\DeclareMathOperator{\id}{id}
\DeclareMathOperator{\Fun}{Fun}
\DeclareMathOperator{\im}{im}
\DeclareMathOperator{\colim}{colim}
\DeclareMathOperator{\hocolim}{hocolim}
\DeclareMathOperator{\ho}{ho}
\DeclareMathOperator{\ob}{ob}
\DeclareMathOperator{\mor}{mor}
\DeclareMathOperator{\op}{op}
\newcommand{\bC}{\mathbf{C}}
\newcommand{\bL}{\mathbf{L}}
\newcommand{\bM}{\mathbf{M}}
\newcommand{\bN}{\mathbf{N}}
\newcommand{\bR}{\mathbf{R}}
\newcommand{\bY}{\mathbf{Y}}
\newcommand{\bZ}{\mathbf{Z}}
\newcommand{\ba}{\mathbf{a}}
\newcommand{\bb}{\mathbf{b}}
\newcommand{\bc}{\mathbf{c}}
\newcommand{\bd}{\mathbf{d}}
\newcommand{\bm}{\mathbf{m}}
\newcommand{\bn}{\mathbf{n}}
\newcommand{\bbC}{\mathbb{C}}
\newcommand{\bbI}{\mathbb{I}}
\newcommand{\bbJ}{\mathbb{J}}
\newcommand{\bbL}{\mathbb{L}}
\newcommand{\bbM}{\mathbb{M}}
\newcommand{\bbN}{\mathbb{N}}
\newcommand{\bbO}{\mathbb{O}}
\newcommand{\bbP}{\mathbb{P}}
\newcommand{\bbQ}{\mathbb{Q}}
\newcommand{\bbR}{\mathbb{R}}
\newcommand{\bbU}{\mathbb{U}}
\newcommand{\sA}{\mathscr{A}}
\newcommand{\sB}{\mathscr{B}}
\newcommand{\sC}{\mathscr{C}}
\newcommand{\sD}{\mathscr{D}}
\newcommand{\sE}{\mathscr{E}}
\newcommand{\sF}{\mathscr{F}}
\newcommand{\sH}{\mathscr{H}}
\newcommand{\sI}{\mathscr{I}}
\newcommand{\sL}{\mathscr{L}}
\newcommand{\sM}{\mathscr{M}}
\newcommand{\sO}{\mathscr{O}}
\newcommand{\sP}{\mathscr{P}}
\newcommand{\sS}{\mathscr{S}}
\newcommand{\sT}{\mathscr{T}}
\newcommand{\sU}{\mathscr{U}}
\newcommand{\sX}{\mathscr{X}}
\newcommand{\sY}{\mathscr{Y}}
\newcommand{\cI}{\mathcal{I}}
\newcommand{\cJ}{\mathcal{J}}
\newcommand{\cO}{\mathcal{O}}
\newcommand{\sus}{\Sigma^{\infty}}
\newcommand{\loops}{\Omega^{\infty}}
\renewcommand{\phi}{\varphi}
\renewcommand{\emptyset}{\O}
\providecommand{\x}{\times}
\providecommand{\sma}{\wedge}
\renewcommand{\_}[1]{\underline{ #1 }}
\providecommand{\arr}{\longrightarrow}
\providecommand{\Aut}{\mathrm{Aut}}
\providecommand{\mGL}{\mathrm{GL}}
\newtheorem{theorem}{Theorem}[section]
\newtheorem{proposition}[theorem]{Proposition}
\newtheorem{lemma}[theorem]{Lemma}
\newtheorem{corollary}[theorem]{Corollary}
\theoremstyle{definition}
\newtheorem{definition}[theorem]{Definition}
\newtheorem{remark}[theorem]{Remark}
\newtheorem{construction}[theorem]{Construction}
\newtheorem{input_data}[theorem]{Input Data}
\title[Diagram Spaces, Diagram Spectra, and Spectra of Units]{Diagram Spaces, Diagram Spectra, and Spectra of Units}
\author[John A. Lind]{John A. Lind}
\address{Department of Mathematics\\ Johns Hopkins University}
\email{jlind@math.jhu.edu}
\begin{document}

\maketitle

\begin{abstract}
This article compares the infinite loop spaces associated to symmetric spectra, orthogonal spectra, and EKMM $S$-modules.  Each of these categories of structured spectra has a corresponding category of structured spaces that receives the infinite loop space functor $\Omega^{\infty}$.  We prove that these models for spaces are Quillen equivalent and that the infinite loop space functors $\Omega^{\infty}$ agree.  This comparison is then used to show that two different constructions of the spectrum of units $gl_1 R$ of a commutative ring spectrum $R$ agree.
\end{abstract}

\tableofcontents

\section{Introduction}

\addtocounter{equation}{-1}

In recent years, algebraic topology has witnessed the development of models for the stable homotopy category that are symmetric monoidal under the smash product.  Indeed, there are many such categories of spectra.  The models that we will consider are symmetric spectra \citelist{\cite{HSS} \cite{MMSS}}, orthogonal spectra \cite{MMSS} and (EKMM) $S$-modules \cite{EKMM}.  These categories of spectra, as well as their various categories of rings and modules, are known to be Quillen equivalent \citelist{\cite{MMSS} \cite{schwede_smodules} \cite{MM}}.  However, a Quillen equivalence only gives so much information.  On their own, these Quillen equivalences give no comparison of the infinite loop spaces associated to equivalent models of the same spectrum.  The present paper makes this comparison of infinite loop spaces.  

To describe the prototype of the objects under comparison, let $\sS$ denote the category of spectra $E$ whose zeroth spaces \emph{are} infinite loop spaces: these are sequences of spaces $E_{n}$ with homeomorphisms $E_{n} \arr \Omega E_{n + 1}$.  Then the infinite loop space associated to such a spectrum $E$ is its zeroth space: $\Omega^{\infty} E = E_0 \cong \Omega^{n} E_n$ for all $n$.  This defines a functor $\Omega^{\infty}$ from $\sS$ to the category $\sT$ of based spaces.  The suspension spectrum functor $\Sigma^{\infty}$ is the left adjoint of $\Omega^{\infty}$.  Composing with the free/forgetful adjunction between the category of unbased spaces $\sU$ and the category of based spaces $\sT$, we have the composite adjunction:
\begin{equation}
\xymatrix{ \sU \ar@<.5ex>[r]^{(-)_{+}} & \sT \ar@<.5ex>[r]^{\Sigma^\infty} \ar@<.5ex>[l] & \sS \ar@<.5ex>[l]^{\Omega^\infty}. }
\end{equation}
The main point of this paper is to define the analog of this adjunction for symmetric spectra, orthogonal spectra and $S$-modules, and then to show that these three adjunctions agree after passing to the homotopy category of spaces and the homotopy category of spectra.  Each model for spectra has a corresponding model for spaces that is symmetric monoidal with commutative monoids modeling $E_{\infty}$ spaces.  These models are Quillen equivalent and this equivalence is the enhancement of the Quillen equivalence of spectra needed to compare infinite loop space information.

The ``structured spaces'' associated to structured spectra are of considerable interest in their own right.  Since their commutative monoids model $E_{\infty}$ spaces, applying $\Omega^{\infty}$ to a commutative ring spectrum yields a space with a ``multiplicative'' $E_{\infty}$ space structure.  Multiplicative $E_{\infty}$ spaces, particularly as packaged in May's notion of an $E_{\infty}$ ring space, were central to the early applications of the theory of structured ring spectra (see \cite{RANT1} for the role of $E_{\infty}$ ring spaces in stable topological algebra and \cite{RANT3} for a history of these applications).  More recently, Rognes has developed the logarithmic algebra of structured ring spectra \cite{rognes_log}, which explicitly uses both the symmetric spectrum and EKMM approach to structured ring spectra and their multiplicative infinite loop spaces.  Our results not only prove that his constructions in the two contexts are equivalent, but show how to transport information between them.

Another example of the use of multiplicative infinite loop spaces is the spectrum of units $gl_1 R$ of a commutative ring spectrum $R$.  Spectra of units are essential to multiplicative orientation theory in geometric topology \cite{RANT3} and have been used more recently in Rezk's logarithmic power operations \cite{rezk_log} and the String orientation of $tmf$ \citelist{\cite{tmf_orientation} \cite{ABGHR}}.  To construct $gl_1 R$, the multiplication on $R$ is converted into the addition on the spectrum $gl_1 R$.  This transfer of structure occurs on the infinite loop space associated to $R$, and cannot be performed purely in terms of spectra.  In fact, there are two constructions of spectra of units: the first is the original definition of May, Quinn and Ray for $E_{\infty}$ ring spectra \cite{E_infty_rings} and the second is the definition for commutative symmetric ring spectra given by Schlichtkrull in \cite{schlichtkrull_units}.  Using the comparison of infinite loop spaces, we also prove that these two constructions give equivalent spectra as output.  

Let $\Sigma \sS$ be the category of symmetric spectra, $\sI \sS$ the category of orthogonal spectra, and $\sM_{S}$ the category of $S$-modules.  Let $\bbI$ be the category of finite sets and injective functions and let $\bbI \sU$ be the category of functors from $\bbI$ to the category $\sU$ of unbased spaces.  We call an object of $\bbI \sU$ an $\bbI$-space.  $\bbI \sU$ is the appropriate model for spaces corresponding to symmetric spectra; it receives the infinite loop space functor $\Omega^{\bullet}$ from symmetric spectra and participates in an adjunction
\begin{equation}
\xymatrix{ \bbI \sU \ar@<.5ex>[r]^{\Sigma^{\bullet}_+} &
 \Sigma \sS \ar@<.5ex>[l]^{\Omega^\bullet} }.
\end{equation}
Here, the subscript $+$ denotes adding a disjoint basepoint to each space in the diagram before taking the suspension spectrum.  We will work throughout with unbased spaces for flexibility and to simplify our later construction of $gl_1$.

In direct analogy to $\bbI$, let $\cI$ denote the category of finite dimensional real inner-product spaces and linear isometries (not necessarily isomorphisms).  Let $\cI \sU$ denote the category of continuous functors from $\cI$ to $\sU$.  Objects in this category, which we refer to as $\cI$-spaces, are the model for spaces corresponding to orthogonal spectra and we have an adjunction
\begin{equation}
\xymatrix{ \cI \sU \ar@<.5ex>[r]^{\Sigma^\bullet_{+}} & \sI \sS \ar@<.5ex>[l]^{\Omega^\bullet}. }
\end{equation}
We will prove that $\bbI$-spaces and $\cI$-spaces are models for topological spaces in the following sense.
\begin{theorem}  There is a model category structure on the category of $\bbI$-spaces whose weak equivalences are detected by the homotopy colimit functor
\[
\hocolim_{\bbI} \colon \bbI \sU \arr \sU.
\]
Similarly, the category of $\cI$-spaces has a model category structure with weak equivalences detected by
\[
\hocolim_{\cI} \colon \cI \sU \arr \sU.
\]
Furthermore, both of these model structures are Quillen equivalent to Quillen's model structure on $\sU$.
\end{theorem}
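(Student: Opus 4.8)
The plan is to obtain both model structures as left Bousfield localizations of the projective (``level'') model structures on the diagram categories, and then to read off the Quillen equivalence with $\sU$ from the colimit/constant-diagram adjunction. I will describe the argument for $\bbI\sU$; the argument for $\cI\sU$ is formally identical once one works with topologically enriched functor categories and enriched homotopy colimits. First I would set up the level model structure: since $\sU$ (in a convenient category of spaces) is a cofibrantly generated, left proper, topological model category and $\bbI$ is small, $\bbI\sU$ carries the projective model structure whose weak equivalences and fibrations are levelwise and which is generated by the maps $F_{\mathbf{n}}(i)$, where $i$ runs over generating (trivial) cofibrations of $\sU$ and $F_{\mathbf{n}}$ is left adjoint to evaluation at $\mathbf{n}\in\bbI$. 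This structure is left proper (projective cofibrations are in particular levelwise cofibrations and colimits are levelwise) and cellular.

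Next I would form the left Bousfield localization $L_S\bbI\sU$ at the set $S$ of maps $F_{\mathbf{n}}(\emptyset)\to F_{\mathbf{m}}(\emptyset)$, i.e. $\bbI(\mathbf{n},-)\to\bbI(\mathbf{m},-)$, induced by the morphisms $\mathbf{m}\to\mathbf{n}$ of $\bbI$. By Hirschhorn's existence theorem $L_S\bbI\sU$ is again a left proper cellular topological model category with the same cofibrations as the level structure, and the work is to identify its weak equivalences. Since the representables $\bbI(\mathbf{n},-)=F_{\mathbf{n}}(\emptyset)$ are cofibrant, the derived mapping space out of one is computed by evaluation (Yoneda), so the $S$-local objects are exactly the levelwise fibrant diagrams $Z$ for which every structure map $Z(\mathbf{m})\to Z(\mathbf{n})$ is a weak equivalence --- the \emph{homotopy constant} diagrams. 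Because $\bbI$ has an initial object $\emptyset$, its classifying space $B\bbI$ is contractible, so $\hocolim_{\bbI}$ of a homotopy constant diagram recovers its value; combining this with the standard fact that $\hocolim_{\bbI}X\to\colim_{\bbI}X$ is a weak equivalence for projectively cofibrant $X$ (the category of elements of $\bbI(\mathbf{n},-)$ has an initial object), a co-Yoneda argument in the homotopy category of $\sU$ shows that the $S$-local equivalences are precisely the maps $f$ with $\hocolim_{\bbI}f$ a weak equivalence. This is the model structure in the statement.

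For the Quillen equivalence, the adjunction $\colim_{\bbI}\dashv c$, with $c\colon\sU\to\bbI\sU$ the constant-diagram functor, is a Quillen adjunction for the level structures since $\colim_{\bbI}$ sends $F_{\mathbf{n}}(i)$ to $i$; as $\colim_{\bbI}$ carries each map of $S$ to the identity of a point, the universal property of left Bousfield localization promotes it to a Quillen adjunction $\colim_{\bbI}\colon L_S\bbI\sU\rightleftarrows\sU\colon c$. To check it is a Quillen equivalence, take $X$ projectively cofibrant and $A\in\sU$; then $\colim_{\bbI}X\simeq\hocolim_{\bbI}X$ and $\hocolim_{\bbI}(cA)\simeq B\bbI\times A\simeq A$, so a map $\colim_{\bbI}X\to A$ is a weak equivalence if and only if its adjoint $X\to cA$ induces a weak equivalence on $\hocolim_{\bbI}$ --- exactly the criterion for a Quillen equivalence. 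The same argument, using that $\cI$ has initial object the zero inner product space and that the topological category of elements of $\cI(V,-)$ has an initial object, handles $\cI\sU$ and its localization.

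The genuinely delicate part is foundational rather than conceptual: running left Bousfield localization for diagrams of \emph{topological} spaces requires pinning down a convenient category of spaces in which the projective level structures are cellular (or arguing through simplicial sets), and, in the $\cI$ case, developing the enriched analogues --- continuous functor categories, enriched projective model structures, and enriched (coend) homotopy colimits --- carefully enough that the identification of local objects with homotopy constant diagrams and the comparison $\hocolim\to\colim$ on cofibrant objects both go through. (For $\bbI\sU$ alone one could instead transport the known simplicial $\bbI$-model structure along geometric realization, but the localization approach has the advantage of applying verbatim to $\cI$.)
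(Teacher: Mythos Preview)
Your argument is correct, but it differs from the paper's in two notable ways.

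For the existence of the model structures, the paper does not invoke Hirschhorn's localization machinery. Instead it writes down explicit generating sets: the generating cofibrations are the level ones $F_{+}I$, and the generating acyclic cofibrations $K$ are $F_{+}J$ together with pushout-products $k_\phi \boxempty i$ built from mapping cylinders of the maps $\lambda_\phi \colon \sD[d] \to \sD[c]$ between representables. The paper then verifies directly the recognition criterion for compactly generated model categories (Theorem~\ref{model_cat_construction_quoted}), the key step being Proposition~\ref{RLP_K_and_stable}, which uses a homotopy-fiber argument over $\hocolim_{\sD}$ to show that a $\hocolim$-equivalence with the RLP against $K$ is a level acyclic fibration. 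This buys the paper freedom from the cellularity hypotheses you flag as delicate, and it yields an explicit description of the fibrations (Proposition~\ref{characterize_rlp_K}) that is used repeatedly later. Your approach is cleaner conceptually and makes the identification of local objects and local equivalences transparent via Yoneda, but it front-loads the foundational work of establishing cellularity for topological diagram categories, especially in the enriched $\cI$ case.

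For the Quillen equivalence with $\sU$, your direct argument via $(\colim_{\sD}, c)$ is more elementary than what the paper does. The paper does not prove this adjunction is a Quillen equivalence; instead it obtains the equivalence with $\sU$ as a composite of the Quillen equivalences $\bbI\sU \simeq \cI\sU$ (Theorem~\ref{equivalence_of_I_and_If_spaces}), $\cI\sU \simeq \sU[\bbL]$ (Theorem~\ref{quillen_equiv_If_spaces_L-spaces}), and $\sU[\bbL] \simeq \sU$ (Remark~\ref{equiv_with_spaces}). Your route is shorter for this particular statement; the paper's route is chosen because the intermediate equivalences are what it actually needs for the comparison of infinite loop space functors.
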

\noindent  The construction of the model structure is carried out for a general diagram category satisfying a few axioms and may be of independent interest.  The model structures will be used throughout the paper but their construction is delayed to \S\ref{model_structure_section}.  The Quillen equivalence with spaces follows from Theorems \ref{quillen_equiv_FCP} and \ref{quillen_equiv_If_spaces_L-spaces} (see Remark \ref{equiv_with_spaces}).  In the case of $\bbI$-spaces, this model structure has also been constructed by Schlichtkrull and Sagave \cite{ss}.

Moving on from diagram spaces, let $\sM_{*}$ denote the category of $*$-modules \cite{BCS}.  This is a symmetric monoidal model category Quillen equivalent to $\sU$ whose commutative monoids are equivalent to algebras for the linear isometries operad $\sL$ and thus model $E_{\infty}$ spaces.  $\sM_{*}$ is the model for spaces corresponding to $S$-modules and we have an adjunction:
\begin{equation}
\xymatrix{ \sM_{*} \ar@<.5ex>[r]^{\Sigma^{\infty}_{S +}} & \sM_{S} \ar@<.5ex>[l]^{\Omega^\infty_{S}}. }
\end{equation}
Readers of \cite{EKMM} should be warned that $\Omega^{\infty}_{S}$ is \emph{not} the usual functor $\Omega^{\infty}$ on $S$-modules.  Instead, $\Omega^{\infty}_{S}$ is a version of $\Omega^{\infty}$ that passes through the mirror-image categories $\sM^{S}$ and $\sM^{*}$.  This is necessary to yield $*$-modules as output and to make the resulting adjunction work correctly.

When we take the total derived functors of the three adjunctions (1) -- (3), the homotopy categories on the left and right sides are equivalent to the homotopy category of spaces and the stable homotopy category, respectively.  We prove that these adjunctions descend to homotopy categories compatibly, in the following sense.

\begin{theorem}\label{main_comparison_theorem}  The total derived versions of the adjunctions \emph{(1) -- (3)}, after passing along the equivalences of homotopy categories, are all isomorphic to the adjunction
\[
\xymatrix{ \ho \sU \ar@<.5ex>[r]^{\Sigma^\infty_{+}} & \ho \sS \ar@<.5ex>[l]^{\Omega^\infty}. }
\]
induced by the prototype adjunction (0).
\end{theorem}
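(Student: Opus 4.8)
The plan is to reduce the statement to a single explicit computation for symmetric spectra and then transport it along the comparison equivalences.  I will use three standard facts repeatedly: a natural weak equivalence between (left or right) Quillen functors induces a natural isomorphism on total derived functors; the total derived functor of a composite of Quillen adjunctions is the composite of the total derived functors; and the \emph{mate correspondence} --- given adjunctions $(F \dashv G)$ and $(F' \dashv G')$ together with equivalences $\Phi$, $\Psi$ filling in the vertical sides of the square whose horizontal edges are the two adjunctions, a natural isomorphism $\Phi \circ G \cong G' \circ \Psi$ has a mate $\Psi \circ F \cong F' \circ \Phi$, and the pair constitutes an isomorphism of adjunctions.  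Consequently it is enough to fix the equivalences of homotopy categories on both sides and then identify the derived \emph{right} adjoints, since $\Omega^\bullet$, $\Omega^\infty$ and $\Omega^\infty_S$ are the concrete functors here.

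For the equivalences themselves: on the spectrum side we take the standard zig-zags of Quillen equivalences giving $\ho\Sigma\sS \simeq \ho\sI\sS \simeq \ho\sM_S \simeq \ho\sS$ of \citelist{\cite{MMSS}\cite{schwede_smodules}\cite{MM}}, and on the space side $\hocolim_\bbI \colon \ho\bbI\sU \simeq \ho\sU$ and $\hocolim_\cI \colon \ho\cI\sU \simeq \ho\sU$ from Theorem 1.1 (the total left derived functors of $\colim$, with quasi-inverse the constant-diagram functor) together with $\ho\sM_* \simeq \ho\sU$ from \cite{BCS}.  The heart of the argument is the symmetric spectrum case.  If $E$ is a symmetric spectrum with stably fibrant replacement $E^f$, then $\mathbb R\Omega^\bullet E$ is represented by the $\bbI$-space $\underline n \mapsto \Omega^n E^f_n$, with $\Sigma_n$ acting through $E^f_n$ and $S^n$.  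I would then prove
\[
\hocolim_{\bbI}\, \Omega^\bullet E^f \;\simeq\; \hocolim_n\, \Omega^n E^f_n \;=\; \Omega^\infty E^f ,
\]
where the first equivalence is the statement that the inclusion of the sequential subcategory $\underline 0 \to \underline 1 \to \cdots$ of $\bbI$ induces an equivalence on homotopy colimits \emph{for this particular diagram}; this holds because $E^f$, being stably fibrant, is semistable, so that the symmetric-group actions do not contribute.  This natural equivalence identifies the derived right adjoint of (1) with $\Omega^\infty$ composed with the equivalence $\ho\Sigma\sS \simeq \ho\sS$; its mate identifies the left adjoints, which gives the asserted isomorphism of adjunctions between the total derived version of (1) and (0).

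To pass to (2), I would use that the forgetful functor $\sI\sS \to \Sigma\sS$ and the restriction functor $\cI\sU \to \bbI\sU$ along $\bbI \hookrightarrow \cI$ fit into squares
\[
\xymatrix{
\cI\sU \ar[r]^{\Sigma^\bullet_+} \ar[d] & \sI\sS \ar[d] \\
\bbI\sU \ar[r]_{\Sigma^\bullet_+} & \Sigma\sS
}
\qquad\qquad
\xymatrix{
\sI\sS \ar[r]^{\Omega^\bullet} \ar[d] & \cI\sU \ar[d] \\
\Sigma\sS \ar[r]_{\Omega^\bullet} & \bbI\sU
}
\]
that commute up to natural isomorphism --- the $n$-th space of a suspension spectrum, respectively of $\Omega^\bullet$, depends only on the value of the diagram on $\mathbf R^n$ --- while the vertical functors induce the equivalences $\ho\sI\sS \simeq \ho\Sigma\sS$ and $\ho\cI\sU \simeq \ho\bbI\sU$ compatibly with the equivalences to $\ho\sS$ and $\ho\sU$ (here using homotopy cofinality of $\bbI \hookrightarrow \cI$).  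Hence the total derived version of (2) is isomorphic to that of (1), and so to (0).  For (3) one argues the same way, with the comparison of $\cI$-spaces, $\sL$-spaces and $*$-modules of Theorems \ref{quillen_equiv_FCP} and \ref{quillen_equiv_If_spaces_L-spaces} playing the role of the restriction functors, and with the compatibility check now concerning $\Omega^\infty_S$ --- recalling that $\Omega^\infty_S$ is built through the mirror-image categories $\sM^S$ and $\sM^*$ --- and the passage to $\ho\sU$ and $\ho\sS$.

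The main obstacle, I expect, is twofold.  First, the homotopy-cofinality input in the symmetric spectrum case must be handled with care in the presence of the symmetric-group actions: one needs $\hocolim_\bbI \Omega^\bullet E^f$ to be \emph{canonically} the classical $\Omega^\infty E$ rather than merely abstractly equivalent to it, and this is precisely where semistability of the fibrant replacement enters.  Second, the $S$-module case carries heavier bookkeeping: one must check that the $*$-module comparison is genuinely compatible with the technically delicate functor $\Omega^\infty_S$ and its factorization through the mirror-image categories, so that all three identifications assemble into one diagram commuting over $(\Sigma^\infty_+ \dashv \Omega^\infty)$.  Once these compatibilities are in hand, the remainder is formal manipulation of derived adjunctions and their mates.
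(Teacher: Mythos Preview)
Your plan is correct and uses the same formal backbone as the paper --- the mate correspondence (the paper isolates this as Proposition~\ref{appendix_prop}) together with commutativity of the comparison squares relating the various $\Omega$-functors.  The genuine difference is the choice of anchor.  You anchor at symmetric spectra, proving directly that $\hocolim_{\bbI}\Omega^\bullet E^f \simeq \Omega^\infty E^f$ for a stably fibrant $E^f$ via semistability and the inclusion $\bbJ \hookrightarrow \bbI$; the paper instead anchors in the \emph{middle} of the main diagram, at $\bbL$-spectra, where $\Omega^\infty_{\bbL}$ is literally the zeroth-space functor and hence equals the prototype $\Omega^\infty$ of adjunction~(0) by definition.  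This makes the identification with~(0) free and reduces the whole theorem to showing that the two leftmost squares of the main diagram commute on derived categories (Propositions~\ref{comparison_prop_loops_diagram_sp} and~\ref{loops_orthogonal_to_Lspectra_prop}); the two right-hand squares commute strictly by the construction of $\Omega^\infty_S$ through the mirror-image categories.  Your route costs an extra cofinality/semistability computation at the symmetric end, and you must also check that the equivalence $\ho\Sigma\sS \simeq \ho\sS$ you are implicitly using (fibrant replacement followed by the underlying $\Omega$-spectrum) agrees with the zig-zag you declared at the outset.  On the $S$-module side, the compatibility you flag as ``heavier bookkeeping'' is exactly the content of Proposition~\ref{loops_orthogonal_to_Lspectra_prop}: one unwinds $\Omega^\bullet \bbN^{\#}$ and $\bbQ^{\#}\Omega^\infty_{\bbL}$ and identifies them via the untwisting isomorphism $\cI_c(U',U) \ltimes \Sigma^{U'} X \cong \Sigma^U_{\bbL}(\cI_c(U',U)_+ \wedge X)$, so there is a concrete computation here rather than pure formality.
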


To prove this, it suffices to prove that the four versions of the infinite loop space functor $\Omega^{\infty}$ agree on $\ho \sS$.  Since we are using different but Quillen equivalent models for spectra, we must incorporate the comparison functors between these models.  The same is true for our different models for spaces.  To make the comparison, we will use two intermediaries between orthogonal spectra and $S$-modules.  These are the category $\sS[\bbL]$  of $\bbL$-spectra and $\sM^{S}$, the mirror image to the category of $S$-modules.  There are analogs at the space level: $\sU[\bbL]$, the category of $\bbL$-spaces, and $\sM^{*}$, the mirror image to the category of $*$-modules.  We shall construct the following diagram relating all of these categories:
\begin{equation*}\label{main_spectrum_to_space_diagram}
\xymatrix{
\Sigma \sS \ar@<.5ex>[rr]^-{\bbP} \ar[dd]_{\Omega^{\bullet}} & &
\sI \sS \ar@<.5ex>[rr]^-{\bbN} \ar[dd]_{\Omega^{\bullet}} \ar@<.5ex>[ll]^-{\bbU} & &
\sS[\bbL] \ar@<.5ex>[ll]^-{\bbN^{\#}} \ar[dd]_{\Omega^{\infty}_{\bbL}} \ar@<.5ex>[rr]^-{F_{\sL}(S, - )} & &
\sM^{S} \ar@<.5ex>[ll]^-{r} \ar@<.5ex>[rr]^-{S \sma_{\sL} - } \ar[dd]_{\Omega^{\infty}_{\bbL}} & &
\sM_{S} \ar[dd]^{\Omega^{\infty}_{S}} \ar@<.5ex>[ll]^-{F_{\sL}( S , - )}
\\ & & &  & &  & & & \\
\bbI \sU \ar@<.5ex>[rr]^-{\bbP} & &
\cI \sU \ar@<.5ex>[rr]^-{\bbQ} \ar@<.5ex>[ll]^-{\bbU} & & 
\sU[\bbL] \ar@<.5ex>[ll]^-{\bbQ^{\#}} \ar@<.5ex>[rr]^-{F_{\sL}( * , - )} & &
\sM^{*} \ar@<.5ex>[rr]^-{* \boxtimes_{\sL} - } \ar@<.5ex>[ll]^-{r} & &
\sM_{*} \ar@<.5ex>[ll]^-{F_{\sL}(*, - )} }
\end{equation*}
The top row consist of models for the stable homotopy category and the bottom row consists of models for the homotopy category of spaces.  All parallel arrows are Quillen equivalences with left adjoints on top.  All vertical arrows are Quillen right adjoints.  By definition, the functors $\Omega^{\infty}_{\bbL}$ agree with the definition of $\Omega^{\infty}$ in the adjunction (0), and we prove Theorem \ref{main_comparison_theorem} by showing that the associated diagram of derived functors on homotopy categories commutes up to natural isomorphism.  The two squares on the right commute by construction, so this is accomplished by showing that (the derived versions of) the two left diagrams commute (Propositions \ref{comparison_prop_loops_diagram_sp} and \ref{loops_orthogonal_to_Lspectra_prop}, respectively).  

Each category in the top row of the main diagram carries a symmetric monoidal product $\sma$ whose monoids and commutative monoids give equivalent models for ring spectra and commutative ring spectra.  Each category in the bottom row also carries a symmetric monoidal product $\boxtimes$ and all of the adjunctions in the main diagram are symmetric monoidal.  An $\bbI$-space monoid under $\boxtimes$ is called an $\bbI$-FCP (functor with cartesian product) and an $\cI$-space monoid under $\boxtimes$ in $\cI$-spaces is called an $\cI$-FCP.  Strictly speaking, the categories $\sS[\bbL], \sU[\bbL], \sM^{S}$ and $\sM^{*}$ are only weakly symmetric monoidal, meaning that in general the unit maps are only weak equivalences.  However, we can still define monoids, commutative monoids, and we will use the usual language of monoidal categories (lax/strong symmetric monoidal functors, monoidal transformations, etc.).  As discussed in \cite{BCS}*{\S 4.2}, the category of monoids in $\sU[\bbL]$ is isomorphic to the category of $A_{\infty}$-spaces structured by the non-$\Sigma$ linear isometries operad $\sL$ and the category of commutative monoids in $\sU[\bbL]$ is isomorphic to the category of $E_{\infty}$-spaces structured by the linear isometries operad with symmetric group actions.  The following theorem shows that associative and commutative FCPs provide models for $A_{\infty}$ and $E_{\infty}$-spaces.  Given a symmetric monoidal category $\sC$, we write $\bM \sC$ for the category of monoids in $\sC$ and $\bC \sC$ for the category of commutative monoids in $\sC$.

\begin{theorem} The bottom row of Quillen equivalences in the main diagram restricts to give a chain of Quillen equivalences between the category $\bM \bbI \sU$ of $\bbI$-FCPs, the category $\bM \cI \sU$ of $\cI$-FCPs, and the category $\bM \sU[\bbL]$ of non-$\Sigma$ $\sL$-spaces.  These equivalences restrict further to give a chain of Quillen equivalences between the category $\bC \bbI \sU$ of commutative $\bbI$-FCPs, the category $\bC \cI \sU$ of commutative $\cI$-FCPs, and the category $\bC \sU[\bbL]$ of $\sL$-spaces.  
\end{theorem}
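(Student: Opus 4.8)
The plan is to lift the bottom-row Quillen equivalences of the main diagram to the associated categories of monoids and of commutative monoids. Since each of these adjunctions is (weakly) symmetric monoidal it restricts to adjunctions on monoids and on commutative monoids, and the content of the theorem is that each restriction is again a Quillen equivalence; I would obtain this from the general principle that a (weakly) symmetric monoidal Quillen equivalence between sufficiently well-behaved monoidal model categories induces Quillen equivalences on monoids and on commutative monoids. Two ingredients must be supplied: the transferred model structures on the six categories of (commutative) monoids, and the verification of the hypotheses needed to lift the equivalences.

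First I would verify that $\boxtimes$ on each of $\bbI\sU$, $\cI\sU$ and $\sU[\bbL]$ is cofibrantly generated and satisfies the monoid axiom, so that the categories of monoids acquire model structures transferred along the forgetful functors, with weak equivalences and fibrations created there. For the two diagram categories this is part of the general diagram-category machinery of \S\ref{model_structure_section}; for $\sU[\bbL]$ it is carried out in \cite{BCS}, working throughout in the weakly unital setting. For the commutative monoid categories one needs in addition that, for every (trivial) cofibration $f$ and every $k$, the map obtained from the $k$-fold $\boxtimes$-pushout-product of $f$ with itself by passing to $\Sigma_{k}$-orbits is again a (trivial) cofibration. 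In $\bbI\sU$ and $\cI\sU$ this holds once one works with the positive variants of the model structures, since there the relevant spaces of morphisms of $\bbI$ (respectively $\cI$) carry a free block-permutation action of $\Sigma_{k}$; the positive model structures are Quillen equivalent to the standard ones, so nothing is lost (cf.\ \cite{ss} in the $\bbI$-space case). For $\sU[\bbL]$ the analogous statement holds because $\sL(k)$ is a free $\Sigma_{k}$-space, and is again part of \cite{BCS}. This yields the transferred model structures on $\bC\bbI\sU$, $\bC\cI\sU$ and $\bC\sU[\bbL]$.

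Next I would apply the lifting principle to each of the two adjunctions in turn. The left adjoint $\bbP\colon\bbI\sU\to\cI\sU$ is strong symmetric monoidal, being the left Kan extension along the symmetric monoidal functor $\bbI\to\cI$, $\mathbf{n}\mapsto\bbR^{n}$, and the adjunction between $\cI\sU$ and $\sU[\bbL]$ is the weakly symmetric monoidal Quillen equivalence built into the main diagram. In both cases the forgetful functor from (commutative) monoids still creates weak equivalences, so the lifted right adjoint reflects weak equivalences, and by the usual criterion for Quillen equivalences it remains only to show that the derived unit is a weak equivalence on a cofibrant (commutative) monoid $A$. For this I would use that the underlying object of $A$ is cofibrant: a cofibrant (commutative) monoid is a retract of a cell object obtained by transfinitely composing pushouts of free-monoid (respectively free-commutative-monoid) maps on generating cofibrations, and the underlying map of each such pushout is assembled, by the standard filtration argument, from the $\boxtimes$-pushout-product powers of the generating cofibrations (passed to $\Sigma_{k}$-orbits in the commutative case) combined via $\boxtimes$ with the underlying object already constructed. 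These building maps are cofibrations by the previous paragraph, so the underlying object of $A$ is cofibrant; hence the derived unit of the lifted adjunction at $A$ agrees up to weak equivalence with the derived unit of the underlying Quillen equivalence at the underlying object of $A$, and is therefore a weak equivalence. Composing the two lifted adjunctions gives the asserted chain of Quillen equivalences for monoids, and the same argument with the commutative-monoid model structures gives the commutative case.

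The main obstacle is the cofibrancy statement in the commutative setting: that the underlying $\bbI$-space (respectively $\cI$-space) of a cofibrant commutative FCP is cofibrant. Establishing this requires controlling the $\Sigma_{k}$-equivariant homotopy type of the iterated $\boxtimes$-pushout-product powers of generating cofibrations after passage to orbits, and it is precisely here that one is forced to work with the positive model structures so that the block-permutation actions are free. The associative analogue is the standard fact that cofibrant monoids have cofibrant underlying objects, which follows routinely from the monoid axiom together with cofibrancy of the unit. Once these cofibrancy inputs are in place, the remainder of the argument is formal manipulation of monoidal adjunctions and presents no further difficulty.
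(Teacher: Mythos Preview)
Your outline for the associative case is essentially the paper's: cofibrant $\sD$-FCPs are cofibrant as $\sD$-spaces (this is the last sentence of the paper's Theorem on $\sD$-FCP model structures), so the derived unit computed in monoids agrees with that computed in $\sD$-spaces, and the underlying Quillen equivalence lifts. The paper records this as Theorem~\ref{quillen_equiv_monoids} and the monoid half of Theorem~\ref{quillen_equiv_FCP_L_spaces}.

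The commutative case has a genuine gap. Your argument hinges on the claim that a cofibrant commutative FCP has cofibrant underlying diagram space, which you deduce from the assertion that $f^{\boxempty k}/\Sigma_k$ is a positive cofibration because the block-permutation action is free. Freeness of the $\Sigma_k$-action on the morphism spaces does give you good \emph{homotopical} behavior of the quotient (this is exactly Lemma~\ref{prep_for_extended_power_prop} and Proposition~\ref{symmetric_power_equiv}: the map $E\Sigma_k \times_{\Sigma_k}(-)^{\boxtimes k} \to (-)^{\boxtimes k}/\Sigma_k$ is a weak equivalence), but it does not make $f^{\boxempty k}/\Sigma_k$ a projective cofibration. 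In fact the paper states flatly, in the proof of Theorem~\ref{quillen_equiv_FCP_L_spaces}, that cofibrant commutative $\cI$-FCPs are \emph{not} cofibrant as $\cI$-spaces; the filtration maps in Lemma~\ref{filtration_on_commutative_monoid_pushout} are only shown to be $h$-cofibrations, not cofibrations.

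The paper's workaround is to abandon underlying cofibrancy and instead prove directly that the specific natural transformations needed (the unit $\eta\colon X\to\bbU\bbP X$, the comparison $\xi\colon\bbQ X\to\bbO X$, and $\pi\colon\hocolim_{\cJ}X\to\colim_{\cJ}X$) are weak equivalences on cofibrant commutative FCPs. This is Proposition~\ref{general_equiv_cofibrant_FCPs_via_induction}: one inducts up the $\bC F_{+}I$-cell structure using the filtration of Lemma~\ref{filtration_on_commutative_monoid_pushout}, and at each stage replaces the strict $\Sigma_n$-quotient by the homotopy quotient via Proposition~\ref{symmetric_power_equiv}, which \emph{is} positive cofibrant, so the known case for cofibrant diagram spaces applies. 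With these weak equivalences in hand, the proofs of Theorems~\ref{equivalence_of_I_and_If_spaces} and~\ref{quillen_equiv_If_spaces_L-spaces} go through verbatim for commutative FCPs. Your proposal would be repaired by replacing the underlying-cofibrancy step with this inductive argument.
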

\noindent This is proved in Theorems \ref{quillen_equiv_FCP_L_spaces}, \ref{quillen_equiv_monoids} and \ref{equivalence_of_commutative_I_and_If_spaces}.  The infinite loop space functors $\Omega^{\bullet}$ are lax symmetric monoidal, so the underlying infinite loop spaces of (commutative) diagram ring spectra are (commutative) FCPs.  There is a version of the main diagram for ring spectra and for commutative ring spectra, given by passage to monoids and commutative monoids in all of the categories present.

In \S\ref{space_units_construction_section}, we construct a group-like FCP $\mGL_1^{\bullet} R$ of units from a diagram ring spectrum $R$.  This defines functors $\mGL_1^{\bullet} \colon \bM \Sigma \sS \arr \bM \bbI \sU$ from symmetric ring spectra to $\bbI$-FCPs and $\mGL_1^{\bullet} \colon \bM \sI \sS \arr \bM \cI \sU$ from orthogonal ring spectra to $\cI$-FCPs.  When $R$ is commutative, the FCP of units $\mGL_1^{\bullet}R$ is also commutative.  In \S\ref{spectrum_units_construction_section}, we build spectra out of group-like commutative FCPs, defining functors $gl_1 \colon \bC \Sigma \sS \arr \sS$ and $gl_1 \colon \bC \sI \sS \arr \sS$ that give the spectrum of units of a commutative diagram ring spectrum.  This agrees with the construction in \cite{schlichtkrull_units} for commutative symmetric ring spectra.

The construction of the units of $A_{\infty}$ and $E_{\infty}$ ring spectra has long been known \cite{E_infty_rings}: starting with an $A_{\infty}$ ring spectrum $R$, there is a group-like $A_{\infty}$ space $\mGL_1 R$ of units.  When $R$ is $E_{\infty}$ the group-like $E_{\infty}$ space $\mGL_1 R$ may be delooped to give a spectrum of units $gl_1 R$.  The categories $\bM \sM_S$ and $\bC \sM_S$ of $S$-algebras and commutative $S$-algebras arise as subcategories of the categories $\bM \sS[\bbL]$ and $\bC \sS[\bbL] = \sS[\sL]$ of $A_{\infty}$ and $E_{\infty}$ ring spectra, so we may also apply the functors $\mGL_1$ and $gl_1$ to them as well.  

\begin{theorem}  After passage to homotopy categories, all four versions of the $A_{\infty}$ space $\mGL_1 R$ of units of a ring spectrum $R$ agree.  After passage to homotopy categories, all four versions of the spectrum of units $gl_1 R$ of a commutative ring spectrum $R$ agree.
\end{theorem}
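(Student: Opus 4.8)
The plan is to factor each of the four constructions into three stages — passage to the multiplicative infinite loop space of $R$, restriction to the units, and (in the commutative case) delooping — and to check that the required agreement is inherited at each stage. The first stage is governed by the monoidal refinement of Theorem \ref{main_comparison_theorem}, while the last two are homotopical, monoidally natural operations.

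First I would record the monoidal comparison of infinite loop spaces. Because all of the adjunctions in the main diagram are symmetric monoidal and the functors $\Omega^{\bullet}$ and $\Omega^{\infty}_{S}$ are lax symmetric monoidal, there is a version of the main diagram obtained by passage to monoids and to commutative monoids in every category appearing. The right-hand squares commute on the nose, and the two left-hand squares commute up to natural isomorphism on homotopy categories by the ring-spectrum versions of Propositions \ref{comparison_prop_loops_diagram_sp} and \ref{loops_orthogonal_to_Lspectra_prop}. Combining this with the chains of Quillen equivalences $\bM \bbI \sU \simeq \bM \cI \sU \simeq \bM \sU[\bbL]$ and $\bC \bbI \sU \simeq \bC \cI \sU \simeq \bC \sU[\bbL]$ of Theorems \ref{quillen_equiv_FCP_L_spaces}, \ref{quillen_equiv_monoids} and \ref{equivalence_of_commutative_I_and_If_spaces}, I conclude that the four versions of the \emph{multiplicative} infinite loop space of $R$ — the $\bbI$-FCP $\Omega^{\bullet} R$ of a symmetric ring spectrum, the $\cI$-FCP $\Omega^{\bullet} R$ of an orthogonal ring spectrum, the non-$\Sigma$ $\sL$-space attached to $R$ viewed in $\sS[\bbL]$, and the one attached to an EKMM $S$-algebra — become isomorphic in $\ho \bM \sU[\bbL]$, the homotopy category of $A_{\infty}$ spaces (and likewise in $\ho \bC \sU[\bbL]$, the homotopy category of $E_{\infty}$ spaces, when $R$ is commutative).

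Next I would observe that forming units is a homotopical construction that is natural with respect to the comparison equivalences. The group-like sub-FCP $\mGL_1^{\bullet} R \subseteq \Omega^{\bullet} R$ is picked out by the units of the ring $\pi_0$ of the relevant homotopy colimit; hence a weak equivalence of ring FCPs restricts to a weak equivalence of the associated group-like FCPs, and a symmetric monoidal weak equivalence induces a ring isomorphism on $\pi_0$ and so carries unit components to unit components — and similarly for the mirror-image and $*$-module models. Therefore the derived units functor applied to the four equivalent multiplicative infinite loop spaces of $R$ yields four group-like (commutative) FCPs that are identified in $\ho \bM \sU[\bbL]$ (resp. $\ho \bC \sU[\bbL]$), which proves the first assertion. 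It also identifies the resulting group-like $\sL$-space with the space of units $\mGL_1 R$ of May, Quinn and Ray of an EKMM $S$-algebra, since on $\sL$-spaces their construction is the same recipe of passing to the invertible components of $\pi_0$.

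Finally, for the spectrum of units I would use the factorization of $gl_1$ through $\mGL_1^{\bullet}$ followed by the delooping machine of \S\ref{spectrum_units_construction_section}, which sends a group-like commutative FCP to a connective spectrum; this machine preserves weak equivalences of group-like commutative FCPs and commutes up to natural equivalence with the comparison equivalences, so the previous step gives the agreement of the four spectra $gl_1 R$ in $\ho \sS$. The construction $gl_1 \colon \bC \Sigma \sS \arr \sS$ agrees with Schlichtkrull's by the very definition recalled in \S\ref{spectrum_units_construction_section} (both deloop the commutative $\bbI$-FCP of units), while the May--Quinn--Ray spectrum of units of an EKMM commutative $S$-algebra is by construction an infinite delooping of the group-like $E_{\infty}$ space $\mGL_1 R$; since any two infinite loop space machines agree on group-like input, this coincides with the machine of \S\ref{spectrum_units_construction_section} applied to the $\sL$-space from the previous step. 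The main obstacle will be the passage from Theorem \ref{main_comparison_theorem}, a statement about underlying spaces, to the comparison of $E_{\infty}$ and $A_{\infty}$ spaces in the second paragraph: one must know that the comparison equivalences respect the operadic multiplicative structure finely enough to be compatible with the units construction, and this is precisely what the Quillen equivalences of categories of (commutative) FCPs and non-$\Sigma$ $\sL$-spaces provide. Granting those, the remaining verifications — homotopy invariance and monoidal naturality of the units and delooping functors, and the matching of the delooping machine with the classical recognition principle via uniqueness of machines — are essentially formal.
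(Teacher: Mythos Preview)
Your three-stage outline is the same as the paper's, and your treatment of the first two stages (monoidal comparison of $\Omega^{\bullet}$, then naturality of the units pullback with respect to the right adjoints $\bbU$ and $\bbQ^{\#}$) is essentially correct and matches Propositions \ref{diagram_GL_1_comparison_prop} and \ref{GL_1_orthogonal_Einfty_prop}.  The gap is in the third stage, the comparison of delooping machines between the orthogonal and $E_{\infty}$ settings.

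The problem is that you are asking the May--Thomason uniqueness theorem to do more than it does.  The orthogonal $gl_1$ is, by Definition \ref{def_of_units}, the Segal machine applied to the $\Gamma$-space $H_{\cI}\mGL_1^{\bullet} R$, whereas the $E_{\infty}$ $gl_1$ is an operadic machine applied to the $\sL$-space $\mGL_1 R$.  Uniqueness of machines compares two machines on the \emph{same} input; it does not tell you that the Segal machine on $H_{\cI} X$ agrees with an operadic machine on $\bbQ X$.  Your sentence ``this machine \dots\ commutes up to natural equivalence with the comparison equivalences'' is exactly the assertion that needs proof, and it is not formal: the paper flags it as ``intrinsically non-model theoretic.''  What is required is an object that carries both structures at once.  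The paper supplies this via the auxiliary category $\cI(U)$, which is an $\sL$-category (unlike $\cI^{\dagger}$), so that $H_{\cI(U)} X$ is a $\Gamma\sL$-space (Proposition \ref{HX_is_F_Einfty}).  One then runs the two machines in the two directions of a bispectrum and invokes the up-and-across theorem to get $S H_{\cI(U)} X \simeq E (H_{\cI(U)} X)_1$ (Proposition \ref{FL_space_equiv}).  Finally one must identify the underlying $\sL$-space $(H_{\cI(U)} X)_1 = \hocolim_{\cI(U)} X$ with $\bbQ X$; this goes through the chain $\bbQ X \simeq \bbO X \cong \colim_{\cJ} X \simeq \hocolim_{\cJ} X \simeq \hocolim_{\cI(U)} X$, and the middle steps require $X$ to be a \emph{cofibrant} commutative $\cI$-FCP, which in turn needs the filtration arguments of Proposition \ref{general_equiv_cofibrant_FCPs_via_induction}.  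None of this is visible in your sketch, and without the $\Gamma\sL$-space bridge your appeal to uniqueness of machines does not connect the two constructions.
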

\noindent 
The first statement is proved as Propositions \ref{diagram_GL_1_comparison_prop} and \ref{GL_1_orthogonal_Einfty_prop}, and the second is proved as Theorems \ref{diagram_gl_comparison_theorem} and \ref{gl_comparison_orthogonal_Einfty_theorem}.  The proof requires the comparison of delooping machines, which seems to be intrinsically non-model theoretic.

\medskip

\emph{Outline.}  In \S2--3 we define $\bbI$-spaces, $\cI$-spaces and the adjunctions (1) and (2) involving the infinite loop space functors $\Omega^{\bullet}$ for symmetric and orthogonal spectra.  In \S4, we show that the adjunctions are monoidal and that commutative monoids in $\bbI$-spaces and $\cI$-spaces give rise to $E_{\infty}$-spaces and hence to infinite loop spaces.  \S5 contains the basic categorical technique used for all of the comparisons in the paper.  In \S6 we compare $\Omega^{\bullet}$ for symmetric and orthogonal spectra.  In \S7 we shift to the EKMM approach, describing $*$-modules and defining the infinite loop space functor $\Omega^{\infty}_{S}$ for $S$-modules.  In \S8 we construct the adjunction between $\cI$-spaces and $*$-modules and in \S9 we prove that it is a Quillen equivalence.  In \S10, we introduce the cylinder construction on $\cI$-FCPs and use it to make a comparison of $\sL$-spaces required in \S14.  The comparison of infinite loop spaces is completed by \S11, which gives the comparison of $\Omega^{\bullet}$ on orthogonal spectra with $\Omega^{\infty}_S$ on $S$-modules.  

In \S12 we define an FCP $\mGL_1^{\bullet} R$ of units associated to a diagram ring spectrum $R$ and in \S13 we convert commutative FCPs to spectra, thus defining the spectrum of units $gl_1 R$.  We compare $gl_1$ of symmetric and orthogonal ring spectra in \S14 and then compare with $gl_1$ of $E_{\infty}$ ring spectra in \S15.  

The rest of the paper contains the model-theoretic results that underly the comparison results.  In \S16 we construct the model structure on the category of $\sD$-spaces in full generality and in \S17 we prove that the model structures on $\bbI$-spaces and $\cI$-spaces are Quillen equivalent.  In \S18 we construct the model structure on the categories of monoids and commutative monoids in $\sD$-spaces (FCPs).  The appendix \S\ref{Topological Categories and the Bar Construction} gives background material on bar constructions and homotopy colimits over topological categories, as well as the action of the linear isometries operad $\sL$ on the categories and functors used throughout the paper.

\medskip

\emph{Conventions.}  In this paper, a topological category means a category internal to topological spaces, not just enriched in topological spaces (see \S\ref{Topological Categories and the Bar Construction} for a review).  The symbol $\sD$ will always denote a topological category.  The main examples in this paper are $\bbI$ (with the discrete topology) and $\cI^{\dagger}$, a small category equivalent to $\cI$.  The description of the topology on $\cI^{\dagger}$ is in \S\ref{Topological Categories and the Bar Construction}.  By a $\sD$-space we mean a continuous functor $\sD \arr \sU$.  We will often write $X_d$ for the value $X(d)$ of $X$ at an object $d$ of $\sD$.  Notice that a $\sD$-space only depends on the enrichment of $\sD$: the topology on $\ob \sD$ is not part of the structure of a $\sD$-space.  We will use the language of compactly generated model categories \cite{parametrized}*{4.5.3}.  A model category $\sC$ is topological if it is enriched, tensored and cotensored in $\sU$ and the topological analog of SM7 holds: given a cofibration $i \colon A \arr X$ and fibration $p \colon E \arr B$, the induced map of spaces
\[
\sC(i, p) \colon \sC(X, E) \arr \sC(A, E) \times_{\sC(A, B)} \sC(X, B)
\]
is a Serre fibration which is a weak equivalence if either $i$ or $p$ is.  

Although the following conditions are easy to verify in practice, we will make repeated use of them in glueing arguments and prefer to make a single definition for easy reference.  A topologically cocomplete category $\sC$ has cylinder objects defined by the tensor $I \times X$ of an object $X$ with the unit interval $I = [0, 1]$.  We have the notion of an $h$-cofibration in $\sC$, given by the homotopy extension property with homotopies defined by these cylinders.  Since $h$-cofibrations are defined by a lifting property, it is immediate that $h$-cofibrations are preserved under coproducts, pushouts and sequential colimits.  We say that a class of weak equivalences in a topologically bicomplete category $\sC$ is well-grounded (compare \cite{parametrized}*{5.4.1}) if the following properties hold:
\begin{itemize}
\item[(i)]  A coproduct of weak equivalences is a weak equivalence.
\item[(ii)] If $i \colon A \arr X$ is an $h$-cofibration and a weak equivalence and $f \colon A \arr Y$ is any map, then the cobase change $Y \arr X \cup_{A} Y$ is a weak equivalence.
\item[(iii)] If $i$ and $i'$ are $h$-cofibrations and the vertical arrows are weak equivalences in the following diagram
\[
\xymatrix{ 
X \ar[d] & A \ar[l]_{i} \ar[r] \ar[d] & Y \ar[d] \\
X' & A' \ar[l]^{i'} \ar[r] & Y' }
\]
then the induced map of pushouts $X \cup_{A} Y \arr X' \cup_{A'} Y'$ is a weak equivalence.
\item[(iv)] If $X$ and $Y$ are each colimits of sequences of $h$-cofibrations $X_n \arr X_{n + 1}$, $Y_n \arr Y_{n+1}$, and $f_n \colon X_n \arr Y_n$ is a compatible family of maps, each of which is a weak equivalence, then the induced map $\colim f_n \colon X \arr Y$ is a weak equivalence.  In particular, if each map $X_n \arr X_{n+1}$ is a weak equivalence, then $X_0 \arr X$ is a weak equivalence.
\item[(v)]  If $f \colon X \arr Y$ is a map in $\sC$ and $i \colon A \arr B$ is a retract of a relative CW complex, then the pushout product
\[
f \boxempty i \colon (X \times B) \cup_{X \times A} (Y \times A) \arr Y \times B
\]
is a weak equivalence if either $f$ or $i$ is a weak equivalence.  
\end{itemize}

Finally, a diagram of categories, such as the main diagram of this introduction, will be said to commute when we really mean ``commute up to natural isomorphism''.

\medskip

\emph{Acknowledgments.}  I thank Peter May for his guidance and enthusiasm, as well as Andrew Blumberg, Mike Mandell and Mike Shulman for helpful conversations.  I am very much in debt to a careful referee who caught serious errors in previous versions of this paper.


\renewcommand{\theequation}{\thesection.\arabic{theorem}}

\section{Infinite loop space theory of symmetric spectra}

In this section we will summarize the basic theory of $\bbI$-spaces (see also \citelist{\cite{rognes_log}*{\S6} \cite{ss}}).  Let us first recall the foundations on symmetric spectra from \citelist{\cite{HSS} \cite{MMSS} }.  Let $\Sigma$ be the category of finite sets $\bn = \{1, \dotsc, n\}$ for $n \geq 0$ and bijections $\bn \arr \bn$.  $\Sigma$ is a symmetric monoidal category under disjoint union.   The category $\Sigma \sT$ of based $\Sigma$-spaces is symmetric monoidal under the internal smash product $\sma$.  The sphere $\Sigma$-space $S \colon \bn \longmapsto S^n$ is a commutative monoid under $\sma$. A symmetric spectrum $E$ is a module over $S$ in the symmetric monoidal category $\Sigma \sT$.  The category $\Sigma \sS$ of symmetric spectra is symmetric monoidal under the smash product $\sma_{S}$ with unit object the sphere spectrum $S$.

In order to capture the infinite loop space information contained in a symmetric spectrum, we describe the structure naturally occuring in the collection of loop spaces $\{ \Omega^{n} E_{n} \}$.  To this end, let $\bbI$ be the category of finite sets $\bn = \{1, \dotsc, n\}$ for $n \geq 0$ with morphisms the injective set maps.  Let $\bbJ$ be the subcategory of $\bbI$ with the same objects but with morphisms only the inclusions $\iota \colon \bm \arr \bn$ such that $\iota(k) = k$.  Note that every morphism $\phi$ of $\bbI$ can be factored (non-uniquely) as $\phi = \overline{\phi} \circ \iota$ for some permutation $\overline{\phi}$ in $\Sigma$.  

Let $E$ be a symmetric spectrum.  Define a based $\bbI$-space $\Omega^{\bullet} E$ by $(\Omega^\bullet E)(\bn) = \Omega^n E_n$.  Given a morphism $\phi \colon \bm \arr \bn$ of $\bbI$, define $(\Omega^{\bullet}E)(\phi)$ as follows.  Write $\phi = \overline{\phi} \circ \iota$ where $\iota \colon \bm \arr \bn$ is the natural inclusion and $\overline{\phi} \in \Sigma_n$.  The induced map $\iota_* \colon \Omega^m E_m \arr \Omega^n E_n$ is $\Omega^m \tilde{\sigma}$, where $\tilde{\sigma}$ is the adjoint of the spectrum structure map $\Sigma^{n - m} E_m \arr E_n$.  Define $\overline{\phi}_* \colon \Omega^n E_n \arr \Omega^n E_n$ by sending $\gamma \in \Omega^n E_n$ to the composite:
\[
S^n \xrightarrow{\overline{\phi}^{-1}} S^n \overset{\gamma}{\arr} E_n \xrightarrow{E(\overline{\phi})} E_n.
\]
The induced map $\phi_* \colon \Omega^m E_m \arr \Omega^n E_n$ is defined to be $\overline{\phi}_* \circ \iota_*$ and is independent of the choice of $\overline{\phi}$.

The functor $\Omega^\bullet \colon \Sigma \sS \arr \bbI \sT$ has a left adjoint $\Sigma^\bullet$.  Given a based $\bbI$-space $X$, the symmetric spectrum $\Sigma^\bullet X$ is given by $(\Sigma^\bullet X)_n = X(\bn) \sma S^n$, with permutations acting diagonally.  The adjunction
\[
\xymatrix{ \bbI \sT \ar@<.5ex>[r]^{\Sigma^\bullet} & \Sigma \sS \ar@<.5ex>[l]^{\Omega^\bullet} }
\]
should be thought of as the symmetric spectrum analog of the usual $(\Sigma^\infty, \Omega^\infty)$ adjunction between based spaces and spectra.  The $\bbI$-space $\Omega^\bullet E$ is the appropriate notion of the infinite loop space associated to the symmetric spectrum $E$.  

From now on, we will work with unbased $\bbI$-spaces.  The usual adjunction between unbased and based spaces passes to diagram spaces, and we have the composite adjunction:
\[
\xymatrix{ \bbI \sU \ar@<.5ex>[r]^{(-)_+} &
\bbI \sT \ar@<.5ex>[r]^{\Sigma^\bullet} \ar@<.5ex>[l]  & \Sigma \sS \ar@<.5ex>[l]^{\Omega^\bullet} }.
\]
Denote the top composite by $\Sigma_{+}^{\bullet}$.

To understand the homotopy type that $\Omega^{\bullet} E$ determines, we combine  the spaces $\Omega^n E_n$ into a single space using the (unbased) homotopy colimit, which we denote by $\Omega^{\infty} E = \hocolim_{\bbI} \Omega^{\bullet} E.$

\begin{remark}\label{semistable_comment}
In general, $\pi_k \Omega^{\infty} E$ and $\pi_k E$ do not agree.  However, for $k \geq 0$, the $\pi_k \Omega^{\infty} E$ are the ``true'' homotopy groups of the symmetric spectrum $E$.  This is because $\Omega^{\infty} E$ is the zeroth space of Shipley's detection functor $D$  \cite{shipley_detection_functor} applied to $E$.   When $E$ is semistable in the sense of \cite{HSS}, then $\pi_k \Omega^{\infty} E$ is isomorphic to $\pi_k E$ for $k \geq 0$.  For more on semistable symmetric spectra and the nuances of the homotopy groups of symmetric spectra, see \cite{schwede_homotopy_groups}.
\end{remark}

Thinking of the homotopy colimit of an $\bbI$-space as determining its underlying homotopy type leads us to make the
\begin{definition}  A weak homotopy equivalence of $\bbI$-spaces is a map $f \colon X \arr Y$ such that the induced map of homotopy colimits
\[
f_* \colon \hocolim_{\bbI} X \arr \hocolim_{\bbI} Y
\]
is a weak homotopy equivalence of spaces.  
\end{definition}

\begin{theorem}  There is a compactly generated topological monoidal model structure on the category of $\bbI$-spaces with weak equivalences the weak homotopy equivalences.  The fibrations are level fibrations $f \colon X \arr Y$ such that for every morphism $\phi \colon \bm \arr \bn$ of $\bbI$, the induced map 
\[
X(\phi) \times f(\bm) \colon X(\bm) \arr X(\bn) \times_{Y(\bn)} Y(\bm)
\]
is a weak homotopy equivalence of spaces.  In particular, the fibrant objects are the $\bbI$-spaces $X$ such that $X(\phi) \colon X(\bm) \arr X(\bn)$ is a weak homotopy equivalence for every morphism $\phi \colon \bm \arr \bn$ of $\bbI$.  Furthermore the weak equivalences of $\bbI$-spaces are well-grounded.
\end{theorem}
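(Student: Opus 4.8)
The plan for the construction --- carried out in full in \S\ref{model_structure_section} --- is to build the model structure in the generality of a topological diagram category $\sD$, with $\bbI$ (given the discrete topology) as the running example, and to obtain it as a localization of the \emph{level} model structure. The two features of $\sD$ that the argument uses, and which I would isolate as the axioms of \S\ref{model_structure_section}, are that $\sD$ is small and symmetric monoidal --- so that the Day convolution $\boxtimes$ and the free $\sD$-spaces $F_d X = \sD(d, -) \times X$ are available, with $F$ strong symmetric monoidal --- and the single homotopical input that for each object $d$ the under-category $d / \sD$ has contractible classifying space, equivalently $\hocolim_\sD F_d(*) \simeq *$. For $\sD = \bbI$ the latter is the classical fact that $B(\bn / \bbI)$ is contractible; it is what forces the representable $\bbI$-spaces to be homotopically trivial.

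First I would record the \emph{level} model structure on $\bbI \sU$, with weak equivalences and fibrations defined objectwise from Quillen's model structure on $\sU$: it is cofibrantly generated with $I = \{ F_d(S^{k-1} \hookrightarrow D^k) \}$ and $J_{\mathrm{lev}} = \{ F_d(D^k \hookrightarrow D^k \times I) \}$, all domains being small since spaces are. Its weak equivalences are $\hocolim_\sD$-equivalences but not conversely, so one enlarges the acyclic cofibrations. For each morphism $\phi$ of $\sD$ let $\lambda_\phi$ be the induced map of representable $\sD$-spaces, replace it by a cofibration $\widetilde{\lambda}_\phi$ via the mapping cylinder, and set
\[
J = J_{\mathrm{lev}} \cup \bigl\{ \widetilde{\lambda}_\phi \boxempty (S^{k-1} \hookrightarrow D^k) \ : \ \phi \in \mor \sD, \ k \geq 0 \bigr\}.
\]
The main steps are then: (a) identify, by an adjunction argument using $\sD\sU(F_d(*), X) \cong X(d)$, the $J$-injective maps with exactly the maps $f$ of the statement (level fibrations such that $X(\phi) \times f(\bm)$ is a weak equivalence for every $\phi$); (b) check that $\hocolim_\sD$ sends each generator in $J$ to a weak equivalence of spaces --- for the new generators this is precisely where the contractibility of $d / \sD$ is used --- and deduce from well-groundedness that every relative $J$-cell complex is a $\hocolim_\sD$-equivalence; and (c) invoke the recognition theorem for compactly generated model categories \cite{parametrized}*{4.5.3} with the pair $(I, J)$. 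The characterization of fibrant objects and the topological analogue of SM7 then follow formally from the shapes of $I$ and $J$.

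For the monoidal axiom I would use that $F$ is strong symmetric monoidal, so $F_c X \boxtimes F_d Y \cong F_{c \otimes d}(X \times Y)$; hence the pushout-product of two generators of $I$ is $F_{c \otimes d}$ applied to a pushout-product of cell inclusions in $\sU$, which is again a relative CW inclusion, giving the cofibration half. For the acyclic half, since our model structure is a localization it is enough to verify that $\widetilde{\lambda}_\phi \boxempty i$ is a weak equivalence for $i \in I$; identifying $\lambda_\phi \boxtimes F_d(X)$ with (a cylinder replacement of) the localizing map $\lambda_{\phi \otimes \id_d}$, this reduces to step (b), again using well-groundedness to pass from generators to pushout-products. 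The unit $F_{\mathbf{0}}(*)$ is a cell complex, hence cofibrant, so the unit axiom is automatic. Finally, well-groundedness of the $\hocolim_\sD$-equivalences is the routine part: $\hocolim_\sD$ preserves coproducts and tensors by spaces and sends cobase changes along $h$-cofibrations and sequential colimits of $h$-cofibrations to homotopy pushouts and homotopy colimits (glueing and telescope lemmas over a topological category, cf. \S\ref{Topological Categories and the Bar Construction}), so conditions (i)--(v) of the Conventions transfer from the corresponding, known, properties of the weak equivalences of $\sU$. The main obstacle is steps (a) and (b): matching the $J$-injective maps with the stated fibrations, and proving that the localizing generators become weak equivalences after $\hocolim_\sD$. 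Both ultimately rest on the single geometric fact that $d / \sD$ has contractible nerve, which for $\bbI$ is classical and is the point of overlap with the construction of Schlichtkrull and Sagave \cite{ss}.
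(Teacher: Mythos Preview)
Your outline matches the paper's construction in \S\ref{model_structure_section} closely --- start from the level model structure, adjoin the mapping-cylinder replacements $\widetilde{\lambda}_\phi$ of the maps between representables, identify the $J$-injectives, and invoke the recognition theorem --- but there is a genuine gap at step (c).  The recognition criterion \cite{parametrized}*{4.5.6} requires not only that relative $J$-cell complexes be weak equivalences (your (b), the paper's Lemma~\ref{stable_acylic_cofib_kcell}), but also that a map has the RLP with respect to $I$ if and only if it is a weak equivalence with the RLP with respect to $J$.  The backward direction --- that a $J$-injective $\hocolim$-equivalence is a \emph{level} acyclic fibration --- is the substantive step and is not addressed by your (a) or (b).  In the paper this is Proposition~\ref{RLP_K_and_stable}: for a $J$-injective $p \colon E \to B$, the levelwise fiber $F$ is again $J$-injective, hence each $F(\phi)$ is an equivalence; the quasifibration Lemma~\ref{hocolim_is_homotopy_fiber} identifies $\hocolim_{\sD} F$ with the homotopy fiber of $\hocolim_{\sD} p$, so if $p$ is a $\hocolim$-equivalence then $\hocolim_{\sD} F$ is contractible; finally condition~(iii) of Input Data~\ref{model_structure_setup} forces each $F(d)$ to be contractible, whence $p$ is a level equivalence.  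None of this is formal, and you have not supplied an alternative.

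Relatedly, you have misidentified the key hypothesis.  Contractibility of $d/\sD$ holds for \emph{every} category, since $\id_d$ is initial in $d/\sD$; it is used only for the easy observation that $\lambda_\phi$ is a $\hocolim$-equivalence.  The real inputs are conditions~(i) and~(iii) of Input Data~\ref{model_structure_setup}: that levelwise fibrations of $\sD$-spaces give quasifibrations after $\hocolim_{\sD}$, and that a $\sD$-space sending every arrow to an equivalence has $X(d) \simeq \hocolim_{\sD} X$.  For $\bbI$ the latter follows from the existence of an initial object (Lemma~\ref{fibrant_hocolim_lemma}); in the positive variant it genuinely requires B\"okstedt's lemma.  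Finally, your route to the pushout-product axiom differs from the paper's: rather than reducing to generators of $J$, \S\ref{model_structure_monoids_section} proves directly (Lemma~\ref{times_cofibrants_preserves_equiv}) that $X \boxtimes (-)$ preserves weak equivalences for cofibrant $X$, via an explicit identification $(\sD[d] \boxtimes Y)(n) \cong \Aut(n) \times_{\Aut(c)} Y(c)$, and deduces the monoid and pushout-product axioms from that.  Your reduction may be salvageable, but as written the bookkeeping with iterated pushout-products mixing $\boxtimes$ and the tensor over $\sU$ is not carried out.
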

\noindent The compactly generated topological model structure exists by Theorem \ref{general_model_cat_theorem}, and the pushout-product axiom is proved in \S\ref{model_structure_monoids_section}.  The last claim follows since the homotopy colimit functor preserves tensors with spaces, pushouts and sequential colimits, and weak homotopy equivalences of spaces are well-grounded.

For the following result and throughout, we will use the stable model structure on symmetric spectra \cite{MMSS}.

\begin{proposition}\label{symmetric_space_spectra_quillen_adjunction}
$(\Sigma^{\bullet}_{+}, \Omega^{\bullet})$ is a Quillen adjunction between $\bbI$-spaces and symmetric spectra.
\end{proposition}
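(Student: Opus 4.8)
The plan is to verify that the right adjoint $\Omega^{\bullet} \colon \Sigma\sS \arr \bbI\sU$ preserves fibrations and acyclic fibrations, which is equivalent to $(\Sigma^{\bullet}_{+}, \Omega^{\bullet})$ being a Quillen adjunction. I will use two standard facts about spaces: every space is fibrant, so $\Omega$ (hence each $\Omega^{k}$) preserves Serre fibrations, weak homotopy equivalences, and homotopy pullbacks. And I will use two facts about symmetric spectra: the stable model structure has the same cofibrations as the level model structure, so its acyclic fibrations are exactly the level acyclic fibrations; and, by \cite{MMSS}, a map $p \colon E \arr E'$ is a stable fibration if and only if it is a level fibration and for every $n$ the square
\[
\xymatrix{ E_n \ar[r]^{\tilde{\sigma}} \ar[d]_{p_n} & \Omega E_{n+1} \ar[d]^{\Omega p_{n+1}} \\ E'_n \ar[r]^{\tilde{\sigma}'} & \Omega E'_{n+1} }
\]
is a homotopy pullback, where $\tilde{\sigma}$ denotes the adjoint structure map.

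First I would dispatch the acyclic fibrations. If $p$ is an acyclic fibration of symmetric spectra, each $p_n$ is an acyclic Serre fibration, hence so is $(\Omega^{\bullet} p)(\bn) = \Omega^n p_n$, so $\Omega^{\bullet} p$ is a level acyclic fibration of $\bbI$-spaces. Any level acyclic fibration $f \colon X \arr Y$ of $\bbI$-spaces is an acyclic fibration in $\bbI\sU$: it is a level fibration; for each $\varphi \colon \bm \arr \bn$ the projection $X(\bn) \times_{Y(\bn)} Y(\bm) \arr Y(\bm)$ is a base change of the acyclic fibration $f(\bn)$, so two-out-of-three forces the comparison map $X(\bm) \arr X(\bn) \times_{Y(\bn)} Y(\bm)$ to be a weak equivalence; and $f$ is a weak equivalence in $\bbI\sU$ since $\hocolim_{\bbI}$ takes level equivalences to weak equivalences.

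The real content is the fibration case. Let $p$ be a stable fibration. Each $p_n$ is a Serre fibration, so $\Omega^{\bullet} p$ is a level fibration, and it remains to check that for each $\varphi \colon \bm \arr \bn$ the square with horizontal maps $(\Omega^{\bullet} E)(\varphi)$, $(\Omega^{\bullet} E')(\varphi)$ and vertical maps $\Omega^m p_m$, $\Omega^n p_n$ is a homotopy pullback. Factoring $\varphi = \overline{\varphi} \circ \iota$ with $\overline{\varphi}$ a permutation, the induced maps $\overline{\varphi}_{*}$ are isomorphisms (with inverses $(\overline{\varphi}^{-1})_{*}$) natural in the spectrum, hence commuting with $\Omega^{n} p_{n}$, so the square for $\varphi$ differs from the square for $\iota$ only by post-composing its two horizontal maps with these isomorphisms; thus it suffices to treat the standard inclusion $\iota \colon \bm \arr \bn$. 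Writing $\iota$ as the composite of the consecutive inclusions $\bk \arr \mathbf{k+1}$ and using that homotopy pullbacks paste, it suffices to treat $\iota \colon \bk \arr \mathbf{k+1}$, for which $(\Omega^{\bullet} E)(\iota) = \Omega^{k} \tilde{\sigma}$. The corresponding square is then $\Omega^{k}$ applied to the level-$k$ structure square displayed above, which is a homotopy pullback because $p$ is a stable fibration; since $\Omega^{k}$ preserves homotopy pullbacks we are done, and $\Omega^{\bullet} p$ is a fibration in $\bbI\sU$.

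I do not expect a serious obstacle: the argument is the formal manipulation of loop functors through homotopy pullbacks together with the pasting law. The single point deserving care is the input that a stable fibration of symmetric spectra is precisely a level fibration whose structure squares are homotopy cartesian; this is where the idiosyncrasies of symmetric spectra recorded in Remark \ref{semistable_comment} could in principle interfere, and it is worth invoking this characterization explicitly from \cite{MMSS} rather than treating it as obvious.
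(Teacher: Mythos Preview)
Your proof is correct, and in the fibration case it takes a genuinely different route from the paper. The paper first isolates the fiber $F$ of a stable fibration $p \colon E \arr B$, observes that $F$ is stably fibrant, and proves as a separate lemma that $\Omega^{\bullet}$ sends fibrant symmetric spectra to fibrant $\bbI$-spaces; it then uses a diagram of levelwise fiber sequences to conclude that the comparison map $\Omega^m E_m \arr \Omega^n E_n \times_{\Omega^n B_n} \Omega^m B_m$ is a weak equivalence. You instead invoke directly the MMSS characterization of stable fibrations as level fibrations whose adjoint-structure squares are homotopy cartesian, reduce to the one-step inclusion $\bk \arr \mathbf{k+1}$ via the permutation/inclusion factorization and pasting of homotopy pullbacks, and then observe that the relevant square is $\Omega^{k}$ of a structure square, hence again homotopy cartesian. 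Your argument is shorter and avoids the auxiliary lemma on fibrant objects; the paper's route has the side benefit of establishing that $\Omega^{\bullet}$ preserves fibrant objects as a standalone fact, which it uses again later. For the acyclic fibrations, both you and the paper argue essentially the same way; note that you could shorten this further by observing that the stable model structure on $\bbI\sU$ is built as a left Bousfield localization of the level model structure with the same cofibrations, so level acyclic fibrations are exactly the stable acyclic fibrations, with no need to verify the fibration condition separately.
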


\noindent Before proving this, we will prove:

\begin{lemma}\label{diagramloops_preserves_fibrants}  $\Omega^{\bullet}$ preserves fibrant objects.
\end{lemma}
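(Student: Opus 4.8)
The plan is to unwind what ``fibrant'' means on each side and then reduce to the single formal fact that iterated based loops preserve weak homotopy equivalences. On the target side, the model structure theorem above tells us that an $\bbI$-space $X$ is fibrant exactly when $X(\phi)\colon X(\bm)\arr X(\bn)$ is a weak homotopy equivalence for every morphism $\phi$ of $\bbI$; since passing from based to unbased $\bbI$-spaces changes neither the underlying spaces nor the structure maps, it suffices to verify this criterion for $X=\Omega^{\bullet}E$ when $E$ is stably fibrant. On the source side I would invoke the characterization of the stably fibrant symmetric spectra from \cite{MMSS}: these are the $\Omega$-spectra, i.e.\ the spectra $E$ for which every adjoint structure map $\sigma_n\colon E_n\arr\Omega E_{n+1}$ is a weak homotopy equivalence (levelwise fibrancy being automatic since every object of $\sU$ is fibrant).

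Next I would cut the problem down to the generating morphisms of $\bbI$. Using the factorization $\phi=\overline{\phi}\circ\iota$ with $\iota\colon\bm\arr\bn$ the standard inclusion and $\overline{\phi}\in\Sigma_n$, the map $\overline{\phi}_{*}\colon\Omega^{n}E_n\arr\Omega^{n}E_n$ is a homeomorphism (its inverse is $(\overline{\phi}^{-1})_{*}$, since $E$ is a functor), so only the inclusions $\iota$ need attention; and as each such $\iota$ is a composite of the inclusions $\bn\hookrightarrow\bn+1$ that adjoin a single element, it is enough to show that the corresponding $\iota_{*}\colon\Omega^{n}E_n\arr\Omega^{n+1}E_{n+1}$ is a weak homotopy equivalence. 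By the explicit description of $\Omega^{\bullet}$ recalled above, this map is precisely $\Omega^{n}\sigma_n$.

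Finally I would use that $\Omega^{n}$ carries weak homotopy equivalences of spaces to weak homotopy equivalences — for instance because $\Omega^{n}$ is a right Quillen functor on based spaces and all objects of $\sT$ are fibrant, or more concretely via the natural isomorphism $\pi_k(\Omega^{n}Y)\cong\pi_{k+n}(Y)$. Hence $\Omega^{n}\sigma_n$ is a weak homotopy equivalence whenever $\sigma_n$ is, which is exactly the $\Omega$-spectrum hypothesis; so $(\Omega^{\bullet}E)(\phi)$ is a weak homotopy equivalence for every $\phi$, and $\Omega^{\bullet}E$ is fibrant. I do not expect a genuine obstacle here: the only ingredient that is not purely formal is the identification of the stably fibrant symmetric spectra with the $\Omega$-spectra, and once that is granted the argument is a short chase through the definition of $\Omega^{\bullet}$.
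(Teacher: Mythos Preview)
Your proof is correct and follows essentially the same approach as the paper: both identify fibrant symmetric spectra with $\Omega$-spectra, factor an arbitrary morphism of $\bbI$ as a permutation followed by a standard inclusion, observe that permutations act by homeomorphisms, and then identify the map induced by the inclusion with $\Omega^m$ applied to the adjoint structure maps. The only cosmetic difference is that the paper handles the inclusion $\bm\subset\bn$ in one step as $\Omega^{m}\tilde{\sigma}^{\,n-m}$ rather than decomposing into single-element inclusions, and it leaves the fact that $\Omega^m$ preserves weak equivalences implicit.
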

\begin{proof}
Suppose that $E$ is a fibrant symmetric spectrum.  Then the maps $\tilde{\sigma} \colon E_n \arr \Omega E_{n + 1}$ are weak homotopy equivalences.  Let $\phi \colon \bm \arr \bn$ be any morphism of $\bbI$.  By the description of fibrant objects, we need to show that $(\Omega^{\bullet} E)(\phi) \colon \Omega^m E_m \arr \Omega^n E_n$ is a weak homotopy equivalence.  Factor $\phi$ as $\phi = \overline{\phi} \circ \iota$, where $\iota$ is the natural inclusion $\bm \subset \bn$ and $\overline{\phi}$ is a permutation.  As $\overline{\phi}$ induces a homeomorphism of spaces, we need only show that $(\Omega^{\bullet}E)(\iota) \colon \Omega^m E_m \arr \Omega^n E_n$ is a weak homotopy equivalence.  But $(\Omega^{\bullet}E)(\iota) = \Omega^m \tilde{\sigma}^{n - m}$, where $\tilde{\sigma}$ is the adjoint to the structure maps of the spectrum $E$.  Since $\tilde{\sigma}$ is a weak equivalence, the lemma is proved.
\end{proof}

\begin{proof}[Proof of Proposition \ref{symmetric_space_spectra_quillen_adjunction}]
It suffices to show that $\Omega^{\bullet}$ preserves fibrations and acyclic fibrations.  Suppose that $p \colon E \arr B$ is a fibration of symmetric spectra.  Then $p$ is a level fibration of symmetric spectra \cite{MMSS}*{\S9}, so each component $p_n \colon E_n \arr B_n$ is a fibration of spaces.  Thus $(\Omega^{\bullet} p)(\bn) \colon \Omega^n E_n \arr \Omega^n B_n$ is a fibration, so $\Omega^{\bullet} p$ is a level fibration of $\bbI$-spaces.  Next form the fiber $F$ of $p$ as the following pullback:
\[
\xymatrix{
F \ar[d] \ar[r] & E \ar[d]^{p} \\
\ast \ar[r] & B }
\]
Since $p$ has the right lifting property with respect to acyclic cofibrations, $F \arr *$ does as well, so $F$ is a fibrant symmetric spectrum.  By Lemma \ref{diagramloops_preserves_fibrants}, $\Omega^{\bullet} F$ is a fibrant $\bbI$-space.  Let $\phi \colon \bm \arr \bn$ be a morphism of $\bbI$.  Then the induced map $(\Omega^{\bullet}F)(\phi) \colon \Omega^m F_m \arr \Omega^n F_n$ is a weak homotopy equivalence.  Now consider the following diagram, where each vertical column is a fiber sequence of spaces:
\[
\xymatrix{
\Omega^m F_m \ar[r] \ar[d] & \Omega^n F_n \ar[r]^{=} \ar[d] & \Omega^n F_n \ar[d] \\
\Omega^m E_m \ar[r] \ar[d] & \Omega^{n}E_n \times_{\Omega^n B_n} \Omega^m B_m \ar[r] \ar[d]^{\pi} & \Omega^n E_n \ar[d]^{\Omega^{\bullet} p } \\
\Omega^m B_m \ar@{=}[r] & \Omega^m B_m \ar[r]_{(\Omega^{\bullet}B)(\phi)} & \Omega^n B_n }
\]
The lower right square is a pullback, so we may identify the fiber of $\pi$ with $\Omega^n F_n$ as indicated.  The top composite is  $(\Omega^{\bullet}F)(\phi) \colon \Omega^m F_m \arr \Omega^n F_n$.  It follows that the map of total spaces $\Omega^m E_m \arr \Omega^{n}E_n \times_{\Omega^n B_n} \Omega^m B_m$ is a weak homotopy equivalence.  By the description of fibrations in the stable model structure, this means that $\Omega^{\bullet} p  \colon \Omega^{\bullet} E \arr \Omega^{\bullet} B$ is a fibration of $\bbI$-spaces, so $\Omega^{\bullet}$ preserves fibrations.

Next we will show that $\Omega^{\bullet}$ preserves acyclic fibrations.  Suppose that $p \colon E \arr~B$ is an acylic fibration of symmetric spectra.  Then $\Omega^{\bullet} p$ is a fibration and we need to show that it is a weak homotopy equivalence.  By \cite{MMSS}*{9.8}, $p$ is a level equivalence 
of symmetric spectra.  Therefore each map $(\Omega^{\bullet} p)(\bn) \colon \Omega^n E_n \arr \Omega^n B_n$ is a weak homotopy equivalence, so the map of homotopy colimits 
\[
\Omega^{\bullet} p  \colon \hocolim_{\bbI} \Omega^{\bullet} E \arr \hocolim_{\bbI} \Omega^{\bullet} B
\]
is a weak homotopy equivalence, as desired.
\end{proof}

\section{Infinite loop space theory of orthogonal spectra}\label{Infinite loop space theory of orthogonal spectra}

We now make the analogous constructions for orthogonal spectra, referring to \cite{MMSS} for background.  Let $\sI$ be the category of finite dimensional inner-product spaces $V$ and linear isometric isomorphisms $V \arr W$.  $\sI$ is symmetric monoidal under direct sum $\oplus$.  The category $\sI \sT$ of based $\sI$-spaces is symmetric monoidal under the internal smash product $\sma$.  The sphere $\sI$-space $S \colon V \longmapsto S^V$ is a commutative monoid under $\sma$, and an orthogonal spectrum is a module over $S$.  The category $\sI \sS$ of orthogonal spectra is symmetric monoidal under $\sma_{S}$ with unit object the sphere spectrum $S$.

Let $\cI$ be the category of finite dimensional real inner-product spaces $V$ and linear isometries $V \arr W$ (not necessarily isomorphisms).  The category $\cI$ is the analog of the category $\bbI$ of finite sets $\bn$ and injections.   A universe $U$ is a real inner-product space that admits an isomorphism $U \cong \bR^{\infty}$.  Later on, we will use spectra indexed on $U$, as in \citelist{\cite{LMS} \cite{EKMM}}.  We fix, once and for all, a universe $U$.  Let $\cJ$ be the category of finite dimensional subspaces $V \subset U$ with morphisms the inclusions $V \subset W$ within $U$.  Notice that there is at most one morphism between any two objects in $\cJ$.  The category $\cJ$ is the analog of the category $\bbJ$ of finite sets $\bn$ and ordered inclusions.  We have inclusions of categories $\bbJ \arr \cJ$ and $\bbI \arr \cI$ defined by $\bn \longmapsto \bR^n$, where we choose a countable orthonormal basis of $U$ and use it to identify $\bR^n$ with a canonical $n$-dimensional subspace of $U$.     

There is an adjunction
\[
\xymatrix{ \cI \sT \ar@<.5ex>[r]^{\Sigma^\bullet} & \sI \sS \ar@<.5ex>[l]^{\Omega^\bullet}, }
\]
defined the same way as for symmetric spectra.  Given a based $\cI$-space $X$, $\Sigma^\bullet X$ is the orthogonal spectrum given by $(\Sigma^\bullet X)_V = X(V) \sma S^V$.  Given an orthogonal spectrum $E$, the $\cI$-space $\Omega^\bullet E$ is defined on objects by $(\Omega^\bullet E)(V) = \Omega^V E_V$ and on morphisms so that an isometry $\phi \colon V \arr W$ induces the map $\phi_* \colon \Omega^V E_V \arr~\Omega^W E_W$ that sends $\gamma \in \Omega^V E_V$ to:
\[
S^W \xrightarrow{\id \sma \phi^{-1}} S^{W - \phi(V)} \sma S^V \xrightarrow{\id \sma \gamma} S^{W - \phi(V)} \sma E_V \xrightarrow{\id \sma E(\phi)} S^{W - \phi(V)} \sma E_{\phi(V)} \xrightarrow{\Sigma} E_W.
\]
Notice that to define $E(\phi)$, we must consider $\phi$ as a linear isometric isomorphism onto its image $\phi(V)$ so that it is a morphism in $\sI$.

From now on, we will work with unbased $\cI$-spaces, and we have the composite adjunction:
\[
\xymatrix{ \cI \sU \ar@<.5ex>[r]^{(-)_+} &
\cI \sT \ar@<.5ex>[r]^{\Sigma^\bullet} \ar@<.5ex>[l] & \sI \sS \ar@<.5ex>[l]^{\Omega^\bullet}, }
\]

We want to form the topological homotopy colimit $\hocolim_{\cI} X$ of an $\cI$-space $X$ over $\cI$.  This homotopy colimit should take into account the topology of the space of objects and the space of morphisms in $\cI$.  However, $\cI$ is not a topological category because its class of objects is not a set.  Instead, we restrict to the equivalent small category $\cI^{\dagger}$ consisting of finite dimensional real inner product spaces $V$ that are a subspace of some finite product $U^n$ of the universe $U$.  The categories $\cI^{\dagger}$ and $\cJ$ are topological categories, but $\cI$ is not.  See \S\ref{Topological Categories and the Bar Construction} for the definitions of the topologies and the formation of topological homotopy colimits.  By abuse of notation, we will write $\hocolim_{\cI} X$ for the topological homotopy colimit $\hocolim_{\cI^{\dagger}} X$ of an $\cI$-space $X$ restricted to the category $\cI^{\dagger}$.

With these conventions in place, let $\Omega^{\infty} E = \hocolim_{\cI} \Omega^{\bullet} E$.  The following proposition relies on results from later sections, but we include it here since it is the analog for $\cI$-spaces of Remark \ref{semistable_comment}.

\begin{proposition}
For $k \geq 0$, there is a canonical isomorphism of homotopy groups $\pi_k \Omega^{\infty} E \cong \pi_k E$.
\end{proposition}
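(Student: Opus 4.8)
The plan is to compute $\Omega^{\infty} E = \hocolim_{\cI} \Omega^{\bullet} E$ by cutting the indexing category down to a cofinal sequence and then recognizing the resulting mapping telescope as the colimit that defines $\pi_{*} E$. The only nontrivial geometric input is the weak contractibility of the spaces of linear isometries into the universe $U$, the same fact that underlies the $E_{\infty}$ structure of the linear isometries operad and which is recorded in \S\ref{Topological Categories and the Bar Construction}.

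The first step replaces $\cI^{\dagger}$ by $\cJ$. I claim the natural map
\[
\hocolim_{\cJ} \Omega^{\bullet} E \arr \hocolim_{\cI^{\dagger}} \Omega^{\bullet} E = \Omega^{\infty} E
\]
is a weak homotopy equivalence. By the topological analogue of Quillen's Theorem~A, obtained from the bar construction estimates of \S\ref{Topological Categories and the Bar Construction}, it suffices to show that for every object $V$ of $\cI^{\dagger}$ the topological comma category $V \downarrow \cJ$ has weakly contractible classifying space. Its objects are the isometries $V \arr W$ with $W \in \cJ$, and its morphisms are the inclusions $W \subseteq W'$ of finite dimensional subspaces of $U$ compatible with the given maps from $V$; since $\cJ$ is a filtered poset, the classifying space of $V \downarrow \cJ$ is the colimit over $W \in \cJ$ of the spaces $\cI(V, W)$ of linear isometries, taken along the evident closed inclusions, and this colimit is $\cI(V, U)$, the space of linear isometries $V \arr U$. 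As $U \cong \bR^{\infty}$, this space is weakly contractible. Restricting once more to the cofinal sub-poset of $\cJ$ on the objects $\bR^{n}$, which is isomorphic to the poset $\bbN$ of natural numbers and cofinal because every finite dimensional subspace of $U$ lies in some $\bR^{n}$, we obtain a natural weak homotopy equivalence from the mapping telescope
\[
\hocolim_{n} \Omega^{n} E_{n} \arr \Omega^{\infty} E,
\]
formed along the maps $(\Omega^{\bullet} E)(\iota) \colon \Omega^{n} E_{n} \arr \Omega^{n + 1} E_{n + 1}$ induced by the standard inclusions $\iota \colon \bR^{n} \arr \bR^{n + 1}$.

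It remains to compute $\pi_{k}$ of this telescope. Homotopy groups commute with sequential homotopy colimits, so
\[
\pi_{k} \Omega^{\infty} E \cong \colim_{n} \pi_{k} \Omega^{n} E_{n} \cong \colim_{n} \pi_{k + n} E_{n}.
\]
By the formula for $(\Omega^{\bullet} E)(\iota)$ in \S\ref{Infinite loop space theory of orthogonal spectra}, the map $\Omega^{n} E_{n} \arr \Omega^{n + 1} E_{n + 1}$ is $\Omega^{n}(\tilde{\sigma}_{n})$, where $\tilde{\sigma}_{n} \colon E_{n} \arr \Omega E_{n + 1}$ is adjoint to the $n$-th structure map of $E$; hence on homotopy groups the transition maps are exactly the stabilization maps $\pi_{k + n} E_{n} \arr \pi_{k + n + 1} E_{n + 1}$ whose colimit is by definition $\pi_{k} E$. (For orthogonal spectra this naive colimit is the correct notion of homotopy group, agreeing for $k \geq 0$ with the homotopy groups of the stable model structure; see \cite{MMSS}.) Since all of the maps above are natural in $E$, the resulting isomorphism $\pi_{k} \Omega^{\infty} E \cong \pi_{k} E$ is canonical.

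The main obstacle is the first step: carefully setting up the topological comma category $V \downarrow \cJ$, identifying its classifying space with the contractible space $\cI(V, U)$ of linear isometries, and checking the cofibration hypotheses needed to apply the topological form of Theorem~A. Once $\Omega^{\infty} E$ is identified with the telescope $\hocolim_{n} \Omega^{n} E_{n}$, the homotopy group computation is routine. A less hands-on alternative would use the comparison theorems of later sections to identify $\Omega^{\infty} E$ with the zeroth space of a fibrant model of $E$ and then invoke semistability of orthogonal spectra; the telescope argument, however, is shorter and relies only on \S\ref{Topological Categories and the Bar Construction}.
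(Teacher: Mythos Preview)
Your approach is essentially the paper's: reduce $\hocolim_{\cI^{\dagger}} \Omega^{\bullet} E$ to the telescope $\hocolim_{\bbJ} \Omega^{\bullet} E$ via the chain $\bbJ \arr \cJ \arr \cI^{\dagger}$ and contractibility of the isometry spaces $\cI_c(V, U)$, then identify the telescope's homotopy groups with $\pi_{k} E$. The paper packages the $\cJ \arr \cI^{\dagger}$ step as Proposition~\ref{untwisting_prop} (via Lemma~\ref{contractible_isometries_bar}) and the $\bbJ \arr \cJ$ step via Lemma~\ref{theorem_A_hocolim_version}, but the content is the same.

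One point needs more care. You assert that the classifying space of $V \downarrow \cJ$ \emph{is} the colimit $\cI_c(V, U)$ ``since $\cJ$ is a filtered poset.'' But $\cJ$ is a \emph{topological} category (its object and morphism spaces are Grassmannians and flag manifolds), so its bar construction is not simply a mapping telescope, and $\hocolim_{\cJ} \cI(V,-) \simeq \colim_{\cJ} \cI(V,-)$ is not automatic. The paper proves exactly this identification as Lemma~\ref{contractible_isometries_bar}, and its proof (via Lemma~\ref{cofibrant_colim}) first reduces the topological $\hocolim_{\cJ}$ to the discrete $\hocolim_{\bbJ}$ using Theorem~A for $\bbJ \arr \cJ$, and only then invokes the telescope/closed-inclusion argument. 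In other words, your step~2 is logically prior to your step~1. The cleanest repair is to apply Theorem~A directly to $\bbJ \arr \cI^{\dagger}$: then $B(V \downarrow \bbJ) = \hocolim_{n \in \bbJ} \cI(V, \bR^{n})$ really is a telescope of closed inclusions of Stiefel manifolds, weakly equivalent to the contractible $\cI_c(V, U)$, and your argument goes through in one step.
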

\begin{proof}
We have isomorphisms:
\[
\pi_k E = \underset{n \in \bbJ}{\colim} \; \pi_{k + n} E_{n} \cong \underset{n \in \bbJ}{\colim} \; \pi_k \Omega^{n} E_{n} \cong \pi_k \; \underset{n \in \bbJ}{\hocolim} \; \Omega^{n} E_{n}.
\]
The result follows by the following composite of weak homotopy equivalences induced by the inclusions of categories $\bbJ \arr \cJ \arr \cI^{\dagger}$ (Lemma \ref{theorem_A_hocolim_version} and Proposition \ref{untwisting_prop}):
\[
\hocolim_{\bbJ} \Omega^{\bullet} E \overset{\simeq}{\arr} \hocolim_{\cJ} \Omega^{\bullet} E \overset{\simeq}{\arr} \hocolim_{\cI} \Omega^{\bullet} E.
\]
\end{proof}
\begin{remark}
Unlike the case of symmetric spectra, orthogonal spectra always have an isomorphism $\pi_k \Omega^{\infty} E \cong \pi_k E$ for $k \geq 0$.  Another way to say this is that for both symmetric and orthogonal spectra, $\Omega^{\infty} E$ is the zeroth space of a fibrant replacement, but only for orthogonal spectra does a fibrant replacement always have the same homotopy groups as the original spectrum.  One way to understand this difference is that $\hocolim_{\bbJ}$ and $\hocolim_{\bbI}$ are only equivalent under certain hypotheses, as specified by B\"okstedt's telescope lemma (\ref{telescope}), but $\hocolim_{\cJ}$ and $\hocolim_{\cI}$ are always equivalent (Proposition \ref{untwisting_prop}).
\end{remark}

\begin{definition}  A weak homotopy equivalence of $\cI$-spaces is a map $f \colon X \arr Y$ such that the induced map of homotopy colimits
\[
f_* \colon \hocolim_{\cI} X \arr \hocolim_{\cI} Y
\]
is a weak homotopy equivalence of spaces.  
\end{definition}

\begin{theorem}\label{model_structure_If_spaces}  There is a compactly generated topological monoidal model structure on the category $\cI$-spaces with weak equivalences the weak homotopy equivalences.  The fibrations are level fibrations $f \colon X \arr Y$ such that for every morphism $\phi \colon V \arr W$ of $\cI$, the induced map 
\[
X(\phi) \times f(V) \colon X(V) \arr X(W) \times_{Y(W)} Y(V)
\]
is a weak homotopy equivalence of spaces.  In particular, the fibrant objects are the $\cI$-spaces $X$ such that $X(\phi) \colon X(V) \arr X(W)$ is a weak homotopy equivalence for every morphism $\phi \colon V \arr W$ of $\cI$.
\end{theorem}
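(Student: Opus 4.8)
The plan is to obtain this model structure as an instance of the general diagram-space theorem (Theorem \ref{general_model_cat_theorem}) applied to the small topological category $\cI^{\dagger}$, so the real work is to check that $\cI^{\dagger}$ meets the hypotheses of that theorem and then to read off the explicit descriptions of the fibrations and fibrant objects. First I would record that the class of maps $f \colon X \arr Y$ of $\cI$-spaces for which $\hocolim_{\cI} f$ is a weak homotopy equivalence is well-grounded in the sense of the conventions: since the topological homotopy colimit $\hocolim_{\cI^{\dagger}} \colon \cI \sU \arr \sU$ preserves tensors with spaces, pushouts along $h$-cofibrations, and sequential colimits of $h$-cofibrations, this reduces verbatim to the well-groundedness of weak homotopy equivalences of spaces, exactly as for $\bbI$-spaces. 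Alongside this I would verify the point-set input to Theorem \ref{general_model_cat_theorem}: that $\cI^{\dagger}$ is small, that its morphism spaces are cofibrant and nondegenerately based so that the two-sided bar construction computing $\hocolim_{\cI^{\dagger}}$ is homotopically invariant, and that the representable $\cI$-spaces $\cI(V, -)$ are cofibrant (see \S\ref{Topological Categories and the Bar Construction}).

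Granting this, the model structure is built by the small object argument: the cofibrations are the retracts of relative $\cI$-cell complexes on the generating cofibrations $\cI(V, -) \times i$, with $V$ an object of $\cI^{\dagger}$ and $i$ a generating cofibration of $\sU$, and the fibrations are defined by the right lifting property against the generating acyclic cofibrations. The latter consist of the level maps $\cI(V, -) \times j$, for $j$ a generating acyclic cofibration of $\sU$, together with extra maps — built from mapping-cylinder replacements of the precomposition maps $\cI(W, -) \arr \cI(V, -)$ induced by morphisms $\phi \colon V \arr W$ of $\cI^{\dagger}$, combined with the generating cofibrations of $\sU$ — whose purpose is to force the maps $X(\phi)$ to become weak equivalences in a fibrant $X$. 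Unwinding the right lifting property against this second class gives precisely the stated condition on fibrations, and setting $Y = \ast$ identifies the fibrant objects as the $\cI$-spaces $X$ for which every $X(\phi)$ is a weak homotopy equivalence. To see that the resulting weak equivalences coincide with the $\hocolim_{\cI}$-equivalences — equivalently, that a homotopy constant $\cI$-space has its expected homotopy colimit — I would invoke the Theorem A-style comparisons of the appendix (Lemma \ref{theorem_A_hocolim_version}, Proposition \ref{untwisting_prop}), which in the end rest on the zero inner-product space being initial in $\cI$, so that $B\cI^{\dagger}$ is contractible.

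The topological analog of SM7 asserted in the statement is the formal consequence of the pushout-product axiom for the box product $\boxtimes$, whose verification I would defer to \S\ref{model_structure_monoids_section}: the essential point is the isomorphism $\cI(V, -) \boxtimes \cI(W, -) \cong \cI(V \oplus W, -)$, which reduces a pushout product of generating cofibrations to one of the same form smashed with a pushout product $i \boxempty i'$ of cell inclusions, so that the cofibration case is immediate and the acyclic case follows from well-groundedness together with the behavior of $\hocolim_{\cI}$ on $\boxtimes$.

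I expect the main obstacle to be the one place where $\cI$-spaces genuinely differ from $\bbI$-spaces: because $\cI^{\dagger}$ is a topological category rather than a discrete one, every homotopy-colimit and bar-construction step must be checked against the topology on its object and morphism spaces, and the homotopical invariance of $\hocolim_{\cI^{\dagger}}$ — on which both well-groundedness and the monoidal axiom rely, the latter through the compatibility of $\hocolim_{\cI}$ with $\boxtimes$, i.e. homotopy finality of $\oplus \colon \cI \times \cI \arr \cI$ — is not automatic and needs the analysis of $\cI^{\dagger}$ carried out in \S\ref{Topological Categories and the Bar Construction}.
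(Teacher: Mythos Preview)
Your proposal is correct and follows essentially the same route as the paper: invoke the general diagram-space theorem (Theorem~\ref{general_model_cat_theorem}) for $(\sD,\sD_+)=(\cI^{\dagger},\cI^{\dagger})$, read off the fibration description from the RLP against the mapping-cylinder generators (Proposition~\ref{characterize_rlp_K}), and defer the pushout-product axiom to \S\ref{model_structure_monoids_section}. Two small calibrations: the hypotheses you should actually check are those of Input Data~\ref{model_structure_setup} (verified in Proposition~\ref{verify_axioms}, with condition~(iii) coming from Lemma~\ref{fibrant_hocolim_lemma} rather than Proposition~\ref{untwisting_prop}), and the paper's monoidal argument in \S\ref{model_structure_monoids_section} proceeds not via homotopy finality of $\oplus$ but via the direct fiber-bundle computation of Lemma~\ref{times_cofibrants_preserves_equiv}.
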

\noindent The compactly generated topological model structure is a special case of Theorem \ref{general_model_cat_theorem} and the pushout-product axiom is proved in \S\ref{model_structure_monoids_section}.  The weak equivalences are well-grounded because the homotopy colimit functor commutes with tensors with spaces, pushouts, and sequential colimits.

The functor $\Omega^{\bullet}$ participates in a Quillen adjunction with the category of orthogonal spectra, which is endowed with the stable model structure \cite{MMSS}:

\begin{proposition}\label{orthogonal_space_spectra_quillen_adjunction} $
(\Sigma^{\bullet}_{+}, \Omega^{\bullet})$ is a Quillen adjunction between $\cI$-spaces and orthogonal spectra.
\end{proposition}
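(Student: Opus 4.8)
The plan is to mimic exactly the proof of Proposition \ref{symmetric_space_spectra_quillen_adjunction}, replacing $\bbI$ by $\cI$ and $\Sigma\sS$ by $\sI\sS$, since every ingredient used there has an evident orthogonal analog. As in the symmetric case, it suffices to show that the right adjoint $\Omega^{\bullet}$ preserves fibrations and acyclic fibrations. First I would establish the analog of Lemma \ref{diagramloops_preserves_fibrants}: if $E$ is a fibrant orthogonal spectrum, then the adjoint structure maps $\tilde\sigma\colon E_V\arr\Omega^{W-V}E_W$ (for $V\subset W$) are weak homotopy equivalences, and since any isometry $\phi\colon V\arr W$ factors as an isomorphism followed by an inclusion, the map $(\Omega^{\bullet}E)(\phi)\colon\Omega^V E_V\arr\Omega^W E_W$ is a composite of a homeomorphism with $\Omega^V\tilde\sigma$, hence a weak homotopy equivalence. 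By the description of fibrant $\cI$-spaces in Theorem \ref{model_structure_If_spaces}, this shows $\Omega^{\bullet}E$ is fibrant.

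Next, given a fibration $p\colon E\arr B$ of orthogonal spectra, I would use the fact (from \cite{MMSS}*{\S9}) that $p$ is a level fibration, so each $p_V\colon E_V\arr B_V$ is a Serre fibration and hence $(\Omega^{\bullet}p)(V)\colon\Omega^V E_V\arr\Omega^V B_V$ is too; thus $\Omega^{\bullet}p$ is a level fibration of $\cI$-spaces. Forming the fiber $F$ of $p$ as a pullback along $*\arr B$, the map $F\arr*$ inherits the right lifting property against acyclic cofibrations, so $F$ is a fibrant orthogonal spectrum, and $\Omega^{\bullet}F$ is a fibrant $\cI$-space by the lemma just proved. Then for each morphism $\phi\colon V\arr W$ of $\cI$ I would write down the $3\times3$ diagram of fiber sequences of spaces, with columns the fibrations obtained by applying $\Omega^{\bullet}$ to the fiber sequence $F\arr E\arr B$ at levels $V$ and $W$ and the evident pullback in the middle column; identifying the fiber of the middle horizontal map $\pi$ with $\Omega^W F_W$ via the lower-right pullback square, and observing that the top composite is $(\Omega^{\bullet}F)(\phi)$, a weak equivalence, one concludes that $\Omega^V E_V\arr\Omega^W E_W\times_{\Omega^W B_W}\Omega^V B_V$ is a weak homotopy equivalence. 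By the characterization of fibrations in Theorem \ref{model_structure_If_spaces}, $\Omega^{\bullet}p$ is a fibration of $\cI$-spaces.

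For acyclic fibrations, if $p\colon E\arr B$ is an acyclic fibration of orthogonal spectra then by the analog of \cite{MMSS}*{9.8} it is a level equivalence, so each $(\Omega^{\bullet}p)(V)$ is a weak homotopy equivalence; applying $\hocolim_{\cI}$ (which preserves level equivalences, e.g.\ by Lemma \ref{theorem_A_hocolim_version} or directly) shows $\Omega^{\bullet}p$ is a weak homotopy equivalence of $\cI$-spaces. Combined with the previous paragraph, $\Omega^{\bullet}p$ is then an acyclic fibration, completing the verification.

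I do not expect a genuine obstacle here: the argument is a transcription of the symmetric case. The only point requiring a moment's care — the mild analog of the subtlety present for symmetric spectra — is the factorization of an arbitrary isometry $\phi\colon V\arr W$ as an isometric isomorphism onto $\phi(V)$ followed by the inclusion $\phi(V)\subset W$, so that the isomorphism part acts by a homeomorphism and the inclusion part acts by $\Omega^V$ of an adjoint structure map; this is exactly parallel to the factorization $\phi=\overline\phi\circ\iota$ used for $\bbI$, and once it is in place the rest is formal.
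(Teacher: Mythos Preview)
Your proposal is correct and is exactly the approach the paper takes: the paper simply states that the proof is essentially identical to that of Proposition~\ref{symmetric_space_spectra_quillen_adjunction} and does not repeat it. Your write-up is precisely the transcription of that argument to the orthogonal setting, including the analog of Lemma~\ref{diagramloops_preserves_fibrants} and the same $3\times 3$ fiber-sequence diagram.
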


\noindent The proof is essentially identical to the proof of Proposition \ref{symmetric_space_spectra_quillen_adjunction} and will not be repeated.

\section{Functors with cartesian product and diagram ring spectra}

We now consider the multiplicative properties of the functor $\Omega^{\bullet}$.  Starting in full generality, let $(\sD, \oplus, 0)$ be a symmetric monoidal category enriched in spaces.  The category of unbased $\sD$-spaces $\sD \sU$ is symmetric monoidal under the internal cartesian product $\boxtimes$.  Given $\sD$-spaces $X$ and $Y$, $X \boxtimes Y$ is defined as the left Kan extension of the $(\sD \times \sD)$-space $X \times Y$ along $\oplus \colon \sD \times \sD \arr \sD$ and its universal property is described by the adjunction:
\[
\sD \sU (X \boxtimes Y, Z) \cong (\sD \times \sD)\sU (X \times Y, Z \circ \oplus).
\]
The unit of $\boxtimes$ is the represented $\sD$-space $\sD[0] = \sD(0, -)$.  When $0$ is the initial object, this is the terminal $\sD$-space $*$.  We will call a monoid in $\sD$-spaces under $\boxtimes$ a $\sD$-functor with cartesian product, abbreviated to $\sD$-FCP.  By the above adjunction, an FCP $X$ can be described internally in terms of an associative and unital map of $\sD$-spaces $X \boxtimes X \arr X$ or externally in terms of an associative and unital natural transformation $X(m) \times X(n) \arr X(m \oplus n)$.  A commutative monoid under $\boxtimes$ is called a commutative FCP.

Specializing to $\sD = \bbI$ and  $\cI$, we have a symmetric monoidal product $\boxtimes$ on $\bbI$-spaces and $\cI$-spaces.

\begin{lemma}  For both symmetric and orthogonal spectra, the functor $\Omega^{\bullet}$ is lax symmetric monoidal.
\end{lemma}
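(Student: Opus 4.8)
The plan is to build the lax symmetric monoidal structure on $\Omega^{\bullet}$ directly, using the external description of $\boxtimes$ recorded just above, and then to deduce each coherence axiom from the corresponding coherence for the smash product $\sma_S$ of spectra. I will do the symmetric spectrum case; the orthogonal case is word-for-word identical after replacing $\bbI$ by $\cI$, the objects $\bn$ by inner product spaces $V$, the spheres $S^n$ by $S^V$, and block permutations in $\Sigma_{m+n}$ by the canonical isometry $V \oplus W \cong W \oplus V$. Recall that $E \sma_S F$ is equipped with canonical maps $\lambda_{m,n} \colon E_m \sma F_n \arr (E \sma_S F)_{m+n}$, $\Sigma_m \times \Sigma_n$-equivariantly natural in $E$ and $F$, obtained as the inclusion of a wedge summand of $(E \sma_{\bbS} F)_{m+n}$ followed by the quotient to $(E \sma_S F)_{m+n}$. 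For $\gamma \in \Omega^m E_m$ and $\delta \in \Omega^n F_n$ I set
\[
\theta_{\bm,\bn}(\gamma, \delta) = \lambda_{m,n} \circ (\gamma \sma \delta) \colon S^{m+n} = S^m \sma S^n \arr (E \sma_S F)_{m+n},
\]
an element of $(\Omega^{\bullet}(E \sma_S F))(\bm \oplus \bn)$; and let $\eta \colon * = \bbI[0] \arr \Omega^{\bullet}S$ be the map of $\bbI$-spaces whose value at $\bn$ picks out $\id_{S^n} \in \Omega^n S^n$.

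First I would check that $(\bm, \bn) \mapsto \theta_{\bm, \bn}$ is a natural transformation on $\bbI \times \bbI$, so that by the universal property $\sD\sU(X \boxtimes Y, Z) \cong (\sD \times \sD)\sU(X \times Y, Z \circ \oplus)$ it defines a map of $\bbI$-spaces $\mu \colon \Omega^{\bullet}E \boxtimes \Omega^{\bullet}F \arr \Omega^{\bullet}(E \sma_S F)$. The content is that for morphisms $\phi \colon \bm \arr \bm'$ and $\psi \colon \bn \arr \bn'$ of $\bbI$, the map $(\Omega^{\bullet}(E \sma_S F))(\phi \oplus \psi)$ intertwines $\theta_{\bm, \bn}$ and $\theta_{\bm', \bn'}$; writing $\phi$, $\psi$ and $\phi \oplus \psi$ as permutations composed with standard inclusions and expanding $(\Omega^{\bullet}E)(\phi) = \overline{\phi}_{*} \circ \iota_{*}$ on each factor, this reduces to (i) $\lambda$ being compatible with the structure maps $\tilde{\sigma}$ of the spectra (the inclusion part) and (ii) $\lambda$ being compatible, via $\oplus \colon \Sigma \times \Sigma \arr \Sigma$, with the symmetric group actions on $E$, $F$ and $E \sma_S F$ (the permutation part). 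The analogous but easier check shows $\eta$ is a well-defined map of $\bbI$-spaces, and naturality of $\mu$ and $\eta$ in $E$ and $F$ is immediate from naturality of $\lambda$ and of $\id_{S^n}$.

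Next I would verify the three coherence diagrams. Associativity and unitality are the easy cases: since the smash product of based maps is strictly associative and $\lambda$ is associative, both composites $(\Omega^{\bullet}E \boxtimes \Omega^{\bullet}F) \boxtimes \Omega^{\bullet}G \rightrightarrows \Omega^{\bullet}(E \sma_S F \sma_S G)$ send $(\gamma, \delta, \epsilon)$ to $\lambda \circ (\gamma \sma \delta \sma \epsilon)$, so the pentagon follows from the associativity coherence of $\sma_S$; the unit triangles reduce to the fact that the unit isomorphism $S \sma_S E \cong E$ is induced levelwise by $\lambda_{0,n}$. The symmetry square is the one requiring care: the symmetry isomorphism of $\boxtimes$ involves the block transposition $\chi_{m,n} \in \Sigma_{m+n}$, and one must identify $\Omega^{\bullet}(\tau_{E,F}) \circ \theta^{E,F}_{\bm,\bn}$ with $\chi_{m,n} \cdot \theta^{F,E}_{\bn,\bm}$ (after transposing the two smash factors). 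This holds precisely because the symmetry isomorphism of $\sma_S$ is assembled from the transposition on the representing objects together with $\chi_{m,n}$, which under the identification $S^m \sma S^n = S^{m+n}$ is exactly the coordinate permutation; checking it is a matter of unwinding the definition of the symmetry of the smash product of symmetric spectra.

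I expect the main obstacle to be the well-definedness step together with the symmetry coherence: both require carefully decomposing a morphism of $\bbI$ (or $\cI$) into a permutation (isometry) and a standard inclusion and tracking this through the equivariance of $\lambda$ and the construction of the symmetry of $\sma_S$; everything else is formal. Alternatively, one may observe that $\Sigma^{\bullet}_{+} * \cong S$ and rearrange the maps $\theta$ into oplax structure maps exhibiting $\Sigma^{\bullet}_{+}$ as an oplax (indeed strong) symmetric monoidal functor, whence $\Omega^{\bullet}$ is lax symmetric monoidal by doctrinal adjunction; this is the same computation repackaged.
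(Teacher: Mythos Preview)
Your proposal is correct, but the paper takes precisely the shortcut you relegate to a final remark. The paper's entire proof is: $\Sigma^{\bullet}_{+}$ is strong symmetric monoidal by inspection (since $(\Sigma^{\bullet}_{+} X)_n = X(\bn)_{+} \sma S^n$ and $S^m \sma S^n \cong S^{m+n}$), and the right adjoint of a strong symmetric monoidal functor is automatically lax symmetric monoidal, with structure maps given as adjuncts of $\Sigma^{\bullet}_{+}(\Omega^{\bullet}E \boxtimes \Omega^{\bullet}F) \cong \Sigma^{\bullet}_{+}\Omega^{\bullet}E \sma \Sigma^{\bullet}_{+}\Omega^{\bullet}F \xrightarrow{\epsilon \sma \epsilon} E \sma F$ and $\Sigma^{\bullet}_{+}(*) \cong S$. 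No coherence diagram is checked by hand.

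Your direct construction of the maps $\theta_{\bm,\bn}$ and the verification of naturality, associativity, unit, and symmetry is the unpackaging of that abstract argument, and it does go through; the maps you write down agree with the adjuncts the paper specifies. What your approach buys is an explicit pointwise formula for the lax structure, which can be useful downstream when one wants to identify the induced FCP multiplication on $\Omega^{\bullet}R$ concretely. What the paper's approach buys is brevity and the avoidance of the bookkeeping you correctly flag as the main obstacle (the symmetry square and the decomposition of $\bbI$-morphisms into permutations and inclusions): all of that is absorbed into the general categorical fact about adjoints of monoidal functors, once one has checked that $\Sigma^{\bullet}_{+}$ is strong symmetric monoidal---and that check is essentially trivial.
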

\begin{proof}
The functor $\Sigma^{\bullet}_{+}$ is strong symmetric monoidal by inspection.  As with any right adjoint of a strong symmetric monoidal functor, $\Omega^{\bullet}$ is lax symmetric monoidal with structure maps
\[
\Omega^{\bullet} E \boxtimes \Omega^{\bullet} E' \arr \Omega^{\bullet} (E \sma E') \quad \text{and} \quad * \arr \Omega^{\bullet}(S)
\]
defined to be the adjuncts of:
\[
\Sigma^{\bullet}_{+} ( \Omega^{\bullet} E \boxtimes \Omega^{\bullet} E') \overset{\cong}{\arr} \Sigma^{\bullet}_{+} \Omega^{\bullet} E \sma \Sigma^{\bullet}_{+} \Omega^{\bullet} E' \xrightarrow{\epsilon \sma \epsilon} E \sma E'
\]
and
\[
\Sigma^{\bullet}_{+} (\ast) \overset{\cong}{\arr} S.
\]
Here $\epsilon$ is the counit of the $(\Sigma^{\bullet}_{+}, \Omega^{\bullet})$ adjunction and the isomorphisms are the strong monoidal structure maps for $\Sigma^{\bullet}_{+}$.
\end{proof}

Lax symmetric monoidal functors preserve monoids and commutative monoids, so we have:
\begin{proposition}  If $R$ is a symmetric ring spectrum, then $\Omega^{\bullet} R$ is an $\bbI$-FCP.  If $R$ is an orthogonal ring spectrum, then $\Omega^{\bullet} R$ is an $\cI$-FCP.  If $R$ is commutative, then $\Omega^{\bullet} R$ is commutative.
\end{proposition}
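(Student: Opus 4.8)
The plan is to deduce the statement directly from the preceding Lemma together with the standard fact that a lax symmetric monoidal functor carries monoids to monoids and commutative monoids to commutative monoids. First I would recall the relevant identifications: a symmetric ring spectrum is precisely a monoid in the symmetric monoidal category $(\Sigma \sS, \sma_S, S)$, an orthogonal ring spectrum is a monoid in $(\sI \sS, \sma_S, S)$, and by definition an $\bbI$-FCP (resp.\ an $\cI$-FCP) is a monoid in $(\bbI \sU, \boxtimes, \bbI[0])$ (resp.\ $(\cI \sU, \boxtimes, \cI[0])$). Since $0$ is initial in both $\bbI$ and $\cI$, the unit objects $\bbI[0]$ and $\cI[0]$ are the terminal diagram spaces $*$, which is exactly the target of the lax unit map $* \arr \Omega^\bullet S$ produced in the proof of the Lemma, so no reconciliation of monoidal structures is needed.

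Next I would unwind the monoid structure that results. Writing $\mu \colon R \sma_S R \arr R$ and $\eta \colon S \arr R$ for the ring structure maps, composition of the lax structure maps of $\Omega^\bullet$ with $\Omega^\bullet \mu$ and $\Omega^\bullet \eta$ produces
\[
\Omega^\bullet R \boxtimes \Omega^\bullet R \arr \Omega^\bullet(R \sma_S R) \xrightarrow{\Omega^\bullet \mu} \Omega^\bullet R, \qquad * \arr \Omega^\bullet S \xrightarrow{\Omega^\bullet \eta} \Omega^\bullet R.
\]
The associativity and two-sided unit axioms for this pair of maps are a formal consequence of the coherence of the lax monoidal structure on $\Omega^\bullet$ and the monoid axioms for $(R, \mu, \eta)$; equivalently, one invokes the fact that a lax symmetric monoidal functor induces a functor $\bM \Sigma \sS \arr \bM \bbI \sU$ (resp.\ $\bM \sI \sS \arr \bM \cI \sU$) on monoid categories. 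When $R$ is commutative, the compatibility of the lax structure map with the symmetry isomorphisms — part of the definition of a lax \emph{symmetric} monoidal functor, and hence available by the preceding Lemma — forces the multiplication on $\Omega^\bullet R$ to be commutative, so $\Omega^\bullet R$ is a commutative FCP.

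I do not expect any obstacle: the genuine content is already packaged in the preceding Lemma, and what remains is the purely formal observation recorded in the sentence immediately before the statement. The only thing worth verifying explicitly is that the product $\boxtimes$ used to define FCPs is literally the symmetric monoidal product with respect to which $\Omega^\bullet$ was shown to be lax symmetric monoidal, which holds by the discussion at the beginning of this section.
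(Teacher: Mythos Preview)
Your proposal is correct and follows exactly the paper's approach: the paper simply records the sentence ``Lax symmetric monoidal functors preserve monoids and commutative monoids'' immediately before the proposition and offers no further proof. Your elaboration of the unit and multiplication maps, and your check that the $\boxtimes$ in the FCP definition is the same monoidal product used in the Lemma, are accurate but more detail than the paper provides.
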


\begin{remark}\label{hocolim_IFCP_is_E_infty}  If $X$ is a commutative $\sD$-FCP, then $\hocolim_{\sD} X$ is an $E_\infty$ space ($\sD = \bbI$ or $\cI$).  In particular, for a commutative symmetric or orthogonal ring spectrum $R$, $\Omega^{\infty} R = \hocolim \Omega^{\bullet} R$ is an $E_{\infty}$-space.  The $E_{\infty}$-space structure is encoded by an action of the (topological) Barratt-Eccles operad $E\Sigma$ with $E\Sigma(j) = E \Sigma_{j}$, the usual total space of the univeral $\Sigma_j$-bundle.  Indeed, $\hocolim_{\sD} X$ arises as the classifying space of the translation category $\sD[X]$, which in our situation can be made into a permutative topological category, and so $B \sD[X]$ carries an action of $E \Sigma$ \cite{E_infty_permutative_cats}*{4.9}.  
\end{remark}

\section{Comparing left and right Quillen functors}\label{left_right_adjoint_section}

Having set up the infinite loop space theory of diagram spectra, we are ready to start proving the comparison results.  This brief section is an overview of the method of comparison that we will use.  It is given in full generality, and is an easy observation about conjugation of adjoint functors.  It can be thought of as a special case of a general method for dealing with composites of left and right derived functors \cite{shulman_doubles}.

Let $(f, g) \colon \sA \arr \sB$ and $(h, k) \colon \sC \arr \sD$ be two Quillen adjunctions of model categories.  Further suppose that there are functors $a \colon \sA \arr \sC$ and $b \colon \sB \arr \sD$ that preserve fibrant objects and weak equivalences of fibrant objects.  For example, $a$ and $b$ might be right Quillen functors.  This situation can be pictured in the following diagram:
\addtocounter{theorem}{1}
\begin{equation}\label{situation}
\xymatrix{ 
\sA \ar@<.5ex>[r]^-{f} \ar[d]_-{a} & 
\sB \ar@<.5ex>[l]^-{g} \ar[d]^-{b} \\
\sC \ar@<.5ex>[r]^-{h}  & 
\sD \ar@<.5ex>[l]^-{k} }
\end{equation}
In the applications to follow, it will be easy to find a natural equivalence of composites $ag \simeq kb$.  We wish to understand how to transfer this equivalence to an equivalence in the other direction: $ha \simeq bf$.  In general, this will not be possible.  However, when $(f, g)$ and $(h, k)$ are Quillen equivalences, we can make a comparison of derived functors.

As left Quillen adjoints, $f$ and $h$ have left derived functors $\bL f \colon \ho \sA \arr \ho \sB $ and $\bL h \colon \ho \sC \arr \ho \sD$.  Similarly, $g$ and $k$ have right derived functors $\bR g \colon \ho \sB \arr \ho \sA$ and $\bR k \colon \ho \sD \arr \ho \sC$.  Since $a$ and $b$ preserve fibrant objects and weak equivalences of fibrant objects, they also have right derived functors $\bR a$ and $\bR b$, represented by $a R$ and $b R$, respectively, where $R$ denotes fibrant approximation.

\begin{proposition}\label{appendix_prop}
Suppose that $(f, g)$ and $(h, k)$ are Quillen equivalences in the situation pictured in diagram \eqref{situation}.  Further suppose that there is a natural equivalence of functors $ag \simeq kb$.  Then there is an isomorphism of composites of derived functors: $\bL h \bR a \cong \bR b \bL f$.
\end{proposition}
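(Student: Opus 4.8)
The plan is to exploit the standard fact that, for a Quillen equivalence, the derived unit and derived counit are natural isomorphisms, and to chase the given equivalence $ag \simeq kb$ through the two adjunctions using adjunct maps. Concretely, I would begin by fixing convenient point-set models for all the derived functors: $\bL f$ is represented by $f$ applied to a cofibrant approximation $Q$ (and similarly $\bL h$ by $h \circ Q$), while $\bR g$, $\bR k$, $\bR a$, $\bR b$ are represented by $g$, $k$, $a$, $b$ applied to a fibrant approximation $R$. Since $a$ and $b$ preserve fibrant objects and weak equivalences between them, $\bR a$ and $\bR b$ are well-defined, and $\bR a \cong a \circ R$, $\bR b \cong b \circ R$ on the homotopy category. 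The goal is to produce a natural isomorphism $\bL h \, \bR a \cong \bR b \, \bL f$ as functors $\ho \sA \to \ho \sD$.

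The heart of the argument is a zig-zag. Take an object $A \in \sA$; I would replace it by a cofibrant-fibrant object, which does not change anything in the homotopy category, and then trace the two composites. On one side, $\bR b \, \bL f$ sends $A$ to $b R f A$ (fibrant approximation of $fA$, then $b$). On the other side, $\bL h \, \bR a$ sends $A$ to $h Q a R A$. The link between them comes from adjointness: the identity $\Hom_{\sD}(h X, Y) \cong \Hom_{\sC}(X, k Y)$ lets me turn a map into $kb(\text{something})$ into a map out of $h(\text{something})$, and dually for $(f,g)$. Starting from the natural equivalence $\eta_X \colon a g X \xrightarrow{\simeq} k b X$ (for $X \in \sB$ fibrant), apply it with $X = R f A$; this gives a weak equivalence $a g R f A \xrightarrow{\simeq} k b R f A$ in $\sC$. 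Now $g R f A$ is a fibrant object receiving the derived unit $A \to g R f A$ of the Quillen equivalence $(f,g)$, which is a weak equivalence since $A$ is cofibrant and $(f,g)$ is a Quillen equivalence; applying $a$ (which preserves weak equivalences of fibrant objects — note $gRfA$ and we may arrange source fibrant too, or use that $a$ preserves all weak equivalences between fibrant objects after one more fibrant replacement) yields a weak equivalence relating $aRA$ to $agRfA$, hence via $\eta$ to $kbRfA$. Transposing across the $(h,k)$-adjunction converts the resulting map $h(\text{cofibrant approx of } aRA) \to bRfA$, and one checks it is a weak equivalence precisely because $(h,k)$ is a Quillen equivalence and $bRfA$ is fibrant (as $b$ preserves fibrant objects). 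Assembling these weak equivalences gives the desired isomorphism in $\ho \sD$, and naturality in $A$ follows from naturality of $\eta$, of the (co)units, and of fibrant/cofibrant approximation.

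The main obstacle, and the step deserving the most care, is bookkeeping the fibrant/cofibrant replacements so that every functor in sight is applied only where it is homotopically meaningful: $a$ and $b$ must be fed fibrant objects, $f$ and $h$ must be fed cofibrant objects, and the derived unit/counit weak-equivalence statements for the two Quillen equivalences have their own cofibrancy/fibrancy hypotheses. The clean way to manage this is to work throughout with objects that are both cofibrant and fibrant (using functorial factorizations), so that a single replacement serves all purposes and no auxiliary diagram-chases are needed to compare different choices of replacement; the resulting maps are then honest weak equivalences between cofibrant-fibrant objects, which descend to isomorphisms in the homotopy categories with no further fuss. Everything else — composing the weak equivalences, transposing along adjunctions, verifying naturality — is formal.
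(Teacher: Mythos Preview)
Your proposal is correct and follows essentially the same route as the paper: use the derived unit of the Quillen equivalence $(f,g)$, transport through the equivalence $ag \simeq kb$, and then use the $(h,k)$-adjunction (your transposition is the paper's derived counit). The only difference is packaging: the paper works entirely in the homotopy categories, observing that right derived functors compose so that $\bR a\,\bR g \cong \bR(ag) \cong \bR(kb) \cong \bR k\,\bR b$, and then writes the one-line composite
\[
\bL h\,\bR a \xrightarrow{\bL h\,\bR a\,\eta} \bL h\,\bR a\,\bR g\,\bL f \cong \bL h\,\bR k\,\bR b\,\bL f \xrightarrow{\epsilon} \bR b\,\bL f,
\]
whereas you carry out the same chain at the point-set level with explicit bifibrant replacements; this is why your version requires the replacement bookkeeping you flag, while the paper's version avoids it entirely.
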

\begin{proof}
Since right derived functors compose to give right derived functors, we have isomorphisms $\bR a \bR g \cong \bR(a g) \cong \bR(kb) \cong \bR k \bR b$.  The units and counits of the derived adjunctions $(\bL f, \bR g)$ and $(\bL h, \bR k)$ are isomorphisms, yielding the desired isomorphism:
\[
\bL h \bR a \xrightarrow{\bL h \bR a \, \eta} \bL h \bR a \bR g \bL f \cong \bL h \bR k \bR b \bL f \overset{\epsilon}{\arr} \bR b \bL f.
\]
\end{proof}

\section{Comparison of infinite loop spaces of diagram spectra}

To compare symmetric spectra and orthogonal spectra, one uses the embedding of diagram categories $\Sigma \arr \cI$ defined by $\bn \mapsto \bR^n$.  This induces a forgetful functor $\bbU \colon \sI \sS \arr \Sigma \sS$ with a left adjoint $\bbP \colon \Sigma \sS \arr \sI \sS$ defined by left Kan extension.  Similarly, the embedding $\bbI \arr \cI$ induces a forgetful functor $\bbU \colon \cI \sU \arr \bbI \sU$ with left adjoint $\bbP \colon \bbI \sU \arr \cI \sU$ defined by left Kan extension.  Such left adjoints $\bbP$ are called prolongation functors.  We recall the comparison results for diagram spectra, then state the analogous results for $\bbI$ and $\cI$ spaces.

\begin{theorem}\cite{MMSS}  The adjunction $(\bbP, \bbU) \colon \Sigma \sS \rightleftarrows \sI \sS$ is a Quillen equivalence between the categories of symmetric spectra and orthogonal spectra, both considered with the stable model structure.  The adjunction restricted to the subcategories of monoids, respectively commutative monoids, gives a Quillen equivalence between symmetric ring spectra and orthogonal ring spectra, respectively commutative symmetric ring spectra and commutative orthogonal ring spectra.  
\end{theorem}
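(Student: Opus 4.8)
The plan is to establish the statement in three stages: first that $(\bbP, \bbU)$ is a Quillen adjunction, then that it is a Quillen equivalence via the standard criterion (the right adjoint reflects weak equivalences between fibrant objects and the derived unit is a weak equivalence on cofibrant objects), and finally that everything lifts to monoids and commutative monoids.

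First I would check the Quillen adjunction. Here $\bbU$ is restriction of diagrams along $\Sigma \arr \cI$, $\bn \mapsto \bR^n$, and $\bbP$ is its left Kan extension, which is strong symmetric monoidal. Since the stable model structures on both sides are left Bousfield localizations of the level model structures --- at the maps $\lambda_n \colon F_{n+1} S^1 \arr F_n S^0$ for symmetric spectra, and at the analogous stabilization maps $F_W S^{W-V} \arr F_V S^0$ for orthogonal spectra --- it is enough to show $\bbP$ sends the relevant generators to (acyclic) cofibrations. The one computation doing the work is $\bbP F_n K \cong F_{\bR^n} K$, natural in the based space $K$: it shows $\bbP$ carries the generating cofibrations $F_n(S^{k-1}_+ \arr D^k_+)$ and the level-acyclic generators to the corresponding generators for orthogonal spectra, and carries each $\lambda_n$ to the orthogonal stabilization map for $\bR^n \subset \bR^{n+1}$, a stable acyclic cofibration. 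Hence $\bbP$ is left Quillen.

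Next I would prove the Quillen equivalence. The reflection statement is the easy half: a stably fibrant orthogonal spectrum $Y$ is an orthogonal $\Omega$-spectrum, so $\bbU Y$ is a symmetric $\Omega$-spectrum, and among $\Omega$-spectra stable equivalences coincide with level equivalences; if $\bbU Y \arr \bbU Y'$ is a level equivalence then $Y_{\bR^n} \arr Y'_{\bR^n}$ is an equivalence for every $n$, and because $Y, Y'$ are $\Omega$-spectra this forces $Y_V \arr Y'_V$ to be an equivalence for every $V$ (compare with $\Omega^{W-V}(Y_W \arr Y'_W)$ for a large $W \cong \bR^{n+k}$ containing $V$), so $Y \arr Y'$ is a stable equivalence. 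The hard part will be the derived unit: for cofibrant $X$ I need $X \arr \bbU R\bbP X$ to be a stable equivalence, where $R$ is $\Omega$-spectrum replacement. I would argue at the level of true (stable) homotopy groups: $\bbU R\bbP X$ is a symmetric $\Omega$-spectrum, hence semistable, so its true homotopy groups equal its naive ones; $\bbU$ preserves naive homotopy groups essentially by definition (both sides are $\colim_{n \in \bbJ} \pi_{*+n}$ of the spaces at $\bR^n$), and since stable equivalences of orthogonal spectra are $\pi_*$-isomorphisms, $\pi_*(R\bbP X) \cong \pi_*(\bbP X)$; finally, for cofibrant symmetric $X$ the true homotopy groups of $X$ agree naturally with $\pi_*(\bbP X)$ --- this holds on free spectra, where both sides compute $\pi^s_{*+n}K$, and propagates up the cellular filtration since $\bbP$ preserves colimits and the weak equivalences in sight are well-grounded. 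Then $X \arr \bbU R\bbP X$ is an isomorphism on true homotopy groups, and by the detection-functor characterization of stable equivalences of symmetric spectra (Shipley, Schwede; cf. Remark \ref{semistable_comment}) it is a stable equivalence. This is the step I expect to be the main obstacle: because naive and true homotopy groups of symmetric spectra genuinely differ, the homotopy-group computation does not by itself produce a stable equivalence, and one must invoke the semistability/detection-functor technology to conclude.

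Finally, for the monoid statements I would note that $\bbP$ (strong symmetric monoidal) and $\bbU$ (lax symmetric monoidal) make the adjunction lift to $\bM$ and $\bC$. Ring spectra get the stable model structure created from spectra via the monoid axiom; commutative ring spectra get the \emph{positive} stable model structure (where the sphere is not cofibrant) so that the forgetful functor from commutative monoids still creates a model structure. In every case weak equivalences and fibrations of (commutative) ring spectra are detected on underlying spectra, and a cofibrant (commutative) ring spectrum has (positively) cofibrant underlying spectrum. Since the free-spectrum computation above equally shows $\bbP$ carries the positive generators to positive ones, the three ingredients --- $\bbP$ left Quillen, $\bbU$ reflecting stable equivalences between fibrant objects, the derived unit a stable equivalence --- all carry over verbatim, yielding the asserted Quillen equivalences of (commutative) ring spectra; the only extra wrinkle is the passage to the positive model structure in the commutative case.
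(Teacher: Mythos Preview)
The paper does not prove this theorem. It is stated with the attribution \cite{MMSS} and used as a black-box input; no argument is supplied here. So there is no proof in the paper to compare your proposal against.

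That said, your sketch is a reasonable outline of how the MMSS argument goes, with one substantive difference worth flagging. The Quillen adjunction step and the lift to (commutative) monoids are as in MMSS: the identification $\bbP F_n K \cong F_{\bR^n} K$ does all the work for the generators, and the passage to monoids and commutative monoids (the latter via the positive model structures) is exactly the Schwede--Shipley lifting argument used there. Your treatment of reflection of equivalences by $\bbU$ between fibrant objects is also standard.

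Where you diverge from MMSS is in handling the derived unit. You argue via \emph{true} homotopy groups and the detection-functor characterisation of stable equivalences; MMSS works more directly and does not invoke Shipley's detection functor. Your route does work, but be aware of the subtlety you yourself point to: the unit $F_n K \to \bbU F_{\bR^n} K$ is \emph{not} a naive $\pi_*$-isomorphism (for $n>0$ the naive homotopy groups of $F_n K$ are genuinely wrong), so the argument really must pass through true homotopy groups. In the cellular induction you then need a specific natural transformation between ``true $\pi_*(-)$'' and ``naive $\pi_*(\bbP-)$'' (namely the one induced by the derived unit followed by the identification true $=$ naive on the semistable target), and you must check that this particular map is an isomorphism on free spectra, not merely that both sides are abstractly $\pi^s_{*+n}K$. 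Once that is done, both functors send cofiber sequences to long exact sequences and preserve filtered colimits of cofibrations, so the five lemma propagates the isomorphism. This is correct, but it leans on nontrivial external input (Schwede's analysis of true homotopy groups and Shipley's detection theorem); the MMSS proof is more self-contained.
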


\noindent In the case of commutative ring spectra, we use the positive stable model structure.  There is also a positive model structure on diagram spaces, which is used for commutative FCPs.  The following result will be proved as Theorems \ref{equivalence_of_I_and_If_spaces} and \ref{equivalence_of_commutative_I_and_If_spaces}:

\begin{theorem}\label{quillen_equiv_FCP}  The adjunction $(\bbP, \bbU) \colon \bbI\sU \rightleftarrows \cI \sU$ is a Quillen equivalence between the categories of $\bbI$-spaces and $\cI$-spaces.  The adjunction restricted to the subcategories of monoids, respectively commutative monoids, gives a Quillen equivalence between $\bbI$-FCPs and $\cI$-FCPs, respectively commutative $\bbI$-FCPs and commutative $\cI$-FCPs.  
\end{theorem}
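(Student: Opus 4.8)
The plan is to follow the pattern used for the comparison of symmetric and orthogonal spectra, exploiting that weak equivalences in both the $\bbI$-space and the $\cI$-space model structures are created by the homotopy colimit functor. Throughout, $\bbP$ is left Kan extension along the embedding $j\colon\bbI\to\cI$, $\bn\mapsto\bbR^n$, and $\bbU$ is restriction along $j$. First I would check that $(\bbP,\bbU)$ is a Quillen adjunction by showing that $\bbU$ is right Quillen: a fibration of $\cI$-spaces is a level fibration $f$ for which $X(\phi)\times f(V)\colon X(V)\to X(W)\times_{Y(W)}Y(V)$ is a weak homotopy equivalence for every $\phi\colon V\to W$ in $\cI$, and restricting along $j$ leaves a level fibration satisfying this condition for those $\phi$ coming from $\bbI$, i.e.\ a fibration of $\bbI$-spaces; acyclic fibrations are the level acyclic fibrations (they are unaffected by the localization defining these model structures), so these restrict as well. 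The same reasoning handles monoids and commutative monoids, whose fibrations and weak equivalences are created by the forgetful functor to diagram spaces (\S\ref{model_structure_monoids_section}); for commutative FCPs one uses the positive model structures, which have the same weak equivalences.

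The heart of the argument is that $\bbU$ \emph{creates} weak equivalences: the canonical map $\hocolim_{\bbI}\bbU Z\to\hocolim_{\cI}Z$ is a weak homotopy equivalence for every $\cI$-space $Z$. This is a homotopy cofinality statement for the inclusion $\bbI\to\cI$ (more precisely for its factorization through $\cI^{\dagger}$), saying that the comma categories of isometric embeddings $V\hookrightarrow\bbR^n$ have contractible nerve, and it is an instance of the cofinality/untwisting results of the appendix (Lemma \ref{theorem_A_hocolim_version} and Proposition \ref{untwisting_prop}). Granting it, $f$ is a weak equivalence of $\cI$-spaces iff $\bbU f$ is one, so $\bbU$ preserves and reflects all weak equivalences; hence $(\bbP,\bbU)$ is a Quillen equivalence as soon as the unit $X\to\bbU\bbP X$ is a weak equivalence for cofibrant $\bbI$-spaces $X$ (this agrees with the derived unit up to weak equivalence, since $\bbU$ preserves weak equivalences and $\bbP X$ is cofibrant). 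Applying $\hocolim_{\bbI}$ and using the cofinality statement, this reduces to showing $\hocolim_{\bbI}X\to\hocolim_{\cI}\bbP X$ is a weak equivalence, which on a generating cofibration $\bbI(\bn,-)\times K$ holds because $\bbP(\bbI(\bn,-)\times K)=\cI(\bbR^n,-)\times K$ and both homotopy colimits are equivalent to $K$ (the relevant categories of elements have initial objects); the general case follows by a cell induction using well-groundedness of the weak equivalences.

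For the monoidal statements, the point is that $\bbP$ is strong symmetric monoidal, being the left Kan extension along the (strong monoidal) functor $j\colon(\bbI,\sqcup)\to(\cI,\oplus)$, so $(\bbP,\bbU)$ is a symmetric monoidal adjunction and lifts to Quillen adjunctions $\bM\bbI\sU\rightleftarrows\bM\cI\sU$ and $\bC\bbI\sU\rightleftarrows\bC\cI\sU$. Since $\bbU$ creates weak equivalences of diagram spaces and weak equivalences of (commutative) FCPs are created by the forgetful functor, $\bbU$ again creates weak equivalences, so it suffices to prove the unit is a weak equivalence on cofibrant (commutative) FCPs. Such a cofibrant object is a retract of a cell object built from the initial one by pushouts along the free monoid (respectively the free commutative monoid) applied to generating (positive) cofibrations, and $\bbP$ commutes with the free monoid and free commutative monoid functors because it is strong monoidal. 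Moreover $\bbU$ is monoidal up to weak equivalence, since
\[
\hocolim_{\bbI}(\bbU A\boxtimes\bbU B)\simeq\hocolim_{\bbI}\bbU A\times\hocolim_{\bbI}\bbU B\simeq\hocolim_{\cI}A\times\hocolim_{\cI}B\simeq\hocolim_{\cI}(A\boxtimes B)\simeq\hocolim_{\bbI}\bbU(A\boxtimes B),
\]
so the unit is a weak equivalence on free objects; one then propagates this through the cell filtration using well-groundedness.

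I expect the commutative case to be the main obstacle: pushing the cell induction past pushouts along free commutative monoids forces one to understand the orbit constructions $K^{\boxtimes n}/\Sigma_n$ — one needs $E\Sigma_n\times_{\Sigma_n}K^{\boxtimes n}\to K^{\boxtimes n}/\Sigma_n$ to be a weak equivalence for positive-cofibrant $K$ and the resulting filtration quotients to be well-grounded. This is exactly what dictates the passage to the positive model structures and constitutes the technical content of Theorems \ref{quillen_equiv_FCP_L_spaces} and \ref{equivalence_of_commutative_I_and_If_spaces}.
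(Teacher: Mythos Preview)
Your overall strategy is sound and reaches the same conclusion, but the route differs from the paper's in a few places worth noting.

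For the base equivalence of $\bbI$-spaces and $\cI$-spaces, you argue that $\bbU$ creates \emph{all} weak equivalences by showing $\bbI\to\cI^{\dagger}$ is homotopy cofinal.  This is correct, and it is actually more streamlined than the paper's argument (Theorem~\ref{equivalence_of_I_and_If_spaces}), which only shows that $\bbU$ detects weak equivalences between \emph{fibrant} objects (by comparing each side to the single space $X(0)$ via Lemma~\ref{fibrant_hocolim_lemma}) and then separately analyzes the map $\bbU r$ through a commutative diagram involving $\bbJ$, $\bbI$, $\cJ$, and $\cI$.  Your citation is off, however: Proposition~\ref{untwisting_prop} concerns the inclusion $\cJ\to\cI$, not $\bbI\to\cI$.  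The contractibility of the comma categories you need is precisely Lemma~\ref{stiefel_manifold_lemma} (that $\hocolim_{\bbI}\cI(\bR^n,-)\simeq *$), fed into Lemma~\ref{theorem_A_hocolim_version}.

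For associative FCPs your argument is unnecessarily heavy.  The paper's proof (Theorem~\ref{quillen_equiv_monoids}) is a one-liner: cofibrant $\bbI$-FCPs are cofibrant as $\bbI$-spaces because the unit object $*$ is cofibrant, so the Quillen equivalence on diagram spaces transfers immediately.  Your route instead appeals to the chain
\[
\hocolim_{\bbI}(\bbU A\boxtimes\bbU B)\ \simeq\ \hocolim_{\bbI}\bbU A\times\hocolim_{\bbI}\bbU B,
\]
which is true but not free: it encodes that $\oplus\colon\bbI\times\bbI\to\bbI$ is homotopy cofinal, a fact the paper neither states nor needs.

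For commutative FCPs you have correctly located the real work.  The paper handles this via Proposition~\ref{general_equiv_cofibrant_FCPs_via_induction}, which runs exactly the cell induction you sketch, using the filtration of Lemma~\ref{filtration_on_commutative_monoid_pushout} on pushouts along $\bC f$ together with Proposition~\ref{symmetric_power_equiv} (the equivalence $E\Sigma_n\times_{\Sigma_n}K^{\boxtimes n}\simeq K^{\boxtimes n}/\Sigma_n$ for positive cofibrant $K$).  Your reference to Theorem~\ref{quillen_equiv_FCP_L_spaces} here is a red herring; that theorem compares $\cI$-FCPs with $\sL$-spaces, not $\bbI$-FCPs with $\cI$-FCPs.
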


The infinite loop space functors $\Omega^{\bullet}$ fit into the following diagram:
\[
\xymatrix{
\Sigma \sS \ar@<.5ex>[r]^{\bbP} \ar[d]_{\Omega^{\bullet}} & \sI \sS \ar@<.5ex>[l]^{\bbU} \ar[d]^{\Omega^{\bullet}} \\
\bbI \sU \ar@<.5ex>[r]^{\bbP} & \cI \sU \ar@<.5ex>[l]^{\bbU} }
\]
Our goal is to show that this diagram commutes in both directions, up to weak equivalence.  For the direction involving the prolongation functors, we must compose left and right Quillen functors.  This requires descending to homotopy categories and derived functors, as discussed in \S \ref{left_right_adjoint_section}.

\begin{proposition}\label{comparison_prop_loops_diagram_sp}  The following diagrams commute:
\[
\xymatrix{
\Sigma \sS \ar[d]_{\Omega^{\bullet}} & \sI \sS \ar[l]_{\bbU} \ar[d]^{\Omega^{\bullet}} & & \ho \Sigma \sS \ar[r]^-{\bL \bbP} \ar[d]_{\bR \Omega^{\bullet}} & \ho \sI \sS  \ar[d]^{\bR \Omega^{\bullet}} \\
\bbI \sU  & \cI \sU \ar[l]^{\bbU} & &  \ho \bbI \sU \ar[r]_-{\bL \bbP} & \ho \cI \sU }
\]
\end{proposition}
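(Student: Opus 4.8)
The plan is to prove the two diagrams separately. The left-hand square involves only the right Quillen functors $\bbU$ and $\Omega^\bullet$, so there I would check strict commutativity at the point-set level. Given an orthogonal spectrum $E$, I would compare $\Omega^\bullet(\bbU E)$ with $\bbU(\Omega^\bullet E)$. Unravelling definitions: $\bbU E$ is the symmetric spectrum with $(\bbU E)_n = E_{\bR^n}$, so $(\Omega^\bullet \bbU E)(\bn) = \Omega^n (\bbU E)_n = \Omega^{\bR^n} E_{\bR^n}$, while $\bbU(\Omega^\bullet E)$ is the $\bbI$-space obtained by restricting the $\cI$-space $\Omega^\bullet E$ along $\bbI \arr \cI$, $\bn \mapsto \bR^n$, so its value at $\bn$ is $(\Omega^\bullet E)(\bR^n) = \Omega^{\bR^n}E_{\bR^n}$. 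These agree on objects, and one checks they agree on morphisms by factoring each $\phi \colon \bm \arr \bn$ in $\bbI$ as a permutation after an inclusion and matching the formula given for $(\Omega^\bullet E)(\phi)$ on orthogonal spectra with the one for symmetric spectra — the only subtlety is identifying the suspension coordinates, and the chosen orthonormal basis of $U$ makes $\bR^n \subset \bR^{n+1}$ compatible with the inclusions $\bbJ \arr \cJ$. So $\bbU \circ \Omega^\bullet = \Omega^\bullet \circ \bbU$ on the nose; in particular it holds after deriving, since both $\Omega^\bullet$'s preserve fibrations and acyclic fibrations (Propositions \ref{symmetric_space_spectra_quillen_adjunction} and \ref{orthogonal_space_spectra_quillen_adjunction}) and $\bbU$ is a right Quillen equivalence, so the derived functors of the composites agree.

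For the right-hand square — the one involving the prolongations $\bbP$ — I would invoke the formal device of \S\ref{left_right_adjoint_section}. Take the diagram \eqref{situation} with $(f,g) = (\bbP, \bbU) \colon \Sigma\sS \rightleftarrows \sI\sS$, with $(h,k) = (\bbP, \bbU) \colon \bbI\sU \rightleftarrows \cI\sU$, with $a = \Omega^\bullet \colon \Sigma\sS \arr \bbI\sU$ and $b = \Omega^\bullet \colon \sI\sS \arr \cI\sU$. Both vertical functors are right Quillen (Propositions \ref{symmetric_space_spectra_quillen_adjunction}, \ref{orthogonal_space_spectra_quillen_adjunction}), hence preserve fibrant objects and weak equivalences between fibrant objects. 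Both horizontal adjunctions are Quillen equivalences (the cited theorem of \cite{MMSS} and Theorem \ref{quillen_equiv_FCP}). The natural isomorphism $ag \cong kb$ needed by Proposition \ref{appendix_prop} is exactly the strict commutativity $\Omega^\bullet \circ \bbU = \bbU \circ \Omega^\bullet$ established in the previous paragraph. Proposition \ref{appendix_prop} then yields a natural isomorphism $\bL\bbP \circ \bR\Omega^\bullet \cong \bR\Omega^\bullet \circ \bL\bbP$ of derived functors $\ho\Sigma\sS \arr \ho\cI\sU$, which is precisely the commutativity of the right-hand diagram.

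The main obstacle is the point-set identification in the first paragraph: one must be careful that the prolongation embedding $\Sigma \arr \cI$ and the embedding $\bbI \arr \cI$ are compatible, i.e.\ that restricting an $\cI$-space along $\bbI \hookrightarrow \cI$ and then nothing further really does recover the $\bbI$-space obtained by the symmetric-spectrum recipe applied to the restricted (symmetric) spectrum. Concretely this is the claim that $\bbU \colon \sI\sS \arr \Sigma\sS$ is restriction along $\Sigma \arr \cI$ while $\bbU \colon \cI\sU \arr \bbI\sU$ is restriction along $\bbI \arr \cI$, and that the square of categories $\Sigma \arr \cI$, $\bbI \arr \cI$, $\Sigma \arr \bbI$ (where the last is the inclusion of bijections into injections) commutes — which it does by the chosen identification $\bn \mapsto \bR^n$. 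Once this compatibility of embeddings is in hand, the matching of the two formulas for $\Omega^\bullet$ on morphisms is the routine check that permutations act by conjugation and inclusions act by adjoint structure maps in both settings, and everything else is formal.
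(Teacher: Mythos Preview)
Your proposal is correct and follows essentially the same approach as the paper: the left square is checked directly from the definitions (the paper simply says ``immediate from the definitions,'' while you spell out the identification $\Omega^\bullet \bbU E \cong \bbU \Omega^\bullet E$ in more detail), and the right square is then obtained by applying Proposition~\ref{appendix_prop} to the Quillen equivalences $(\bbP,\bbU)$ and the right Quillen functors $\Omega^\bullet$. The only difference is expository---your version expands the point-set verification that the paper compresses into a single clause.
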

\begin{proof}
The commutativity of the first diagram is immediate from the definitions.  The  adjunctions are Quillen equivalences, and the functors $\Omega^{\bullet}$ are right Quillen by Propositions \ref{symmetric_space_spectra_quillen_adjunction} and \ref{orthogonal_space_spectra_quillen_adjunction}.  Hence we are in the situation described in \S\ref{left_right_adjoint_section}, so the second diagram commutes by Proposition \ref{appendix_prop}.
\end{proof}

We also make the comparison of $\Omega^{\bullet}$ for ring and commutative ring spectra, parenthesizing the statement about commutative monoids.  We first record the following observation:

\begin{proposition}\label{diagram_loops_quillen_adjunction_comm}  Restricted to the categories of symmetric and orthogonal (commutative) ring spectra and (commutative) $\bbI$ and $\cI$-FCPs, $(\Sigma^{\bullet}_{+}, \Omega^{\bullet})$ is a Quillen adjunction.
\end{proposition}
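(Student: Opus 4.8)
The plan is to bootstrap from the non-multiplicative case already established in Propositions \ref{symmetric_space_spectra_quillen_adjunction} and \ref{orthogonal_space_spectra_quillen_adjunction}, transferring the Quillen adjunction property along the free--forgetful adjunctions that define monoids and commutative monoids. The key observation is that in all four categories of structured objects --- (commutative) ring spectra in $\Sigma\sS$ and $\sI\sS$, and (commutative) FCPs in $\bbI\sU$ and $\cI\sU$ --- the model structure is created by the forgetful functor to the underlying category, in the sense that a map of structured objects is a weak equivalence or fibration precisely when it is one in the underlying category. For the associative case this is the monoid model structure in the sense of Schwede--Shipley, which exists because our model categories are monoidal and satisfy the monoid axiom; for the commutative case on the spectrum side one uses the positive stable model structure (as noted in the excerpt) and on the space side the positive model structure on diagram spaces, both of which lift to commutative monoids. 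I would state these facts with references to \cite{MMSS} and \cite{schwede_shipley_algebras} for spectra and to \S\ref{model_structure_monoids_section} for FCPs.

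Granting this, the argument is short. The functor $\Omega^{\bullet}$ is lax symmetric monoidal, hence preserves monoids and commutative monoids; its restriction to (commutative) ring spectra lands in (commutative) FCPs and the diagram
\[
\xymatrix{
\bM\Sigma\sS \ar[r]^{\Omega^{\bullet}} \ar[d] & \bM\bbI\sU \ar[d] \\
\Sigma\sS \ar[r]^{\Omega^{\bullet}} & \bbI\sU
}
\]
commutes (and likewise for $\cI$-spaces, and for commutative monoids), where the vertical arrows are the forgetful functors. Since fibrations and acyclic fibrations in the categories of structured objects are detected by the forgetful functors, and the bottom $\Omega^{\bullet}$ preserves fibrations and acyclic fibrations by Propositions \ref{symmetric_space_spectra_quillen_adjunction} and \ref{orthogonal_space_spectra_quillen_adjunction}, it follows immediately that the top $\Omega^{\bullet}$ preserves fibrations and acyclic fibrations of structured objects. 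The left adjoint $\Sigma^{\bullet}_{+}$ is strong symmetric monoidal, so it too restricts to monoids and commutative monoids, giving an adjunction at the structured level; a right adjoint that preserves fibrations and acyclic fibrations is a right Quillen functor, which is the claim.

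The one point requiring care --- and the only place a subtlety can hide --- is compatibility of model structures with the positive conventions used for commutative monoids. On the spectrum side one must use the positive stable model structure on commutative symmetric and orthogonal ring spectra, and on the space side the positive model structure on commutative diagram FCPs, for the lifted model structures to exist at all. The content of Propositions \ref{symmetric_space_spectra_quillen_adjunction} and \ref{orthogonal_space_spectra_quillen_adjunction} is proved for the (non-positive) stable and diagram model structures, so I would remark that the positive fibrations and acyclic fibrations form a subclass of their non-positive counterparts, and that $\Omega^{\bullet}$ sends positive (acyclic) fibrations of spectra to positive (acyclic) fibrations of diagram spaces --- this is transparent from the level descriptions of these fibrations, since the positive condition only changes which levels $\bn$ (resp. $V$) are required to satisfy the lifting condition, and $\Omega^{\bullet}$ is computed levelwise by $(\Omega^{\bullet}E)(\bn) = \Omega^{n}E_{n}$. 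I expect this bookkeeping about positive model structures to be the main (and only) obstacle; the rest is the formal transfer-of-Quillen-adjunction argument above.
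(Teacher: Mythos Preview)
Your argument is essentially the same as the paper's, which says only ``This goes just as for the additive case in Proposition \ref{symmetric_space_spectra_quillen_adjunction}. For commutative ring spectra, we use the positive stable model structure on ring spectra and FCPs.'' You have correctly unpacked what this means: fibrations and weak equivalences in the (commutative) monoid model structures are created by the forgetful functor, $\Omega^{\bullet}$ commutes with the forgetful functors, and therefore the additive result (Propositions \ref{symmetric_space_spectra_quillen_adjunction} and \ref{orthogonal_space_spectra_quillen_adjunction}) transfers directly.

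One small correction: your remark that ``the positive fibrations and acyclic fibrations form a subclass of their non-positive counterparts'' is backwards. The positive model structure has \emph{fewer} cofibrations (since $F_{+}I \subset FI$), hence fewer acyclic cofibrations, hence \emph{more} fibrations. So absolute fibrations form a subclass of positive fibrations, not the reverse. Fortunately you do not actually use this claim; the sentence that follows it --- that the argument of Proposition \ref{symmetric_space_spectra_quillen_adjunction} goes through verbatim for the positive model structures because $\Omega^{\bullet}$ is computed levelwise and the positive condition merely restricts which levels are tested --- is exactly right and is all that is needed. Drop the subclass remark and keep that sentence.
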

\begin{proof}  This goes just as for the additive case in
Proposition \ref{symmetric_space_spectra_quillen_adjunction}.  For commutative ring spectra, we use the positive stable model structure on ring spectra and FCPs.
\end{proof}

In particular, Proposition \ref{appendix_prop} applies and we have:

\begin{proposition}\label{comparison_prop_loops_diagram_ring_sp}  The following diagrams \emph{(}and their analogs with $\bM$ replaced by $\bC$\emph{)} commute:
\[
\xymatrix{
\bM \Sigma \sS \ar[d]_{\Omega^{\bullet}} & \bM \sI \sS \ar[l]_{\bbU} \ar[d]^{\Omega^{\bullet}} & & \ho \bM \Sigma \sS \ar[r]^-{\bL \bbP} \ar[d]_{\bR \Omega^{\bullet}} & \ho \bM \sI \sS  \ar[d]^{\bR \Omega^{\bullet}} \\
\bM \bbI \sU  & \bM \cI \sU \ar[l]^{\bbU} & &  \ho \bM \bbI \sU \ar[r]_-{\bL \bbP} & \ho \bM \cI \sU }
\]
\end{proposition}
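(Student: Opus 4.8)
The plan is to mimic the proof of Proposition~\ref{comparison_prop_loops_diagram_sp}, now working in the categories of (commutative) monoids. First I would observe that the forgetful functor $\bbU$ is induced on monoids by the strong symmetric monoidal forgetful functors $\bbU \colon \sI\sS \arr \Sigma\sS$ and $\bbU \colon \cI\sU \arr \bbI\sU$, while $\Omega^{\bullet}$ is induced on monoids by the lax symmetric monoidal functors $\Omega^{\bullet}$. Hence all four functors in the strict square are obtained from functors on underlying objects by transport of the monoidal structure, and the forgetful functor to underlying objects is conservative. Since the square of underlying functors commutes on the nose by Proposition~\ref{comparison_prop_loops_diagram_sp}, and since the lax monoidal structure maps of $\Omega^{\bullet}$ are built levelwise from the spectrum structure maps and are therefore preserved by $\bbU$, the monoid structures on $\bbU \Omega^{\bullet} R$ and $\Omega^{\bullet} \bbU R$ agree. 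Thus the left-hand square commutes, giving a natural isomorphism $\bbU \Omega^{\bullet} \cong \Omega^{\bullet} \bbU$ on (commutative) ring spectra.

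For the right-hand square I would invoke Proposition~\ref{appendix_prop} with $\sA = \bM\Sigma\sS$, $\sB = \bM\sI\sS$, $\sC = \bM\bbI\sU$, $\sD = \bM\cI\sU$, the pairs $(f, g) = (h, k) = (\bbP, \bbU)$, and $a = b = \Omega^{\bullet}$. The hypotheses are in place: $(\bbP, \bbU)$ is a Quillen equivalence on (commutative) ring spectra by the cited theorem of \cite{MMSS} and on (commutative) FCPs by Theorem~\ref{quillen_equiv_FCP}; $\Omega^{\bullet}$ is right Quillen on these monoid categories by Proposition~\ref{diagram_loops_quillen_adjunction_comm}, hence preserves fibrant objects and weak equivalences between them; and the natural isomorphism $ag = \Omega^{\bullet} \bbU \cong \bbU \Omega^{\bullet} = kb$ is the commutativity of the strict square just established. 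Proposition~\ref{appendix_prop} then yields $\bL\bbP \, \bR\Omega^{\bullet} \cong \bR\Omega^{\bullet} \, \bL\bbP$, which is the commutativity of the derived square. The commutative case is identical after replacing $\bM$ by $\bC$ throughout and using the positive stable model structures on ring spectra and on FCPs, exactly as in Proposition~\ref{diagram_loops_quillen_adjunction_comm}.

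There is no genuine obstacle here; the only point requiring attention is the bookkeeping in the first paragraph, namely that passing to monoids disturbs neither the strict commutation $\bbU \Omega^{\bullet} \cong \Omega^{\bullet} \bbU$ nor the right-Quillen property of $\Omega^{\bullet}$. Both are formal consequences of the fact that $\bbU$ is conservative and created from the monoidal structure, together with the fact that $\Omega^{\bullet}$ is lax symmetric monoidal with levelwise structure maps, so the verification is routine once the underlying statements (Propositions~\ref{comparison_prop_loops_diagram_sp} and~\ref{diagram_loops_quillen_adjunction_comm}) are granted.
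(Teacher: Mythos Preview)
Your proposal is correct and follows essentially the same approach as the paper: the strict square is immediate from the definitions (the paper says just that, without your additional bookkeeping about monoid structures), and the derived square follows from Proposition~\ref{appendix_prop} using the Quillen equivalences and the right-Quillen property of $\Omega^{\bullet}$ established in Proposition~\ref{diagram_loops_quillen_adjunction_comm}. Your more careful verification that the monoid structures on $\bbU \Omega^{\bullet} R$ and $\Omega^{\bullet} \bbU R$ agree is a welcome elaboration of what the paper leaves implicit.
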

\begin{proof}
The first diagram is immediate from the definitions and the second follows from Proposition~\ref{appendix_prop}.
\end{proof}

\section{Infinite loop space theory of $S$-modules}\label{Infinite loop space theory of $S$-modules_section}

We recall what we need of the theory of $\bbL$-spectra and $S$-modules, referring to \cite{EKMM} as our primary source (see also \citelist{\cite{LMS}\cite{E_infty_rings}\cite{RANT1}}).  By a spectrum we mean a (LMS) spectrum $E$ indexed on a universe $U'$ of real inner product spaces such that the structure maps $E_{V} \arr \Omega^{W - V} E_{W}$ are homeomorphisms, as in \cite{LMS}.  We denote the category of spectra indexed on $U'$ by $\sS^{U'}$.  Recall that we have fixed a universe $U$, and we will work with the category $\sS = \sS^{U}$ of spectra indexed on $U$.  There is a monad $\bbL$ on spectra defined using the twisted half-smash product:
\[
\bbL E = \sL(1) \ltimes E.
\]
$\sL$ denotes the linear isometries operad with $\sL(j) = \cI_c(U^j, U)$.  
Here $\cI_c$ is the category of finite and countably infinite dimensional real inner product spaces and their linear isometries.  An $\bbL$-spectrum is an algebra for the monad $\bbL$ and we let $\sS[\bbL]$ denote the category of $\bbL$-spectra.  There is a smash product $\sma_{\sL}$ of $\bbL$-spectra defined in terms of a certain coequalizer involving the actions of $\sL(1)$.  The functor $(-)\sma_{\sL} E$ is left adjoint to the function $\sL$-spectrum functor $F_{\sL}(E, -)$.  $S$-modules in the sense of \cite{EKMM} are the $\bbL$-spectra $E$ for which the sphere spectrum acts as unit, meaning that the canonical weak equivalence $S \sma_{\sL} E \arr E$ is an isomorphism.  Denote the category of $S$-modules by $\sM_{S}$.  The functor $S \sma_{\sL} - \colon \sS[\bbL] \arr \sM_{S}$ is right adjoint to the forgetful functor $l \colon \sM_{S} \arr \sS[\bbL]$ and is left adjoint to the function $\sL$-spectrum functor $F_{\sL}(S, -) \colon \sM_{S} \arr \sS[\bbL]$.  

An $E_{\infty}$ ring spectrum is an $\bbL$-spectrum $R$ with an action of the linear isometries operad $\sL$.  Equivalently, $R$ is an algebra for the monad $\bC$ on $\bbL$-spectra defined by:
\[
\bC E = \bigvee_{j \geq 0} E^{\sma j} / \Sigma_{j},
\]
where $E^{\sma j}$ is the $j$-fold smash product $\sma_{\sL}$ of $E$.  Similarly, a commutative $S$-algebra is an algebra for the monad $\bC$ restricted to $S$-modules.  To avoid confusion, be warned that the monad $\bC$ is denoted by $\bbP$ in \cite{EKMM} and $\bbC$ is used for a different monad there.  Similarly, an $A_{\infty}$ ring spectrum is an algebra for the monad $\bM$ on $\bbL$-spectra defined by:
\[
\bM E= \bigvee_{j \geq 0} E^{\sma j}.
\]

We will use the following model structures on the categories of spectra, $\bbL$-spectra, and $S$-modules from \citelist{\cite{EKMM}*{\S VII.4} \cite{ABGHR}*{\S3.1}}:
\begin{itemize}
\item  There is a cofibrantly generated topological model structure on the category of spectra, with weak equivalences the weak homotopy equivalences of spectra and fibrations the Serre fibrations, i.e. the maps that are level-wise Serre fibrations of based spaces.  All spectra are fibrant in this model structure.
\item  There is a cofibrantly generated topological model structure on the category of $\bbL$-spectra with weak equivalences and fibrations created by the forgetful functor to spectra.  
\item There is a cofibrantly generated topological monoidal model structure on $S$-modules with weak equivalences and fibrations created by the forgetful functor to spectra.  
\end{itemize}

The underlying infinite loop space of a spectrum $E$ is the zero$^{th}$ space $\Omega^{\infty} E = E(0)$.  The functor $\Omega^{\infty}$ is right adjoint to the suspension spectrum functor $\sus \colon \sT \arr \sS$ \cite{LMS}*{\S I.4}.  Our goal is to study the analog of this adjunction for $S$-modules.

In order to capture the underlying infinite loop space of an $S$-module in a structured way, we will use a symmetric monoidal category Quillen equivalent to the category of topological spaces whose commutative monoids are $\sL$-spaces.   This category is constructed in direct analogy with $S$-modules, and was first carried out in \cite{blumberg_thesis}.  We will outline the basic definitions, referring to \citelist{ \cite{ABGHR} \cite{BCS}} for full details.  

Let $\bbL$ be the monad on unbased spaces defined by $\bbL X = \sL(1) \times X$.  An algebra for $\bbL$ is called an $\bbL$-space and the category of $\bbL$-spaces is denoted by $\sU[\bbL]$.  There is a weak symmetric monoidal product $\boxtimes_{\sL}$ on $\sU[\bbL]$ defined as the coequalizer
\[
\xymatrix{ \sL(2) \times \sL(1) \times \sL(1) \times X \times Y \ar@<-.5ex>[r] \ar@<.5ex>[r] & \sL(2) \times X \times Y \ar[r] & X \boxtimes_{\sL} Y}
\]
of the action of $\sL(1)^2$ on $\sL(2)$ by precomposition and the action of $\sL(1)^2$ on $X \times Y$.  The functor $(-)\boxtimes_{\sL} X$ is left adjoint to the function $\sL$-space functor $F_{\sL}(X, -)$.  An $\bbL$-space $X$ is a $*$-module if the canonical weak equivalence $\lambda \colon \ast \boxtimes_{\sL} X \arr X$ is an isomorphism.  The full subcategory of $\sU[\bbL]$ consisting of $\ast$-modules is denoted by $\sM_{*}$.   The functor $* \boxtimes_{\sL} - \colon \sU[\bbL] \arr \sM_{*}$ is right adjoint to the forgetful functor $l \colon \sM_{*} \arr \sU[\bbL]$ and is left adjoint to the function $\sL$-space functor $F_{\sL}(*, -) \colon \sM_{*} \arr \sU[\bbL]$.

A commutative monoid in the category of $\bbL$-spaces is an algebra for the monad:
\[
\bC X = \coprod_{j \geq 0} X^{\boxtimes j} / \Sigma_{j}.
\]
Here we have written $\boxtimes$ for $\boxtimes_{\sL}$.  Commutative monoids in $\bbL$-spaces are the same thing as $\sL$-spaces, meaning algebras for the linear isometries operad in unbased spaces.  A monoid in the category of $\bbL$-spaces is an algebra for the monad:
\[
\bM X = \coprod_{j \geq 0} X^{\boxtimes j}.
\]
Monoids in $\sL$-spaces are the same thing as non-$\Sigma$ $\sL$-spaces.

We will use the following model structures on $\bbL$-spaces and $\ast$-modules from \cite{BCS}*{4.15, 4.16}.  They are defined using the cofibrantly generated model structure on the category of topological spaces with weak equivalences the weak homotopy equivalences and fibrations the Serre fibrations.
\begin{itemize}
\item  There is a cofibrantly generated topological monoidal model structure on the category of $\bbL$-spaces with weak equivalences and fibrations created by the forgetful functor to spaces.  In particular, all objects are fibrant.  Colimits and limits of $\bbL$-spaces are constructed in the underlying category of spaces.  
\item There is a cofibrantly generated topological monoidal model structure on $*$-modules with weak equivalences and fibrations created by the forgetful functor to spaces.  Colimits are created in the category of $\bbL$-spaces, and limits are created by applying $\ast \boxtimes ( - )$ to the limit computed in the underlying category of spaces.
\end{itemize}
\noindent Notice that since colimits and weak equivalences are preserved by the forgetful functor to spaces, the weak equivalences are well-grounded for both $\bbL$-spaces and $\ast$-modules.

\begin{remark}\label{equiv_with_spaces}
As observed in \cite{BCS}, the category of $\bbL$-spaces and the category of $\ast$-modules are each Quillen equivalent to the category of topological spaces via the forgetful functor.  This is immediate from the construction of the model structures.  Although neither Quillen adjunction is monoidal, there is a different comparison functor from $\bbL$-spaces to spaces that is monoidal, see \cite{BCS}*{\S 4.5}.
\end{remark}

Let $X$ be a based $\bbL$-space (with trivial action of $\sL(1)$ on the basepoint).  The untwisting isomorphism \cite{EKMM}*{2.1} allows us to define an $\bbL$-spectrum structure on $\sus X$ using the $\bbL$-space structure on $X$:
\[
\sL(1) \ltimes \Sigma^{\infty} X \cong \Sigma^{\infty}(\sL(1)_{+} \sma X) \arr \Sigma^{\infty} X.
\]
Since $\sus$ preserves the coequalizers defining $\boxtimes_{\sL}$ and $\sma_{\sL}$, the untwisting isomorphism also gives a natural isomorphism $\sus(X \boxtimes_{\sL} Y)_+ \cong \sus X_+ \sma_{\sL} \sus Y_+$ for unbased $\bbL$-spaces $X$ and $Y$.  In fact, we may lift the $(\Sigma^{\infty}, \Omega^{\infty})$ adjunction to a symmetric monoidal adjunction between $\bbL$-spectra and $\bbL$-spaces.  

\begin{proposition}\cite{ABGHR}*{5.17}  The composable pair of Quillen adjunctions
\[
\xymatrix{
\sU  \ar@<.5ex>[r]^-{( - )_+} & 
\sT  \ar@<.5ex>[r]^-{\sus}  \ar@<.5ex>[l] &
\sS  \ar@<.5ex>[l]^-{\loops} }
\]
induces a pair of Quillen adjunctions between $\bbL$-spaces and $\bbL$-spectra:
\[
\xymatrix{
\sU[\bbL]  \ar@<.5ex>[r]^-{( - )_+} & 
\sT[\bbL]  \ar@<.5ex>[r]^-{\sus_{\bbL}}  \ar@<.5ex>[l] &
\sS[\bbL]  \ar@<.5ex>[l]^-{\loops_{\bbL}} }
\]
Furthermore, the composite of left adjoints $\sus_{\bbL} \circ ( - )_+ = \sus_{\bbL +}$ is strong symmetric monoidal and the right adjoint $\Omega^{\infty}_{\bbL}$ is lax symmetric monoidal.
\end{proposition}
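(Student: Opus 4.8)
The plan is to recognise all four categories as categories of algebras over monads and to lift the two given Quillen adjunctions one monad at a time. Recall that an $\bbL$-space is an algebra for the monad $\bbL X = \sL(1) \x X$ on $\sU$, that there is a parallel monad $\bbL X = \sL(1)_{+} \sma X$ on the category $\sT$ of based spaces (with $\sL(1)$ acting trivially on basepoints), and that $\bbL E = \sL(1) \ltimes E$ is a monad on $\sS$; the unit and multiplication of each come from the identity isometry $* \arr \sL(1)$ and the operadic composition $\sL(1) \x \sL(1) \arr \sL(1)$. The first step is to observe that the left adjoint $(-)_{+} \colon \sU \arr \sT$ intertwines the first two of these monads via the evident homeomorphism $(\sL(1) \x X)_{+} \cong \sL(1)_{+} \sma X_{+}$, and that the left adjoint $\sus \colon \sT \arr \sS$ intertwines the last two via the untwisting isomorphism $\sus(\sL(1)_{+} \sma X) \cong \sL(1) \ltimes \sus X$ of \cite{EKMM}*{2.1}. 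One then checks, using the coherence of these isomorphisms recorded in \cite{EKMM}, that in each case the left adjoint together with its comparison isomorphism is a morphism of monads, i.e.\ is compatible with the monad units and multiplications.

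Granting this, the standard fact that an adjunction whose left adjoint is such a morphism of monads lifts to an adjunction between categories of algebras produces the lifted left adjoints $(-)_{+} \colon \sU[\bbL] \arr \sT[\bbL]$ and $\sus_{\bbL} \colon \sT[\bbL] \arr \sS[\bbL]$; their right adjoints are the forgetful functor $\sT[\bbL] \arr \sU[\bbL]$ and a functor $\loops_{\bbL}$ whose underlying space is $\loops E$ with the $\bbL$-action induced from that of $E$ (so $\loops_{\bbL}$ really does compute the infinite loop space of the prototype adjunction (0), as claimed in the introduction). Because the model structures on $\sU[\bbL]$, $\sT[\bbL]$ and $\sS[\bbL]$ all have fibrations and weak equivalences created by the forgetful functors to $\sU$, $\sT$ and $\sS$, the Quillen property transports for free: a map in $\sS[\bbL]$ is an acyclic fibration (resp.\ a fibration) iff its underlying map of spectra is, and applying $\loops$ to that underlying map gives an acyclic fibration (resp.\ a fibration) of based spaces since $(\sus, \loops)$ is a Quillen adjunction; but that is precisely the underlying map of $\loops_{\bbL}$ applied to the original map. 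Hence $\loops_{\bbL}$ is right Quillen, and the same argument handles $((-)_{+}, \mathrm{forget})$. Composing the two Quillen adjunctions yields the asserted $(\sus_{\bbL +}, \loops_{\bbL})$.

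For the monoidal assertions, first note that $\sus_{\bbL +}(*) = \sus(S^{0}) = S$, the sphere spectrum, which is the unit for $\sma_{\sL}$. The multiplicativity isomorphism $\sus_{\bbL +}(X \boxtimes_{\sL} Y) \cong \sus_{\bbL +} X \sma_{\sL} \sus_{\bbL +} Y$ is exactly the one already constructed just before the statement: $\sus$ is a left adjoint, hence preserves the coequalizers defining $\boxtimes_{\sL}$ and $\sma_{\sL}$, and the untwisting isomorphism identifies the two presentations. Associativity, unit and symmetry coherence for this structure reduce by naturality to the coherence of the untwisting isomorphism, so $\sus_{\bbL +}$ is strong symmetric monoidal with respect to the (weak) symmetric monoidal structures $\boxtimes_{\sL}$ and $\sma_{\sL}$. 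Finally, as with any right adjoint of a strong symmetric monoidal functor, $\loops_{\bbL} = \Omega^{\infty}_{\bbL}$ inherits a canonical lax symmetric monoidal structure, with structure maps the adjuncts of the composites assembled from the monoidal constraints of $\sus_{\bbL +}$ and the counit of the adjunction, exactly as in the proof that $\Omega^{\bullet}$ is lax symmetric monoidal.

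The step that will take real care is the first one: verifying that the untwisting isomorphism is genuinely compatible with the monad structures on $\bbL$-spectra, and likewise with the symmetric monoidal constraints needed in the last paragraph. Everything downstream is formal once these compatibilities are in hand. Fortunately they are exactly the coherence properties of the twisted half-smash product and of $\boxtimes_{\sL}$, $\sma_{\sL}$ that are worked out in \cite{EKMM} and \cite{BCS}, so no genuinely new computation is required.
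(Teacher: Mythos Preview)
The paper does not prove this proposition; it merely cites \cite{ABGHR}*{5.17}.  Your sketch is correct and is essentially the expected argument: lift each adjunction to algebras via the monad morphism supplied by the untwisting isomorphism, transfer the Quillen property using that the model structures on $\sU[\bbL]$, $\sT[\bbL]$, $\sS[\bbL]$ have fibrations and weak equivalences created by the forgetful functors, and obtain the strong symmetric monoidal structure on $\sus_{\bbL+}$ from the isomorphism $\sus_{\bbL+}(X \boxtimes_{\sL} Y) \cong \sus_{\bbL+} X \sma_{\sL} \sus_{\bbL+} Y$ already recorded in the paragraph immediately preceding the statement.  The lax monoidal structure on $\Omega^{\infty}_{\bbL}$ then follows formally, exactly as you say.
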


\begin{remark}  The suspension spectrum $\Sigma^{\infty} X$ of an arbitrary based space $X$ carries a different $\bbL$-spectrum structure given by collapsing $\sL(1)$ to a point:
\[
\sL(1) \ltimes \Sigma^{\infty} X \cong \Sigma^{\infty} (\sL(1)_+ \sma X) \arr \Sigma^{\infty} (S^0 \sma X) \cong \Sigma^{\infty} X.
\]
To avoid confusion with this trivial $\bbL$-spectrum structure, we use the notation $\sus_{\bbL} X$ for the $\bbL$-spectrum associated to an $\bbL$-space.  
\end{remark}

Since $S = \sus_{\bbL +} (*)$, we have a natural isomorphism $\sus_{\bbL +}(* \boxtimes_{\sL} X) \cong S \sma_{\sL} \sus_{\bbL +} X$.  Therefore, the functor $\sus_{\bbL +}$ restricts to a functor $\sus_{S +} \colon \sM_* \arr \sM_S$.  Its right adjoint $\Omega^{\infty}_{S}$ will be the appropriate notion of the underlying infinite loop space of an $S$-module.  Notice that the composite $\Omega_{\bbL}^{\infty} \circ l$ of $\Omega_{\bbL}^{\infty}$ with the forgetful functor $l \colon \sM_S \arr \sS[\bbL]$ does not land in $*$-modules, and in particular is not the right adjoint of $\sus_{S +}$.  

In order to define the infinite loop space $\Omega_{S}^{\infty}$ of an $S$-module, we must be more careful.  The problem is that $\Omega_{\bbL}^{\infty} \circ l$ is the composite of a left adjoint and a right adjoint.  To remedy this, we will pass through the mirror image to the category of $S$-modules \cite{EKMM}*{\S II.2}.  This is the category $\sM^{S}$ of $\bbL$-spectra for which $S$ is a strict counit, meaning that the canonical weak equivalence $E \arr F_{\sL}(S, E)$ is an isomorphism.  The forgetful functor $r \colon \sM^{S} \arr \sS[\bbL]$ has as left adjoint the functor $F_{\sL}(S, -)$.  Furthermore, the resulting adjunction $(S \sma_{\sL} -, F_{\sL}(S, -) ) \colon \sM^{S} \arr \sM_{S}$ is an equivalence of categories.  We summarize this series of adjunctions in the following diagram, with left adjoints on top:
\[
\xymatrix{
\sS[\bbL]  \ar@<.5ex>[rr]^-{F_{\sL}(S, - )} & &
\sM^{S} \ar@<.5ex>[ll]^-{r} \ar@<.5ex>[rr]^-{S \sma_{\sL} - }  & &
\sM_{S}  \ar@<.5ex>[ll]^-{F_{\sL}( S , - )} }
\]
Similarly, there is a mirror image category $\sM^{*}$ to the category of $*$-modules.  Its objects are the $\bbL$-spaces for which the natural map $X \arr F_{\sL}(*, X)$ is an isomorphism.  We have adjunctions and an equivalence of categories between $\sM^*$ and $\sM_{*}$ just as for $S$-modules:
\[
\xymatrix{
\sU[\bbL]  \ar@<.5ex>[rr]^-{F_{\sL}(*, - )} & &
\sM^{*} \ar@<.5ex>[ll]^-{r} \ar@<.5ex>[rr]^-{* \boxtimes_{\sL} - }  & &
\sM_{*}  \ar@<.5ex>[ll]^-{F_{\sL}( * , - )} }
\]
A series of adjunctions shows that the natural isomorphism $\sus_{\bbL +}(* \boxtimes_{\sL} X) \cong S \sma_{\sL} \sus_{\bbL +} X$ induces a natural isomorphism $\loops_{\bbL} F_{\sL}(S, E) \cong F_{\sL}(*, \loops_{\bbL} E)$.  Therefore, the infinite loop space functor $\loops_{\bbL} \colon \sS[\bbL] \arr \sU[\bbL]$ restricts to a functor of mirror image categories $\loops_{\bbL} \colon \sM^{S} \arr \sM^{*}$.  

\begin{definition}\label{def_infinite_loop_of_Smodule}  Define the infinite loop $*$-module $\Omega^{\infty}_{S} M$ of an $S$-module $M$ by:
\[
\Omega^{\infty}_{S} M = * \boxtimes_{\sL} \Omega^{\infty}_{\bbL} F_{\sL} (S, M).
\]
\end{definition}

A series of adjunctions proves:

\begin{lemma}  The left adjoint of $\Omega^{\infty}_{S} \colon \sM_{S} \arr \sM_{*}$ is $\sus_{S +} = \sus_{\bbL +} \circ l \colon \sM_* \arr \sM_S$.  The left adjoint of $\Omega^{\infty}_{\bbL} \circ r \colon \sM^{S} \arr \sM^{*}$ is the functor 
\[
X \longmapsto F_{\sL} (S, \Sigma_{\bbL}^{\infty} (* \boxtimes_{\sL} X) ).
\]
\end{lemma}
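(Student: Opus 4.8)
The proof is a routine chase of adjunctions, so the plan is to assemble the relevant adjunctions and equivalences from this section and compose them; the only input beyond formal nonsense is the natural isomorphism $\sus_{\bbL +}(*\boxtimes_{\sL}X)\cong S\sma_{\sL}\sus_{\bbL +}X$, which is already in hand. It is cleanest to prove the two statements in reverse order.

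For the second statement, fix $X\in\sM^{*}$ and $E\in\sM^{S}$ and compute
\begin{align*}
\sM^{S}\bigl(F_{\sL}(S,\sus_{\bbL +}(*\boxtimes_{\sL}X)),E\bigr)
&\cong\sS[\bbL]\bigl(\sus_{\bbL +}(*\boxtimes_{\sL}X),rE\bigr)
\cong\sU[\bbL]\bigl(*\boxtimes_{\sL}X,\Omega^{\infty}_{\bbL}rE\bigr)\\
&\cong\sM_{*}\bigl(*\boxtimes_{\sL}X,\,*\boxtimes_{\sL}\Omega^{\infty}_{\bbL}rE\bigr)
\cong\sM^{*}\bigl(X,\Omega^{\infty}_{\bbL}rE\bigr),
\end{align*}
using in succession the adjunction $F_{\sL}(S,-)\dashv r$, the adjunction $\sus_{\bbL +}\dashv\Omega^{\infty}_{\bbL}$, the adjunction $l\dashv(*\boxtimes_{\sL}-)$ for $*$-modules, and the full faithfulness of the equivalence $*\boxtimes_{\sL}-\colon\sM^{*}\to\sM_{*}$. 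The penultimate isomorphism needs $\Omega^{\infty}_{\bbL}rE$ to lie in $\sM^{*}$, which is exactly the fact, established above, that $\Omega^{\infty}_{\bbL}$ restricts to a functor $\sM^{S}\to\sM^{*}$. As all the isomorphisms are natural, $F_{\sL}(S,\sus_{\bbL +}(*\boxtimes_{\sL}-))$ is the left adjoint of $\Omega^{\infty}_{\bbL}\circ r$.

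For the first statement, write $\Omega^{\infty}_{S}$ as the composite
\[
\sM_{S}\xrightarrow{F_{\sL}(S,-)}\sM^{S}\xrightarrow{\Omega^{\infty}_{\bbL}}\sM^{*}\xrightarrow{*\boxtimes_{\sL}-}\sM_{*}
\]
of Definition~\ref{def_infinite_loop_of_Smodule}. The outer two functors are the equivalences $\sM_{S}\simeq\sM^{S}$ and $\sM^{*}\simeq\sM_{*}$, with left adjoints the quasi-inverses $S\sma_{\sL}-$ and $F_{\sL}(*,-)$; the middle functor has left adjoint $F_{\sL}(S,\sus_{\bbL +}(*\boxtimes_{\sL}-))$ by the previous paragraph. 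Hence the left adjoint of $\Omega^{\infty}_{S}$ is the composite $(S\sma_{\sL}-)\circ F_{\sL}(S,\sus_{\bbL +}(*\boxtimes_{\sL}-))\circ F_{\sL}(*,-)$, which I would simplify on an object $X\in\sM_{*}$: one has $*\boxtimes_{\sL}F_{\sL}(*,X)\cong X$ (equivalence followed by quasi-inverse), so $\sus_{\bbL +}(*\boxtimes_{\sL}F_{\sL}(*,X))\cong\sus_{\bbL +}\circ l(X)=\sus_{S +}X$, and this is an $S$-module by the natural isomorphism $\sus_{\bbL +}(*\boxtimes_{\sL}X)\cong S\sma_{\sL}\sus_{\bbL +}X$; then applying $F_{\sL}(S,-)$ and $S\sma_{\sL}-$ to an $S$-module returns it, since those two functors realize the equivalence $\sM_{S}\simeq\sM^{S}$. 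So the composite collapses to $\sus_{S +}=\sus_{\bbL +}\circ l$.

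The argument has no genuine difficulty, but it demands care — and this is where a careless chase goes wrong: passing to the mirror-image categories reverses variance, since the forgetful functor $l$ out of $\sM_{*}$ and $\sM_{S}$ is a left adjoint while the forgetful functor $r$ out of $\sM^{*}$ and $\sM^{S}$ is a right adjoint. One must therefore keep track of which side of each adjunction every functor occupies, and must invoke the idempotency built into the definitions — that $S\sma_{\sL}-,F_{\sL}(S,-)$ and $*\boxtimes_{\sL}-,F_{\sL}(*,-)$ are mutually quasi-inverse equivalences — precisely at the steps where iterated smash products or function objects appear, so that they collapse as needed.
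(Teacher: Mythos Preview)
Your proof is correct and is exactly the ``series of adjunctions'' the paper alludes to without writing out; the paper gives no further detail. Your chain of isomorphisms for the second statement and the subsequent collapse of the composite left adjoint for the first statement are the intended argument.
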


\section{The adjunction between $\cI$-spaces and $*$-modules}

A Quillen equivalence between orthogonal spectra and $S$-modules is constructed in \cite{MM}.  Over the next two sections, we will construct an analogous Quillen equivalence between $\cI$-spaces and $*$-modules.  Since $*$-modules are equivalent to $\bbL$-spaces, it suffices to prove that $\cI$-spaces are equivalent to $\bbL$-spaces, which is accomplished in Theorem \ref{quillen_equiv_If_spaces_L-spaces}.  The equivalence restricts to a Quillen equivalence of commutative $\cI$-FCPs and $\sL$-spaces, giving two equivalent approaches to modeling infinite loop spaces.  \S\ref{Comparison of infinite loop spaces of orthogonal spectra and $S$-modules} will use this result to show that the infinite loop space functors for orthogonal spectra and $S$-modules agree.  We first summarize the Quillen equivalence of orthogonal spectra and $S$-modules.  

\begin{theorem}\cite{MM}*{1.1, 1.5}  There is a Quillen equivalence $(\bbN, \bbN^{\#}) \colon  \sI \sS \rightleftarrows \sS[\bbL]$ between orthogonal spectra and $\bbL$-spectra.  The functor $\bbN$ is strong symmetric monoidal and $\bbN^{\#}$ is lax symmetric monoidal.  The restricted adjunction between (commutative) orthogonal ring spectra and $A_{\infty}$ ($E_{\infty}$) ring spectra is also a Quillen equivalence.  Similarly, there is a Quillen equivalence $(\bbN_S, \bbN^{\#}_S) \colon  \sI \sS \rightleftarrows \sM_S$ between orthogonal spectra and $S$-modules that restricts to a Quillen equivalence between (commutative) orthogonal ring spectra and (commutative) $S$-algebras.
\end{theorem}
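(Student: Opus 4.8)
\emph{Proof proposal.} This is the main comparison theorem of \cite{MM}, so I only sketch the shape of the argument. The left adjoint $\bbN \colon \sI \sS \arr \sS[\bbL]$ is built by prolongation: an orthogonal spectrum is a continuous diagram indexed on finite dimensional inner product spaces, and one left Kan extends along the inclusion into the larger indexing category allowing countably infinite dimensional spaces, obtaining a coordinate-free (LMS) spectrum that automatically carries the $\bbL$-action coming from the twisted half-smash product $\sL(1) \ltimes (-)$; the right adjoint $\bbN^{\#}$ is a categorical mapping-spectrum construction. First I would check that $(\bbN, \bbN^{\#})$ is a Quillen adjunction for the stable model structures. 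Since the forgetful functor from $\bbL$-spectra to spectra creates fibrations and weak equivalences, it suffices to evaluate $\bbN$ on the generating (acyclic) cofibrations of orthogonal spectra -- these are built from shift desuspensions of sphere spectra smashed with CW pairs -- and to identify the outputs using the untwisting isomorphism $\sL(1) \ltimes \Sigma^{\infty} X \cong \Sigma^{\infty}(\sL(1)_{+} \sma X)$ of \cite{EKMM}*{2.1}.

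The next step is to upgrade this to a Quillen equivalence. I would use the criterion that $\bbN^{\#}$ detects weak equivalences between fibrant objects and that the derived unit $X \arr \bbN^{\#}((\bbN X)^{\mathrm{fib}})$ is a stable equivalence for every cofibrant $X$. Both statements reduce, by induction over cell attachments and passage to sequential colimits -- using that the stable equivalences on each side are well-grounded, hence compatible with these constructions and with smashing against CW pairs -- to the case where $X$ is a shift desuspension of a point. On such generators $\bbN X$ can be written out explicitly and the comparison map identified with a map already known to be a stable equivalence in the theory of coordinate-free spectra and $\bbL$-spectra. \textbf{This point-set analysis of $\bbN$ on the generators -- controlling the twisted half-smash product and showing that the relevant untwisting maps are stable equivalences -- is the heart of the matter and where essentially all the work lies.}

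For the monoidal claims: $\bbN$ is strong symmetric monoidal because it is a left Kan extension along a symmetric monoidal functor and so commutes with the coequalizers defining $\sma_{\sL}$, while its value on the orthogonal sphere spectrum is the sphere $\bbL$-spectrum; as the right adjoint of a strong symmetric monoidal functor, $\bbN^{\#}$ is then automatically lax symmetric monoidal. Hence the adjunction restricts to $\bM \sI \sS \rightleftarrows \bM \sS[\bbL]$ and to $\bC \sI \sS \rightleftarrows \bC \sS[\bbL] = \sS[\sL]$, that is, to (commutative) orthogonal ring spectra versus $A_{\infty}$ ($E_{\infty}$) ring spectra. To see these restrictions are again Quillen equivalences, I would put the stable model structure on monoids and the positive stable model structure on commutative monoids (so the relevant free monads are homotopically well behaved and the unit is positive-cofibrant) and apply the standard lifting principle for Quillen equivalences of algebra categories: granting the monoid axiom, it suffices to know that cofibrant (commutative) algebras have underlying objects on which the derived unit and counit are the same stable equivalences as for cofibrant spectra. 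The delicate case is the commutative one, where the forgetful functor does not preserve cofibrancy; there one checks the comparison on free commutative algebras and on the successive quotients $E^{\sma j}/\Sigma_{j}$ of the symmetric-power filtration, using that the positive model structure makes these quotients homotopically correct.

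Finally, the $S$-module version $(\bbN_S, \bbN^{\#}_S) \colon \sI \sS \rightleftarrows \sM_S$ is obtained by composing $\bbN$ with the Quillen equivalence $\sS[\bbL] \rightleftarrows \sM_S$ (factoring through the mirror-image category $\sM^{S}$ via $F_{\sL}(S, -)$ and $S \sma_{\sL} -$), and a composite of Quillen equivalences is a Quillen equivalence; carrying out the same composition on (commutative) monoids -- using that $S \sma_{\sL} -$ is strong symmetric monoidal -- gives the statement for (commutative) $S$-algebras.
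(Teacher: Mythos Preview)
The paper does not prove this theorem; it is stated as a citation of \cite{MM}*{1.1,\,1.5}. The only argumentative content the paper adds is the sentence immediately following the statement: ``The cited source only contains the comparison of orthogonal spectra and $S$-modules, but the equivalence of orthogonal spectra and $\bbL$-spectra follows by the equivalence between the categories of $\bbL$-spectra and $S$-modules.'' In other words, \cite{MM} constructs $(\bbN_S, \bbN^{\#}_S)$ directly and the paper deduces $(\bbN, \bbN^{\#})$ from it via the equivalence $\sM_S \simeq \sS[\bbL]$.

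Your sketch runs this deduction in the opposite direction: you propose to build $(\bbN, \bbN^{\#})$ first and then obtain $(\bbN_S, \bbN^{\#}_S)$ by composing with $\sS[\bbL] \rightleftarrows \sM_S$. That is a perfectly coherent strategy, but it is not what either \cite{MM} or this paper does. Beyond that, your description of $\bbN$ as a left Kan extension ``along the inclusion into the larger indexing category allowing countably infinite dimensional spaces'' is not quite the construction in \cite{MM}; there the left adjoint is built as a tensor product of functors $\bbN^* \otimes_{\cJ_S} (-)$ with $\bbN^*(V)$ a twisted half-smash product of the form $\cI_c(V \otimes U, U) \ltimes \Sigma^{V \otimes U}_V S^0$ (compare the paper's later use of $\bbQ^*(V) = \cI_c(V \otimes U, U)$ in the space-level analog). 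The rest of your outline---reduction to generators, the untwisting isomorphism, the monoidal formalities, and the positive model structure for commutative monoids---is in the right spirit and broadly matches the shape of the argument in \cite{MM}, but since the present paper simply cites that source, there is nothing here to compare your sketch against beyond the one-line deduction noted above.
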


The functors we have denoted by $\bbN_{S}$ and $\bbN^{\#}_{S}$ are called $\bbN$ and $\bbN^{\#}$ in \cite{MM}.  The cited source only contains the comparison of orthogonal spectra and $S$-modules, but the equivalence of orthogonal spectra and $\bbL$-spectra follows by the equivalence between the categories of $\bbL$-spectra and $S$-modules.

We now make the central definition underlying the comparison between $\cI$-spaces and $\bbL$-spaces.  
\begin{definition}  Let $W$ be a real inner-product space of countable dimension.  The space of linear isometries $\cI_c(W, U)$ is an $\bbL$-space with the action 
\[
\bbL \cI_c(W, U) = \cI_c(U, U) \times \cI_c(W, U) \arr \cI_c(W, U)
\]
given by composition.  Define $\bbQ^{*} \colon \cI^{\op} \arr \sU[\bbL]$ by:
\[
\bbQ^*(V) = \cI_c(V \otimes U, U).
\]  
\end{definition}

\noindent By Hopkins' Lemma \cite{EKMM}*{I.5.4}, the $\bbL$-space $\cI_c((V \oplus W) \otimes U, U)$ is the coequalizer of the diagram defining $\cI_c(V \otimes U, U) \boxtimes_{\sL} \cI_c(W \otimes U, U)$.  This identification is natural and symmetric in $V$ and $W$, so we have proved the following:

\begin{lemma}  The functor $\bbQ^*$ is strong symmetric monoidal:
\[
\bbQ^*(V \oplus W) \cong \bbQ^*(V) \boxtimes_{\sL} \bbQ^*(W) \quad \text{and} \quad \bbQ^*(0) \cong *.
\]
\end{lemma}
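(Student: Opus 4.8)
The plan is to read both sides of the claimed isomorphism through Hopkins' Lemma \cite{EKMM}*{I.5.4}. Since tensoring with $U$ distributes over direct sums, there is a natural isometric isomorphism $(V \oplus W) \otimes U \cong (V \otimes U) \oplus (W \otimes U)$, so
\[
\bbQ^*(V \oplus W) = \cI_c\bigl( (V \otimes U) \oplus (W \otimes U), U \bigr),
\]
and both $V \otimes U$ and $W \otimes U$ have countable dimension. Hopkins' Lemma identifies this space with the balanced product $\sL(2) \times_{\sL(1)^2} \bigl( \cI_c(V \otimes U, U) \times \cI_c(W \otimes U, U) \bigr)$ via $(h, f, g) \mapsto h \circ (f \oplus g)$, where $f \oplus g \colon (V \otimes U) \oplus (W \otimes U) \arr U^2$ and $h \in \sL(2) = \cI_c(U^2, U)$. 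But this balanced product is exactly the coequalizer defining $\bbQ^*(V) \boxtimes_{\sL} \bbQ^*(W)$, so we obtain the desired homeomorphism. For the unit, $\bbQ^*(0) = \cI_c(0 \otimes U, U) = \cI_c(0, U)$ is a single point with trivial $\sL(1)$-action, which is the monoidal unit $*$.

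Next I would check that these identifications respect all the structure at hand. The homeomorphism is one of $\bbL$-spaces because the $\bbL$-action on $\bbQ^*(V \oplus W)$ is postcomposition with $\sL(1) = \cI_c(U, U)$, which under $h \circ (f \oplus g) \leftrightarrow [h; f, g]$ becomes the operad composition $\sL(1) \times \sL(2) \arr \sL(2)$ on the first coordinate, i.e.\ the canonical $\bbL$-structure on $\boxtimes_{\sL}$; the unital relation for $\sL$ is what makes these agree. Naturality in $V$ and $W$ as objects of $\cI^{\op}$ holds because a linear isometry $V' \arr V$ acts on both sides by $(-\otimes U)$ followed by precomposition, which is visibly compatible with the formula $h \circ (f \oplus g)$. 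The associativity constraint is verified by identifying both iterated products with $\bbQ^*(V \oplus W \oplus W') \cong \sL(3) \times_{\sL(1)^3}\bigl( \bbQ^*(V) \times \bbQ^*(W) \times \bbQ^*(W') \bigr)$ through the operad composition $\sL(2) \times \sL(2) \arr \sL(3)$, using its associativity; the symmetry constraint follows because the $\Sigma_2$-action on $\sL(2)$ defining the symmetry of $\boxtimes_{\sL}$ corresponds under Hopkins' Lemma to the transposition isometry $(V \otimes U) \oplus (W \otimes U) \cong (W \otimes U) \oplus (V \otimes U)$ induced by $V \oplus W \cong W \oplus V$. Finally, the unit coherence isomorphisms are the canonical maps $* \boxtimes_{\sL} \bbQ^*(V) \arr \bbQ^*(V)$, which are themselves instances of Hopkins' Lemma (with $0 \oplus (V \otimes U) = V \otimes U$) and hence isomorphisms.

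The only genuine mathematical input is Hopkins' Lemma, which is quoted from \cite{EKMM}; everything after that is bookkeeping with the operad structure maps of $\sL$, so I do not expect a real obstacle. If there is a delicate point, it is checking that the displayed object-level isomorphisms assemble into an honest strong symmetric monoidal structure (equivariance plus the pentagon and hexagon diagrams) rather than a mere family of isomorphisms of objects; but since the operad composition of $\sL$ is unital, associative, and $\Sigma$-equivariant, each required diagram reduces to one of these properties, so the verification is routine.
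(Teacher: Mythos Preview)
Your proposal is correct and follows exactly the paper's approach: the paper's entire proof is the single sentence that Hopkins' Lemma \cite{EKMM}*{I.5.4} identifies $\cI_c((V\oplus W)\otimes U,U)$ with the coequalizer defining $\bbQ^*(V)\boxtimes_{\sL}\bbQ^*(W)$, and that this identification is natural and symmetric in $V$ and $W$. You have reproduced this and gone further, spelling out the $\bbL$-equivariance, associativity, symmetry, and unit coherence that the paper leaves implicit; none of that extra bookkeeping is wrong, though note that your invocation of Hopkins' Lemma for the unit case (one summand equal to $0$) is strictly speaking a degenerate instance not covered by the usual statement for universes, but it is easily verified directly.
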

The general theory of \cite{MM}*{\S1.2} now applies to the functor $\bbQ^*$.  Define functors $\bbQ \colon \cI \sU \arr \sU[\bbL]$ and $\bbQ^{\#} \colon \sU[\bbL] \arr \cI \sU$ by:
\[
\bbQ X = \bbQ^* \otimes_{\cI} X \quad \text{and} \quad (\bbQ^{\#} Y) (V) = \sU[\bbL](\bbQ^*(V), Y).
\]
Here $- \otimes_{\sD} -$ denotes the tensor product of functors (an enriched coend) over a topological category $\sD$.  See \S\ref{Topological Categories and the Bar Construction} for the definition in this context.  In particular, Proposition \ref{tensor_product_ok_prop} shows that $- \otimes_{\sD} -$ does not depend on the topology of $\ob \sD$, only on the enrichment of $\sD$.   The paper \cite{vogt} is a readable account of the case when $\ob \sD$ is discrete and both functors land in topological spaces.  For a more abstract setting and the relation to bar constructions, see \cite{shulman_hocolim}.  We will freely use basic properties of this construction as contained in these sources.  The results of \cite{MM}*{\S1.2} give us:

\begin{proposition}  The functor $\bbQ$ is left adjoint to $\bbQ^{\#}$.  Furthermore, $\bbQ$ is strong symmetric monoidal and $\bbQ^{\#}$ is lax symmetric monoidal.  Thus $(\bbQ, \bbQ^{\#})$ restricts to an adjunction between the categories of (commutative) $\cI$-FCPs and (commutative) monoids in $\bbL$-spaces.
\end{proposition}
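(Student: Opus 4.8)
The plan is to deduce the whole statement from the general machinery of \cite{MM}*{\S1.2}, whose only hypothesis---that the diagram functor be strong symmetric monoidal---has just been verified for $\bbQ^*$ by means of Hopkins' Lemma. I will nevertheless sketch the three ingredients explicitly.

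First, the adjunction. For an $\cI$-space $X$ and an $\bbL$-space $Y$, the universal property of the tensor product of functors over a topological category (see \S\ref{Topological Categories and the Bar Construction}; by Proposition \ref{tensor_product_ok_prop} nothing here depends on the topology of $\ob \cI$, so we may work with $\cI$ itself), together with the fact that $\sU[\bbL]$ is tensored and cotensored over $\sU$, gives natural isomorphisms
\[
\sU[\bbL]\bigl(\bbQ X, Y\bigr) = \sU[\bbL]\bigl(\bbQ^* \otimes_{\cI} X,\, Y\bigr) \cong \cI \sU\bigl(X,\, \sU[\bbL](\bbQ^*(-), Y)\bigr) = \cI \sU\bigl(X, \bbQ^{\#} Y\bigr),
\]
so that $\bbQ$ is left adjoint to $\bbQ^{\#}$.

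Second, strong symmetric monoidality of $\bbQ$. Writing $X \boxtimes Y$ as the left Kan extension of $X \times Y$ along $\oplus \colon \cI \times \cI \arr \cI$, and using the Fubini theorem for coends, the co-Yoneda lemma, the identification $\bbQ^*(V \oplus W) \cong \bbQ^*(V) \boxtimes_{\sL} \bbQ^*(W)$ of the preceding lemma, and the fact that $(-) \boxtimes_{\sL} Z$ is a left adjoint (to $F_{\sL}(Z, -)$) and hence commutes with the coend defining $\bbQ$, one obtains natural isomorphisms
\begin{gather*}
\bbQ(X \boxtimes Y) \cong \bigl(\bbQ^*(-) \boxtimes_{\sL} \bbQ^*(-)\bigr) \otimes_{\cI \times \cI} (X \times Y) \\
\cong (\bbQ^* \otimes_{\cI} X) \boxtimes_{\sL} (\bbQ^* \otimes_{\cI} Y) = \bbQ X \boxtimes_{\sL} \bbQ Y,
\end{gather*}
together with $\bbQ(\cI(0, -)) \cong \bbQ^*(0) \cong *$ on units. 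Because the coherence isomorphisms for $\bbQ^*$ are natural and symmetric in their variables (this is exactly the content of the application of Hopkins' Lemma \cite{EKMM}*{I.5.4} above), these isomorphisms are compatible with the associativity and symmetry constraints; thus $\bbQ$ is strong symmetric monoidal, where ``symmetric monoidal'' is understood in the weak sense appropriate to $\sU[\bbL]$.

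Third, since $\bbQ$ is a strong symmetric monoidal left adjoint, its right adjoint $\bbQ^{\#}$ inherits a canonical lax symmetric monoidal structure---conjugate the structure isomorphisms of $\bbQ$ through the adjunction---and the unit and counit become monoidal natural transformations. A monoidal adjunction of this kind lifts to an adjunction between categories of monoids and between categories of commutative monoids, so $(\bbQ, \bbQ^{\#})$ restricts to adjunctions $\bM \cI \sU \rightleftarrows \bM \sU[\bbL]$ and $\bC \cI \sU \rightleftarrows \bC \sU[\bbL]$, that is, between (commutative) $\cI$-FCPs and (commutative) monoids in $\bbL$-spaces. I expect the only point requiring care to be the interchange in the second step: that the coequalizer defining $\boxtimes_{\sL}$ commutes with the coend defining $\bbQ^* \otimes_{\cI} X$. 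This is not because the coequalizer is an absolute colimit---it is not---but because $(-) \boxtimes_{\sL} Z$ preserves all colimits, being a left adjoint. Everything else is the formal apparatus of \cite{MM}*{\S1.2} specialized to the functor $\bbQ^*$.
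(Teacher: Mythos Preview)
Your proposal is correct and matches the paper's approach: the paper simply cites \cite{MM}*{\S1.2} for the entire proposition, and you do the same while spelling out the three standard ingredients (the coend adjunction, the strong monoidality via Fubini and co-Yoneda, and the formal passage to a lax monoidal right adjoint). Your added care about why $\boxtimes_{\sL}$ commutes with the coend---because $(-)\boxtimes_{\sL} Z$ is a left adjoint---is a correct and worthwhile clarification beyond what the paper makes explicit.
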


The following diagram shows how the adjunction $(\bbQ, \bbQ^{\#})$ fits into the infinite loop space theory of orthogonal spectra, $\bbL$-spectra and $S$-modules:
\addtocounter{theorem}{1}
\begin{equation}\label{loops_orthogonal_to_Lspectra_diagram}
\xymatrix{
\sI \sS \ar@<.5ex>[rr]^-{\bbN} \ar[dd]_{\Omega^{\bullet}} & &
\sS[\bbL] \ar@<.5ex>[ll]^-{\bbN^{\#}} \ar[dd]_{\Omega^{\infty}_{\bbL}} \ar@<.5ex>[rr]^-{F_{\sL}(S, - )} & &
\sM^{S} \ar@<.5ex>[ll]^-{r} \ar@<.5ex>[rr]^-{S \sma_{\sL} - } \ar[dd]_{\Omega^{\infty}_{\bbL}} & &
\sM_{S} \ar[dd]^{\Omega^{\infty}_{S}} \ar@<.5ex>[ll]^-{F_{\sL}( S , - )}
\\
& & & & & & \\
\cI \sU \ar@<.5ex>[rr]^-{\bbQ}  & &
\sU[\bbL] \ar@<.5ex>[ll]^-{\bbQ^{\#}} \ar@<.5ex>[rr]^-{F_{\sL}( * , - )} & &
\sM^{*} \ar@<.5ex>[rr]^-{* \boxtimes_{\sL} - } \ar@<.5ex>[ll]^-{r} & &
\sM_{*} \ar@<.5ex>[ll]^-{F_{\sL}(*, - )} } 
\end{equation}
Each pair of arrows is a Quillen adjunction with left adjoint on top.  Each vertical arrow has a left adjoint and these vertical adjunctions are also Quillen adjunctions.  The rightmost square commutes (in both directions) by the definition of $\Omega^{\infty}_{S}$ and the fact that the top and bottom adjunctions are equivalences of categories.  The middle square also commutes (in both directions), and we will  prove in Proposition \ref{loops_orthogonal_to_Lspectra_prop} that the left square commutes up to natural weak equivalence.  

There is a direct construction of a functor $\bbQ_{*} \colon \cI \sU \arr \sM_{*}$ using the same method as the construction of $\bbQ$: choosing $\bbQ_*^*(V) = \ast \boxtimes_{\sL} \cI_c(V \otimes U, U)$ instead of $\cI_c(V \otimes U, U)$ lands in $*$-modules instead of $\bbL$-spaces.  The functor $\bbQ_* = \bbQ_*^* \otimes_{\cI} (-)$ has a right adjoint $\bbQ^{\#}_{*}$.  In fact, this adjunction of $\cI$-spaces and $*$-modules coincides with the composite adjunction in diagram \eqref{loops_orthogonal_to_Lspectra_diagram}.  The proof is a long series of adjunctions and will be omitted, but works equally well to compare $\bbN_{S}$ to $\bbN$.

\begin{lemma}  In diagram \eqref{loops_orthogonal_to_Lspectra_diagram}, the top horizontal composite $S \sma_{\sL} F_{\sL}(S, \bbN - )$ is naturally isomorphic to $\bbN_{S} \colon \sI \sS \arr \sM_{S}$.  Hence its right adjoint $\bbN^{\#} F_{\sL} (S, -)$ is naturally isomorphic to $\bbN^{\#}_{S}$.  The bottom horizontal composite $* \boxtimes_{\sL} F_{\sL} (*, \bbQ -)$ is naturally isomorphic to $\bbQ_{*}$ and its right adjoint $\bbQ^{\#} F_{\sL}(*, -)$ is naturally isomorphic to $\bbQ^{\#}_{*}$.
\end{lemma}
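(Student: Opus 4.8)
The plan is to verify each of the four stated natural isomorphisms by chasing adjunctions, using the key formal input that all the functors in question are left Kan extensions against strong symmetric monoidal functors out of $\cI$ (or built from such), and that the ``untwisting'' identities relating $\sus_{\bbL +}$, $\boxtimes_{\sL}$ and $\sma_{\sL}$ intertwine everything. I would set up the two halves ($\bbN_S$ versus $\bbN$, and $\bbQ_*$ versus $\bbQ$) in parallel, since the statement explicitly remarks that the same proof works for both; I will describe the $\bbQ_*$ case and indicate that $\bbN_S$ is identical.

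First I would recall that by construction $\bbQ X = \bbQ^* \otimes_{\cI} X$ where $\bbQ^*(V) = \cI_c(V \otimes U, U)$ is a strong symmetric monoidal functor $\cI^{\op} \to \sU[\bbL]$, and similarly $\bbQ_* X = \bbQ_*^* \otimes_{\cI} X$ with $\bbQ_*^*(V) = * \boxtimes_{\sL} \cI_c(V \otimes U, U)$. The core computation is then that $* \boxtimes_{\sL} F_{\sL}(*, \bbQ X)$ is naturally isomorphic to $\bbQ_* X$. Since $* \boxtimes_{\sL} - $ is a left adjoint it commutes with the coend $\otimes_{\cI}$; and since $(* \boxtimes_{\sL} -)$ followed by the forgetful functor is weakly equivalent to the identity but, crucially, $(* \boxtimes_{\sL} -) \circ F_{\sL}(*, -)$ is isomorphic to the identity on $*$-modules (this is exactly the equivalence $\sM^* \simeq \sM_*$ recorded before Definition \ref{def_infinite_loop_of_Smodule}), I would argue that $* \boxtimes_{\sL} F_{\sL}(*, \bbQ^*(V))$ is naturally isomorphic to $* \boxtimes_{\sL} \bbQ^*(V) = \bbQ_*^*(V)$. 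Commuting the left adjoint $* \boxtimes_{\sL} -$ past the coend (and observing $F_{\sL}(*, -)$ interacts correctly with the enriched coend, which it does because $\bbQ X$ and the coend are built from $\bbQ^*$ levelwise) then yields $* \boxtimes_{\sL} F_{\sL}(*, \bbQ X) \cong \bbQ_*^* \otimes_{\cI} X = \bbQ_* X$, naturally in $X$. Taking right adjoints of both sides of the established natural isomorphism of left adjoints $\bbQ_* \cong (* \boxtimes_{\sL} -) \circ F_{\sL}(*, -) \circ \bbQ$ immediately gives $\bbQ^{\#}_* \cong \bbQ^{\#} \circ F_{\sL}(*, -)$, since a natural isomorphism of left adjoints induces one of right adjoints. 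The $\bbN_S$ statement is proved by the verbatim argument with $\cI_c(V \otimes U, U)$ replaced by the orthogonal-to-$\bbL$-spectra prolongation data of \cite{MM}*{\S1.2}, $\boxtimes_{\sL}$ replaced by $\sma_{\sL}$, and $* \boxtimes_{\sL} -$ replaced by $S \sma_{\sL} -$; the equivalence $\sM^S \simeq \sM_S$ plays the role of $\sM^* \simeq \sM_*$.

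The subtle point — the step I expect to absorb most of the real work — is verifying that the left adjoint $* \boxtimes_{\sL} -$ (respectively $S \sma_{\sL} -$) together with $F_{\sL}(*, -)$ commutes with the coend $\bbQ^* \otimes_{\cI} (-)$ in the precise sense needed, i.e. that the natural transformation $* \boxtimes_{\sL} F_{\sL}(*, \bbQ^* \otimes_{\cI} X) \to (* \boxtimes_{\sL} F_{\sL}(*, \bbQ^*)) \otimes_{\cI} X$ is an isomorphism. Here $F_{\sL}(*, -)$ is a right adjoint and so does not a priori commute with the colimit defining the coend; the resolution is that we are not using naive commutation but rather the isomorphism $(* \boxtimes_{\sL} -) \circ F_{\sL}(*, -) \cong \id$ on the full subcategory of $*$-modules, combined with the observation that each $\bbQ^*(V)$ and hence the coend is obtained from already-``$*$-modular'' pieces after applying $* \boxtimes_{\sL} -$, so that the composite $(* \boxtimes_{\sL} -) \circ F_{\sL}(*, -)$ can be pushed inside. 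This is why the paper describes the proof as ``a long series of adjunctions'': one must carefully track which functors are being applied to objects already in $\sM_*$ (where the counit is an iso) versus general $\bbL$-spaces. Once this bookkeeping is done the isomorphisms are formal, and I would present the argument as: (1) identify $\bbQ_*^*$ with $* \boxtimes_{\sL} \bbQ^*$ by definition; (2) identify $* \boxtimes_{\sL} \bbQ^*$ with $* \boxtimes_{\sL} F_{\sL}(*, \bbQ^*)$ levelwise using $\sM^* \simeq \sM_*$; (3) commute $* \boxtimes_{\sL} -$ and the relevant instance of $F_{\sL}(*, -)$ past $- \otimes_{\cI} X$; (4) conclude for the left adjoints and then dualize to the right adjoints.
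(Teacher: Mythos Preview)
The paper explicitly omits this proof (``The proof is a long series of adjunctions and will be omitted''), so there is no reference argument to compare against; what follows is an evaluation of your proposal on its own merits.

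Your overall strategy is sound, and you correctly identify step (3)---commuting the right adjoint $F_{\sL}(*,-)$ past the coend---as the crux. However, your resolution of that step is vague, and in fact the difficulty dissolves once you make one observation explicit: each $\bbQ^*(V)$ is \emph{already} a $*$-module, not merely ``$*$-modular after applying $*\boxtimes_{\sL}-$''. This follows immediately from the strong symmetric monoidal structure of $\bbQ^*$: since $\bbQ^*(0)=*$, the unit isomorphism gives $\lambda\colon *\boxtimes_{\sL}\bbQ^*(V)\cong\bbQ^*(0)\boxtimes_{\sL}\bbQ^*(V)\cong\bbQ^*(0\oplus V)=\bbQ^*(V)$. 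Since colimits in $\sM_*$ are created in $\sU[\bbL]$, the coend $\bbQ X=\bbQ^*\otimes_{\cI}X$ is then itself a $*$-module. The analogous statement for $\bbN^*$ is Hopkins' Lemma in EKMM (I.8.5): $\cI_c(U',U)\ltimes E$ is an $S$-module whenever $U'$ is a universe.

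With this in hand, no commutation of $F_{\sL}(*,-)$ with colimits is needed. You simply apply the equivalence $(*\boxtimes_{\sL}-,\,F_{\sL}(*,-))\colon\sM^*\simeq\sM_*$ directly to the object $\bbQ X\in\sM_*$ to get $*\boxtimes_{\sL}F_{\sL}(*,\bbQ X)\cong\bbQ X$. Separately, $\bbQ_*^*(V)=*\boxtimes_{\sL}\bbQ^*(V)\cong\bbQ^*(V)$ via $\lambda$, so $\bbQ_*X\cong\bbQ X$ naturally. Chaining these gives the desired isomorphism $*\boxtimes_{\sL}F_{\sL}(*,\bbQ X)\cong\bbQ_*X$, and the statement for right adjoints follows as you say. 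So your four-step plan collapses to a two-step argument once you sharpen the observation in your final paragraph from ``$*$-modular after applying $*\boxtimes_{\sL}-$'' to ``already a $*$-module''.
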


\section{The equivalence of $\cI$-spaces and $*$-modules}\label{section_equivalence_cI_spaces_L_spaces}

We now turn to the homotopical analysis of the functors $\bbQ$ and $\bbQ^{\#}$.  The main result of this section is Theorem \ref{quillen_equiv_If_spaces_L-spaces}, which states that $(\bbQ, \bbQ^{\#})$ is a Quillen equivalence.  

We will make serious use of the two sided bar construction $B(Y, \sD, X)$ built out of a topological category $\sD$ and functors $X \colon \sD \arr \sU$ and $Y \colon \sD^{\op} \arr \sU$.  The homotopy colimit of $X$ over $\sD$ is the special case of $Y = *$.  The bar construction acts as a derived version of the tensor product of functors $Y \otimes_{\sD} X$.  See \S A for the definition and basic properties of the two-sided bar construction over topological categories.  Recall from \S\ref{Infinite loop space theory of orthogonal spectra} that we use the equivalent topological category $\cI^{\dagger}$ instead of $\cI$ when taking the homotopy colimit of an $\cI$-space.  We make the same abuse of notation for bar constructions as we do for homotopy colimits and write $B(Y, \cI, X)$ for the bar construction defined using $\cI^{\dagger}$.

We will need the following results on spaces of isometries.

\begin{lemma}\label{mapping_space_equiv_lemma}  \hspace{2in}
\begin{itemize}  
\item[(i)]  Given a real inner-product space $V$, the space $\cI_c(V, U)$ of isometries is  contractible.  Furthermore, if $V$ is finite dimensional, then it is a CW complex.
\item[(ii)]   If $U'$ is a countably infinite dimensional real inner-product space, then the space $\cI_c(U', U)$ of linear isometries is a cofibrant $\bbL$-space.
\item[(iii)] A linear isometry $U' \arr U''$ of infinite dimensional real inner-product spaces induces a weak equivalence of mapping spaces:
\[
\sU[\bbL](\cI_c(U', U) , Y) \overset{\simeq}{\arr} \sU[\bbL] (\cI_c(U'', U), Y).
\]
Similarly, a weak equivalence $X \arr Y$ of $\bbL$-spaces induces a weak equivalence of mapping spaces
\[
\sU[\bbL](\cI_c(U', U), X) \overset{\simeq}{\arr} \sU[\bbL] (\cI_c(U', U), Y)
\]
\item[(iv)]  Let $f \colon X \arr Y$ be a fibration of $\bbL$-spaces and let $\phi \colon V \arr W$ be a morphism in $\cI$.  The map
\[
f_* \colon \sU[\bbL] (\bbQ^*(V), X) \arr \sU[\bbL](\bbQ^*(V), U), Y)
\]
is a Serre fibration and the induced map $\sU[\bbL]((\phi \otimes \id)^*, f)$
\[
\sU[\bbL](\bbQ^*(V), X) \arr \sU[\bbL](\bbQ^*(W), X) \times_{\sU[\bbL](\bbQ^*(W), Y) } \sU[\bbL](\bbQ^*(V), Y)
\]
is a weak homotopy equivalence.
\end{itemize}
\end{lemma}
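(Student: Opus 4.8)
The plan is to isolate the two statements that carry geometric content, (i) and (ii), and then obtain (iii) and (iv) formally from the topological form of SM7, Ken Brown's lemma, and right properness of spaces. For (i), the contractibility of $\cI_c(V, U)$ when $U$ is a universe is the classical fact underlying the theory of the linear isometries operad: one builds an explicit contraction by first deforming isometries into a complementary copy of $U$ (a Hilbert hotel shift), then using that $t \mapsto \sqrt{1 - t^2}\, f_0 + t\, f_1$ is a path of isometries as soon as $f_0$ and $f_1$ have orthogonal images; see \cite{EKMM} and \cite{LMS}. When $V \cong \bR^n$ is finite dimensional, every isometry $\bR^n \arr U$ has image in a finite dimensional subspace, so $\cI_c(\bR^n, U)$ is the sequential colimit, along closed inclusions, of the Stiefel manifolds of $n$-frames in $\bR^k$; each of these is a compact smooth manifold, hence a finite CW complex, so the colimit is a CW complex (and this reproves contractibility in this case, since the $n$-frame space in $\bR^k$ is highly connected for large $k$).

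For (ii), a countably infinite dimensional real inner-product space $U'$ admits an isometric isomorphism $g \colon U' \arr U$, and precomposition with $g^{-1}$ is an $\bbL$-equivariant homeomorphism $\cI_c(U', U) \cong \cI_c(U, U) = \sL(1)$, equivariance holding because the monad structure map is postcomposition. Since $\sL(1)$ is the free $\bbL$-space $\bbL(*)$ on a point and the forgetful functor $\sU[\bbL] \arr \sU$ creates fibrations and weak equivalences, $\bbL$ is left Quillen and $\bbL(*)$ is cofibrant. Part (iii) is then formal: every $\bbL$-space is fibrant, so SM7 and Ken Brown's lemma show that $\sU[\bbL](-, Y)$ sends weak equivalences between cofibrant $\bbL$-spaces to weak equivalences, and that $\sU[\bbL](A, -)$ sends weak equivalences of $\bbL$-spaces to weak equivalences for every cofibrant $A$. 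A linear isometry $U' \arr U''$ induces, by precomposition, an $\bbL$-map $\cI_c(U'', U) \arr \cI_c(U', U)$ between cofibrant $\bbL$-spaces that is a weak equivalence since both spaces are contractible by (i); applying $\sU[\bbL](-, Y)$ gives the first assertion of (iii), and the second is the special case $A = \cI_c(U', U)$ of the preceding remark.

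For (iv), first note that $\bbQ^*(V) = \cI_c(V \otimes U, U)$ is a cofibrant $\bbL$-space: for $V \neq 0$ the space $V \otimes U$ is countably infinite dimensional, so this is (ii), while the case $V = 0$, where $\bbQ^*(0) = *$, is checked separately. Since $\bbQ^*(V)$ is cofibrant, SM7 applied to the fibration $f$ shows that $f_* \colon \sU[\bbL](\bbQ^*(V), X) \arr \sU[\bbL](\bbQ^*(V), Y)$ is a Serre fibration. Moreover $(\phi \otimes \id)^* = \bbQ^*(\phi) \colon \bbQ^*(W) \arr \bbQ^*(V)$ is a map between cofibrant $\bbL$-spaces which is a weak equivalence, source and target being contractible by (i); applying $\sU[\bbL](-, X)$ and $\sU[\bbL](-, Y)$ then produces a commuting square
\[
\xymatrix{
\sU[\bbL](\bbQ^*(V), X) \ar[r]^-{\simeq} \ar[d]_{f_*} & \sU[\bbL](\bbQ^*(W), X) \ar[d]^{f_*} \\
\sU[\bbL](\bbQ^*(V), Y) \ar[r]_-{\simeq} & \sU[\bbL](\bbQ^*(W), Y)
}
\]
in which the horizontal maps are weak equivalences and the vertical maps are Serre fibrations. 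Writing $P$ for the pullback formed by the right-hand and bottom edges of the square, the projection $P \arr \sU[\bbL](\bbQ^*(W), X)$ is the base change of the bottom weak equivalence along the right-hand Serre fibration, hence a weak equivalence by right properness of spaces; and since the composite $\sU[\bbL](\bbQ^*(V), X) \arr P \arr \sU[\bbL](\bbQ^*(W), X)$ equals the top weak equivalence, two out of three shows that the pullback-corner map $\sU[\bbL]((\phi \otimes \id)^*, f) \colon \sU[\bbL](\bbQ^*(V), X) \arr P$ is a weak equivalence. The only genuinely non-formal input is (i); the points that need care are the cofibrancy of $\bbQ^*(V)$, which hinges entirely on the identification $\cI_c(U', U) \cong \bbL(*)$, and the degenerate case $V = 0$.
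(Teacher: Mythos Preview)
Your proof is correct and follows the same approach as the paper: cite the classical contractibility for (i), identify $\cI_c(U',U) \cong \bbL(*)$ for (ii), and deduce (iii) and (iv) formally from the fact that the model structure on $\bbL$-spaces is topological with all objects fibrant. You are simply more explicit than the paper about the model-categorical machinery (SM7, Ken Brown, right properness), and you correctly flag the degenerate case $V = 0$ in (iv), which the paper's proof glosses over.
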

\begin{proof}
The first claim in (i) is \cite{E_infty_rings}*{I.1.3}.  When $V$ is finite dimensional, $\cI_c(V, U)$ is the union over the finite dimensional subspaces $W \subset U$ of the Stiefel manifolds $\cI(V, W)$, hence is triangulable as a CW complex.  A choice of isomorphism $U \arr U'$ induces an isomorphism of $\bbL$-spaces $\cI_c(U', U) \cong \cI_c(U, U) = \bbL(*)$, and the latter is a generating cell for the model structure on $\bbL$-spaces.  This proves (ii), and (iii) follows because the model structure on $\bbL$-spaces is topological.  For (iv), the induced map $f_*$ is a Serre fibration because $\bbQ^*(V) = \cI_c(V \otimes U, U)$ is a cofibrant $\bbL$-space.  The induced maps $\sU[\bbL](\phi^*, X)$ and $\sU[\bbL](\phi^*, Y)$ are weak homotopy equivalences by (iii).  Since weak homotopy equivalences are preserved by pullbacks along Serre fibrations, it follows that the map $\sU[\bbL]((\phi \otimes \id)^*, f)$ is also a weak homotopy equivalence.
\end{proof}

The following lemma translates between the model-theoretic approach to homotopy theory and that based on the bar construction.

\begin{lemma}\label{cofibrant_colim}
Suppose that $X$ is a cofibrant $\cI$-space.  Then the projection 
\[
\pi \colon \hocolim_{\cJ} X \arr \colim_{\cJ} X
\] 
is a weak homotopy equivalence.
\end{lemma}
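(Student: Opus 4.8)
The plan is to reduce the statement to a sequential mapping-telescope computation, using that $\cJ$ is a directed poset with a cofinal sequence, and then to carry out that computation by showing that a cofibrant $\cI$-space has $h$-cofibrant structure maps. First I would note that, $\cJ$ being a poset, by Proposition~\ref{tensor_product_ok_prop} the topological homotopy colimit $\hocolim_{\cJ}$ is the ordinary homotopy colimit of a functor on the poset $\cJ$, so every cofinality argument below is the classical one. Fix a countable orthonormal basis $(e_i)_{i\ge 1}$ of the universe $U$ and put $U_n=\langle e_1,\dots,e_n\rangle$. Since $U\cong\bbR^{\infty}=\bigoplus_i\bbR$, every finite dimensional subspace of $U$ is contained in some $U_n$, so the full subcategory $\cN\subseteq\cJ$ on the objects $U_n$ is cofinal, and for every $V\in\cJ$ the comma category $V\downarrow\cN=\{\,n:V\subseteq U_n\,\}$ is a nonempty final segment of $\bbN$, hence has a least element and contractible nerve. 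Ordinary cofinality gives $\colim_{\cJ}X\cong\colim_n X(U_n)$, and Lemma~\ref{theorem_A_hocolim_version} gives a weak homotopy equivalence $\hocolim_{\cN}X\arr\hocolim_{\cJ}X$. Both identifications are natural and compatible with the projections, so it suffices to prove that for cofibrant $X$ the projection
\[
\pi_{\cN}\colon\hocolim_n X(U_n)\arr\colim_n X(U_n)
\]
out of the mapping telescope is a weak homotopy equivalence.

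Next I would use that a sequential colimit along $h$-cofibrations of spaces is a homotopy colimit --- by the usual analysis of the mapping telescope, or by applying property~(iv) for a well-grounded class of weak equivalences to the filtration of $\hocolim_n X(U_n)$ by partial telescopes and their deformation retractions onto their final stages. Thus it is enough to prove: \emph{if $X$ is a cofibrant $\cI$-space and $\iota\colon V\arr W$ is a morphism of $\cJ$, then $X(\iota)\colon X(V)\arr X(W)$ is an $h$-cofibration.}

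To prove the $h$-cofibration claim I would argue over the cell structure. By the construction of the model structure in Theorem~\ref{general_model_cat_theorem}, a cofibrant $\cI$-space is a retract of a cell complex built from the generating cofibrations $\cI(V_0,-)\times(S^{k-1}\hookrightarrow D^k)$ by coproducts, cobase changes, and sequential colimits. For a free $\cI$-space $\cI(V_0,-)\times A$ with $A$ a CW complex the map indexed by $\iota$ is $\cI(V_0,\iota)\times\id_A$, where $\cI(V_0,\iota)\colon\cI(V_0,V)\arr\cI(V_0,W)$ is the closed inclusion of Stiefel manifolds given by postcomposition with $V\hookrightarrow W$; a closed inclusion of smooth manifolds is an $h$-cofibration, and this is preserved by the product with $A$. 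The class of $\cI$-spaces whose $\cJ$-structure maps are all $h$-cofibrations therefore contains the sources and targets of the generating cofibrations, and since $h$-cofibrations of spaces are closed under coproducts, pushouts and sequential colimits and satisfy the glueing lemma, this class is closed under coproducts, sequential colimits, cobase change along maps within it (apply the glueing lemma to the map of pushout squares produced by a cell attachment), and retracts. Hence every cofibrant $\cI$-space lies in this class, completing the proof.

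The geometric inputs --- that the $U_n$ are cofinal in $\cJ$ and that inclusions of Stiefel manifolds are $h$-cofibrations --- are elementary, so I expect the only point requiring genuine care to be the glueing step in the third paragraph: verifying that attaching a cell preserves $h$-cofibrancy of \emph{every} structure map, which amounts to comparing the pushouts defining the attachment at the source and target of $\iota$ and invoking the glueing lemma for $h$-cofibrations. Everything else is cofinality bookkeeping.
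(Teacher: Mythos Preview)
Your proof is correct and follows essentially the same route as the paper's. Both arguments reduce to the discrete cofinal subcategory $\bbJ\cong\cN$ via Lemma~\ref{theorem_A_hocolim_version}, then verify that for a cofibrant $\cI$-space the structure maps along $\bbJ$ are sufficiently cofibrant to make the mapping telescope weakly equivalent to the sequential colimit. The only noteworthy difference is that the paper asserts the structure maps are \emph{closed inclusions} (which suffices for the telescope argument and is slightly quicker to verify by a direct levelwise calculation), whereas you argue the stronger claim that they are $h$-cofibrations and set up an explicit glueing induction; your version is more detailed but not materially different. One small remark: your citation of Proposition~\ref{tensor_product_ok_prop} in the first paragraph is not quite on point---that proposition concerns coends, not homotopy colimits---but the sentence is unnecessary anyway, since you immediately pass to the discrete $\cN$ where no topological subtlety arises.
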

\begin{proof}  
We may assume that $X$ is an $FI$-cell complex, where $FI$ is the set of generating cofibrations for the level model structure (\S \ref{model_structure_section}):
\[
FI = \{ F_{V} i \mid  V \in \ob \cI^{\dagger}, \;  i \colon S^{n - 1} \arr D^n, n \geq 0 \}.
\]
Here $F_{V} \colon \sU \arr \cI \sU$ is the left adjoint to evaluation at the object $V$ of $\cI$ and is calculated by $(F_{V}A)(W) = \cI(V, W) \times A$.  Note than an inclusion $W \subset W'$ of inner product spaces induces a closed inclusion $(F_{V}A)(W) \arr (F_{V}A)(W')$.  Since the pushouts and sequential colimits defining cell complexes are calculated levelwise, we also have a closed inclusion $X(W) \arr X(W')$.

Now restrict $X$ along the inclusions of categories $\bbJ \arr \cJ \arr \cI$.  The homotopy colimit over $\bbJ$ is homotopy equivalent to the usual mapping telescope, and we have just argued that each map $\bm \arr \mathbf{m + 1}$ of $\bbJ$ induces a closed inclusion $X(\bm) \arr X(\bm + \mathbf{1})$.  It follows that the top horizontal map in the following commutative diagram is a weak homotopy equivalence:
\[
\xymatrix{
\hocolim_{\bbJ} X \ar[r] \ar[d] & \colim_{\bbJ} X \ar@{=}[d] \\
\hocolim_{\cJ} X \ar[r] & \colim_{\cJ} X .}
\]
The vertical map of homotopy colimits is induced by the inclusion of categories $\bbJ \arr \cJ$, and is a weak homotopy equivalence by Lemma \ref{theorem_A_hocolim_version}.  Therefore the bottom horizontal arrow is a weak homotopy equivalence as desired.
\end{proof}

\begin{lemma}\label{contractible_isometries_bar}  The $\cI^{\op}$-space $B(*, \cJ, \cI)$ is level-wise contractible.
\end{lemma}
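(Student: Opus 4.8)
\emph{Proof strategy.} The plan is to identify the level-$V$ space of $B(*,\cJ,\cI)$ with a homotopy colimit over $\cJ$, replace that homotopy colimit by the corresponding strict colimit using cofibrancy, and then recognize the strict colimit as a contractible space of isometries. Unwinding the definitions of \S\ref{Topological Categories and the Bar Construction}, the $\cI^{\op}$-space $B(*,\cJ,\cI)$ has, at an object $V$ of $\cI^{\dagger}$, the value
\[
B(*,\cJ,\cI)(V) = B\bigl(*,\cJ,\cI(V,-)\bigr) = \hocolim_{\cJ}\cI(V,-),
\]
where $\cI(V,-)\colon \cJ \arr \sU$ sends a finite dimensional subspace $W \subset U$ to the space $\cI_c(V,W)$ of linear isometries $V \arr W$, functorially by postcomposition along the inclusions of $\cJ$; the contravariant (precomposition) dependence on $V$ is what makes $B(*,\cJ,\cI)$ an $\cI^{\op}$-space.

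Next I would observe that $\cI(V,-)$ is a cofibrant $\cI$-space. In the notation of the proof of Lemma \ref{cofibrant_colim} it equals $F_{V}(*)$, and since $S^{-1} = \emptyset$ the generating cofibration $F_{V}(S^{-1}\arr D^0)$ is precisely $\emptyset \arr \cI(V,-)$, exhibiting $\cI(V,-)$ as an $FI$-cell complex with a single cell, hence cofibrant in the level model structure. Lemma \ref{cofibrant_colim} then gives that the projection
\[
\pi\colon \hocolim_{\cJ}\cI(V,-) \arr \colim_{\cJ}\cI(V,-)
\]
is a weak homotopy equivalence.

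Finally I would compute the strict colimit. Since $\cJ$ is a directed poset (any two finite dimensional subspaces of $U$ are contained in their sum) with $\colim_{W\in\cJ}W = U$, and since every linear isometry out of the finite dimensional space $V$ has finite dimensional image, one gets $\colim_{W\in\cJ}\cI_c(V,W) = \cI_c(V,U)$, the colimit topology agreeing with the usual topology on $\cI_c(V,U)$. By Lemma \ref{mapping_space_equiv_lemma}(i) this space is contractible, so $B(*,\cJ,\cI)(V)\simeq\cI_c(V,U)\simeq *$ for every $V$, which is the assertion. The only substantive input is the contractibility of the isometry spaces $\cI_c(V,U)$; the passage from $\hocolim_{\cJ}$ to $\colim_{\cJ}$ is the one mild technical point, and it is handled entirely by Lemma \ref{cofibrant_colim}, which ultimately rests on the structure maps $\cI_c(V,W)\arr\cI_c(V,W')$ being closed inclusions, so that the homotopy colimit over the directed poset $\cJ$ is computed by the evident mapping telescope.
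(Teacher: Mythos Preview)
Your proof is correct and follows essentially the same approach as the paper: identify $B(*,\cJ,\cI)(V)$ with $\hocolim_{\cJ}\cI(V,-)$, observe that $\cI(V,-)=F_V(*)$ is an $FI$-cell complex and hence cofibrant, apply Lemma~\ref{cofibrant_colim} to pass to the colimit $\cI_c(V,U)$, and invoke the contractibility of that space of isometries. The paper's argument is slightly terser but structurally identical.
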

\begin{proof}  Evaluated at $V$, the bar construction is:
\[
B(*, \cJ, \cI(V, -)) = \underset{W \in \cJ}{\hocolim} \, \cI(V, W).
\]
The represented $\cI$-space $\cI(V, -)$ is the evaluation $F_{V}(*)$ of $F_V$ on a point.  Thus $\cI(V, -)$ is an $F I$-cell complex, and in particular is cofibrant.  By Lemma \ref{cofibrant_colim}, the homotopy colimit over $\cJ$ is weak homotopy equivalent to the colimit:
\[
\underset{W \in \cJ}{\hocolim} \, \cI(V, W)  \overset{\simeq}{\arr} \underset{W \in \cJ}{\colim} \, \cI(V, W) = \cI_c(V, U).
\]
The contractibility of the space of isometries $\cI_c(V, U)$ finishes the proof.
\end{proof}

\begin{proposition}\label{untwisting_prop}  Let $X$ be an $\cI$-space.  The inclusion of categories $\cJ \arr \cI$ induces a natural homotopy equivalence of spaces:
\[
\hocolim_{\cJ} X \overset{\simeq}{\arr} \hocolim_{\cI} X.
\]
\end{proposition}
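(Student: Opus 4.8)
The plan is to compare the homotopy colimit over $\cJ$ with that over $\cI$ by means of a two-sided bar construction argument, along the lines of the untwisting technique from \cite{MM}*{\S1.2}. For an $\cI$-space $X$, there is a natural map of spaces
\[
\hocolim_{\cJ} X = B(*, \cJ, X|_{\cJ}) \arr B(*, \cI, X) = \hocolim_{\cI} X
\]
induced by the inclusion of categories $j \colon \cJ \arr \cI$. The strategy is to factor the source as a bar construction that is manifestly weakly equivalent to the target. Specifically, the composite functor $X|_{\cJ}$ is $X \circ j$, and there is a natural isomorphism of spaces
\[
B(*, \cJ, X \circ j) \cong B\bigl(B(*, \cJ, \cI), \cI, X\bigr),
\]
coming from the associativity/Fubini property of the two-sided bar construction (this is standard: $B(*,\cJ, G\circ j) \cong B(*\otimes_{\cJ}\cI(-, j-), \cI, G)$, and $* \otimes_{\cJ} \cI(-, j-)$ is replaced by the derived version $B(*, \cJ, \cI)$). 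On the other side, $\hocolim_{\cI} X = B(*, \cI, X)$.

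The comparison map then becomes the map of bar constructions
\[
B\bigl(B(*, \cJ, \cI), \cI, X\bigr) \arr B(*, \cI, X)
\]
induced by the map of $\cI^{\op}$-spaces $B(*, \cJ, \cI) \arr *$. By Lemma \ref{contractible_isometries_bar}, the $\cI^{\op}$-space $B(*, \cJ, \cI)$ is level-wise contractible, so this map of $\cI^{\op}$-spaces is a level-wise weak homotopy equivalence. The bar construction $B(-, \cI, X)$ takes level-wise weak equivalences of (well-grounded) $\cI^{\op}$-spaces to weak homotopy equivalences, provided we know that the relevant spaces are suitably cofibrant/well-pointed so that the realization of the simplicial space behaves well; this is precisely the kind of statement recorded in the appendix \S\ref{Topological Categories and the Bar Construction}, using that the morphism spaces of $\cI^{\dagger}$ are well-behaved (CW, by Lemma \ref{mapping_space_equiv_lemma}(i)) and that weak equivalences of spaces are well-grounded. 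Naturality in $X$ is clear since every step is functorial.

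The main obstacle will be the homotopical bookkeeping in the second step: showing that $B(-, \cI, X)$ preserves level-wise weak equivalences of $\cI^{\op}$-spaces requires knowing that the simplicial spaces involved are Reedy cofibrant (or at least proper), which in turn uses that the morphism spaces $\cI^{\dagger}(V,W)$ are cofibrant and that the degeneracies are cofibrations. One must be careful that $X$ itself need not be cofibrant — but the bar construction is a "fattened-up" replacement that does not require cofibrancy of $X$, only good behavior of $\cI$; this is exactly why we use $\cI^{\dagger}$ with its explicit topology. An alternative route, avoiding the double bar construction, is to verify directly that $j \colon \cJ \arr \cI^{\dagger}$ satisfies the hypothesis of the homotopy-colimit version of Quillen's Theorem A (Lemma \ref{theorem_A_hocolim_version}): one needs the relevant comma categories (or their nerves) to have contractible classifying spaces, which amounts to the contractibility of spaces of isometries $\cI_c(V, U)$ — the same input as Lemma \ref{contractible_isometries_bar}. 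Either way the contractibility of isometry spaces is the geometric heart of the argument, and the rest is formal manipulation of bar constructions.
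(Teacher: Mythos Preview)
Your approach is essentially the same as the paper's, but there is a technical gap in the execution. The asserted isomorphism
\[
B(*, \cJ, X \circ j) \cong B\bigl(B(*, \cJ, \cI), \cI, X\bigr)
\]
is not an isomorphism. The Fubini/interchange isomorphism for iterated bar constructions reads
\[
B(*, \cJ, B(\cI, \cI, X)) \cong B\bigl(B(*, \cJ, \cI), \cI, X\bigr),
\]
with an extra inner copy of $B(\cI, \cI, -)$ on the left. Your parenthetical hints at this when you say $* \otimes_{\cJ} \cI$ is ``replaced by the derived version,'' but that replacement is an equivalence, not an isomorphism, and it lives on the \emph{inner} variable, not the outer one. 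Consequently you do not get a direct factorization of the map $B(*,\cJ,X)\to B(*,\cI,X)$; you get a zigzag
\[
B(*, \cJ, X) \xleftarrow{\ B(\id,\id,\epsilon)\ } B(*, \cJ, B(\cI, \cI, X)) \cong B\bigl(B(*, \cJ, \cI), \cI, X\bigr) \arr B(*, \cI, X),
\]
and you must check that this zigzag is compatible with the original map induced by $j$. That compatibility is not automatic: it is precisely the content of the diagram chase the paper performs, invoking Lemma~\ref{appendix_homotopy_diagram} to show that the two ways of collapsing the double bar construction (via $B(\id,\id,\epsilon)$ and via $B(\epsilon,\id,\id)$) agree up to homotopy.

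Once that is in place, the rest of your argument is correct and matches the paper: the right-hand map is induced by the level-wise contractibility of $B(*,\cJ,\cI)$ (Lemma~\ref{contractible_isometries_bar}), and the left-hand map by $\epsilon\colon B(\cI,\cI,X)\to X$ being a level-wise deformation retract. Note that the paper works with level-wise \emph{homotopy} equivalences throughout, which propagate through geometric realization of proper simplicial spaces without any cofibrancy hypothesis on $X$; this sidesteps the ``main obstacle'' you flagged about $B(-,\cI,X)$ preserving level-wise weak equivalences. Your alternative route via Lemma~\ref{theorem_A_hocolim_version} would also work, but you would need to identify the comma category $(V \downarrow j)$ as a topological category and show its classifying space is contractible, which again reduces to the contractibility of $\cI_c(V,U)$.
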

\begin{proof}
The map in question is the bottom horizontal map in the following diagram of spaces:
\[
\xymatrix{
B(*, \cJ, B(\cI, \cI, X)) \ar[dr] \ar[rr]^{\cong} \ar[dd]_{B(\id, \id, \epsilon)} & & B(B(*, \cJ, \cI), \cI, X) \ar[d]  \\
& B(*, \cI, B(\cI, \cI, X)) \ar[r]^{\cong} \ar[dr]_{B(\id, \id, \epsilon)} & B(B(*, \cI, \cI), \cI, X) \ar[d]^{B(\epsilon, \id, \id)} & \\
B(*, \cJ, X) \ar[rr] & & B(*, \cI, X) & }
\]
The horizontal isomorphisms are the canonical interchange maps for iterated bar constructions.  The other unmarked arrows are induced by the inclusion $\cJ \arr \cI$.  The bottom right triangle commutes up to homotopy as an instance of Lemma \ref{appendix_homotopy_diagram} with $Y = *$.  The other regions commute by naturality.  The map of $\cI$-spaces $\epsilon \colon B(\cI, \cI, X) \arr X$ is a level-wise deformation retract, hence the left vertical map is a homotopy equivalence.  The right vertical composite is induced by the map of $\cI^{\op}$-spaces
\[
B(*, \cJ, \cI) \arr B(*, \cI, \cI) \overset{\epsilon}{\arr} *,
\]
which is equal to the level-wise homotopy equivalence $B(*, \cJ, \cI) \arr *$ of Lemma \ref{contractible_isometries_bar}.  Thus the right vertical composite is also a homotopy equivalence and the lemma follows.
\end{proof}

\begin{remark}
The preceding proposition is the first step in connecting the homotopy theory of $\cI$-spaces to the homotopy theory of $\bbL$-spaces.  It is an interesting fact that the analogs of \ref{contractible_isometries_bar} and \ref{untwisting_prop} do \emph{not} hold for $\bbI$-spaces.  That is, $B(*, \bbJ, \bbI)$ is not contractible.  This difference between the symmetric and orthogonal contexts is related to the fact that $\pi_*$-isomorphisms and weak homotopy equivalences do not coincide for symmetric spectra, and do coincide for orthogonal spectra.
\end{remark}

We will now define a variant of $\bbQ$ that gives rise to the colimit functor $\colim_{\cJ}$ and compare it to $\bbQ$.  The definitions are in direct analogy with the two different functors $\bbM$ and $\bbN$ from orthogonal spectra to $S$-modules \cite{MM}*{\S1.7}.  Define $\bbO^* \colon \cI^{\op} \arr \sU[\bbL]$ by $\bbO^*(V) = \cI_c(V, U)$.  Then the tensor product of functors $\bbO X = \bbO^* \otimes_{\cI} X$ defines a functor $\bbO \colon \cI \sU \arr \sU[\bbL]$.  The map 
\begin{align*}
\sL(2) \times_{\sL(1)^2} \cI_c(V, U) \times \cI_c(W, U) &\arr \cI_c(V \oplus W) \\
(\gamma, f, g) &\longmapsto \gamma \circ(f \oplus g)
\end{align*}
defines a natural map $\bbO^*(V) \boxtimes_{\sL} \bbO^*(W) \arr \bbO^*(V \oplus W)$ making $\bbO^*$, and hence $\bbO$, a lax symmetric monoidal functor.  In particular, $\bbO$ takes commutative FCPs to $\sL$-spaces.  We will now show that the functor $\bbO$ is the original construction of an $\sL$-space from a commutative $\cI$-FCP described in \cite{E_infty_rings}:

\begin{lemma}\label{O_is_colim}  Let $X$ be an $\cI$-space.  Then there is a natural isomorphism $\bbO X \cong \colim_{\cJ} X$.  If $X$ is a commutative $\cI$-FCP then this is an isomorphism of $\sL$-spaces.
\end{lemma}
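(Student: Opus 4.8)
The plan is to realize both $\bbO X$ and $\colim_{\cJ} X$ as iterated colimits and to identify them via the enriched co-Yoneda lemma. The only geometric input is the identification $\cI_c(V, U) = \colim_{W \in \cJ} \cI(V, W)$ already used in the proof of Lemma~\ref{contractible_isometries_bar}: since $V$ is finite dimensional, every linear isometry $V \to U$ has finite dimensional image, so $\cI_c(V, U)$ is the colimit, over the finite dimensional subspaces $W \subset U$, of the spaces $\cI(V, W)$ of linear isometries into $W$. This identification is natural in $V$, hence is an isomorphism of $\cI^{\op}$-spaces $\bbO^{*} \cong \colim_{W \in \cJ} \cI(-, W)$, the colimit being formed levelwise; moreover the $\bbL$-action on the right-hand side given by composition in the target $U$ is exactly the one defining $\bbO^{*}$.

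First I would substitute this into the coend defining $\bbO$. Since $(-) \otimes_{\cI} X$ is a coend it preserves colimits in its first variable, so
\[ \bbO X = \bbO^{*} \otimes_{\cI} X \cong \Bigl(\colim_{W \in \cJ} \cI(-, W)\Bigr) \otimes_{\cI} X \cong \colim_{W \in \cJ} \bigl(\cI(-, W) \otimes_{\cI} X\bigr). \]
The enriched co-Yoneda (density) lemma for the tensor product of functors over the small topological category $\cI^{\dagger}$ equivalent to $\cI$ — legitimate since, by Proposition~\ref{tensor_product_ok_prop}, this construction depends only on the enrichment — supplies a natural isomorphism $\cI(-, W) \otimes_{\cI} X \cong X(W)$ for each object $W$. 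Combining the two, one obtains a natural isomorphism of $\bbL$-spaces $\bbO X \cong \colim_{W \in \cJ} X(W) = \colim_{\cJ} X$, where $\colim_{\cJ} X$ carries the $\bbL$-action sending $\alpha \in \sL(1)$ and $x \in X(W)$ to $\alpha_{*} x \in X(\alpha(W))$. Naturality in $X$ is automatic, since every step is.

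It remains to treat the case when $X$ is a commutative $\cI$-FCP and to check that the isomorphism is then one of $\sL$-spaces. On the left, the $\sL$-algebra structure on $\bbO X$ is induced by the lax symmetric monoidal structure map $\bbO^{*}(V) \boxtimes_{\sL} \bbO^{*}(W) \to \bbO^{*}(V \oplus W)$, $(\gamma, f, g) \mapsto \gamma \circ (f \oplus g)$, together with the external product of $X$. Tracing a product of classes through the isomorphism, the $j$-ary operation becomes the assignment sending $\gamma \in \sL(j) = \cI_c(U^{j}, U)$ and classes $x_{i} \in X(W_{i})$ with $W_{i} \subset U$ finite dimensional to the image of the iterated product $x_{1} \cdots x_{j} \in X(W_{1} \oplus \cdots \oplus W_{j})$ under $X$ applied to the restriction of $\gamma$ to $W_{1} \oplus \cdots \oplus W_{j} \subset U^{j}$; this is precisely the $\sL$-space structure on $\colim_{\cJ} X$ of \cite{E_infty_rings}. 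The bookkeeping needed to verify that this matching is well defined and compatible with the operad structure maps of $\sL$ is the only non-formal step, and I expect reconciling the conventions with the classical construction to be the main, if routine, obstacle.
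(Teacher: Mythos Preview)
Your proof is correct and follows essentially the same approach as the paper: both identify $\bbO^* = \cI_c(-,U)$ with $\colim_{W\in\cJ}\cI(-,W)$, pull the $\cJ$-colimit through the coend $(-)\otimes_{\cI} X$, and apply co-Yoneda to get $\colim_{\cJ} X$; the paper merely packages this as the single chain $\bbO X = \ast \otimes_{\cJ}\cI \otimes_{\cI} X \cong \ast\otimes_{\cJ} X$ by writing $\colim_{\cJ}$ as $\ast\otimes_{\cJ}(-)$. Your treatment of the $\sL$-space claim, tracing the structure maps back to the construction in \cite{E_infty_rings}, is likewise what the paper does.
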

\begin{proof}
We may write $\cI_c(V, U) = \colim_{W \in \cJ} \cI(V, W)$ as the coend $\ast \otimes_{\cJ} \cI(V, -)$.  The isomorphism follows:
\[
\bbO X = \bbO^* \otimes_{\cI} X = \ast \otimes_{\cJ} \cI \otimes_{\cI} X \cong \ast \otimes_{\cJ} X = \colim_{\cJ} X.
\]
Comparing the definition of the monoidal structure maps of $\bbO^*$ with the $\sL$-space structure maps on $\colim_{\cJ} X$ in \cite{E_infty_rings}*{I.1.6} verifies the second claim.
\end{proof}

A choice of one dimensional subspace of $U$ determines an inclusion $V \arr V \otimes U$, which induces a natural transformation
\[
\xi^* \colon \bbQ^*(V) = \cI_c(V \otimes U, U) \arr \cI_c(V, U) = \bbO^*(V).
\]
Write $\xi = \xi^* \otimes_{\cI} (-) \colon \bbQ \arr \bbO$ for the induced natural transformation.

\begin{lemma}\label{xi_prop}  The natural transformation $\xi \colon \bbQ \arr \bbO$ is symmetric monoidal.  If $X$ is a cofibrant $\cI$-space, then $\xi \colon \bbQ X \arr \bbO X \cong \colim_{\cJ} X$ is a weak homotopy equivalence of $\bbL$-spaces.
\end{lemma}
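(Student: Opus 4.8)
The plan is to prove the two assertions separately. For symmetric monoidality I would work at the level of the $\cI^{\op}$-spaces $\bbQ^*$ and $\bbO^*$. Fixing the one-dimensional subspace $\mathbf{1} \subset U$ used to define $\xi^*$, the inclusion $V \cong V \otimes \mathbf{1} \hookrightarrow V \otimes U$ is natural in $V$, and the essential point is that under the canonical isometric isomorphism $(V \oplus W) \otimes U \cong (V \otimes U) \oplus (W \otimes U)$ the inclusion of $V \oplus W$ is carried to the direct sum of the inclusions of $V$ and $W$. Substituting this into the explicit formula for the monoidal structure map of $\bbQ^*$ coming from Hopkins' Lemma and into the formula $(\gamma, f, g) \mapsto \gamma \circ (f \oplus g)$ for that of $\bbO^*$, one checks that the square relating $\xi^*(V) \boxtimes_{\sL} \xi^*(W)$ to $\xi^*(V \oplus W)$ through the two structure maps commutes, as does the compatibility with the units $\bbQ^*(0) \cong * \cong \bbO^*(0)$. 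Applying the coend $(-) \otimes_{\cI} (-)$ and the formalism of \cite{MM}*{\S1.2} used to build $\bbQ$ and $\bbO$ then promotes this to a monoidal transformation $\xi \colon \bbQ \arr \bbO$; this step is routine.

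For the homotopical statement I would argue by induction over a cell structure. Both $\bbQ = \bbQ^* \otimes_{\cI} (-)$ and $\bbO = \bbO^* \otimes_{\cI} (-)$ preserve colimits and tensors with spaces, hence $h$-cofibrations, and the weak equivalences of $\bbL$-spaces are well-grounded because they are created by the forgetful functor to $\sU$. Since a cofibrant $\cI$-space is a retract of an $FI$-cell complex and weak equivalences are closed under retracts, it suffices to treat $FI$-cell complexes. For a generating cell $F_V S^{n-1} \arr F_V D^n$ the co-Yoneda lemma identifies $\bbQ F_V A \cong \bbQ^*(V) \times A = \cI_c(V \otimes U, U) \times A$ and $\bbO F_V A \cong \bbO^*(V) \times A = \cI_c(V, U) \times A$, with $\xi_{F_V A} = \xi^*(V) \times \id_A$; since $V \otimes U$ has countable dimension, Lemma \ref{mapping_space_equiv_lemma}(i) shows both $\cI_c(V \otimes U, U)$ and $\cI_c(V, U)$ are contractible, so $\xi^*(V)$ is a weak homotopy equivalence and therefore $\xi_{F_V A}$ is one for every CW complex $A$. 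Attaching cells, applying the functors $\bbQ$ and $\bbO$ and the natural transformation $\xi$ to the defining pushouts and sequential colimits, and using the gluing and colimit axioms for well-grounded weak equivalences (the cellular transition maps remaining $h$-cofibrations after $\bbQ$ and $\bbO$), one shows by induction that $\xi_X$ is a weak homotopy equivalence of underlying spaces, hence of $\bbL$-spaces. Composing with the isomorphism $\bbO X \cong \colim_{\cJ} X$ of Lemma \ref{O_is_colim} yields the conclusion.

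The only genuine input is the contractibility of the spaces of linear isometries $\cI_c(-, U)$, which makes the base case of the induction nearly automatic; the remaining work is bookkeeping with coends and the well-groundedness axioms. I expect the main subtlety to be that $\bbO$ is \emph{not} a left Quillen functor for a model structure whose weak equivalences are the weak homotopy equivalences of $\cI$-spaces, so the legitimacy of the induction rests on the elementary (but easily overlooked) facts that $\bbO$ nonetheless preserves colimits and $h$-cofibrations and that $\bbO$ of an $FI$-cell complex computes the homotopically meaningful object $\colim_{\cJ} X$ — exactly the pattern by which $\bbM$ and $\bbN$ are compared in \cite{MM}*{\S1.7}.
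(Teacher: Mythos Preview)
Your proposal is correct and follows essentially the same approach as the paper: verify that $\xi^*$ is a symmetric monoidal transformation by checking the definitions of the monoidal structure maps on $\bbQ^*$ and $\bbO^*$, then reduce the weak equivalence claim to $FI$-cell complexes and, by cellular induction, to the generating case $F_V K$, where $\bbQ F_V K \cong \cI_c(V\otimes U,U)\times K$ and $\bbO F_V K \cong \cI_c(V,U)\times K$ are both contractible times $K$. Your write-up is more explicit than the paper's about the bookkeeping (retracts, $h$-cofibrations, well-grounded weak equivalences), but the argument is the same.
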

\begin{proof}  Checking the definitions of the monoidal structure maps for $\bbQ^*$ and $\bbO^*$ verifies that $\xi^*$ is a symmetric monoidal natural transformation.  We may assume that $X$ is a $FI$-cell complex and induct up the cellular filtration.  Hence it suffices to shows that $\xi$ is a weak homotopy equivalence on $\cI$-spaces of the form $F_{V} K = \cI(V, -) \times K$ with $K$ a CW complex.  The functor $\bbQ$ commutes with tensors with spaces and on represented $\cI$-spaces takes the form $\bbQ (\cI(V, -)) = \bbQ^*(V)$, so we have a natural isomorphism
\[
\bbQ F_{V} K \cong \cI_c(V \otimes U, U) \times K.
\]
On the other hand, we have the natural isomorphism  $\bbO F_{V} K \cong \cI_c(V, U) \times K$ from Lemma \ref{O_is_colim}.  Since both space of isometries are contractible, it follows that $\xi \colon \bbQ F_{V} K \arr \bbO F_{V} K$ is a weak homotopy equivalence.
\end{proof}

\begin{remark}  Note that the identification of $\bbO X$ with $\colim_{\cJ} X$ is canonical, but $\xi$ is not canonical: it requires a choice of one dimensional subspace of $U$.  The functor $\bbO$ has a right adjoint $\bbO^{\#}$, but this adjunction does not appear to be monoidal, and $(\bbO, \bbO^{\#})$ does not appear to be a Quillen adjunction.  The difficulty is in showing that the $\bbL$-space $\bbO^*(V) = \cI_{c}(V, U)$ is cofibrant.
\end{remark}

We are now ready to prove the main result of this section:

\begin{theorem}\label{quillen_equiv_If_spaces_L-spaces}  The adjunction $(\bbQ, \bbQ^{\#})$ is a Quillen equivalence between the categories of $\cI$-spaces and $\bbL$-spaces.  
\end{theorem}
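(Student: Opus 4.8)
The plan is to verify that $(\bbQ, \bbQ^{\#})$ is a Quillen adjunction and then apply the standard recognition criterion: a Quillen adjunction is a Quillen equivalence as soon as the right adjoint reflects weak equivalences between fibrant objects and, for every cofibrant object $X$, the derived unit is a weak equivalence. Since every $\bbL$-space is fibrant, fibrant replacement is the identity and the derived unit for cofibrant $X$ is simply the ordinary unit $\eta\colon X \to \bbQ^{\#}\bbQ X$. For the Quillen adjunction itself, the description of fibrations of $\cI$-spaces in Theorem~\ref{model_structure_If_spaces}, together with the fact that fibrations of $\bbL$-spaces are created by the forgetful functor to spaces, reduces the claim to exactly the content of Lemma~\ref{mapping_space_equiv_lemma}(iv): for a fibration $f$ of $\bbL$-spaces, $\bbQ^{\#}f$ is a level fibration of $\cI$-spaces whose comparison maps $\sU[\bbL](\bbQ^{*}(V), X) \to \sU[\bbL](\bbQ^{*}(W),X)\times_{\sU[\bbL](\bbQ^{*}(W),Y)}\sU[\bbL](\bbQ^{*}(V),Y)$ are weak homotopy equivalences. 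That $\bbQ^{\#}$ also preserves acyclic fibrations follows similarly, using in addition that $\bbQ^{*}(V)$ is a cofibrant $\bbL$-space, so that $\sU[\bbL](\bbQ^{*}(V),-)$ carries weak equivalences to weak equivalences by Lemma~\ref{mapping_space_equiv_lemma}(iii).

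The homotopical input is a pair of homotopy-type identifications. First, for a cofibrant $\cI$-space $X$, Lemma~\ref{xi_prop} gives a natural weak equivalence $\xi\colon \bbQ X \to \bbO X$, Lemma~\ref{O_is_colim} identifies $\bbO X$ with $\colim_{\cJ}X$, and Lemma~\ref{cofibrant_colim} together with Proposition~\ref{untwisting_prop} identifies the underlying space of $\colim_{\cJ}X$ with $\hocolim_{\cJ}X \simeq \hocolim_{\cI}X$; hence the underlying space of $\bbQ X$ is naturally weakly equivalent to $\hocolim_{\cI}X$, and in particular $\bbQ$ detects weak equivalences between cofibrant $\cI$-spaces. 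Second, for an arbitrary $\bbL$-space $Y$, every morphism $\phi\colon V\to W$ of $\cI$ induces a linear isometry $V\otimes U\to W\otimes U$ of infinite-dimensional spaces, so Lemma~\ref{mapping_space_equiv_lemma}(iii) shows that every structure map of the $\cI$-space $\bbQ^{\#}Y$ is a weak equivalence. Choosing a nested exhausting sequence of finite-dimensional subspaces of $U$ beginning with a line $\bbR$ --- which is cofinal in $\cJ$ --- Proposition~\ref{untwisting_prop} and the resulting telescope of weak equivalences give a natural chain $\hocolim_{\cI}\bbQ^{\#}Y \simeq \hocolim_{\cJ}\bbQ^{\#}Y \simeq (\bbQ^{\#}Y)(\bbR) \cong \sU[\bbL](\cI_c(U,U),Y)$, and since $\cI_c(U,U) = \bbL(*)$ is the free $\bbL$-space on a point, the free--forgetful adjunction identifies this with the underlying space of $Y$. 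Thus $\hocolim_{\cI}\bbQ^{\#}Y$ is naturally weakly equivalent to the underlying space of $Y$, and consequently $\bbQ^{\#}$ reflects weak equivalences between $\bbL$-spaces.

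It remains to show that $\eta\colon X\to\bbQ^{\#}\bbQ X$ is a weak equivalence of $\cI$-spaces for cofibrant $X$, i.e. that $\hocolim_{\cI}(\eta)$ is a weak equivalence of spaces. By the two identifications above, both $\hocolim_{\cI}X$ and $\hocolim_{\cI}\bbQ^{\#}\bbQ X$ are naturally weakly equivalent to the underlying space of $\bbQ X$, so the content is to check that $\hocolim_{\cI}(\eta)$, transported along these equivalences, agrees up to homotopy with the identity of $\bbQ X$. This is a diagram chase with two-sided bar constructions in the spirit of the proof of Proposition~\ref{untwisting_prop}: one compares the bar constructions computing $\hocolim_{\cI}$, the coends defining $\bbQ$ and $\bbO\cong\colim_{\cJ}$, and the evaluation counit, using the compatibility of $\xi^{*}$ with the evaluation maps $\bbQ^{*}(V)\times\sU[\bbL](\bbQ^{*}(V),-)\to(-)$. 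I expect this bookkeeping --- in particular matching the evaluation counit against the cofinality equivalence while tracking the noncanonical choice of line entering $\xi$ --- to be the main obstacle; everything else is formal. Once $\eta$ is known to be a weak equivalence on cofibrant objects and $\bbQ^{\#}$ reflects weak equivalences between (all, hence fibrant) $\bbL$-spaces, the recognition criterion shows that $(\bbQ,\bbQ^{\#})$ is a Quillen equivalence.
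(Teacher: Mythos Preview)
Your overall strategy matches the paper's: verify the Quillen adjunction via Lemma~\ref{mapping_space_equiv_lemma}, then use the criterion from \cite{hovey_model_categories}*{1.3.16} together with the fact that all $\bbL$-spaces are fibrant, reducing everything to showing that the unit $\eta\colon X\to\bbQ^{\#}\bbQ X$ is a weak equivalence for cofibrant $X$. Your two homotopy-type identifications are also correct and are exactly the ingredients the paper uses.

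The gap is precisely where you flag it. You reduce the problem to checking that $\hocolim_{\cI}(\eta)$ corresponds to the identity under the two identifications, call this ``bookkeeping,'' and stop. But this is the substantive part of the argument, and the paper does not do it by a bar-construction chase in the style of Proposition~\ref{untwisting_prop}. Instead, the paper extends $X$ to a functor on $\cI_c$ by left Kan extension (so that $X(U)=\colim_{\cJ}X\cong\bbO X$), and then for each $V\in\cJ$ writes down an explicit commutative diagram relating $\eta_V\colon X(V)\to\sU[\bbL](\bbQ^*(V),\bbQ X)$ to the maps $X(V)\to X(V\otimes U)\to X(U\otimes U)$ and the corresponding mapping spaces $\sU[\bbL](\cI_c(-,U),X(U))$. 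The point is that the adjuncts of the evaluation maps for the extended functor $X$ assemble into this diagram and connect $\eta_V$ directly to the weak equivalence $\bbQ^{\#}\xi$. Taking $\hocolim_{\cJ}$ and using Lemma~\ref{cofibrant_colim} plus $B\cJ\simeq *$ finishes the argument. Without this pointwise diagram (or an equivalent device), your compatibility claim is an assertion rather than a proof; the Kan-extension trick is what makes the comparison tractable, and it is not visible from your sketch.
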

\begin{proof}  Using the characterization of fibrations of $\cI$-spaces given in Theorem \ref{model_structure_If_spaces}, it is an immediate consequence of Lemma \ref{mapping_space_equiv_lemma}.(iii) and (iv) that the functor $\bbQ^{\#}$ preserves fibrations and takes weak equivalences to level-wise weak homotopy equivalences.  Hence $(\bbQ, \bbQ^{\#})$ is a Quillen adjunction.

Now we turn to the Quillen equivalence.  Suppose that $X$ is a cofibrant $\cI$-space.  Extend $X$ to a functor defined on $\cI_c$ by taking the left Kan extension of $X$ along the inclusion of categories $\cI \arr \cI_c$.  This means that the value of $X$ on an infinite dimensional inner product space $W$ is computed as $\colim_{V \subset W} X(V)$, where $V$ runs over the finite dimensional subspaces of $W$.  In particular, $X(U) = \colim_{\cJ} X$.  

Choose a one dimensional subspace of $U$.  This gives a linear isometry $i_V \colon V \arr V \otimes U$ for each object $V$ of $\cI$, and a natural transformation $\xi^* \colon \bbQ^* \arr \bbO^*$.  By Lemma \ref{xi_prop}, the induced map $\xi \colon \bbQ X \arr \bbO X \cong X(U)$ is a weak equivalence of $\bbL$-spaces.

Now consider an object $V$ of $\cJ$.  The inclusions $V \subset U$ and $V \otimes U \subset U \otimes U$, and the maps $i_V$ and $i_U = \colim_V i_V$ give rise to the following commutative diagram:
\[
\xymatrix{
\sU[\bbL]( \cI_c(V \otimes U, U), \bbQ X ) \ar[rr]^{\bbQ^{\#} \xi}_{\simeq} & & \sU[\bbL](\cI_{c}(V \otimes U, U), X(U) ) \ar[ddd]^{\simeq} \\
X(V) \ar[u]^{\eta_V} \ar[r]^{i_V} \ar[d] & X(V \otimes U) \ar[d] \ar[ur] & \\
X(U) \ar[r]_{i_U} \ar[d]_{\cong} & X(U \otimes U) \ar[dr] & \\
\sU[\bbL]( \cI_c(U, U), X(U) ) \ar[rr]^-{\simeq}_-{i_U} & & \sU[\bbL]( \cI_c(U \otimes U, U), X(U) )
}
\]
Here, $\eta$ is the unit for the adjunction $(\bbQ, \bbQ^{\#})$.  The diagonal arrows and the isomorphism $X(U) \arr \sU[\bbL]( \cI_c(U, U), X(U) )$ are all the adjuncts of evaluation maps for the functor $X$.  The lower and right trapezoids commute by the naturality of those adjunctions.  A diagram chase involving the definition of the unit $\eta$ verifies that the upper trapezoid commutes.  The maps of mapping spaces are weak equivalences as indicated by Lemma \ref{mapping_space_equiv_lemma}.(ii).  Now take the homotopy colimit of this diagram over $V \in \cJ$.  The induced map $\hocolim_{\cJ} X \arr \hocolim_{\cJ} X(U)$ is a weak equivalence by the following commutative diagram:
\[
\xymatrix{
\hocolim_{\cJ} X \ar[r] \ar[d]_{\simeq} & \hocolim_{\cJ} X(U) \ar[d] \ar[r]^-{\cong} & X(U) \times B \cJ \ar[d]^{\simeq}  \\
\colim_{\cJ} X \ar[r]^-{\cong} & \colim_{\cJ} X(U) \ar[r]^-{\cong} & X(U)}
\]
Here the vertical maps are the canonical projections to the colimit and are weak equivalences by Lemma \ref{cofibrant_colim} and the homotopy equivalence $B \cJ \simeq *$, respectively.  Returning to the main diagram, it follows that the homotopy colimit of the unit $\eta$ is a weak equivalence as well.  Since homotopy colimits over $\cJ$ and $\cI$ are homotopy equivalent (Proposition \ref{untwisting_prop}), we have shown that the unit $\eta \colon X \arr \bbQ^{\#}\bbQ X$ is a weak equivalence for cofibrant $\cI$-spaces $X$.  Since all $\bbL$-spaces are fibrant, this implies that $(\bbQ, \bbQ^{\#})$ is a Quillen equivalence \cite{hovey_model_categories}*{1.3.16}.
\end{proof}

The equivalence between $\cI$-spaces and $\bbL$-spaces induces an equivalence at the level of monoids and commutative monoids.

\begin{theorem}\label{quillen_equiv_FCP_L_spaces}
Restricting to monoids, respectively commutative monoids, $(\bbQ, \bbQ^{\#})$ induces a Quillen equivalence between the categories of $\cI$-FCPs and non-$\Sigma$ $\sL$-spaces, respectively commutative $\cI$-FCPs and $\sL$-spaces.
\end{theorem}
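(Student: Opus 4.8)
The plan is to obtain Theorem~\ref{quillen_equiv_FCP_L_spaces} from the underlying Quillen equivalence of Theorem~\ref{quillen_equiv_If_spaces_L-spaces} by the standard procedure for lifting a monoidal Quillen equivalence to categories of monoids and of commutative monoids. First I would recall the model structures on $\cI$-FCPs and commutative $\cI$-FCPs (constructed in \S\ref{model_structure_monoids_section}), and on non-$\Sigma$ $\sL$-spaces and $\sL$-spaces (that is, on monoids and commutative monoids in $\bbL$-spaces), in each case with weak equivalences and fibrations created by the forgetful functor to $\cI \sU$, respectively to $\sU[\bbL]$; in the commutative case one uses the \emph{positive} model structure on $\cI$-spaces. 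In particular every monoid and every commutative monoid in $\bbL$-spaces is fibrant, since all $\bbL$-spaces are. Because $\bbQ$ is strong symmetric monoidal and $\bbQ^{\#}$ is lax symmetric monoidal, the adjunction $(\bbQ, \bbQ^{\#})$ restricts to adjunctions on monoids and on commutative monoids; moreover $\bbQ$, being a strong symmetric monoidal left adjoint, commutes with the free monoid monad $\bM$ and the free commutative monoid monad $\bC$, so that $\bbQ \bM \cong \bM \bbQ$ and $\bbQ \bC \cong \bC \bbQ$. That the restricted adjunctions are Quillen adjunctions is then immediate, since $\bbQ^{\#}$ already preserves fibrations and acyclic fibrations of underlying objects by Theorem~\ref{quillen_equiv_If_spaces_L-spaces} and these are detected by the forgetful functors.

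For the Quillen equivalence, since all objects on the $\bbL$-space side are fibrant, by \cite{hovey_model_categories}*{1.3.16} it suffices to prove that for a cofibrant $\cI$-FCP (respectively a cofibrant commutative $\cI$-FCP) $X$ the unit $\eta \colon X \arr \bbQ^{\#} \bbQ X$ is a weak equivalence; and since weak equivalences of monoids are detected on underlying $\cI$-spaces and both $\bbQ$ and $\bbQ^{\#}$ commute with the forgetful functors, this amounts to showing that $\eta$ is a weak homotopy equivalence of underlying $\cI$-spaces. In the associative case this is short: the usual analysis of cofibrant objects in a category of monoids over a monoidal model category with cofibrant unit satisfying the pushout-product axiom (as in \cite{MMSS}) shows that the underlying $\cI$-space of a cofibrant $\cI$-FCP is cofibrant, so on underlying objects $\eta$ is precisely the unit of the underlying adjunction $(\bbQ,\bbQ^{\#})$ at a cofibrant $\cI$-space, hence a weak equivalence by Theorem~\ref{quillen_equiv_If_spaces_L-spaces}.

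The commutative case is the main obstacle, exactly because the underlying $\cI$-space of a cofibrant commutative $\cI$-FCP need not be cofibrant. Here I would use the positive model structure together with the explicit cellular description of cofibrant commutative $\cI$-FCPs: such an $X$ is a retract of a cell object built by iterated pushouts along free commutative monoid maps $\bC(F_{V} K) \arr \bC(F_{V} L)$ with $V$ a nonzero object of $\cI$ and $K \arr L$ a relative CW complex, and since $\bbQ \bC \cong \bC \bbQ$ and $\bbQ(F_{V} K) \cong \bbQ^{*}(V) \times K$, the $\bbL$-space $\bbQ X$ has a matching description. Filtering the free-commutative-monoid cell attachments by number of cells and using that the weak equivalences of $\cI$-spaces are well-grounded, the comparison reduces to checking that for each $j \geq 0$ the map of $\Sigma_j$-orbit spaces
\[
\big( (F_{V} K)^{\boxtimes j} \big) \big/ \Sigma_j \;\arr\; \big( (\bbQ F_{V} K)^{\boxtimes j} \big) \big/ \Sigma_j
\]
and its relative analogues are weak homotopy equivalences. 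The hard part is the $\Sigma_j$-equivariant bookkeeping: one needs that these $\Sigma_j$-actions are suitably free — on the $\bbL$-space side because Hopkins' Lemma \cite{EKMM}*{I.5.4} identifies $(\bbQ F_{V} K)^{\boxtimes j}$ with a space built from the free contractible $\Sigma_j$-space $\sL(j) = \cI_c(U^j, U)$, and on the $\cI$-space side because positivity of $V$ makes the $\Sigma_j$-action on $(F_{V} K)^{\boxtimes j} \cong F_{V^{\oplus j}}(K^{j})$ suitably free (this is precisely what the positive model structure buys) — so that passage to orbits is homotopically meaningful; and then the non-equivariant comparison is already a weak equivalence, being the cofibrant $\cI$-space case treated above applied to $F_{V^{\oplus j}}(K^{j})$, whose $\bbQ$-value is weakly equivalent to $\colim_{\cJ} F_{V^{\oplus j}}(K^{j})$ by Lemma~\ref{xi_prop}. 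Reassembling through the cellular filtration then shows that $\eta$ is a weak equivalence of underlying $\cI$-spaces for cofibrant commutative $\cI$-FCPs, which completes the proof.
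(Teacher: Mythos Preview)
Your treatment of the associative case is correct and matches the paper: cofibrant $\cI$-FCPs have cofibrant underlying $\cI$-spaces, so the unit $\eta$ is a weak equivalence on underlying objects by Theorem~\ref{quillen_equiv_If_spaces_L-spaces}, and you are done.

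The commutative case, however, has a genuine gap. Your plan is to run a cellular induction directly on the unit $\eta \colon X \arr \bbQ^{\#}\bbQ X$, reducing to the displayed comparison of $\Sigma_j$-orbits. There are two problems. First, the displayed map does not typecheck: its source is an $\cI$-space and its target an $\bbL$-space, so it is not literally a map (presumably you intend $\eta$ on $(F_V K)^{\boxtimes j}/\Sigma_j$, with target $\bbQ^{\#}\bigl((\bbQ F_V K)^{\boxtimes j}/\Sigma_j\bigr)$). Second, and more seriously, even with that correction the induction does not go through: the functor $\bbQ^{\#}$ is a right adjoint and does \emph{not} commute with the pushouts and sequential colimits in the cellular filtration of Lemma~\ref{filtration_on_commutative_monoid_pushout}. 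So knowing $\eta$ is a weak equivalence on each filtration piece does not assemble into a statement about $\eta$ on $X$, because $\bbQ^{\#}\bbQ X$ is not the colimit of $\bbQ^{\#}\bbQ$ applied to the pieces.

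The paper avoids this by running the cellular induction not on $\eta$ but on the natural transformation $\xi \colon \bbQ X \arr \bbO X$ of Lemma~\ref{xi_prop} (this is Proposition~\ref{general_equiv_cofibrant_FCPs_via_induction}.(iii)). Both $\bbQ$ and $\bbO$ are colimit-preserving, so the filtration argument is legitimate there; the $\Sigma_j$-freeness you identified is exactly what makes the orbit comparison go through for $\xi$. Once $\xi$ is known to be a weak equivalence for cofibrant commutative $\cI$-FCPs, the entire diagram chase in the proof of Theorem~\ref{quillen_equiv_If_spaces_L-spaces} (which only used $\xi$ being a weak equivalence, together with Lemma~\ref{cofibrant_colim}, whose commutative analogue is Proposition~\ref{general_equiv_cofibrant_FCPs_via_induction}.(iv)) applies verbatim to conclude that $\eta$ is a weak equivalence. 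The moral is: do the induction on a comparison of \emph{left} adjoints and then feed it back into the argument for $\eta$, rather than attacking $\eta$ directly.
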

\begin{proof}
In the case of monoids, the theorem follows from the fact that cofibrant $\cI$-FCPs are cofibrant as $\cI$-spaces.  While this is not true of cofibrant commutative $\cI$-FCPs, we will prove in Proposition \ref{general_equiv_cofibrant_FCPs_via_induction}.(iii)   that  $\xi \colon \bbQ X \arr \bbO X$ is a weak homotopy equivalence for $X$ a cofibrant commutative $\cI$-FCP.  The proof of Theorem \ref{quillen_equiv_If_spaces_L-spaces} then goes through to show that the unit of the adjunction $(\bbQ, \bbQ^{\#})$ is a weak homotopy equivalence on cofibrant commutative FCPs.  This proves the result in the case of commutative monoids.
\end{proof}

\section{Comparison of infinite loop spaces of orthogonal spectra and $S$-modules}\label{Comparison of infinite loop spaces of orthogonal spectra and $S$-modules}

We will now use the Quillen equivalence of $\cI$-spaces and $\bbL$-spaces to prove that the infinite loop space functors of orthogonal spectra, $\bbL$-spectra and $S$-modules agree.  

Let $X$ be a based space.  Given a universe $U'$ and a subspace $V \subset U'$, the shift desuspension functor $\Sigma^{U'}_{V} (-)$ is left adjoint to the functor $(-)_{V} \colon \sS^{U'} \arr \sT$ from spectra indexed on $U'$ to based spaces given by evaluation at $V$.  When $V = 0$, $\Sigma^{U'}_{0} = \Sigma^{U'}$ is the suspension spectrum functor for spectra indexed on $U'$.  For our privileged universe $U$, $\Sigma^{U}$ is the suspension spectrum functor $\Sigma^{\infty}$.  The twisted half-smash product $\cI_c(U', U) \ltimes \Sigma^{U'} X$ is naturally an $\bbL$-spectrum via the action of $\sL(1) = \cI_c(U, U)$ on $\cI_c(U', U)$ by composition.  This action of $\sL(1)$ on $\cI_c(U', U)$ also gives $\cI_c(U', U)_+ \sma X$ the structure of an $\bbL$-space.  These $\bbL$ actions are related by the untwisting isomorphism \cite{EKMM}*{2.1}:
\begin{equation}\label{untwisting_iso}  
\cI_c(U', U) \ltimes \Sigma^{U'} X \cong \Sigma_{\bbL}^{U} (\cI_c(U', U)_+ \sma X).
\end{equation}

We may now make the comparison:

\begin{proposition}\label{loops_orthogonal_to_Lspectra_prop}  The following diagrams commute:
\[
\xymatrix{
\sI \sS \ar[d]_{\Omega^{\bullet}} & \sS[\bbL] \ar[l]_{\bbN^{\#}} \ar[d]^{\Omega^{\infty}_{\bbL}} & &\ho \sI \sS \ar[r]^-{\bL \bbN} \ar[d]_{\bR \Omega^{\bullet}} & \ho \sS[\bbL]  \ar[d]^{\bR \Omega^{\infty}_{\bbL}} \\
\cI \sU  & \sU[\bbL] \ar[l]^-{\bbQ^{\#}} & &  \ho \cI \sU \ar[r]_-{\bL \bbQ} & \ho \sU[\bbL] }
\]
\end{proposition}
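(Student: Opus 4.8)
The plan is to prove the first square of the statement as a genuine natural isomorphism by a chain of adjunctions, and then to obtain the second square formally from Proposition~\ref{appendix_prop} (for that last step only a natural weak equivalence would in fact be needed). For the first square, both $\Omega^{\bullet}\bbN^{\#}$ and $\bbQ^{\#}\Omega^{\infty}_{\bbL}$ are composites of right adjoints, so I would identify them through their left adjoints. Fix an $\bbL$-spectrum $E$ and an object $V$ of $\cI$. By the definition of $\Omega^{\bullet}$ on orthogonal spectra, $(\Omega^{\bullet}\bbN^{\#}E)(V)=\Omega^{V}(\bbN^{\#}E)_{V}$; since $\bbN^{\#}$ is right adjoint to $\bbN$ and $F_{V}S^{0}$ is the free orthogonal spectrum on $S^{0}$ at level $V$, this equals $\sT(S^{V},\sS[\bbL](\bbN F_{V}S^{0},E))\cong\sS[\bbL](S^{V}\sma\bbN F_{V}S^{0},E)$. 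As $\bbN$ is a topological left adjoint, $S^{V}\sma\bbN F_{V}S^{0}\cong\bbN(S^{V}\sma F_{V}S^{0})=\bbN(F_{V}S^{V})$, and a short Thom space computation identifies the orthogonal spectrum $F_{V}S^{V}$ with $\Sigma^{\bullet}_{+}(F_{V}\ast)$, the suspension spectrum of the representable $\cI$-space $\cI(V,-)=F_{V}\ast$. On the other side, the $(\sus_{\bbL+},\Omega^{\infty}_{\bbL})$ adjunction gives $(\bbQ^{\#}\Omega^{\infty}_{\bbL}E)(V)=\sU[\bbL](\bbQ^{\ast}(V),\Omega^{\infty}_{\bbL}E)\cong\sS[\bbL](\sus_{\bbL+}\bbQ^{\ast}(V),E)$, and $\bbQ^{\ast}(V)=\bbQ(F_{V}\ast)$. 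Thus both sides are corepresented by applying $\bbN\circ\Sigma^{\bullet}_{+}$ and $\sus_{\bbL+}\circ\bbQ$ to $F_{V}\ast$, so it suffices to produce a natural isomorphism $\bbN\circ\Sigma^{\bullet}_{+}\cong\sus_{\bbL+}\circ\bbQ$ of functors $\cI\sU\arr\sS[\bbL]$.

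Both of these functors are cocontinuous and strong symmetric monoidal, hence determined by their restriction to the representables $F_{V}\ast$ together with the monoidal comparison maps. On $F_{V}\ast$ the right-hand side is $\sus_{\bbL}(\cI_{c}(V\otimes U,U)_{+})$, which by the untwisting isomorphism~\eqref{untwisting_iso} is the $\bbL$-spectrum $\cI_{c}(V\otimes U,U)\ltimes\Sigma^{V\otimes U}S^{0}$. On the left-hand side $\Sigma^{\bullet}_{+}(F_{V}\ast)\cong F_{V}S^{V}$, and the computation of $\bbN$ on free orthogonal spectra in \cite{MM}*{\S1} identifies $\bbN(F_{V}S^{V})$ with the same $\bbL$-spectrum $\cI_{c}(V\otimes U,U)\ltimes\Sigma^{V\otimes U}S^{0}$; the monoidal structure maps on the two sides are matched through Hopkins' lemma, exactly as in the proof that $\bbQ^{\ast}$ is strong monoidal. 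Reassembling these identifications yields $(\Omega^{\bullet}\bbN^{\#}E)(V)\cong(\bbQ^{\#}\Omega^{\infty}_{\bbL}E)(V)$, and the remaining point is naturality in $V$: one must check that this isomorphism intertwines the $\cI$-space structure on $\Omega^{\bullet}\bbN^{\#}E$ (which combines the orthogonal structure maps of $\bbN^{\#}E$ with functoriality in $\sI$) with the $\cI$-space structure on $\bbQ^{\#}\Omega^{\infty}_{\bbL}E$ (precomposition in $\cI_{c}((-)\otimes U,U)$). I expect this bookkeeping, together with unwinding the \cite{MM} construction of $\bbN$ far enough to see its value on $F_{V}S^{V}$ (including the absorption of the $\mathrm{O}(V)$-actions present in both $F_{V}$ and $\cI_{c}(V\otimes U,U)$), to be the main obstacle; everything else is formal adjunction chasing.

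Finally, the second diagram is immediate from Proposition~\ref{appendix_prop}, applied to the left-hand square of diagram~\eqref{loops_orthogonal_to_Lspectra_diagram} with $(f,g)=(\bbN,\bbN^{\#})$, $(h,k)=(\bbQ,\bbQ^{\#})$, $a=\Omega^{\bullet}$ and $b=\Omega^{\infty}_{\bbL}$: the pair $(\bbN,\bbN^{\#})$ is a Quillen equivalence by \cite{MM}, the pair $(\bbQ,\bbQ^{\#})$ is a Quillen equivalence by Theorem~\ref{quillen_equiv_If_spaces_L-spaces}, the vertical functors $\Omega^{\bullet}$ and $\Omega^{\infty}_{\bbL}$ are right Quillen (Proposition~\ref{orthogonal_space_spectra_quillen_adjunction} and \cite{ABGHR}*{5.17}) and hence preserve fibrant objects and weak equivalences between them, and the natural equivalence $\Omega^{\bullet}\bbN^{\#}\simeq\bbQ^{\#}\Omega^{\infty}_{\bbL}$ required as input is precisely the first diagram just proved. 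Proposition~\ref{appendix_prop} then gives the isomorphism of derived functors $\bR\Omega^{\infty}_{\bbL}\circ\bL\bbN\cong\bL\bbQ\circ\bR\Omega^{\bullet}$, which is the commutativity of the second square.
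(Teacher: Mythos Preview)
Your proposal is correct and follows essentially the same approach as the paper. The paper's proof is a direct chain of isomorphisms: it expands $(\bbN^{\#}E)_{V}$ as $\sS[\bbL](\cI_c(V\otimes U,U)\ltimes\Sigma_{V}^{V\otimes U}S^{0},E)$, moves $\Omega^{V}$ inside as a smash with $S^{V}$, uses $S^{V}\sma\Sigma_{V}^{V\otimes U}S^{0}\cong\Sigma^{V\otimes U}S^{0}$ together with the untwisting isomorphism to reach $\sS[\bbL](\Sigma^{\infty}_{\bbL+}\cI_c(V\otimes U,U),E)$, and finishes with the $(\Sigma^{\infty}_{\bbL+},\Omega^{\infty}_{\bbL})$ adjunction---exactly the ingredients in your argument, just not repackaged through the global statement $\bbN\circ\Sigma^{\bullet}_{+}\cong\Sigma^{\infty}_{\bbL+}\circ\bbQ$. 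The paper treats the naturality you worry about as implicit in ``unraveling definitions''; your explicit flagging of it is reasonable but not an obstacle. For the second square the paper invokes uniqueness of adjoints after the first square, which amounts to the same application of Proposition~\ref{appendix_prop} that you cite.
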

\begin{proof}
In \cite{MM}, the definition of the functor $\bbN$ depends on a choice of a one-dimensional subspace $\bR \subset U$.  Different choices give rise to (non-canonically) isomorphic functors, so we are free to fix a choice $\bR \subset U$ for the remainder of the proof.  We may identify a finite dimensional inner product space $V$ with the subspace $V \otimes \bR$ of the universe $V \otimes U$.  

We will make the comparison of right adjoints first.  Unraveling definitions, we have:
\begin{align*}
\Omega^{\bullet} \bbN^{\#} E (V) &= \Omega^{V} \sS[\bbL] (\cI_c(V \otimes U, U) \ltimes \Sigma_{V}^{V \otimes U} S^0, E) \\
&\cong \sS[\bbL] (\cI_c(V \otimes U, U) \ltimes (S^V \sma \Sigma_{V}^{V \otimes U} S^0 ), E ) \\
&\cong \sS[\bbL] (\Sigma_{\bbL_{+}}^{\infty} \cI_c(V \otimes U, U), E) \\
&\cong \sU[\bbL] (\cI_c(V \otimes U, U), \Omega^{\infty}_{\bbL} E) = \bbQ^{\#} \Omega_{\bbL}^{\infty} E(V).
\end{align*}
The first isomorphism follows since tensors with spaces are preserved by left adjoints, such as the twisted half-smash product.  For the second isomorphism, notice that $S^V \sma \Sigma_{V}^{V \otimes U} S^0 \cong \Sigma^{V \otimes U} S^0$, and then apply the untwisting isomorphism \eqref{untwisting_iso}.  This gives the diagrams on the left.  The diagrams on the right follow by the uniqueness of adjoints.
\end{proof}

The two rightmost squares of diagram \eqref{loops_orthogonal_to_Lspectra_diagram} commute strictly, so we may immediately deduce the comparison of infinite loop spaces between orthogonal spectra and $S$-modules from Proposition \ref{loops_orthogonal_to_Lspectra_prop}.

\begin{proposition}\label{loops_orthogonal_to_Smodules_prop}  The following diagrams commute:
\[
\xymatrix{
\sI \sS \ar[d]_{\Omega^{\bullet}} & \sM_{S} \ar[l]_{\bbN_{S}^{\#}} \ar[d]^{\Omega^{\infty}_{S}} & & \ho \sI \sS \ar[r]^-{\bL \bbN_{S}} \ar[d]_{\bR \Omega^{\bullet}} & \ho \sM_{S}  \ar[d]^{\bR \Omega^{\infty}_{S}} \\
\cI \sU  & \sM_{*} \ar[l]^-{\bbQ_{*}^{\#}} & &  \ho \cI \sU \ar[r]_-{\bL \bbQ_{*}} & \ho \sM_{*} }
\]
\end{proposition}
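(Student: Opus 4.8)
The plan is to deduce this directly from Proposition~\ref{loops_orthogonal_to_Lspectra_prop} by pasting together the three squares of diagram~\eqref{loops_orthogonal_to_Lspectra_diagram}. Recall from the lemma preceding \S\ref{section_equivalence_cI_spaces_L_spaces} that $\bbN_S$ is naturally isomorphic to the composite of left adjoints $(S \sma_{\sL} -) \circ F_{\sL}(S,-) \circ \bbN$ along the top row, and $\bbQ_*$ to the composite $(* \boxtimes_{\sL} -) \circ F_{\sL}(*,-) \circ \bbQ$ along the bottom row, with $\bbN_S^{\#} \cong \bbN^{\#}F_{\sL}(S,-)$ and $\bbQ_*^{\#} \cong \bbQ^{\#}F_{\sL}(*,-)$ the corresponding composites of right adjoints. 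Since the rightmost square of \eqref{loops_orthogonal_to_Lspectra_diagram} commutes strictly by Definition~\ref{def_infinite_loop_of_Smodule} (using that $(S \sma_{\sL} -, F_{\sL}(S,-))$ and $(* \boxtimes_{\sL} -, F_{\sL}(*,-))$ are equivalences of categories) and the middle square commutes strictly in both directions via the natural isomorphism $\loops_{\bbL}F_{\sL}(S,E) \cong F_{\sL}(*, \loops_{\bbL}E)$ of \S\ref{Infinite loop space theory of $S$-modules_section}, the whole comparison reduces to the already-proved left square.

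In detail, for the strictly commuting diagram of right adjoints I would compute, for an $S$-module $M$,
\[
\bbQ_*^{\#}\,\Omega^{\infty}_S M \;\cong\; \bbQ^{\#}\,F_{\sL}(*,\Omega^{\infty}_S M) \;\cong\; \bbQ^{\#}\,\Omega^{\infty}_{\bbL}\,F_{\sL}(S,M) \;\cong\; \Omega^{\bullet}\,\bbN^{\#}\,F_{\sL}(S,M) \;\cong\; \Omega^{\bullet}\,\bbN_S^{\#} M,
\]
where the first isomorphism is the identification of $\bbQ_*^{\#}$, the second is the commutativity of the middle and rightmost squares of \eqref{loops_orthogonal_to_Lspectra_diagram} (i.e.\ $F_{\sL}(*,\Omega^{\infty}_S M) \cong \Omega^{\infty}_{\bbL}F_{\sL}(S,M)$, which is exactly Definition~\ref{def_infinite_loop_of_Smodule} unwound together with $\loops_{\bbL}$ restricting compatibly to the mirror-image categories), the third is the point-set commutativity from Proposition~\ref{loops_orthogonal_to_Lspectra_prop}, and the last is the identification of $\bbN_S^{\#}$. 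Naturality of all these isomorphisms gives the left-hand diagram of the proposition.

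For the derived (right-hand) diagram I would run the same argument on homotopy categories. The two rightmost squares of \eqref{loops_orthogonal_to_Lspectra_diagram} are squares of Quillen equivalences commuting on the nose, so passing to derived functors identifies $\ho\sS[\bbL] \simeq \ho\sM_S$ and $\ho\sU[\bbL] \simeq \ho\sM_*$ in a way that carries $\bL\bbN$ to $\bL\bbN_S$, carries $\bL\bbQ$ to $\bL\bbQ_*$, and intertwines $\bR\Omega^{\infty}_{\bbL}$ with $\bR\Omega^{\infty}_S$. Composing this with the commutativity $\bR\Omega^{\infty}_{\bbL}\circ\bL\bbN \cong \bL\bbQ\circ\bR\Omega^{\bullet}$ of the right-hand square of Proposition~\ref{loops_orthogonal_to_Lspectra_prop} yields $\bR\Omega^{\infty}_S\circ\bL\bbN_S \cong \bL\bbQ_*\circ\bR\Omega^{\bullet}$. (Alternatively one can apply Proposition~\ref{appendix_prop} directly, taking the two Quillen equivalences to be the top and bottom composites along \eqref{loops_orthogonal_to_Lspectra_diagram}.)

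The only step requiring genuine care — and the one I expect to be the main obstacle — is the claim that the middle square of \eqref{loops_orthogonal_to_Lspectra_diagram} commutes in both directions, i.e.\ that the isomorphism $\loops_{\bbL}F_{\sL}(S,E) \cong F_{\sL}(*,\loops_{\bbL}E)$ is compatible with the forgetful functors $r$ out of $\sM^S$ and $\sM^*$, so that the restriction $\loops_{\bbL}\colon \sM^S \arr \sM^*$ really is the one induced by the unrestricted $\loops_{\bbL}\colon \sS[\bbL]\arr\sU[\bbL]$. This is the ``series of adjunctions'' indicated in \S\ref{Infinite loop space theory of $S$-modules_section}; once it is in place, the rest is a formal paste of natural isomorphisms on top of Proposition~\ref{loops_orthogonal_to_Lspectra_prop}, with no new homotopical input needed.
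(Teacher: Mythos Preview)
Your proposal is correct and takes essentially the same approach as the paper: deduce the result by pasting the three squares of diagram~\eqref{loops_orthogonal_to_Lspectra_diagram}, using that the two rightmost squares commute strictly and invoking Proposition~\ref{loops_orthogonal_to_Lspectra_prop} for the leftmost. The ``main obstacle'' you flag---compatibility of $\loops_{\bbL}F_{\sL}(S,-) \cong F_{\sL}(*,\loops_{\bbL}-)$ with the mirror-image categories---is already handled in \S\ref{Infinite loop space theory of $S$-modules_section} and asserted as strict commutativity just after diagram~\eqref{loops_orthogonal_to_Lspectra_diagram}, so no new work is needed there.
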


\noindent This concludes the comparison of infinite loop space functors and the proof of Theorem \ref{main_comparison_theorem}.

\section{The space of units of a diagram ring spectrum}\label{space_units_construction_section}

Let $R$ be a commutative diagram ring spectrum.  In the next two sections, we will define the spectrum of units $gl_1 R$.  The idea is that by taking the stably invertible components of the commutative FCP $\Omega^{\bullet} R$, we have a group-like commutative FCP $\mGL_1^{\bullet} R$.  This is accomplished in the current section.  We then convert commutative FCPs into spectra in \S\ref{spectrum_units_construction_section}.  In the case of symmetric spectra, the construction is due to Schlichtkrull \cite{schlichtkrull_units}.

\medskip

The forgetful functor from groups to monoids has a right adjoint $M \mapsto M^{\times}$, where $M^{\times}$ is the submonoid of invertible elements of $M$.  We will now make the analogous construction for $\sD$-FCPs, where $\sD = \bbI$ or $\cI$.  Let $X$ be a $\sD$-FCP.  The space $\hocolim_{\sD} X$ inherits a topological monoid structure from the FCP multiplication $\mu$ on $X$ and the permutative structure $\oplus$ of $\sD$:
\[
(\hocolim_{\sD} X)^2 = \hocolim_{\sD^2} (X \times X) \overset{\mu}{\arr} \hocolim_{\sD^2} (X \circ \oplus) \overset{\oplus_*}{\arr} \hocolim_{\sD} X.
\]
In fact, this monoid structure underlies the $E_{\infty}$-space structure on $\hocolim_{\sD} X$ (Remark \ref{hocolim_IFCP_is_E_infty}).
Taking components, there is an induced monoid structure on $\pi_0 X = \pi_0 \hocolim_{\bbI} X$.  We say that $X$ is grouplike if $\pi_0 X$ is a group.  Consider $\pi_0 X$ and $(\pi_0 X)^\times$ as constant $\sD$-spaces.  We define a discretization map of $\sD$-spaces $X \arr \pi_0 X$ by:
\[
X(d) \arr \pi_0 X(d) \arr \underset{d \in \sD}{\colim} \, \pi_0 X(d) \arr \pi_0 \hocolim_{\sD} X = \pi_0 X.
\]
The first map is the discretization map of spaces that takes a point to its connected component in $\pi_0$, and we give $\pi_0 X(d)$ the quotient topology.  Give $\pi_0 X$ the topology inherited from the colimit topology on $\colim \pi_0 X_n$, so that the the composite is continuous.  

\begin{definition}\label{pullback_defining_units}
Given a $\sD$-FCP $X$, define the $\sD$-FCP $X^{\times}$ to be the following pullback of $\sD$-spaces:
\begin{equation*}
\xymatrix{
X^{\times} \ar[d] \ar[r] & X \ar[d] \\
(\pi_0 X)^{\times} \ar[r] & \pi_0 X
}
\end{equation*}
\end{definition}
The space $(X^{\times})(d)$ is the disjoint union of the components of $X(d)$ whose elements are stably invertible, in the sense that they map to units in $\pi_0 X$.  It is immediate that $\pi_0 (X^{\times}) = (\pi_0 X)^{\times}$ and so $X^{\times}$ is grouplike.  Furthermore, $X \mapsto X^{\times}$ is right adjoint to the forgetful functor from grouplike $\sD$-FCPs to $\sD$-FCPs.

Let $\pi'_0 X$ be the $\sD$-FCP $(\pi'_0 X)(d) = \pi_0 (X(d))$.  Define a sub $\sD$-FCP $\pi'_0 X^{\x}$ by letting $(\pi'_0 X^{\x})(d)$ be the set of components $[x] \in \pi_0 X(d)$ for which there exists $[y] \in \pi_0 X(d')$ such that $[\mu(x, y)]$ and $[\mu(y, x)]$ are the components of the image of the FCP unit map $\eta \colon * \arr X$ in $\pi_0X(d \oplus d')$ and $\pi_0X(d' \oplus d)$.   It follows that $\colim_{\bbI} \pi'_0 X^{\x} \cong (\pi_0 X)^{\x}$, so $X^{\x}$ can also be described as the pullback
\addtocounter{theorem}{1}
\begin{equation}\label{alternative_units_pullback}
\xymatrix{
X^{\times} \ar[d] \ar[r] & X \ar[d] \\
\pi'_0 X^{\times} \ar[r] & \pi'_0 X
}
\end{equation}

Specializing to the case of the FCP $\Omega^{\bullet} R$ for diagram ring spectra $R$, we define FCPs of units:

\begin{definition}  If $R$ is a symmetric ring spectrum, we define the $\bbI$-FCP of units of $R$ to be $\mGL_1^{\bullet} R = (\Omega^{\bullet} R)^{\times}$.  If $R$ is an orthogonal ring spectrum, we define the $\cI$-FCP of units of $R$ to be $\mGL_1^{\bullet} R = (\Omega^{\bullet} R)^{\times}$.  
\end{definition}

\begin{remark}  For semistable symmetric spectra, $\pi_0 \Omega^{\infty} R \cong \pi_0 R$ (see Remark \ref{semistable_comment}).  In this case, $(\pi_0 R)^{\times}$ is the usual subgroup of multiplicatively invertible elements of the ring $\pi_0 R$ and $\mGL_{1}^{\bullet} R$ is defined by the following pullback:
\[
\xymatrix{
\mGL_{1}^{\bullet} R \ar[d] \ar[r] & \Omega^{\bullet} R \ar[d] \\
(\pi_0 R)^{\times} \ar[r] & \pi_0 R
}
\]
\noindent On the other hand, $\pi_0 \Omega^{\infty} R \cong \pi_0 R$ for all orthogonal ring spectra $R$.  Hence $\mGL_{1}^{\bullet} R$ is always described by the above pullback.
\end{remark}

\section{The spectrum of units of a commutative diagram ring spectrum}\label{spectrum_units_construction_section}

From a commutative diagram ring spectrum $R$ we have constructed a commutative FCP of units $\mGL_1^{\bullet} R$.  We will now convert commutative FCPs into $\Gamma$-spaces and then apply an infinite loop space machine to get spectra of units.   For $\bbI$-FCPs this construction is originally due to Schlichtkrull \cite{schlichtkrull_units} (see also \citelist{\cite{Sch_thomspectra} \cite{Sch_higherTHH} }) and for $\cI$-FCPs we will use a direct analog of his approach.

Let $\Gamma^{\op}$ be the skeletal category of finite base sets $\bn^{+} = \{0, 1, \dotsc, n\}$ with basepoint $0$ and based maps.  A $\Gamma$-space is a functor $Y \colon \Gamma^{\op} \arr \sU$ such that $Y(\mathbf{0}^+)$ is contractible and the projections $\delta_i \colon \bn^+ \arr \mathbf{1}^+$ sending $i$ to $1$ and all other elements to the basepoint induce a weak equivalence:
\begin{equation*}
\delta \colon Y(\bn^+) \overset{\simeq}{\arr} \prod_{i = 1}^n Y(\mathbf{1}^+).
\end{equation*}
We refer to this map as the Segal map.  Note that our $\Gamma$-spaces are often called special $\Gamma$-spaces elsewhere.  Write $Y_n$ for $Y(\bn^+)$.  The underlying space of a $\Gamma$-space is $Y_1$.  In general, $\pi_0 Y_1$ an abelian monoid.  We call a $\Gamma$-space grouplike if $\pi_0 Y_1$ is a group.

Starting with a commutative $\bbI$-FCP X, we will construct a $\Gamma$-space $H_{\bbI} X$.  Starting with a commutative $\cI$-FCP, we will construct two different $\Gamma$-spaces: $H_{\cI} X$ and $H_{\cI(U)} X$.  $H_{\cI} X$ will be used to define the spectrum of units of an orthogonal ring spectrum.  $H_{\cI(U)} X$ will be used to compare with the spectrum of units of an $E_{\infty}$ ring spectrum, and uses the full subcategory $\cI(U)$ of $\cI$ whose objects are finite dimensional sub-inner product spaces $V \subset U$.  Unlike $\cI$, the category $\cI(U)$ is an $\sL$-category (\ref{Topological Categories and the Bar Construction}.6), which will allow the comparison with the $\sL$-space $\colim_{\cJ} X$.  However, $\cI(U)$ is \emph{not} symmetric monoidal, so we will never consider $\cI(U)$-spaces or $\cI(U)$-FCPs.  In what follows, the homotopy colimits that define $H_{\cI} X$ are built by replacing $\cI$ with the equivalent small topological subcategory $\cI^{\dagger}$, as usual (see \S\ref{Infinite loop space theory of orthogonal spectra}).  We break from our usual convention, and explicitly write $\cI^{\dagger}$ where it appears in this construction.

\begin{construction}\label{FCP_to_fspace_construction}  Let $\sD$ denote $\bbI$, $\cI^{\dagger}$ or $\cI(U)$.  We construct a functor $X \longmapsto H_{\sD} X$ from commutative $\bbI$-FCPs (for $\sD = \bbI$) or $\cI$-FCPs (for $\sD = \cI^{\dagger}, \cI(U)$) to $\Gamma$-spaces as follows.  Let $\sP(\bn^{+})$ be the poset of subsets $A \subset \bn^{+} = \{0, 1, \dotsc, n\}$ that do \emph{not} contain $0$.  Let $\sD(\bn^{+})$ be the category of functors $\theta \colon \sP(\bn^{+}) \arr \sD$ such that for every pair of subsets $A, B \in \sP(\bn^{+})$ with $A \cap B = \emptyset$, the diagram induced by the inclusions $A \arr A \cup B$ and $B \arr A \cup B$
\[
\theta(A) \arr \theta(A \cup B) \longleftarrow \theta(B)
\]
is a coproduct diagram in the category of finite sets (for $\sD = \bbI$) or in the category of vector spaces (for $\sD = \cI^{\dagger}, \cI(U)$).  Morphisms in $\sD(\bn^{+})$ are natural transformations of functors.  Notice that $\theta(\emptyset) = 0$, the initial object (and unit for $\oplus$).  Thus an object $\theta$ of $\sD(\bn^{+})$ consists of:
\begin{itemize}
\item objects $\theta_i = \theta(\{i\})$ for $1 \leq i \leq n$,
\item morphisms $\theta_i \arr \theta(A)$ for each $A \subset \{1, \dotsc, n\}$ and $i \in A$ that assemble into a canonical isomorphism:
\addtocounter{theorem}{1}
\begin{align}\label{ordering_iso}
\bigoplus_{i \in A} \theta_i &\cong \theta(A).
\end{align}
\end{itemize}
Here $\oplus$ denotes the monoidal product of $\sD$.  For $\sD = \cI(U)$, the abstract direct sum is not a subspace of $U$, so the isomorphism is a morphism of $\cI$ but not $\cI(U)$.  

We will take topological homotopy colimits over $\sD(\bn^{+})$ and so we must give $\sD(\bn^{+})$ a topology.  Let $\sP(\bn^{+})$ have the discrete topology and topologize the category $\Fun(\sP(\bn^{+}), \sD)$ of functors and natural transformations as described in \ref{topological_functor_category}. Since $\sD(\bn^{+})$ is a full subcategory of $\Fun(\sP(\bn^{+}), \sD)$, we topologize $\ob \sD(\bn^{+})$ and $\mor \sD(\bn^{+})$ as subspaces of $\ob \Fun(\sP(\bn^{+}), \sD)$ and $\mor \Fun(\sP(\bn^{+}), \sD)$.  

We have a continuous forgetful functor $u \colon \sD(\bn^{+}) \arr \sD^n$ that sends $\theta$ to $(\theta_{1}, \dotsc, \theta_{n})$.    Let $X(\bn^{+})$ denote the composite functor
\begin{align*}
X(\bn^{+}) \colon \sD(\bn^{+}) &\overset{u}{\arr} \sD^n \overset{X^n}{\arr} \sU \\
&\theta \longmapsto \prod_{i = 1}^{n} X(\theta_{i}).
\end{align*}
Define a functor $H_{\sD}X$ from $\Gamma^{\op}$ to spaces using the homotopy colimit:
\[
H_{\sD} X (\bn^{+}) = \underset{\sD(\bn^{+})}{\hocolim} \; X(\bn^{+}).
\]
By definition, $H_{\sD} X(\mathbf{0}^{+}) = \ast$.  Note that the underlying space of $H_{\sD} X$ is $(H_{\sD} X)_1 = \hocolim_{\sD} X$.  To see the functoriality in $\Gamma^{\op}$, suppose that $\alpha \colon \bm^{+} \arr \bn^{+}$ is a map of based sets.  The inverse image functor $\alpha^{-1} \colon \sP(\bn^{+}) \arr \sP(\bm^{+})$ induces a functor $\alpha_* \colon \sD(\bm^{+}) \arr \sD(\bn^{+})$ by precomposition.  We then have a natural transformation $X(\alpha) \colon X(\bm^{+}) \arr X(\bn^{+}) \circ \alpha_*$ defined using the FCP multiplication $\mu$:
\begin{align*}
X(\alpha)_\theta \colon \prod_{i = 1}^{m} X(\theta_i) \arr \prod_{j = 1}^{m} \prod_{i \in \alpha^{-1}(j)} X(\theta_i) \overset{\mu}{\arr} &\prod_{j = 1}^{m} X \bigl( \bigoplus_{i \in \alpha^{-1}(j)} \theta_i \bigr) \\
&\cong \prod_{j = 1}^{m} X( \theta( \alpha^{-1}(j))).
\end{align*}
The first map is the projection away from the factors indexed by elements $i \in \bm^{+}$ that map to the basepoint under $\alpha$.  The last isomorphism is induced by the canonical isomorphism \eqref{ordering_iso}.  Since $X$ is a commutative FCP, $X(\alpha)$ does not depend on the choice of ordering used to define the product indexed by $\alpha^{-1}(j)$.  When $\sD = \cI(U)$ the isomorphism \eqref{ordering_iso} is not a morphism in $\cI(U)$.  However, in that case we start with an $\cI$-FCP, so we are free to use functoriality in $\cI$ when defining $H_{\cI(U)} X$.  The natural transformation $X(\alpha)$ induces the first map of homotopy colimits below:
\[
\underset{\sD(\bm^{+})}{\hocolim} \; X(\bm^{+}) \overset{X(\alpha)}{\arr} \underset{\sD(\bm^{+})}{\hocolim} \; X(\bn^{+}) \circ \alpha_* \overset{\alpha_*}{\arr} \underset{\sD(\bn^{+})}{\hocolim} \; X(\bn^{+}).
\]
The second map is induced by the functor $\alpha_* \colon \sD(\bm^{+}) \arr \sD(\bn^{+})$ and the composite gives functoriality of $H_{\sD} X$ in the morphism $\alpha$.
\end{construction}

\begin{proposition}\label{construction_gives_gamma_space}  The functor $H_{\sD} X$ is a $\Gamma$-space.
\end{proposition}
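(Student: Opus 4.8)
The plan is to check the two defining conditions for $Y = H_{\sD}X$ to be a $\Gamma$-space: that $Y(\mathbf{0}^{+})$ is contractible, and that each Segal map is a weak homotopy equivalence. The first holds by fiat, since Construction \ref{FCP_to_fspace_construction} sets $H_{\sD}X(\mathbf{0}^{+}) = *$.

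For the Segal condition, fix $n \geq 1$. First I would unwind $H_{\sD}X$ on the projections $\delta_{i} \colon \bn^{+} \arr \mathbf{1}^{+}$ using Construction \ref{FCP_to_fspace_construction}. Since $\delta_{i}^{-1}(\{1\}) = \{i\}$, the induced functor $(\delta_{i})_{*} \colon \sD(\bn^{+}) \arr \sD(\mathbf{1}^{+})$ becomes, under the evident isomorphism $\sD(\mathbf{1}^{+}) \cong \sD$, the forgetful functor $\theta \longmapsto \theta_{i}$, and the transformation $X(\delta_{i})$ is simply the projection $\prod_{k} X(\theta_{k}) \arr X(\theta_{i})$ onto the $i$-th factor (the FCP multiplication $\mu$ contributes nothing, because $\delta_{i}^{-1}(1)$ is a singleton). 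Assembling the $\delta_{i}$, the Segal map is therefore the composite
\[
H_{\sD} X(\bn^{+}) = \hocolim_{\sD(\bn^{+})}\, (X^{n} \circ u) \arr \hocolim_{\sD^{n}}\, X^{n} \;\cong\; \Bigl( \hocolim_{\sD} X \Bigr)^{n} = \prod_{i = 1}^{n} H_{\sD} X(\mathbf{1}^{+}),
\]
where $u \colon \sD(\bn^{+}) \arr \sD^{n}$ is the forgetful functor of Construction \ref{FCP_to_fspace_construction} and $X^{n} \colon \sD^{n} \arr \sU$ sends $(\theta_{1}, \dotsc, \theta_{n})$ to $\prod_{i} X(\theta_{i})$; the first arrow is induced by $u$ and the identity of $X^{n} \circ u$, and the isomorphism is the standard fact that homotopy colimits over a product of topological categories split as products (the two-sided bar construction commutes with finite products).

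So it remains to prove that $u \colon \sD(\bn^{+}) \arr \sD^{n}$ induces a weak homotopy equivalence on homotopy colimits, which I would do by showing that $u$ is an equivalence of topological categories. It is essentially surjective: using the symmetric monoidal structure of $\sD$, any tuple $(\theta_{1}, \dotsc, \theta_{n})$ is $u$ of the web $A \longmapsto \bigoplus_{i \in A} \theta_{i}$. It is fully faithful: the coproduct condition in the definition of $\sD(\bn^{+})$ exhibits each $\theta(A)$ as the direct sum of the $\theta_{i}$, $i \in A$ (an \emph{orthogonal} direct sum when $\sD = \cI^{\dagger}$ or $\cI(U)$, so that the comparison maps are isometries), whence a natural transformation $\theta \arr \theta'$ is precisely the data of morphisms $\theta_{i} \arr \theta_{i}'$ in $\sD$. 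Granting compatibility with the topologies — automatic for $\bbI$, and for $\cI^{\dagger}$, $\cI(U)$ a matter of realizing the abstract direct sums as subspaces of a finite product $U^{m}$ via distinct coordinate blocks — an equivalence of topological categories is homotopy cofinal, so it induces a weak homotopy equivalence on homotopy colimits by Lemma \ref{theorem_A_hocolim_version}. Then $\delta$ is a weak homotopy equivalence and $H_{\sD}X$ is a $\Gamma$-space.

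The main obstacle is this last step, and more precisely its topological refinement: the statement that $u$ is an equivalence of \emph{abstract} categories is routine, but since $\hocolim$ is the topological bar construction over $\cI^{\dagger}$ or $\cI(U)$ one must either produce a continuous pseudo-inverse (with continuous unit and counit) or, alternatively, verify directly that the comma categories $(\theta_{1}, \dotsc, \theta_{n}) / u$ have contractible classifying spaces — which is where the contractibility of the spaces of linear isometries (Lemma \ref{mapping_space_equiv_lemma}.(i)) and the bookkeeping of \S\ref{Topological Categories and the Bar Construction} enter.
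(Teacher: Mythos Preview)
Your identification of the Segal map with the map on homotopy colimits induced by $u \colon \sD(\bn^{+}) \arr \sD^{n}$ is exactly right, and applying Lemma~\ref{theorem_A_hocolim_version} is the correct endgame.  Where you diverge from the paper is in how you verify the cofinality hypothesis.  You propose to show that $u$ is an equivalence of topological categories, and you correctly flag the obstacle: producing a \emph{continuous} pseudo-inverse is awkward, particularly for $\sD = \cI(U)$, which is not symmetric monoidal and where the ``direct sum'' section $(\theta_{1},\dotsc,\theta_{n}) \mapsto (A \mapsto \bigoplus_{i \in A} \theta_{i})$ simply does not land in $\cI(U)$ without non-canonical choices of embeddings into $U$.

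The paper sidesteps this entirely.  Rather than globalize an inverse to $u$, it works one comma category at a time: for each \emph{fixed} $\bd = (d_{1},\dotsc,d_{n}) \in \ob \sD^{n}$, it exhibits an initial object of $(\bd \downarrow u)$.  For $\sD = \bbI$ or $\cI^{\dagger}$ this is $\id_{\bd} \colon \bd \arr u(\theta)$ with $\theta(A) = \bigoplus_{i \in A} d_{i}$; for $\sD = \cI(U)$ one first chooses isometric isomorphisms $\phi_{i} \colon d_{i} \arr d'_{i}$ onto pairwise orthogonal subspaces of $U$ and uses the internal sum $\theta(A) = \sum_{i \in A} d'_{i}$, obtaining $\prod_{i}\phi_{i} \colon \bd \arr u(\theta)$ as initial object.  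The point is that these choices are made separately for each $\bd$ and need not assemble into a continuous functor---Lemma~\ref{theorem_A_hocolim_version} only asks that each $B(\bd \downarrow u)$ be contractible, which an initial object delivers immediately (cf.\ Remark~\ref{top_initial_object_remark}).  In particular, the contractibility of spaces of linear isometries (Lemma~\ref{mapping_space_equiv_lemma}(i)) is not needed here; that is a red herring introduced by aiming for a global equivalence rather than pointwise initial objects.
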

\begin{proof}  It remains to show that the Segal maps are weak equivalences.  Under the canonical isomorphism $(\hocolim_{\sD} X)^n \cong \hocolim_{\sD^n} X^n$, the Segal map 
\[
\delta \colon H_{\sD}(\bn^+) \arr \prod_{i = 1}^n H_{\sD}(\mathbf{1}^+)
\]
is identified with the map of homotopy colimits induced by the forgetful functor $u \colon \sD(\bn^{+}) \arr \sD^n$:
\[
\underset{\sD(\bn^+)}{\hocolim} \; X(\bn^+) \arr \underset{\sD^n}{\hocolim} \; X^n.
\]
To show that this is a weak homotopy equivalence, we will apply Lemma \ref{theorem_A_hocolim_version} to the forgetful functor $u \colon \sD(\bn^+) \arr \sD^n$.  For any object $\bd = (d_1, \dotsc, d_n)$ of $\sD^n$, the comma category $(\bd \downarrow u)$ has initial object $\id \colon \bd \arr u(\theta)$, where $\theta$ is the functor $\sP(\bn^+) \arr \sD$ given by:
\[
\theta (A) = \bigoplus_{i \in A} d_{i}.
\]
This works as written for $\sD = \bbI$ and $\cI^{\dagger}$.  For $\sD = \cI(U)$, the direct sum is not an object of $\cI(U)$.  Instead, we choose isometric isomorphisms $\phi_i \colon d_i \arr d'_i$ such that the $d'_i$ are pairwise orthogonal for $1 \leq i \leq n$.  Then define the functor $\theta$ using the \emph{internal} direct sum:
\[
\theta (A) = \sum_{i \in A} d'_{i} \subset U.
\]
The object $\prod_i \phi_i \colon \bd \arr u(\theta)$ is an initial object of $(\bd \downarrow u)$.  In all cases, the classifying space of a topological category with initial object is contractible, so Lemma \ref{theorem_A_hocolim_version} applies, yielding the desired weak equivalence.  
\end{proof}

From now on, denote the $\Gamma$-space $H_{\cI^{\dagger}} X$ by $H_{\cI} X$.  Notice that if $X$ is a commutative $\cI$-FCP, the inclusions of categories $\bbI \arr \cI^{\dagger}$ and $\cI(U) \arr \cI^{\dagger}$ induce maps of $\Gamma$-spaces $H_{\bbI} \bbU X \arr H_{\cI} X$ and $H_{\cI(U)} X \arr H_{\cI} X$.  

\begin{remark}  The preceding construction is equivalent to the construction of a $\Gamma$-space from the permutative category $\sD[X]$ given in \cite{May_unique_from_perm} (see Remark \ref{hocolim_IFCP_is_E_infty}).  In the case of $\sD = \cI(U)$, which is not permutative, one must use the structure of a partial permutative category given by the internal direct sum, as discussed in \cite{May_spectra_of_Imonoids}.  Our construction avoids the use of partial structures because we only need the internal direct sum in the verification of the $\Gamma$-space condition.
\end{remark}

The last step in constructing the spectrum of units is converting a  $\Gamma$-space into a spectrum.  Our model for the output will be weak $\Omega$-spectra.  A weak $\Omega$-spectrum $X$ is a sequence of spaces $X_n$ with spectrum structure maps $X_n \arr \Omega X_{n + 1}$ that are weak homotopy equivalences.  In the language of \cite{MMSS}, these are the fibrant objects in the category of coordinatized prespectra.  Denote the category of weak $\Omega$-spectra by $\sS^{\Omega}$.  We will use the term spectrum for this notion from now on, hoping not to cause confusion with the underlying (LMS) spectrum of an $\bbL$-spectrum.  Since the spectrum of units of a ring spectrum will never carry multiplicative structure, we need not model it in a symmetric monoidal category of spectra: ultimately, we only care about the object it determines in the stable homotopy category.

There are many different, but equivalent, constructions of a spectrum from a $\Gamma$-space.  Instead of choosing one, we follow the axiomatic approach of \cite{May_Thomason} ($\Gamma$-spaces are called $\sF$-spaces there).  The most general input is the notion of an $\sO$-space for a category of operators $\sO$.  $\Gamma$-spaces and $\sC$-spaces for an $E_{\infty}$ operad $\sC$ are both examples of $\sO$-spaces.  An $\sO$-space $Y$ has an underlying space $Y_1$.  An infinite loop space machine is a functor $E$ from $\sO$-spaces (for some category of operators $\sO$) to weak $\Omega$-spectra along with a natural group completion $\iota \colon Y_1 \arr E_0 Y = (E Y)_0$.  The main theorem of \cite{May_Thomason} shows that any two infinite loop space machines agree up to natural weak equivalence, so the following definition does not depend on the choice of $E$.  

\begin{definition}\label{def_of_units}  Let $R$ be a symmetric or orthogonal commutative ring spectrum.  The spectrum of units of $R$ is the output of any infinite loop space machine $E$ applied to the $\Gamma$-space given by Construction \ref{FCP_to_fspace_construction} for the commutative FCP $\mGL^{\bullet}_1 R$:
\[
gl_1 R = E H_{\sD} \mGL_{1}^{\bullet} R \qquad \begin{cases} \text{$\sD = \bbI$ for symmetric ring spectra,} \\ \text{$\sD = \cI$ for orthogonal ring spectra.} \end{cases}
\]
\end{definition}

\begin{remark}
We could have defined the spectra of units of diagram ring spectra operadically, by applying an infinite loop space machine to the $E_{\infty}$ space $\hocolim \mGL_1^{\bullet} R$ (Remark \ref{hocolim_IFCP_is_E_infty}).  In fact, this gives the same homotopy type as Definition \ref{def_of_units}.  One can prove this by applying the uniqueness result of \cite{May_unique_from_perm} to the permutative category whose geometric realization is $\hocolim \mGL_1^{\bullet} R$.  
\end{remark}

We will not use the following result until the comparison with the spectra of units of $E_{\infty}$ ring spectra, but we record it here since it is immediate from the definitions.   It shows how Construction \ref{FCP_to_fspace_construction} encodes infinite loop space structure in two different ways.  Given an operad $\sC$, a $\Gamma\sC$-space is a $\Gamma$-space $X$ such that each $X(A)$ is a $\sC$-space and the maps induced by morphisms in $\Gamma^{\op}$ are maps of $\sC$-spaces.  

\begin{proposition}\label{HX_is_F_Einfty}  Let $X$ be a commutative $\cI$-FCP. Then $H_{\cI(U)}X$ is a $\Gamma \sL$-space, where $\sL$ is the linear isometries operad.
\end{proposition}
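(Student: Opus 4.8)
The plan is to exhibit, for each $\bn^{+}$, an $\sL$-action on $H_{\cI(U)}X(\bn^{+})$ and then to check that the maps induced by morphisms of $\Gamma^{\op}$ respect it; since $H_{\cI(U)}X$ is already a $\Gamma$-space by Proposition~\ref{construction_gives_gamma_space}, this will give the $\Gamma\sL$-space structure. The whole reason for working with $\cI(U)$ rather than $\cI^{\dagger}$ or $\cJ$ is that $\cI(U)$ is an $\sL$-category (\ref{Topological Categories and the Bar Construction}.6): for $g \in \sL(j) = \cI_c(U^{j}, U)$ and objects $V_{1}, \dots, V_{j}$ of $\cI(U)$, the internal direct sum $g(V_{1} \oplus \dots \oplus V_{j}) \subset U$ is again an object of $\cI(U)$, $g$ acts on morphisms by conjugation, and the operad structure of $\sL$ makes this into an action through functors. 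First I would promote this to an $\sL$-category structure on $\cI(U)(\bn^{+})$: given objects $\theta^{(1)}, \dots, \theta^{(j)}$ and $g \in \sL(j)$, set $(g \cdot \theta)(A) = g\bigl(\bigoplus_{k=1}^{j} \theta^{(k)}(A)\bigr)$. Since $g$ is linear and isometric it carries the decomposition $\bigoplus_{i \in A}\bigl(\bigoplus_{k}\theta_{i}^{(k)}\bigr)$ to a decomposition of $(g\cdot\theta)(A)$ into pairwise orthogonal subspaces of $U$, so $g \cdot \theta$ is again an object of $\cI(U)(\bn^{+})$, and the analogous formula on natural transformations makes $\cI(U)(\bn^{+})$ an $\sL$-category for which the forgetful functor $u \colon \cI(U)(\bn^{+}) \arr \cI(U)^{n}$ is a map of $\sL$-categories (with the componentwise $\sL$-structure on the target).

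Next I would check that $X(\bn^{+}) \colon \cI(U)(\bn^{+}) \arr \sU$ is an $\sL$-functor lying over $u$: given $x^{(k)} \in \prod_{i} X(\theta_{i}^{(k)})$ for $1 \le k \le j$ and $g \in \sL(j)$, apply the iterated commutative FCP multiplication $\mu \colon \prod_{k} X(\theta_{i}^{(k)}) \arr X(\bigoplus_{k}\theta_{i}^{(k)})$ in each slot $i$, then apply the map induced by the isometric isomorphism from $\bigoplus_{k}\theta_{i}^{(k)}$ onto its image $(g\cdot\theta)_{i}$; this lands in $\prod_{i} X((g\cdot\theta)_{i})$. Associativity and commutativity of $\mu$ together with the operad identities for $\sL$ show that these maps assemble into an $\sL$-functor structure on $X(\bn^{+})$. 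Then, by the discussion of $\sL$-actions on two-sided bar constructions over $\sL$-categories in \S\ref{Topological Categories and the Bar Construction},
\[
H_{\cI(U)}X(\bn^{+}) = \hocolim_{\cI(U)(\bn^{+})} X(\bn^{+}) = B(*, \cI(U)(\bn^{+}), X(\bn^{+}))
\]
is an $\sL$-space: the $\sL(j)$-actions on objects, morphisms and functor values yield compatible $\sL(j)$-actions on the simplicial bar construction, which pass to the geometric realization. For $\bn^{+} = \mathbf{0}^{+}$ this is the trivial $\sL$-space $*$.

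Finally I would verify that each $H_{\cI(U)}X(\alpha)$, for $\alpha \colon \bm^{+} \arr \bn^{+}$, is a map of $\sL$-spaces. The functor $\alpha_{*} \colon \cI(U)(\bm^{+}) \arr \cI(U)(\bn^{+})$ is precomposition with $\alpha^{-1}$, which commutes with the formula $(g\cdot\theta)(A) = g(\bigoplus_{k}\theta^{(k)}(A))$, so $\alpha_{*}$ is a map of $\sL$-categories; and the natural transformation $X(\alpha) \colon X(\bm^{+}) \arr X(\bn^{+}) \circ \alpha_{*}$, built from $\mu$ and the canonical isomorphisms~\eqref{ordering_iso}, is a map of $\sL$-functors. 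Passing to bar constructions then shows $H_{\cI(U)}X(\alpha)$ is $\sL$-equivariant, so $H_{\cI(U)}X$ is a $\Gamma\sL$-space. I expect the only genuine work is this last step — checking that $\alpha_{*}$ and $X(\alpha)$ interact correctly with the internal direct sum and the operadic reshuffling when $\alpha$ is not injective, where commutativity of $\mu$ is precisely what is needed; the remainder is routine bookkeeping with the machinery of the appendix.
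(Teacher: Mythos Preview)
Your proposal is correct and follows essentially the same approach as the paper: define the $\sL$-action on $\cI(U)(\bn^{+})$ by the formula $(g\cdot\theta)(A) = g\bigl(\bigoplus_{k}\theta^{(k)}(A)\bigr)$, lift to an $\sL\sD$-algebra structure on the module underlying $X(\bn^{+})$ via the commutative FCP multiplication, invoke Lemma~\ref{barconstruction_Lspace_lemma}, and then check naturality in $\Gamma^{\op}$. The only stylistic difference is that the paper packages the construction of the $\sL$-structures on $\sX(\bn^{+})$ and $\mor\cI(U)(\bn^{+})$ as pullbacks of the $\sL$-spaces $\sX^{n}$ and $(\mor\cI(U))^{n}$ along the forgetful map $u$, which lets it import the $\sL$-structure from Proposition~\ref{hocolim_of_FCP_is_Lspace_prop}(iv) without rewriting the explicit formulas you give.
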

\begin{proof}  By Proposition \ref{hocolim_of_FCP_is_Lspace_prop}, $\cI(U)$ is an $\sL$-category.  Let $u \colon \ob \cI(U)(\bn^{+}) \arr (\ob \cI(U))^n$ be the object map of the forgetful functor $\theta \mapsto (\theta_{1}, \dotsc, \theta_{n})$.  Define an $\sL$-space structure on $\ob \cI(U)(\bn^{+})$ by letting $\gamma \in \sL(j)$ act on $(\theta^1, \dotsc, \theta^j) \in \ob \cI(U)(\bn^{+})^j$ by:
\[
\gamma \cdot (\theta^1, \dotsc, \theta^j) \colon A \longmapsto \gamma( \theta^1(A) \oplus \dotsm \oplus \theta^j(A) ), \qquad \text{where $A \in \sP(\bn^{+})$.}
\]
Since this agrees with the product $\sL$-space structure on $(\ob \cI(U))^n$, the map $u \colon \ob \cI(U)(\bn^{+}) \arr (\ob \cI(U))^n$ is a map of $\sL$-spaces.

Let $t^n \colon \sX^n \arr (\ob \cI(U))^n$ be the $n$-fold cartesian product of the left $\cI(U)$-module associated to the $\cI$-space $X$.   By Proposition \ref{hocolim_of_FCP_is_Lspace_prop}.(iv), $t^n$ is a map of $\sL$-spaces.  The left $\cI(U)(\bn^{+})$-module associated to the $\cI(U)(\bn^{+})$-space $X(\bn^{+})$ is the pullback:
\[
\xymatrix{ \sX(\bn^{+}) \ar[r] \ar[d] & \sX^n \ar[d] \\
\ob \cI(U)(\bn^{+}) \ar[r]^{u} & (\ob \cI(U))^n .}
\]
Since this is the pullback of a diagram of $\sL$-spaces, $\sX(\bn^{+})$ is an $\sL$-space and the structure map $\sX(\bn^{+}) \arr \ob \cI(U)(\bn^{+})$ is a map of $\sL$-spaces.

The space of morphisms $\mor \cI(U)(\bn^{+})$ is the following pullback:
\[
\xymatrix{ \mor \cI(U)(\bn^{+}) \ar[r] \ar[d]_{s \times t} & (\mor \cI(U))^n \ar[d]^{s \times t} \\
\ob \cI(U)(\bn^{+}) \times \ob \cI(U)(\bn^{+}) \ar[r]^{u \times u} & (\ob \cI(U))^n \times (\ob \cI(U))^n .}
\]
Again this is a pullback diagram of $\sL$-spaces, so $\mor \cI(U)(A)$ is an $\sL$-space and the source and target maps $\mor \cI(U)(\bn^{+}) \arr \ob \cI(U)(\bn^{+})$ are maps of $\sL$-spaces.  The left module structure map $\mor \cI(U)(\bn^{+}) \times_{\ob \cI(U)(\bn^{+})} \sX(\bn^{+}) \arr \sX(\bn^{+})$ is defined using these pullback diagrams and so is a map of $\sL$-spaces as well.  It follows that $\sX(\bn^{+})$ is an $\sL \cI(U)(\bn^{+})$-algebra, in the sense of \S\ref{Topological Categories and the Bar Construction}.  By Lemma \ref{barconstruction_Lspace_lemma}, $\hocolim_{\cI(U)(\bn^{+})} X(\bn^{+})$ is an $\sL$-space.  The preceding argument is functorial in the variable $\bn^{+}$ and so the homotopy colimits $\hocolim_{\cI(U)(\bn^{+})} X(\bn^{+})$ assemble into a $\Gamma \sL$-space.
\end{proof}

\section{Comparison of units of diagram ring spectra}

We will compare the spectra of units of symmetric and orthogonal commutative ring spectra.  First we need some lemmas on homotopy colimits, which will be useful in later sections as well.  For the following, see Remark \ref{top_initial_object_remark} on initial objects in topological categories.

\begin{lemma}\label{fibrant_hocolim_lemma} Let $\sD$ be a topological category with an initial object $0$.  Let $X$ be a $\sD$-space such that for every morphism $\phi$ of $\sD$, the induced map $X(\phi)$ is a weak homotopy equivalence of spaces.  Then for any object $d$ of $\sD$, the inclusion
\[
X(d) \arr \hocolim_{\sD} X
\]
is a weak homotopy equivalence.
\end{lemma}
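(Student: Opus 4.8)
The plan is to replace $X$ by the constant $\sD$-diagram at $X(0)$ and exploit the contractibility of the classifying space $B\sD$. Write $\iota_d \colon 0 \arr d$ for the unique morphism out of the initial object (taken in the strict sense, cf. Remark~\ref{top_initial_object_remark}), and let $\nu_d \colon X(d) \arr \hocolim_{\sD} X$ denote the canonical inclusion. First I would note that the maps $X(\iota_d)$ assemble into a natural transformation $c_{X(0)} \arr X$ from the constant diagram at $X(0)$: for $\psi \colon d \arr d'$ one has $\psi \circ \iota_d = \iota_{d'}$ because $0$ is initial, which is exactly the required naturality square, and continuity is clear since $\sD(0,d)$ is a point and $X$ is a continuous functor. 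By hypothesis each component $X(\iota_d)$ is a weak homotopy equivalence, so $c_{X(0)} \arr X$ is a level-wise weak equivalence of $\sD$-diagrams and hence induces a weak homotopy equivalence of homotopy colimits (a basic property of the bar construction, \S\ref{Topological Categories and the Bar Construction}):
\[
X(0) \times B\sD \;=\; \hocolim_{\sD} c_{X(0)} \;\overset{\simeq}{\arr}\; \hocolim_{\sD} X .
\]

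Next I would invoke that $B\sD$ is contractible, since $\sD$ has an initial object — the same fact used in the proof of Proposition~\ref{construction_gives_gamma_space}. Thus the inclusion $X(0) = X(0) \times \{[0]\} \hookrightarrow X(0) \times B\sD$ is a weak homotopy equivalence, and unwinding the identifications shows that the composite
\[
X(0) \;\hookrightarrow\; X(0) \times B\sD \;\overset{\simeq}{\arr}\; \hocolim_{\sD} X
\]
is exactly $\nu_0$, because the natural transformation $c_{X(0)} \arr X$ is the identity on the summand indexed by the object $0$ in simplicial degree zero. Hence $\nu_0$ is a weak homotopy equivalence, which settles the case $d = 0$.

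For a general object $d$, I would compare $\nu_d$ with $\nu_0$ via $\iota_d$: the triangle with vertices $X(0)$, $X(d)$, $\hocolim_{\sD} X$ and edges $X(\iota_d) \colon X(0) \arr X(d)$, $\nu_d$, and $\nu_0$ commutes up to homotopy, the homotopy being the standard one attached to the morphism $\iota_d$ in $\sD$ (it is exhibited by the evident $1$-simplex in the bar construction; compare Lemma~\ref{appendix_homotopy_diagram}). Since $\nu_0$ is a weak homotopy equivalence by the previous step and $X(\iota_d)$ is one by hypothesis, two-out-of-three forces $\nu_d$ to be a weak homotopy equivalence, as desired.

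The argument is essentially formal, and the only step calling for real care is the homotopy-commutative triangle in the last paragraph; but as indicated it is an instance of a bar-construction lemma already available to us, so I expect it to present no genuine obstacle. Everything else — naturality and continuity of $c_{X(0)} \arr X$, invariance of homotopy colimits under level-wise weak equivalences, contractibility of $B\sD$, and the identification of the composite with $\nu_0$ — is routine bookkeeping with the two-sided bar construction.
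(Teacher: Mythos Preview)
Your argument is correct and is precisely the standard one. The paper does not actually supply its own proof of this lemma: it simply asserts that ``the proof in \cite{telescope_lemma}*{6.2} for ordinary categories also applies to topological categories.'' What you have written is that proof, so there is nothing substantive to compare.

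Two small remarks. First, your citation of Lemma~\ref{appendix_homotopy_diagram} for the homotopy-commutative triangle $\nu_0 \simeq \nu_d \circ X(\iota_d)$ is off target; that lemma concerns iterated bar constructions. The homotopy you need is more direct: the $1$-simplex $(\iota_d; x)$ in $B_1(\ast, \sD, \sX)$ realizes to a path from $\nu_0(x)$ to $\nu_d(X(\iota_d)(x))$. Second, the assertion that a level-wise weak equivalence of $\sD$-diagrams induces a weak equivalence of homotopy colimits is not stated explicitly in \S\ref{Topological Categories and the Bar Construction} for general topological $\sD$; it uses properness of the bar construction (guaranteed by the standing hypothesis that $i \colon \ob \sD \arr \mor \sD$ is an $h$-cofibration) together with the fact that the induced maps on $q$-simplices are weak equivalences. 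For the categories actually used in the paper this is immediate from the fiber-bundle descriptions such as \eqref{simplices_of_cJ_X}, and in any case it is exactly the kind of routine check being delegated to the reference.
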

\noindent The proof in \cite{telescope_lemma}*{6.2} for ordinary categories also applies to topological categories.

When $X$ is a fibrant $\sD$-space in the positive model structure, $X(\phi) \colon X(d) \arr X(d')$ is only an equivalence for $d \neq 0$, so the previous lemma does not apply.  A serious technical result provides the same conclusion:

\begin{lemma}[B\"okstedt]\label{bokstedt_lemma}
Let $\sD = \bbI$ or $\cI$.  Let $X$ be a $\sD$-space and let $\sD_{>n}$ denote the full subcategory of $\sD$ consisting of objects $\bd > \bn$ for $\bbI$ and $V$ with $\dim(V) > n$ for $\cI$.  Suppose that every morphism $d \arr d'$ in $\sD_{> n}$ induces a $\lambda$-connected map $X(d) \arr X(d')$.  Then for $d \in \ob \sD_{>n}$, the inclusion $X(d) \arr \hocolim_{\sD} X$ is at least $(\lambda - 1)$-connected.
\end{lemma}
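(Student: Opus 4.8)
The plan is to execute B\"okstedt's original argument in the bar-construction model $\hocolim_{\sD}X \cong |B(*,\sD,X)|$ of \S\ref{Topological Categories and the Bar Construction}, handling $\cI$ and $\bbI$ separately: over $\cI$ everything reduces to a poset and one in fact gets a $\lambda$-connected map, whereas over $\bbI$ the combinatorics of injections has to be confronted and costs one in connectivity. For $\sD=\cI$, I would first apply Proposition \ref{untwisting_prop} to replace $\hocolim_{\cI}X$ by the homotopy-equivalent $\hocolim_{\cJ}X$, compatibly with the canonical map out of $X(V)$, so that it suffices to work over the poset $\cJ$. Next I would observe that the coslice $(V\downarrow\cJ_{>n})$ is a directed poset with initial object $V$ --- hence of contractible classifying space --- on which every structure map of $X$ is $\lambda$-connected; comparing with the constant diagram at $X(V)$ and using that homotopy colimits preserve $\lambda$-connected maps of diagrams shows $X(V)\arr\hocolim_{(V\downarrow\cJ_{>n})}X$ is $\lambda$-connected. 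Finally, the functors $(V\downarrow\cJ_{>n})\arr\cJ_{>n}$ and $\cJ_{>n}\arr\cJ$ have contractible comma categories (directed posets once more), so two applications of Lemma \ref{theorem_A_hocolim_version} give that $X(V)\arr\hocolim_{\cI}X$ is $\lambda$-connected, which is even sharper than claimed.

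For $\sD=\bbI$ the poset reduction is unavailable, since $\bbJ\arr\bbI$ is not homotopy cofinal, and this is the crux of the matter. The plan is to filter $B(*,\bbI_{>n},X)$ by the cardinality $p$ of the top vertex of a simplex; the $p$-th subquotient is assembled --- $\Sigma_p$-equivariantly --- out of $X(\bp)$ together with the classifying spaces of categories of ``complemented injections'' into sets of cardinality $<p$. Since $X(\bd)\arr X(\bp)$ is $\lambda$-connected for $\bd,\bp\in\bbI_{>n}$, one combines this with B\"okstedt's connectivity estimates for those injection categories to see that each subquotient is $\lambda$-connected relative to the previous stage; passing to the colimit over $p$, using the contractibility of $B\bbI_{>n}$ together with the (again homotopy cofinal) inclusion $\bbI_{>n}\arr\bbI$, then yields that $X(\bd)\arr\hocolim_{\bbI}X$ is $(\lambda-1)$-connected. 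Throughout, replacing $\cI$ by the small topological category $\cI^{\dagger}$ --- and taking all homotopy colimits in the topological sense --- is harmless, since every bar construction and comma category appearing carries the homotopy-invariance properties recorded in \S\ref{Topological Categories and the Bar Construction}.

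The hard part is the last combinatorial input: estimating the connectivity of the nerves of the categories of injections (equivalently, of complements of injections) into sets of bounded size, which grows with the bound. This is the technical heart of B\"okstedt's lemma and is exactly where the loss of one in the connectivity is incurred; by contrast, over $\cJ$ the analogous categories are directed posets and cost nothing, which is why the orthogonal side behaves strictly better --- consistent with the remark following Proposition \ref{untwisting_prop} on why $\pi_*$-isomorphisms and weak homotopy equivalences differ for symmetric but not orthogonal spectra.
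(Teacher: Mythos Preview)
The paper does not prove this lemma at all; immediately after the statement it simply records that the result ``goes back to B\"okstedt's THH preprints'' and refers to \cite{brun}*{2.5.1} for a published proof. Your sketch therefore already goes further than the paper does.

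Your outline is sound. For $\sD = \cI$, the reduction to $\cJ$ via Proposition~\ref{untwisting_prop} is legitimate and not circular: that proposition rests on Lemmas~\ref{cofibrant_colim} and~\ref{contractible_isometries_bar}, neither of which invokes the present lemma. The poset argument you describe (compare with the constant diagram, then two cofinality steps) is correct and does yield the sharper $\lambda$-connected conclusion, consistent with the remark after Proposition~\ref{untwisting_prop}. For $\sD = \bbI$ you correctly identify both the shape of the argument (filtration of the bar construction by cardinality of the terminal vertex, subquotients built from $X(\bp)$ and nerves of injection categories) and the location of the real work. Your cofinality claim for $\bbI_{>n} \hookrightarrow \bbI$ is also correct: each comma category is contractible via the standard ``disjoint union with a fixed copy of $\mathbf{n+1}$'' nullhomotopy, which supplies natural transformations from both the identity and a constant functor to the shift.

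The one genuine gap is precisely the one you flag: the connectivity estimate for the nerves of the injection (or complement-of-injection) categories. You do not carry this out, and it is exactly the technical heart of \cite{brun}*{2.5.1}. So in the end your argument and the paper's both bottom out in the same citation; you have simply made the surrounding reduction explicit, which is useful but does not replace the reference.
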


\noindent This goes back to B\"oksted's THH preprints.  For a published proof, see \cite{brun}*{2.5.1}.

\begin{lemma}\label{general_inclusion into_hocolim_equiv}  Let $\sD = \bbI$ or $\cI^{\dagger}$ and suppose that $X(\phi)$ is a weak equivalence for every morphism $\phi$ in $\sD_{> 0}$.  Then the inclusion 
\[
\hocolim_{\sD_{>0}} X \arr \hocolim_{\sD} X
\]
is a homotopy equivalence.
\end{lemma}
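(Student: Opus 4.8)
\emph{Proof proposal.} The plan is to use that $0$ is an initial object of $\sD$, so that $\sD$ is obtained from $\sD_{>0}$ by freely adjoining $0$ as a new initial object; the one geometric fact needed is that $B\sD_{>0}$ is contractible. To see this, fix an object $d_0$ of $\sD_{>0}$ --- the object $\mathbf{1}$ when $\sD = \bbI$, a fixed one-dimensional subspace $\ell \subset U$ when $\sD = \cI^{\dagger}$ --- and let $T \colon \sD_{>0} \arr \sD_{>0}$ be the functor $T(d) = d \oplus d_0$. The two summand inclusions define natural transformations $\id \Rightarrow T$ and $\mathrm{const}_{d_0} \Rightarrow T$, so on classifying spaces $\id_{B\sD_{>0}} \simeq BT \simeq \mathrm{const}$, whence $B\sD_{>0} \simeq *$.

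The main step is then to analyze the bar construction $B(*, \sD, X)$ computing $\hocolim_{\sD} X$. Since $0$ is initial, a chain $c_0 \arr \cdots \arr c_n$ with $c_0 \neq 0$ lies entirely in $\sD_{>0}$, while a chain with $c_0 = 0$ is determined by its tail in $\sD_{>0}$ together with the unique morphism out of $0$; the chains of the first kind form the simplicial subspace computing $\hocolim_{\sD_{>0}} X$, and those of the second kind contribute, after collapsing the (constant) coefficient, a copy of $X(0) \times \mathrm{Cone}(B\sD_{>0})$ glued on along $X(0) \times B\sD_{>0}$. In other words, $\hocolim_{\sD} X$ is the double mapping cylinder of
\[
\hocolim_{\sD_{>0}} X \longleftarrow X(0) \times B\sD_{>0} \longrightarrow X(0),
\]
the left map being induced by the natural transformation $\mathrm{const}_{X(0)} \Rightarrow X|_{\sD_{>0}}$ given by the morphisms $0 \arr d$, and the right map the projection. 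As $B\sD_{>0} \simeq *$, the right map is a homotopy equivalence through a cofibration (a mapping-cylinder inclusion), so by the gluing lemma the inclusion of the other end --- which is exactly $\hocolim_{\sD_{>0}} X \arr \hocolim_{\sD} X$ --- is a homotopy equivalence.

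The step I expect to require the most care is the identification of $\hocolim_{\sD} X$ with that double mapping cylinder: this rests on the bar constructions being proper simplicial spaces, so that geometric realization preserves the (co)fibrations and homotopy pushouts used in the final gluing argument. In the discrete case $\sD = \bbI$ this is automatic; for the topologically enriched $\cI^{\dagger}$ it holds because the unit $\ob \cI^{\dagger} \arr \mor \cI^{\dagger}$ is a closed inclusion, so the degeneracies of the bar construction are closed inclusions. A more hands-on alternative, which uses the stated hypothesis directly, is to fix $d \in \ob \sD_{>0}$ and observe that $X(d) \arr \hocolim_{\sD} X$ is a weak equivalence by B\"okstedt's Lemma \ref{bokstedt_lemma} (applied with $n = 0$ and connectivity arbitrarily large, since by hypothesis every morphism of $\sD_{>0}$ induces a weak equivalence), that the same argument applied within $\sD_{>0}$ --- whose truncations are again of the form $\sD_{>m}$, $m \geq 0$ --- shows $X(d) \arr \hocolim_{\sD_{>0}} X$ is a weak equivalence, and then to conclude by two-out-of-three together with the fact that these homotopy colimits have the homotopy type of CW complexes.
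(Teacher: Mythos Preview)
Your main argument is correct. The paper does not supply its own proof; it cites \cite{telescope_lemma}*{6.4} for $\bbI$ and asserts the same proof works for $\cI^{\dagger}$. Your pushout decomposition is a clean way to organize that argument and handles both cases uniformly. In fact you prove more than stated: the hypothesis on $X$ is never used, so the inclusion $\hocolim_{\sD_{>0}} X \to \hocolim_{\sD} X$ is a homotopy equivalence for \emph{every} $\sD$-space $X$ when $\sD = \bbI$ or $\cI^{\dagger}$. What makes this work is that $\{0\}$ is a clopen summand of $\ob\sD$, giving a strict pushout square
\[
\xymatrix{
X(0) \times B\sD_{>0} \ar[r] \ar[d] & \hocolim_{\sD_{>0}} X \ar[d] \\
X(0) \times B\sD \ar[r] & \hocolim_{\sD} X,
}
\]
and since both $B\sD$ and $B\sD_{>0}$ are contractible the left map is an acyclic $h$-cofibration, hence so is its cobase change on the right. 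One small imprecision in your sketch: a chain with $c_0 = 0$ need not have its tail entirely in $\sD_{>0}$, since it may begin $0 \to 0 \to \cdots$; accounting for these repetitions is exactly what makes the glued-on piece $X(0) \times B\sD \cong X(0) \times \mathrm{Cone}(B\sD_{>0})$ rather than something smaller, but your stated conclusion is already the correct one.

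Your alternative via B\"okstedt's Lemma has a gap: Lemma~\ref{bokstedt_lemma} is stated for $\hocolim_{\sD}$ with $\sD = \bbI$ or $\cI$, not for $\hocolim_{\sD_{>0}}$, and $\sD_{>0}$ has no initial object so Lemma~\ref{fibrant_hocolim_lemma} does not help either. Supplying the missing step $X(d) \simeq \hocolim_{\sD_{>0}} X$ is not obviously easier than the main argument you already gave.
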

\noindent For a proof, see \cite{telescope_lemma}*{6.4}.  The proof there for $\bbI$ works for $\cI$ as well.

\begin{lemma}\cite{May_Thomason}*{2.3}\label{MT_fspace_equiv_lemma}  Let $\psi \colon X \arr Y$ be a map of $\Gamma$-spaces such that the map of underlying spaces $\psi_1 \colon X_1 \arr Y_1$ is a weak homotopy equivalence.  Then $E\psi$ is a weak equivalence of spectra.
\end{lemma}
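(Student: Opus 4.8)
The plan is to promote the hypothesis, which only concerns underlying spaces, to a levelwise weak equivalence of $\Gamma$-spaces, and then to push that through an infinite loop space machine.

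First I would use the Segal maps. By the definition of a $\Gamma$-space, for every $n$ the Segal map $\delta \colon X(\bn^{+}) \arr \prod_{i = 1}^{n} X(\mathbf{1}^{+}) = X_{1}^{n}$ is a weak homotopy equivalence, and the same holds for $Y$. These maps are natural in $\bn^{+}$, so $\psi$ sits inside the commutative square
\[
\xymatrix{
X(\bn^{+}) \ar[r]^-{\psi(\bn^{+})} \ar[d]_{\delta} & Y(\bn^{+}) \ar[d]^{\delta} \\
X_{1}^{n} \ar[r]_-{\psi_{1}^{n}} & Y_{1}^{n}
}
\]
in which the vertical maps are weak equivalences and the bottom map is a weak equivalence because $\psi_{1}$ is and finite products preserve weak homotopy equivalences of spaces. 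Hence $\psi(\bn^{+})$ is a weak homotopy equivalence for every $n \geq 1$, and for $n = 0$ the spaces $X(\mathbf{0}^{+})$ and $Y(\mathbf{0}^{+})$ are contractible, so $\psi(\mathbf{0}^{+})$ is one too. Thus $\psi$ is a levelwise weak equivalence of $\Gamma$-spaces.

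Next I would appeal to the construction of the machine. An infinite loop space machine $E$ produces the spectrum $EY$ functorially out of the values $Y(\bn^{+})$ of the $\Gamma$-space by means of geometric realizations of simplicial spaces, or equivalently two-sided bar constructions; for instance, in Segal's machine the spaces of $EY$ are realizations of simplicial spaces each of whose entries is $Y$ evaluated on a finite pointed set. A levelwise weak equivalence $\psi$ therefore induces a levelwise weak equivalence of the simplicial spaces computing $EX$ and $EY$, and since all of the spaces in sight are well-grounded and these simplicial spaces are proper, geometric realization carries this to a weak homotopy equivalence in each spectrum degree. Thus $E\psi$ is a levelwise equivalence of weak $\Omega$-spectra, i.e. a weak equivalence in $\sS^{\Omega}$. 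By the uniqueness theorem of \cite{May_Thomason} the choice of machine is immaterial, so the lemma holds for arbitrary $E$.

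The step I expect to be the main obstacle is the second one: that the machine sends levelwise weak equivalences of $\Gamma$-spaces to weak equivalences of spectra is not purely formal, since geometric realization is only homotopically well-behaved under the usual point-set hypotheses of properness and well-groundedness. These do hold in the topological framework fixed in the conventions of this paper, and the verification is precisely \cite{May_Thomason}*{2.3}, requiring nothing beyond the standard analysis of the bar construction recalled in the appendix.
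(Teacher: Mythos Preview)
The paper does not give its own proof of this lemma; it simply cites \cite{May_Thomason}*{2.3} and moves on. Your two-step argument---first use the Segal maps to upgrade the hypothesis on $\psi_1$ to a levelwise weak equivalence of $\Gamma$-spaces, then invoke that an infinite loop space machine sends levelwise weak equivalences to weak equivalences of spectra---is exactly the argument given in the cited source, and it is correct. Your commentary on the point-set hypotheses (properness of the simplicial spaces in the bar construction) is also accurate: that is precisely where the nontrivial content lies, and it is what \cite{May_Thomason}*{2.3} verifies.
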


\begin{lemma}\label{equiv_FCP_gives_equiv_spectra}
Let $X \arr Y$ be a weak homotopy equivalence of commutative $\bbI$-FCPs.  Then the induced map $EH_{\bbI} X \arr EH_{\bbI} Y$ is a weak equivalence of spectra.  Similarly, a weak homotopy equivalence of commutative $\cI$-FCPs induces a weak equivalence of spectra $EH_{\cI} X \arr EH_{\cI} Y$.
\end{lemma}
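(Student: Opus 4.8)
The plan is to deduce the statement directly from Lemma \ref{MT_fspace_equiv_lemma} together with the observations already recorded in Construction \ref{FCP_to_fspace_construction}. Write $\sD$ for $\bbI$ in the symmetric case and for $\cI^{\dagger}$ in the orthogonal case. First I would note that $X \longmapsto H_{\sD} X$ is a functor from commutative FCPs to $\Gamma$-spaces (Construction \ref{FCP_to_fspace_construction} and Proposition \ref{construction_gives_gamma_space}), so a weak homotopy equivalence $f \colon X \arr Y$ of commutative $\bbI$-FCPs (respectively $\cI$-FCPs) induces a map of $\Gamma$-spaces $H_{\sD} f \colon H_{\sD} X \arr H_{\sD} Y$.

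Next I would identify the underlying map $(H_{\sD} f)_1$. By Construction \ref{FCP_to_fspace_construction}, the underlying space of $H_{\sD} X$ is $(H_{\sD} X)_1 = H_{\sD} X(\mathbf{1}^{+}) = \hocolim_{\sD} X$, and under this identification $(H_{\sD} f)_1$ is precisely the induced map $\hocolim_{\sD} f \colon \hocolim_{\sD} X \arr \hocolim_{\sD} Y$. Since $f$ is by definition a weak homotopy equivalence of FCPs, this map of homotopy colimits is a weak homotopy equivalence of spaces. In the orthogonal case the identification $(H_{\cI} X)_1 = \hocolim_{\cI^{\dagger}} X$ matches the definition of weak homotopy equivalence of $\cI$-FCPs because both invoke the same abuse of notation $\hocolim_{\cI} = \hocolim_{\cI^{\dagger}}$.

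Finally, Lemma \ref{MT_fspace_equiv_lemma} asserts that an infinite loop space machine $E$ sends a map of $\Gamma$-spaces that is a weak homotopy equivalence on underlying spaces to a weak equivalence of spectra. Applying this to $H_{\sD} f$ shows that $E H_{\sD} X \arr E H_{\sD} Y$ is a weak equivalence, which is both assertions of the lemma. There is essentially no obstacle here: the argument is entirely formal once one has the functoriality of $H_{\sD}$ and the identification of its underlying space with $\hocolim_{\sD}$, both of which are part of Construction \ref{FCP_to_fspace_construction}; the only point that needs any care is the bookkeeping with $\cI^{\dagger}$ just mentioned.
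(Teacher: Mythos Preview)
Your proposal is correct and follows essentially the same approach as the paper: reduce to Lemma~\ref{MT_fspace_equiv_lemma} by identifying the underlying space $(H_{\sD} X)_1$ with $\hocolim_{\sD} X$, so that a weak homotopy equivalence of FCPs gives a weak equivalence on underlying spaces of the associated $\Gamma$-spaces. The paper's proof is terser but makes exactly the same two moves.
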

\begin{proof} Here is the proof for $\bbI$-FCPs.  The orthogonal case is essentially identical.  By Lemma \ref{MT_fspace_equiv_lemma}, it suffices to show the map of spaces $H_{\bbI}X_1 \arr H_{\bbI}Y_1$ is an equivalence.  By definition, $H_{\bbI}X_1 = \hocolim_{\bbI} X$.  Since $X \arr Y$ is a weak homotopy equivalence, the result follows.
\end{proof}

The following proposition compares the output of Construction \ref{FCP_to_fspace_construction} when fed $\cI$-FCPs and their underlying $\bbI$-FCPs.

\begin{proposition}\label{equivalence_Fspace_over_U}  Let $X$ be a fibrant commutative $\cI$-FCP.  Then there is a natural weak equivalence of spectra $EH_{\cI} X\simeq EH_{\bbI} (\bbU X)$.  
\end{proposition}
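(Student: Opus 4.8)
The plan is to reduce the statement, via Lemma \ref{MT_fspace_equiv_lemma}, to a statement about underlying spaces of $\Gamma$-spaces. Since $X$ is a commutative $\cI$-FCP, the inclusion $\bbI \hookrightarrow \cI^{\dagger}$ induces a map of $\Gamma$-spaces $H_{\bbI}(\bbU X) \to H_{\cI} X$, and by Construction \ref{FCP_to_fspace_construction} its value on $\mathbf{1}^{+}$ is the canonical map $b \colon \hocolim_{\bbI} \bbU X \to \hocolim_{\cI^{\dagger}} X$ induced by $\bbI \hookrightarrow \cI^{\dagger}$. So by Lemma \ref{MT_fspace_equiv_lemma} it suffices to prove that $b$ is a weak homotopy equivalence, and then invoke naturality (every map in sight is natural in $X$). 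Throughout I will only use that the underlying $\cI$-space of $X$ is fibrant in the positive model structure, i.e.\ that $X(\phi)$ is a weak homotopy equivalence for every morphism $\phi\colon V \to W$ of $\cI$ with $\dim V \geq 1$; consequently $\bbU X$ is a positively fibrant $\bbI$-space, meaning $(\bbU X)(\psi)$ is a weak homotopy equivalence for every morphism $\psi$ of $\bbI$ between nonzero objects.

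First I would record the commutative square of spaces coming from the compatible inclusions $\bbJ \hookrightarrow \bbI$, $\bbJ \hookrightarrow \cJ$, $\cJ \hookrightarrow \cI$, $\bbI \hookrightarrow \cI$ (all given by $\bn \mapsto \bR^{n} \subset U$):
\[
\xymatrix{
\hocolim_{\bbJ} \bbU X \ar[r]^-{a} \ar[d]_-{c} & \hocolim_{\bbI} \bbU X \ar[d]^-{b} \\
\hocolim_{\cJ} X \ar[r]^-{d} & \hocolim_{\cI^{\dagger}} X
}
\]
Here $c$ is a weak equivalence by Lemma \ref{theorem_A_hocolim_version} applied to the inclusion $\bbJ \hookrightarrow \cJ$ (whose comma categories have initial objects), exactly as in the proof that $\pi_{k}\Omega^{\infty}E \cong \pi_{k}E$ for orthogonal spectra, and $d$ is a weak equivalence by Proposition \ref{untwisting_prop}. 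Hence $b \circ a = d \circ c$ is a weak equivalence, and the problem is reduced to showing that $a$ is a weak equivalence.

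For this last point I would compare both homotopy colimits with the single space $(\bbU X)(\mathbf{1})$ via the inclusions of the one-object subcategory $\{\mathbf{1}\}$ into $\bbJ$ and into $\bbI$, which fit into a commuting triangle with $a$. On the $\bbJ$ side, $\bbJ_{>0} \hookrightarrow \bbJ$ is homotopy cofinal so $\hocolim_{\bbJ_{>0}}\bbU X \to \hocolim_{\bbJ}\bbU X$ is a weak equivalence; since $\bbJ_{>0}$ has initial object $\mathbf 1$ and all its morphisms induce weak equivalences, Lemma \ref{fibrant_hocolim_lemma} gives that $(\bbU X)(\mathbf 1) \to \hocolim_{\bbJ}\bbU X$ is a weak equivalence. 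On the $\bbI$ side, B\"okstedt's Lemma \ref{bokstedt_lemma} with $n=0$ applies: every morphism of $\bbI_{>0}$ induces a weak homotopy equivalence, i.e.\ an $\infty$-connected map, so the inclusion $(\bbU X)(\mathbf 1) \to \hocolim_{\bbI}\bbU X$ is $\infty$-connected, hence a weak homotopy equivalence. By two-out-of-three $a$ is a weak equivalence, therefore so is $b$, and Lemma \ref{MT_fspace_equiv_lemma} completes the proof.

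The step I expect to be the main obstacle is the comparison $a\colon \hocolim_{\bbJ}\bbU X \to \hocolim_{\bbI}\bbU X$: for a general $\bbI$-space this map need not be an equivalence, which is precisely the phenomenon governed by B\"okstedt's telescope lemma and responsible for the difference between the symmetric and orthogonal settings noted in Remark \ref{semistable_comment}. What rescues the argument is that a positively fibrant commutative $\cI$-FCP restricts to an $\bbI$-space whose every positive-degree structure map is an equivalence, so the convergence hypothesis needed to invoke B\"okstedt's lemma holds in the strongest possible form.
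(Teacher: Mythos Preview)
Your proof is correct, and the overall strategy---produce a map of $\Gamma$-spaces from the inclusion $\bbI \hookrightarrow \cI^{\dagger}$ and reduce via Lemma~\ref{MT_fspace_equiv_lemma} to the map on underlying spaces---is exactly what the paper does. The difference is only in how you show that the map $b \colon \hocolim_{\bbI} \bbU X \to \hocolim_{\cI} X$ is a weak equivalence.

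The paper's argument is shorter: since Lemma~\ref{bokstedt_lemma} is stated for \emph{both} $\bbI$ and $\cI$, one can apply it on each side with $n = 0$. For any $d > 0$ the inclusions $(\bbU X)(d) \to \hocolim_{\bbI} \bbU X$ and $X(\bR^d) \to \hocolim_{\cI} X$ are then weak equivalences, and since $(\bbU X)(d) = X(\bR^d)$ the map $b$ is a weak equivalence by two-out-of-three. This avoids the auxiliary square with $\bbJ$ and $\cJ$, the cofinality argument for $\bbJ_{>0} \hookrightarrow \bbJ$, and the appeal to Proposition~\ref{untwisting_prop}. Your route is perfectly valid---it reuses machinery already in place for other comparisons in the paper---but the direct application of B\"okstedt's lemma on the $\cI$ side makes the detour through $\cJ$ unnecessary.
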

\begin{proof}
The inclusion of categories $\bbI \arr \cI$ induces a map of $\Gamma$-spaces $H_{\bbI} \bbU X \arr H_{\cI} X$.  We will show that this induces a weak equivalence of spectra.  Fix $d > 0$.  Because $X$ and $\bbU X$ are positive fibrant, Lemma \ref{bokstedt_lemma} implies that the horizontal maps in the following commutative diagram are weak homotopy equivalences:
\[
\xymatrix{
\bbU X (d) \ar[r]^-{\simeq} & \hocolim_{\bbI} \bbU X \ar[d] 
 \\
X(\bR^d) \ar[r]^-{\simeq} \ar@{=}[u] & \hocolim_{\cI} X }
\]
Thus the vertical arrow of homotopy colimits is a weak homotopy equivalence.  By Lemma \ref{MT_fspace_equiv_lemma}, the associated map of spectra is a weak equivalence.
\end{proof}

We are ready to prove the comparison theorem for the units of symmetric and orthogonal ring spectra.  Before stating the result, we need to know that $\mGL_1^{\bullet}$ descends to a functor on homotopy categories.

\begin{lemma}\label{GL_1_preserves_fibrants} For symmetric and orthogonal ring spectra and commutative ring spectra, the functor $\mGL_1^{\bullet}$ preserves fibrant objects and weak equivalences between fibrant objects.
\end{lemma}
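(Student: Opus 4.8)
The plan is to write $\mGL_1^{\bullet}R = (\Omega^{\bullet}R)^{\times}$ and handle the two functors separately. First I would use that $\Omega^{\bullet}$ is right Quillen: between symmetric spectra and $\bbI$-spaces and between orthogonal spectra and $\cI$-spaces by Propositions \ref{symmetric_space_spectra_quillen_adjunction} and \ref{orthogonal_space_spectra_quillen_adjunction}, and between (commutative) ring spectra and (commutative) FCPs, with the positive stable and positive model structures, by Proposition \ref{diagram_loops_quillen_adjunction_comm}. A right Quillen functor preserves fibrant objects and, by Ken Brown's lemma, weak equivalences between them, and the model structures on FCPs have fibrations and weak equivalences created by the forgetful functor to $\sD$-spaces. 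So it suffices to show that $X \mapsto X^{\times}$ carries a fibrant $\sD$-FCP ($\sD = \bbI$ or $\cI$) to a fibrant one and a weak homotopy equivalence of fibrant $\sD$-FCPs to a weak homotopy equivalence, in each of the absolute and positive model structures.

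The heart of the argument is how $(-)^{\times}$ interacts with structure maps and with maps of FCPs. Recall from \eqref{alternative_units_pullback} that $X^{\times}(d)$ is the union of those path components $[x]$ of $X(d)$ whose image in $\pi_0\hocolim_{\sD}X$ is invertible. Hence, for any morphism $\phi \colon d \to d'$ of $\sD$, the square
\[
\xymatrix{
X^{\times}(d) \ar[r] \ar[d] & X^{\times}(d') \ar[d] \\
X(d) \ar[r]_-{X(\phi)} & X(d')
}
\]
is a pullback: $X(\phi)$ takes path components to path components, and a component $[x]$ of $X(d)$ lands in $X^{\times}(d')$ iff $\phi_{*}[x]$ is invertible in $\pi_0\hocolim_{\sD}X$, which is the same as $[x]$ being invertible since $x$ and $\phi_{*}x$ represent the same class in the homotopy colimit. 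Likewise, if $g \colon X \to Y$ is a map of $\sD$-FCPs that is a weak homotopy equivalence, the induced monoid map $\pi_0\hocolim_{\sD}X \to \pi_0\hocolim_{\sD}Y$ is an isomorphism, so $g(d)^{-1}(Y^{\times}(d)) = X^{\times}(d)$ and the naturality square with verticals the inclusions $X^{\times}(d) \to X(d)$, $Y^{\times}(d) \to Y(d)$ is also a pullback. In both cases the verticals are inclusions of unions of path components, so I can invoke the elementary fact that the restriction of a weak homotopy equivalence $f \colon A \to B$ to the preimage of a union of path components $B_0 \subseteq B$, namely $f^{-1}(B_0) \to B_0$, is again a weak homotopy equivalence (decompose the source over $\pi_0 B_0$).

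Granting this, fibrancy of $X^{\times}$ is immediate: if $X$ is fibrant then $X(\phi)$ is a weak homotopy equivalence for every morphism $\phi$ of $\sD$ (resp.\ for every $\phi$ in $\sD_{>0}$ in the positive case), so $X^{\times}(\phi)$ is one too, while the level-fibration condition holds trivially. For a weak homotopy equivalence $g \colon X \to Y$ of fibrant $\sD$-FCPs, I would use that $\bbI$ and $\cI$ have initial objects to apply Lemma \ref{fibrant_hocolim_lemma} in the absolute case, or B\"okstedt's Lemma \ref{bokstedt_lemma} with $n = 0$ and $\lambda = \infty$ in the positive case, to see that $X(d) \to \hocolim_{\sD}X$ and $Y(d) \to \hocolim_{\sD}Y$ are weak homotopy equivalences for each relevant $d$; comparing these with $\hocolim_{\sD}g$ shows every $g(d)$ is a weak homotopy equivalence, hence so is every $g^{\times}(d)$ by the pullback above. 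Thus $g^{\times}$ is a level equivalence over $\sD$ (resp.\ over $\sD_{>0}$), and applying $\hocolim_{\sD}$ — together with Lemma \ref{general_inclusion into_hocolim_equiv} and the fibrancy of $X^{\times},Y^{\times}$ just established, in the positive case — shows $\hocolim_{\sD}g^{\times}$ is a weak equivalence, i.e.\ $g^{\times}$ is a weak homotopy equivalence of $\sD$-FCPs. The one point that needs care is the positive model structure, where structure maps are equivalences only on $\sD_{>0}$ and the homotopy-colimit comparisons must be routed through B\"okstedt's lemma rather than through the initial object; everything else is formal.
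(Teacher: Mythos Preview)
Your argument is correct and follows essentially the same route as the paper's proof: use that $\Omega^{\bullet}$ is right Quillen, then analyze $(-)^{\times}$ levelwise by restricting to the stably invertible components, and conclude in the positive case via Lemma~\ref{general_inclusion into_hocolim_equiv}. The one genuine difference is how you obtain level equivalence of $\Omega^{\bullet}f$ for a weak equivalence $f$ between fibrant ring spectra: the paper invokes \cite{MMSS}*{8.11} directly (a stable equivalence between fibrant diagram spectra is a level equivalence), whereas you stay at the FCP level and recover levelwise equivalence from fibrancy via Lemma~\ref{fibrant_hocolim_lemma} or B\"okstedt's Lemma~\ref{bokstedt_lemma}. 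Your route is slightly more self-contained and shows that the statement about $(-)^{\times}$ holds for arbitrary fibrant FCPs, not just those of the form $\Omega^{\bullet}R$; the paper's route is shorter since the spectrum-level fact is already on hand.
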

\begin{proof}
Consider the case of a fibrant (commutative) symmetric ring spectrum $R$ (the orthogonal case is essentially identical).  Since $\Omega^{\bullet}$ is right Quillen, the (commutative) $\bbI$-FCP $\Omega^{\bullet} R$ is fibrant in the (positive)  model structure.  Restricting the equivalences $\Omega^m R_m \arr \Omega^n R_n$ to the stably unital components, we get equivalences $(\Omega^m R_m)^\times \arr (\Omega^n R_n)^\times$ (for $m \geq 1$ in the commutative case).  Hence $\mGL_1 R$ is fibrant in the (positive) stable model structure.  

Next, Suppose that $f \colon R \arr R'$ is a stable equivalence of fibrant (commutative) symmetric ring spectra.  By \cite{MMSS}*{8.11}, $f$ is a (positive) level equivalence of symmetric spectra.  Since $\pi_0(R) \arr \pi_0(R')$ is an isomorphism of monoids, the induced map of pullbacks $(\Omega^n R_n)^{\times} \arr (\Omega^n R'_n)^{\times}$ is a weak homotopy equivalence (for $n \geq 1$ in the commutative case).  Thus $\mGL^{\bullet}_{1} f$ is a (positive) level equivalence.  By Lemma \ref{general_inclusion into_hocolim_equiv}, a positive level equivalence is a weak homotopy equivalence, so we have proved that $\mGL_1^{\bullet}$ preserves weak equivalences between fibrant objects.
\end{proof}
\noindent It follows that in both the associative and the commutative settings, $\mGL_1^{\bullet}$ has a right derived functor $\bR \mGL_{1}^{\bullet}$.  By \ref{equiv_FCP_gives_equiv_spectra},  the functor $gl_1$ preserves weak equivalences between fibrant objects.  Hence $gl_1$ also has a right derived functor $\bR gl_1$ from the homotopy category of commutative ring spectra to the homotopy category of spectra.  Next, we make the comparison of the FCPs $\mGL_1^{\bullet}$:

\begin{proposition}\label{diagram_GL_1_comparison_prop}  The following diagrams commute:
\[
\xymatrix{
\bM \Sigma \sS \ar[d]_{\mGL_{1}^{\bullet}} & \bM \sI \sS \ar[l]_{\bbU} \ar[d]^{\mGL_{1}^{\bullet}} & & \ho \bM \Sigma \sS \ar[r]^-{\bL \bbP} \ar[d]_{\bR \mGL_{1}^{\bullet}} & \ho \bM \sI \sS  \ar[d]^{\bR \mGL_{1}^{\bullet}} \\
\bM \bbI \sU  & \bM \cI \sU \ar[l]^{\bbU} & &  \ho \bM \bbI \sU \ar[r]_-{\bL \bbP} & \ho \bM \cI \sU }
\]
The diagrams of commutative monoids \emph{(}with $\bC$ replacing $\bM$\emph{)} also commute.
\end{proposition}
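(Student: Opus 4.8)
The plan is to treat the two squares separately: the strict (left) square I would check by hand, and the derived (right) square will then follow formally from the machinery of \S\ref{left_right_adjoint_section}. For the left square, recall from the proof of Proposition~\ref{comparison_prop_loops_diagram_sp} that $\Omega^{\bullet}$ already commutes with the forgetful functors $\bbU$: under the identification $\bn \cong \bR^n$ there is a canonical isomorphism $\Omega^{\bullet}(\bbU R) \cong \bbU(\Omega^{\bullet} R)$ of $\bbI$-FCPs. So the only thing to verify is that the functor $X \mapsto X^{\times}$ commutes with $\bbU$, i.e. that $\bbU(X^{\times}) \cong (\bbU X)^{\times}$ naturally for an $\cI$-FCP $X$; applying this to $X = \Omega^{\bullet} R$ then gives $\mGL_1^{\bullet} \circ \bbU \cong \bbU \circ \mGL_1^{\bullet}$.

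To prove $\bbU(X^{\times}) \cong (\bbU X)^{\times}$ I would use the levelwise description of $X^{\times}$ as the pullback~\eqref{alternative_units_pullback} of $X \to \pi'_0 X \leftarrow \pi'_0 X^{\times}$. Since $\bbU$ is computed levelwise it preserves this pullback, and $(\bbU \pi'_0 X)(\bn) = \pi_0 X(\bR^n) = \pi_0(\bbU X)(\bn) = (\pi'_0 \bbU X)(\bn)$, so the claim reduces to the identity of sub-$\bbI$-spaces $\bbU(\pi'_0 X^{\times}) = \pi'_0(\bbU X)^{\times}$. This is the one genuinely content-bearing point, and I expect it to be the main obstacle. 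Unwinding the definitions, a component $[x] \in \pi_0 X(\bR^n)$ lies in $\pi'_0(\bbU X)^{\times}$ precisely when it admits an inverse $[y] \in \pi_0 X(\bR^m)$ for some $m$, while it lies in $\bbU(\pi'_0 X^{\times})$ precisely when it admits an inverse $[y] \in \pi_0 X(V)$ for some object $V$ of $\cI$. The forward implication is trivial; for the converse, given an inverse over $V$ I would choose an isometric isomorphism $V \cong \bR^{\dim V}$ — which is a morphism of $\cI$, even though not of $\sI$ — and transport the inverse along it, using naturality of the FCP multiplication $\mu$ and unit $\eta$ to see that the result is an inverse over $\bR^{\dim V}$. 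Hence the two notions of stable invertibility coincide and the left square commutes.

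For the right square, I would observe that we are exactly in the situation of \S\ref{left_right_adjoint_section}: the adjunctions $(\bbP, \bbU)$ on ring spectra and on FCPs are Quillen equivalences (by \cite{MMSS} and Theorem~\ref{quillen_equiv_FCP} respectively), and by Lemma~\ref{GL_1_preserves_fibrants} the vertical functors $\mGL_1^{\bullet}$ preserve fibrant objects and weak equivalences between fibrant objects, so they have right derived functors $\bR \mGL_1^{\bullet}$. The natural isomorphism $\mGL_1^{\bullet} \circ \bbU \cong \bbU \circ \mGL_1^{\bullet}$ established above is precisely the input required by Proposition~\ref{appendix_prop} (taken with $f = h = \bbP$, $g = k = \bbU$, $a = b = \mGL_1^{\bullet}$), which then yields $\bL\bbP \circ \bR\mGL_1^{\bullet} \cong \bR\mGL_1^{\bullet} \circ \bL\bbP$. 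Finally, the commutative case is handled by the same argument verbatim: $\mGL_1^{\bullet}$ sends a commutative ring spectrum to a commutative FCP, the strict square commutes by the same computation, and Lemma~\ref{GL_1_preserves_fibrants}, Theorem~\ref{quillen_equiv_FCP} and Proposition~\ref{appendix_prop} all have commutative analogues, provided one works throughout with the positive stable model structures on ring spectra and FCPs.
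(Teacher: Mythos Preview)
Your proof is correct and follows the same overall architecture as the paper: establish the strict left square by showing $\bbU \circ \mGL_1^{\bullet} \cong \mGL_1^{\bullet} \circ \bbU$, then invoke Proposition~\ref{appendix_prop} for the derived right square, with the commutative case handled identically using the positive model structures.

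The one place where you diverge is in how you verify the strict square. The paper works with the pullback from Definition~\ref{pullback_defining_units} involving the global monoid $\pi_0 X = \pi_0 \hocolim X$: it observes that $\bbU R$ is semistable, so $\pi_0 \bbU R \cong \pi_0 R$, and hence $\bbU$ carries the $\cI$-pullback defining $\mGL_1^{\bullet} R$ to the $\bbI$-pullback defining $\mGL_1^{\bullet} \bbU R$. You instead use the levelwise description~\eqref{alternative_units_pullback} with $\pi'_0 X$, and argue directly that an inverse over an arbitrary $V \in \cI$ can be transported along an isometric isomorphism $V \cong \bR^{\dim V}$ to an inverse over an object of $\bbI$. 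Your route is slightly more elementary and proves the more general fact $\bbU(X^{\times}) \cong (\bbU X)^{\times}$ for any $\cI$-FCP $X$, not just $X = \Omega^{\bullet} R$; the paper's route is shorter because the semistability input collapses the comparison to a single isomorphism $\pi_0 \bbU R \cong \pi_0 R$. One small correction: an isometric isomorphism $V \cong \bR^{\dim V}$ \emph{is} a morphism of $\sI$ (whose morphisms are precisely the linear isometric isomorphisms), so your parenthetical is misstated, though this does not affect the argument.
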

\begin{proof}  
Let $R$ be an orthogonal ring spectrum.  Then $\bbU R$ is a semistable symmetric ring spectrum, so $\mGL_1^{\bullet} \bbU R$ may be identified with the following pullback of $\bbI$-FCPs:
\[
\xymatrix{
\mGL_1^{\bullet} \bbU R \ar[r] \ar[d] & \Omega^{\bullet} \bbU R \ar[d] \\
(\pi_0 \bbU R)^{\times} \ar[r] & \pi_0 \bbU R }
\]
Since $\Omega^{\bullet} \bbU R \cong \bbU \Omega^{\bullet} R$ and  $\pi_0 \bbU R \cong \pi_0 R$, $\bbU$ takes the pullback diagram of $\cI$-FCPs defining $\mGL_{1}^{\bullet} R$ to the displayed pullback.  $\bbU$ preserves pullbacks, so we have a natural isomorphism $\bbU \mGL_{1}^{\bullet} R \cong \mGL_{1}^{\bullet} \bbU R$.  This gives the diagram on the left.  The second diagram commutes by applying Proposition \ref{appendix_prop}.  

The same proof works in the commutative setting, but for the second diagram we use the Quillen equivalence of categories of commutative monoids instead of monoids when we apply Proposition \ref{appendix_prop}.
\end{proof}

\begin{theorem}\label{diagram_gl_comparison_theorem}  The following diagrams commute:
\[
\xymatrix{
\ho \bC \Sigma \sS \ar[dr]_{\bR gl_1} & & \ho \bC \sI \sS \ar[dl]^{\bR gl_1} \ar[0,-2]_{\bR \bbU} &  \ho \bC \Sigma \sS \ar[dr]_{\bR gl_1} \ar[0,2]^{\bL \bbP} & & \ho \bC \sI \sS \ar[dl]^{\bR gl_1}  \\
& \ho \sS^{\Omega} & &  & \ho \sS^{\Omega} & }
\]
\end{theorem}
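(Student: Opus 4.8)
The plan is to establish the left-hand triangle directly and then obtain the right-hand one formally. Recall from Definition~\ref{def_of_units} that $gl_1 = E H_{\sD}\circ\mGL_1^{\bullet}$, with $\sD=\bbI$ for commutative symmetric ring spectra and $\sD=\cI$ for commutative orthogonal ring spectra. By Lemma~\ref{equiv_FCP_gives_equiv_spectra} the functor $E H_{\sD}$ carries every weak equivalence of commutative FCPs to a weak equivalence of spectra, so it passes directly to homotopy categories; combined with Lemma~\ref{GL_1_preserves_fibrants} this shows that $\bR gl_1 = E H_{\sD}\circ\bR\mGL_1^{\bullet}$, where $\bR\mGL_1^{\bullet}$ is represented by $\mGL_1^{\bullet}$ applied to a fibrant approximation.

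For the left triangle I would first produce a natural transformation $\tau\colon gl_1\circ\bbU\Rightarrow gl_1$ of functors $\bC\sI\sS\arr\sS^{\Omega}$. Given a commutative orthogonal ring spectrum $R$, Proposition~\ref{diagram_GL_1_comparison_prop} supplies a natural isomorphism $\mGL_1^{\bullet}(\bbU R)\cong\bbU(\mGL_1^{\bullet}R)$ of commutative $\bbI$-FCPs, and the inclusion of categories $\bbI\arr\cI$ induces a natural map of $\Gamma$-spaces $H_{\bbI}(\bbU \mGL_1^{\bullet}R)\arr H_{\cI}(\mGL_1^{\bullet}R)$; applying $E$ and composing gives $\tau_R\colon gl_1(\bbU R)\arr gl_1(R)$. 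When $R$ is fibrant, $\mGL_1^{\bullet}R$ is a fibrant commutative $\cI$-FCP by Lemma~\ref{GL_1_preserves_fibrants}, so $\tau_R$ is a weak equivalence of spectra by Proposition~\ref{equivalence_Fspace_over_U}. Now $\bbU$ is right Quillen and $gl_1$ preserves weak equivalences between fibrant objects, so the right derived functors compose to give $\bR(gl_1\circ\bbU)\cong\bR gl_1\circ\bR\bbU$; and a natural transformation that is a weak equivalence on fibrant objects induces an isomorphism of right derived functors, so $\tau$ yields $\bR gl_1\circ\bR\bbU\cong\bR gl_1$, which is the left triangle.

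The right triangle then follows without further work: by \cite{MMSS} the adjunction $(\bbP,\bbU)$ is a Quillen equivalence between commutative symmetric and commutative orthogonal ring spectra, so $(\bL\bbP,\bR\bbU)$ is an equivalence of homotopy categories with $\bR\bbU\circ\bL\bbP\cong\id$; precomposing the left triangle with $\bL\bbP$ gives $\bR gl_1\cong\bR gl_1\circ\bL\bbP$. The main point requiring care here is not conceptual but organizational: because $\mGL_1^{\bullet}$ is not homotopy invariant, each $\bR gl_1$ must be computed via fibrant approximation, and one must appeal to Lemma~\ref{GL_1_preserves_fibrants} to ensure that the relevant composites of right derived functors are again right derived functors. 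The one genuinely homotopical ingredient, Proposition~\ref{equivalence_Fspace_over_U}, is already in hand, so no new difficulty arises.
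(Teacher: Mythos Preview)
Your proof is correct and follows essentially the same approach as the paper: both use Proposition~\ref{diagram_GL_1_comparison_prop} to identify $\mGL_1^{\bullet}\bbU$ with $\bbU\mGL_1^{\bullet}$, then invoke Proposition~\ref{equivalence_Fspace_over_U} on fibrant objects to compare $EH_{\bbI}\bbU$ with $EH_{\cI}$, and finally use the Quillen equivalence $(\bbP,\bbU)$ to deduce the second triangle from the first. The paper presents this by decomposing each triangle into an upper square (at the FCP level) and a lower triangle (at the spectrum level) and treating both triangles in parallel, whereas you package the left triangle as a single natural transformation $\tau$ and then obtain the right triangle by precomposing with $\bL\bbP$ and using $\bR\bbU\circ\bL\bbP\cong\id$; these are organizational variants of the same argument.
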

\begin{proof}
The diagram on the left can be expanded into the diagram:
\[
\xymatrix{
\ho \bC \Sigma \sS \ar[d]_{\bR \mGL_1^{\bullet}} & & \ho \bC \sI \sS \ar[d]^{\bR \mGL_1^{\bullet}} \ar[0,-2]_{\bR \bbU} \\ 
\ho \bC \bbI \sU \ar[dr]_{E H_{\bbI}} & & \ho \bC \cI \sU \ar[0, -2]_{\bR \bbU} \ar[dl]^{EH_{\cI}} \\
& \ho \sS^{\Omega} & }
\]
The upper square commutes by Proposition \ref{diagram_GL_1_comparison_prop}.  
Since the right derived functors are calculated by fibrant approximation,  Proposition \ref{equivalence_Fspace_over_U} applies, proving that the lower triangle commutes.

The second diagram is:
\[
\xymatrix{
\ho \bC \Sigma \sS \ar[d]_{\bR \mGL_1^{\bullet}} \ar[0,2]^{\bL \bbP} & & \ho \bC \sI \sS \ar[d]^{\bR \mGL_1^{\bullet}}  \\ 
\ho \bC \bbI \sU \ar[dr]_{E H_{\bbI}} \ar[0, 2]^{\bL \bbP} & & \ho \bC \cI \sU  \ar[dl]^{EH_{\cI}} \\
& \ho \sS^{\Omega} & }
\]
The top square commutes by Proposition \ref{diagram_GL_1_comparison_prop}.  For the lower triangle, notice that the following natural transformation is an isomorphism:
\[
EH_{\bbI} \xrightarrow{EH_{\bbI} \eta} EH_{\bbI} \, \bR \bbU \, \bL \bbP \cong EH_{\cI} \, \bL \bbP.
\]
Here the displayed isomorphism is the commutativity of the triangle in the previous diagram and the unit $\eta$ of the adjunction $(\bL \bbP, \bR \bbU)$ is an isomorphism by the Quillen equivalence of Theorem \ref{quillen_equiv_FCP}.  \end{proof}

\section{Comparison of units of orthogonal and $E_{\infty}$ ring spectra}\label{section_units_comparison_ortho_einfty}

We now make the comparison between the spectra of units of commutative orthogonal ring spectra and $E_{\infty}$ ring spectra.  They must first be compared at the space level:

\begin{proposition}\label{GL_1_orthogonal_Einfty_prop}  The following diagrams commute:
\[
\xymatrix{
\bM \sI \sS \ar[d]_{\mGL_1^{\bullet}} & \bM\sS[\bbL] \ar[l]_-{\bbN^{\#}} \ar[d]^{ \mGL_1} & & \ho \bM \sI \sS \ar[r]^-{\bL \bbN} \ar[d]_{\bR \mGL_1^{\bullet}} & \ho \bM\sS[\bbL]  \ar[d]^{\bR \mGL_1} \\
\bM \cI \sU  & \bM\sU[\bbL] \ar[l]^-{\bbQ^{\#}} & &  \ho \bM \cI \sU \ar[r]_-{\bL \bbQ} & \ho \bM\sU[\bbL] }
\]
The diagrams of commutative monoids \emph{(}with $\bC$ replacing $\bM$\emph{)} also commute.
\end{proposition}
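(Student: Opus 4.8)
The plan is to follow the template of Proposition \ref{diagram_GL_1_comparison_prop}: first establish that the left-hand (strict) square commutes up to natural isomorphism, and then feed this into the formalism of \S\ref{left_right_adjoint_section} to obtain the right-hand (derived) square. Throughout, the associative case is treated first and the commutative case obtained by replacing FCPs and $\bbL$-spaces by their commutative analogues and using the positive model structures.

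For the strict square, I would first recognize $\mGL_1 R$, for $R$ an ($A_{\infty}$ or $E_{\infty}$) monoid in $\bbL$-spectra, as the units pullback of Definition \ref{pullback_defining_units} applied to the monoid $\Omega^{\infty}_{\bbL} R$ in $\bbL$-spaces: namely, $\mGL_1 R$ is the pullback of the discretization map $\Omega^{\infty}_{\bbL} R \arr \pi_0 \Omega^{\infty}_{\bbL} R$ along the inclusion $(\pi_0 \Omega^{\infty}_{\bbL} R)^{\times} \arr \pi_0 \Omega^{\infty}_{\bbL} R$, the two discrete objects being constant $\bbL$-spaces. Now $\bbQ^{\#}$, as a right adjoint, preserves this pullback; by Proposition \ref{loops_orthogonal_to_Lspectra_prop} there is a natural isomorphism of $\cI$-FCPs $\bbQ^{\#} \Omega^{\infty}_{\bbL} \cong \Omega^{\bullet} \bbN^{\#}$ (monoidal, since all functors in sight are lax symmetric monoidal and the isomorphism is built from symmetric monoidal adjunctions, so it restricts to monoids and to commutative monoids); and $\bbQ^{\#}$ carries a discrete $\bbL$-space $A$ to the constant $\cI$-space on $A$, because $(\bbQ^{\#} A)(V) = \sU[\bbL](\bbQ^{*}(V), A)$ is the space of maps out of the connected $\bbL$-space $\bbQ^{*}(V) = \cI_c(V \otimes U, U)$ and hence equals $A$. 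Combined with the identification $\pi_0 \Omega^{\infty}_{\bbL} R \cong \pi_0 (\bbQ^{\#} \Omega^{\infty}_{\bbL} R)$ of monoids --- valid because $\bbQ^{\#}$ is a right Quillen equivalence, every $\bbL$-space is fibrant, and the monoid structures are carried by the lax monoidal structures --- this shows that $\bbQ^{\#}$ sends the pullback defining $\mGL_1 R$ to the pullback of Definition \ref{pullback_defining_units} computing $(\Omega^{\bullet}\bbN^{\#} R)^{\times} = \mGL_1^{\bullet}\bbN^{\#} R$. Hence $\bbQ^{\#}\mGL_1 \cong \mGL_1^{\bullet}\bbN^{\#}$ naturally, which is the left-hand diagram.

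For the derived square, I would apply Proposition \ref{appendix_prop} with $(f,g) = (\bbN, \bbN^{\#})$, $(h,k) = (\bbQ, \bbQ^{\#})$, $a = \mGL_1^{\bullet}$, and $b = \mGL_1$. The adjunctions $(\bbN, \bbN^{\#})$ and $(\bbQ, \bbQ^{\#})$ restrict to Quillen equivalences on monoids, respectively commutative monoids, by \cite{MM} and Theorem \ref{quillen_equiv_FCP_L_spaces}. The functor $\mGL_1^{\bullet}$ preserves fibrant objects and weak equivalences between fibrant objects by Lemma \ref{GL_1_preserves_fibrants}; the same holds for $\mGL_1$ on $\bbL$-ring spectra, trivially for fibrant objects (every $\bbL$-ring spectrum is fibrant) and for weak equivalences because a weak equivalence of $\bbL$-ring spectra is a level equivalence, hence restricts to a weak equivalence on the stably invertible components of $\Omega^{\infty}_{\bbL}$. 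The natural isomorphism $\bbQ^{\#}\mGL_1 \cong \mGL_1^{\bullet}\bbN^{\#}$ just produced is the hypothesis ``$ag \simeq kb$'' of Proposition \ref{appendix_prop}, and its conclusion $\bL\bbQ \circ \bR\mGL_1^{\bullet} \cong \bR\mGL_1 \circ \bL\bbN$ is exactly the commutativity of the right-hand diagram; the commutative version goes through verbatim using the positive model structures.

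The step I expect to be the main obstacle is the strict square: pinning down the construction of $\mGL_1$ for $\bbL$-ring spectra precisely enough that, after applying the right adjoint $\bbQ^{\#}$, it visibly becomes the pullback of Definition \ref{pullback_defining_units}. This reduces to checking that $\bbQ^{\#}$ preserves the discretization map and the selection of stably invertible components, which in turn rests on the connectedness of the spaces $\bbQ^{*}(V) = \cI_c(V \otimes U, U)$. Nothing here is deep, but it must be done carefully, since the units functor $(-)^{\times}$ is neither a left nor a right adjoint and interacts with $\bbQ^{\#}$ only through preservation of pullbacks and of connected mapping objects.
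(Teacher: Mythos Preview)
Your proposal is correct and follows essentially the same strategy as the paper: reduce to the strict square by factoring $\mGL_1^{\bullet} = (-)^{\times} \circ \Omega^{\bullet}$ and $\mGL_1 = (-)^{\times} \circ \Omega^{\infty}_{\bbL}$, invoke Proposition~\ref{loops_orthogonal_to_Lspectra_prop} for the loop-space part, show $\bbQ^{\#}$ commutes with $(-)^{\times}$ by pushing the defining pullback through the right adjoint, and then conclude the derived square from Proposition~\ref{appendix_prop}.

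The one substantive difference is which pullback description of $(-)^{\times}$ you use. You work with Definition~\ref{pullback_defining_units} directly, which forces you to identify $\pi_0 Y \cong \pi_0\,\hocolim_{\cI} \bbQ^{\#} Y$; your justification of this via ``$\bbQ^{\#}$ is a right Quillen equivalence'' is a bit loose, since the Quillen equivalence does not by itself compare the homotopy colimit of $\bbQ^{\#} Y$ to $Y$. The cleaner argument is levelwise: $\bbQ^*(V)$ is contractible and cofibrant, so $\pi_0 \bbQ^{\#} Y(V) \cong \pi_0 Y$ for every $V$. The paper exploits exactly this by using the alternative pullback \eqref{alternative_units_pullback} via the levelwise $\pi'_0$: one shows $\pi'_0 \bbQ^{\#} Y$ is the constant FCP at $\pi_0 Y$ (with the correct monoid structure), so $\bbQ^{\#}$ carries the pullback defining $Y^{\times}$ to the pullback \eqref{alternative_units_pullback} defining $(\bbQ^{\#} Y)^{\times}$, entirely avoiding any statement about $\hocolim_{\cI}$. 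Both routes arrive at the same isomorphism; the paper's is just a shade more direct.
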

\begin{proof}
We consider the diagram on the left first.  The second diagram will follow by Proposition \ref{appendix_prop} as usual.  $\mGL_1^{\bullet}$ is the composite of $\Omega^{\bullet}$ and the functor of $\cI$-FCPs: $X \mapsto X^{\x}$.  Similarly, the functor $\mGL_1$ from $A_{\infty}$ ring spectra to $\bbL$-space monoids is the composite of $\Omega^{\infty}_{\bbL}$ and the functor taking an $\bbL$-space monoid $Y$ to the pullback
\[
\xymatrix{
Y^{\times} \ar[r] \ar[d] & Y \ar[d] \\
\pi_0 Y^{\x} \ar[r] & \pi_0 Y. }
\]
Using the diagram in Proposition \ref{loops_orthogonal_to_Lspectra_prop} that compares $\Omega^{\bullet}$ with $\bbN^{\#}$ and $\bbQ^{\#}$, it suffices to prove that $\bbQ^{\#} (Y^\x) \cong (\bbQ^{\#} Y)^\x$ for an $\bbL$-space monoid $Y$.

The components of the space $\bbQ^{\#}Y(V)$ are given by
\[
\pi_0 \bbQ^{\#} Y(V) = \pi_0 \sU[\bbL](\bbQ^*(V), Y) \overset{\cong}{\arr} \pi_0 Y,
\]
where the map sends a homotopy class $[f]$ to the component of its image in $Y$.  This is well-defined because $\bbQ^{*}(V) = \cI_c(V \otimes U, U)$ is contractible.  The FCP structure on $\bbQ^{\#}Y$ is defined in terms of the multiplication $Y \boxtimes_{\sL} Y \arr Y$ and the strong symmetric monoidal structure map $\bbQ^*(V) \boxtimes_{\sL} \bbQ^*(W) \cong \bbQ^*(V \oplus W)$.   It follows that the FCP multiplication $\pi_0 \bbQ^{\#} Y(V) \times \pi_0 \bbQ^{\#} Y(W) \arr \pi_0 \bbQ^{\#}Y(V \oplus W)$ agrees with the multiplication on $\pi_0 Y$ coming from the $\bbL$-space monoid structure.

Using the notation established right before diagram \eqref{alternative_units_pullback}, we have an isomorphism of FCPs $\pi'_0 \bbQ^{\#} Y \cong \pi_0Y$.  Therefore, $\bbQ^{\#}$ takes the pullback diagram defining $Y^{\x}$ to the pullback diagram \eqref{alternative_units_pullback} defining $(\bbQ^{\#} Y)^\x$.  Since $\bbQ^{\#}$ preserves pullbacks, we have a natural isomorphism $\bbQ^{\#} (Y^\x) \cong (\bbQ^{\#} Y)^\x$.
\end{proof}

To compare the associated spectra of units, we need to compare the output of different infinite loop space machines.  This is the main technical thrust of the uniqueness theorem for infinite loop space machines \cite{May_Thomason}, whose methods and notation we follow for the next proposition.  The Segal machine $S$ is an infinite loop space machine defined on $\Gamma$-spaces.  Let $E$ be any infinite loop space machine defined on $\sL$-spaces.  Let $\bY$ be a $\Gamma\sL$-space with $n$-th $\sL$-space $\bY_n = \bY(\bn^+)$ and let $Y$ denote the underlying $\Gamma$-space obtained by forgetting the $\sL$-space structures.    

\begin{proposition}\label{FL_space_equiv}  There is a natural weak equivalence of spectra $S Y \simeq E \bY_1$.
\end{proposition}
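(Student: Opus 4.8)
\emph{Plan.} This proposition is an instance of the comparison that lies at the heart of the uniqueness theorem \cite{May_Thomason}, so the plan is to fit $\bY$ into that framework and quote the relevant results. Recall from \cite{May_Thomason} the notion of a \emph{category of operators}: a topological category $\sO$ whose object set is that of $\Gamma^{\op}$, equipped with a projection $\pi \colon \sO \arr \Gamma^{\op}$ that is the identity on objects (here $\Gamma^{\op}$ itself is regarded as the category of operators of the commutativity operad), such that $\sO$-spaces --- continuous functors $\sO \arr \sU$ satisfying the Segal condition --- make sense. Then $\Gamma$-spaces are exactly the $\Gamma^{\op}$-spaces; and since the linear isometries operad $\sL$ is $E_{\infty}$ --- each $\sL(j) = \cI_{c}(U^{j}, U)$ is contractible by Lemma \ref{mapping_space_equiv_lemma}.(i) --- the category of operators $\widehat{\sL}$ built from $\sL$ has the $\sL$-spaces as its $\sO$-spaces and its projection $\pi \colon \widehat{\sL} \arr \Gamma^{\op}$ is a \emph{local equivalence}: it induces a ($\Sigma$-equivariant) homotopy equivalence on each space of morphisms. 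The Segal machine $S$ is the infinite loop space machine on $\Gamma^{\op}$-spaces, and we may take $E$ to be a machine on $\widehat{\sL}$-spaces, i.e.\ on $\sL$-spaces.

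The first step is to record that a $\Gamma\sL$-space $\bY$ determines, in the framework of \cite{May_Thomason} (after the standard rigidification, or equivalently after replacing $\bY$ by a level-wise equivalent bar resolution over $\widehat{\sL}$ that changes neither the underlying $\Gamma$-space nor the value at $\mathbf{1}^{+}$), the data of a $\widehat{\sL}$-space whose underlying $\Gamma$-space is $Y$ and whose underlying $\sL$-space is $\bY_{1}$: the $\sL$-actions on the spaces $\bY(\bn^{+})$, the maps induced by morphisms of $\Gamma^{\op}$, and the Segal equivalences are exactly what is needed. Once this is in place, the comparison of \cite{May_Thomason} applies: for any $\sO$-space $\bX$ with $\pi \colon \sO \arr \Gamma^{\op}$ a local equivalence and any machine $E$ on $\sO$-spaces, there is a natural chain of weak equivalences of spectra between $E\bX$ and the Segal machine applied to the $\Gamma$-space obtained by pushing $\bX$ forward along $\pi$ (a two-sided bar construction over $\sO$); and this pushforward has underlying space equivalent to $\bX(\mathbf{1}^{+})$, precisely because $\pi$ is a local equivalence, so Lemma \ref{MT_fspace_equiv_lemma} identifies $S$ of it with $S$ of any $\Gamma$-space sharing that underlying space. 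Specializing to $\sO = \widehat{\sL}$, $\bX = \bY$, and our machine $E$ on $\sL$-spaces, this yields a natural zigzag $SY \simeq E\bY_{1}$, which is the assertion.

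The main obstacle is this middle step: reproducing (or extracting precisely from \cite{May_Thomason}) the zigzag of equivalences relating the two machines. Carrying it out requires recalling the explicit two-sided bar-construction models for $S$ and for $E$ and verifying that each arrow in the resulting chain
\[
SY \longleftarrow \; \cdots \; \longrightarrow E\bY_{1}
\]
is a weak equivalence of spectra; each such verification reduces to the contractibility of the operad spaces $\sL(j)$ --- so that the local equivalence $\pi$ induces level-wise weak equivalences on the homotopy colimits appearing in the bar constructions --- together with standard homotopical properties of the two-sided bar construction over topological categories from \S\ref{Topological Categories and the Bar Construction}. This is exactly the technical content of \S2--3 of \cite{May_Thomason}. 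Naturality in $\bY$ is automatic, since the associated $\widehat{\sL}$-space, the bar resolutions, and the pushforward are all functorial constructions.
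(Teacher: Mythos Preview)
Your approach differs from the paper's.  The paper follows the proof of \cite{May_Thomason}*{2.5} directly: apply the machine $E$ levelwise to each $\sL$-space $\bY_n$, so that for each fixed $m$ the assignment $\bn^{+} \mapsto E_m \bY_n$ is a $\Gamma$-space $E_m\bY$; then apply $S$ to each of these.  The resulting spectra $SE_m\bY$ assemble into a bispectrum via the equivalences $SE_m\bY \arr S\Omega E_{m+1}\bY \arr \Omega SE_{m+1}\bY$, and the ``up-and-across'' theorem \cite{May_Thomason}*{3.9} yields $SE_0\bY \simeq S_0 E\bY$.  Group completion considerations then give the zigzag $SY \overset{\simeq}{\arr} SE_0\bY \simeq S_0E\bY \overset{\simeq}{\longleftarrow} E\bY_1$.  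The two structures on $\bY$ are never combined; they are simply used as the two simplicial directions of a bispectrum.

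Your route instead tries to package $\bY$ as a $\widehat{\sL}$-space and invoke the comparison of machines over categories of operators.  There is a genuine gap here: a $\Gamma\sL$-space in the paper's sense (a $\Gamma$-space each of whose levels is an $\sL$-space, with $\Gamma$-structure maps that are $\sL$-maps) is \emph{not} the same datum as a $\widehat{\sL}$-space, because the morphism spaces of $\widehat{\sL}$ intertwine the $\Gamma^{\op}$ and operad structures in a way that the separate actions on $\bY$ need not.  Your parenthetical about ``standard rigidification'' or a ``bar resolution over $\widehat{\sL}$'' gestures at this passage but does not supply it, and it is not a formality.  In addition, the machine $E$ in the hypothesis is defined on $\sL$-spaces, not on $\widehat{\sL}$-spaces, so the uniqueness theorem you want to invoke would first require extending or translating $E$.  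The paper's bispectrum argument sidesteps both issues entirely: it uses only that $E$ can be applied in one variable and $S$ in the other.
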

\begin{proof}
Applying the infinite loop space machine $E$ to each $\sL$-space $\bY_n$, we have a sequence of spectra $E \bY_n$ such that for $m$ fixed, $\bn^+ \mapsto E_m \bY_n$ defines a $\Gamma$-space $E_m \bY$.  Applying the Segal machine $S$ to each of these $\Gamma$-spaces, we have spectra $S E_m \bY$ for $m \geq 0$.  From here the proof is the same as the proof of \cite{May_Thomason}*{2.5}.  By properties of the Segal machine, there are weak equivalences
\[
SE_m \bY \overset{S \sigma}{\arr} S \Omega E_{m + 1} \bY \arr \Omega S E_{m + 1} \bY,
\]
and thus the spectra $S E_m \bY$ comprise a bispectrum.  The ``up-and-across'' theorem \cite{May_Thomason}*{3.9} then yields an equivalence of spectra $S E_0 \bY \simeq S_0 E \bY$.  Taking group completions into account leads to a zig-zag of weak equivalences:
\[
SY \overset{\simeq}{\arr} S E_0 \bY \simeq S_0 E \bY \overset{\simeq}{\longleftarrow} E \bY_1.
\]
\end{proof}

If $X$ is a commuative $\cI$-FCP, we may construct a spectrum by applying the Segal machine $S$ to the $\Gamma$-space $H_{\cI} X$ or by applying $E$ to the $\sL$-space $\hocolim_{\cJ} X$.  The two outputs are equivalent:

\begin{proposition}\label{F_to_L_equivofspectra_prop}  Let $X$ be a commutative $\cI$-FCP.  There is a natural chain of weak equivalences of spectra $S H_{\cI} X \simeq E \hocolim_{\cJ} X$.
\end{proposition}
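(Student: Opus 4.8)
The plan is to place $S H_{\cI} X$ and $E\hocolim_{\cJ} X$ at the two ends of the uniqueness comparison in Proposition~\ref{FL_space_equiv}, applied to the $\Gamma\sL$-space $H_{\cI(U)} X$ furnished by Proposition~\ref{HX_is_F_Einfty}. Its underlying $\Gamma$-space is $H_{\cI(U)} X$ and, since $\cI(U)(\mathbf{1}^{+}) \cong \cI(U)$, its first $\sL$-space is $(H_{\cI(U)} X)_{1} = \hocolim_{\cI(U)} X$. Thus Proposition~\ref{FL_space_equiv} immediately supplies a natural chain of weak equivalences of spectra $S H_{\cI(U)} X \simeq E\hocolim_{\cI(U)} X$, and it remains to identify the left-hand side with $S H_{\cI} X$ and the right-hand side with $E\hocolim_{\cJ} X$, each up to natural weak equivalence.

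For the left-hand identification I would use the map of $\Gamma$-spaces $H_{\cI(U)} X \arr H_{\cI} X$ induced by the inclusion of topological categories $\cI(U) \arr \cI^{\dagger}$; on underlying spaces this is the map $\hocolim_{\cI(U)} X \arr \hocolim_{\cI^{\dagger}} X$. Since $\cI(U) \arr \cI^{\dagger}$ is an equivalence of categories---it is full and faithful, and every finite-dimensional subspace of a finite product $U^{n}$ is isometrically isomorphic to a subspace of $U$---each comma category over an object of $\cI^{\dagger}$ has an initial object, so Lemma~\ref{theorem_A_hocolim_version} shows this map of homotopy colimits is a weak homotopy equivalence. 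Lemma~\ref{MT_fspace_equiv_lemma}, applied to the Segal machine $S$, then gives $S H_{\cI(U)} X \simeq S H_{\cI} X$.

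For the right-hand identification I would regard $\cJ$ as an $\sL$-subcategory of $\cI(U)$, with $\sL$-action restricted from that on $\cI(U)$ (defined by the same formula on internal direct sums), and regard the left $\cJ$-module underlying $X$ as the restriction of the left $\cI(U)$-module underlying $X$. The inclusion $\cJ \arr \cI(U)$ then induces a map of $\sL$-spaces $\hocolim_{\cJ} X \arr \hocolim_{\cI(U)} X$ on bar constructions, by the naturality of the bar construction in $\sL$-functors (Lemma~\ref{barconstruction_Lspace_lemma}). This map is a weak homotopy equivalence of underlying spaces: the proof of Proposition~\ref{untwisting_prop} applies with $\cI$ replaced by $\cI(U)$, since $\cI(U)(V,-)$ restricted to $\cJ$ is the $\cJ$-space $\cI(V,-)$ already treated in Lemma~\ref{contractible_isometries_bar}, so that $B(*,\cJ,\cI(U))$ is level-wise contractible. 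Since any infinite loop space machine sends a map of $\sL$-spaces that is an underlying weak equivalence to a weak equivalence of spectra, applying $E$ gives $E\hocolim_{\cJ} X \simeq E\hocolim_{\cI(U)} X$. Concatenating the three comparisons, each natural in $X$, yields the required natural chain $S H_{\cI} X \simeq E\hocolim_{\cJ} X$.

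I expect the main obstacle to be the $\sL$-equivariance bookkeeping in the right-hand identification: one must genuinely verify that $\cJ$ is an $\sL$-subcategory of $\cI(U)$ and that the induced map $\hocolim_{\cJ} X \arr \hocolim_{\cI(U)} X$ respects the $\sL$-space structures, so that the infinite loop space machine $E$ may be applied to it. Everything else is formal once Propositions~\ref{HX_is_F_Einfty} and~\ref{FL_space_equiv} are in hand; the only genuine homotopy-theoretic input on the right---that $\hocolim_{\cJ} X \arr \hocolim_{\cI(U)} X$ is an underlying weak equivalence---is a routine transcription of Proposition~\ref{untwisting_prop} from $\cI$ to $\cI(U)$.
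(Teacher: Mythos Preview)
Your proposal is correct and follows essentially the same chain as the paper: $S H_{\cI} X \longleftarrow S H_{\cI(U)} X \simeq E(H_{\cI(U)}X)_1 \longleftarrow E\hocolim_{\cJ} X$, with the middle equivalence coming from Proposition~\ref{FL_space_equiv} applied to the $\Gamma\sL$-space of Proposition~\ref{HX_is_F_Einfty}. The only presentational differences are that the paper obtains the underlying weak equivalence $\hocolim_{\cJ} X \arr \hocolim_{\cI(U)} X$ by a two-out-of-three argument (using Proposition~\ref{untwisting_prop} for the composite $\cJ \arr \cI(U) \arr \cI$ and Lemma~\ref{theorem_A_hocolim_version} for the second inclusion) rather than by rerunning the proof of Proposition~\ref{untwisting_prop} with $\cI(U)$ in place of $\cI$, and that the $\sL$-equivariance you flag as the main obstacle is already recorded as Proposition~\ref{hocolim_of_FCP_is_Lspace_prop}.(v), so no additional bookkeeping is needed.
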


\begin{proof}
The chain of weak equivalences is:
\[
S H_{\cI} X \overset{\simeq}{\longleftarrow} S H_{\cI(U)} X \simeq E (H_{\cI(U)} X)_1 \overset{\simeq}{\longleftarrow} E \; \underset{\cJ}{\hocolim} \; X.
\]
The inclusion of categories $\cI(U) \arr \cI$ induces a weak homotopy equivalence of spaces $\hocolim_{\cI(U)} X \arr \hocolim_{\cI} X$ by Lemma \ref{theorem_A_hocolim_version}.  The first equivalence in the chain follows by Lemma \ref{MT_fspace_equiv_lemma}.  For the middle equivalence, apply Proposition \ref{FL_space_equiv} to the $\Gamma \sL$-space $H_{\cI(U)} X$ (Proposition \ref{HX_is_F_Einfty}).  For the last equivalence, consider the inclusions of categories $\cJ \arr \cI(U) \arr \cI$.  The composite induces a weak equivalence of homotopy colimits by Proposition \ref{untwisting_prop}.  The second inclusion also induces a weak equivalence as just mentioned.  Thus the first inclusion induces a weak equivalence $\hocolim_{\cJ} X \arr \hocolim_{\cI(U)} X$.  Since this is a map of $\sL$-spaces (\ref{hocolim_of_FCP_is_Lspace_prop}.(v)), it induces the last weak equivalence of spectra after applying $E$.
\end{proof}

We may now compare the spectra of units of commutative orthogonal ring spectra and $E_{\infty}$ ring spectra:

\begin{theorem}\label{gl_comparison_orthogonal_Einfty_theorem}  The following diagrams commute:
\[
\xymatrix{
\ho \bC \sI \sS \ar[dr]_{\bR gl_1} & & \ho \bC\sS[\bbL] \ar[dl]^{\bR gl_1} \ar[0,-2]_{\bR \bN^{\#}} &  \ho \bC \sI \sS \ar[dr]_{\bR gl_1} \ar[0,2]^{\bL \bbN} & & \ho \bC\sS[\bbL] \ar[dl]^{\bR gl_1}  \\
& \ho \sS^{\Omega} & &  & \ho \sS^{\Omega} & }
\]
\end{theorem}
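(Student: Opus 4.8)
The plan is to prove the left-hand triangle directly, by identifying both composites --- at the level of $\sL$-spaces --- with an infinite loop space machine applied to the group-like $\sL$-space $\mGL_{1}R$; the right-hand triangle then follows formally from the Quillen equivalence $(\bbN,\bbN^{\#})$ on commutative monoids. Fix the Segal machine $S$ on $\Gamma$-spaces and an infinite loop space machine $E$ on $\sL$-spaces; by the uniqueness theorem \cite{May_Thomason} the choices are immaterial, and for a commutative monoid $R$ in $\bbL$-spectra the classical construction \cite{E_infty_rings} gives $gl_{1}R = E\,\mGL_{1}R$, a homotopy functor of $R$. Now let $R$ be a (fibrant) $E_{\infty}$ ring spectrum, i.e.\ object of $\bC\sS[\bbL]$. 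Since $\bbN^{\#}$ is right Quillen, $\bbN^{\#}R$ is a fibrant commutative orthogonal ring spectrum, so by Lemma \ref{GL_1_preserves_fibrants} the commutative $\cI$-FCP $\mGL_{1}^{\bullet}(\bbN^{\#}R)$ is fibrant, and hence (by the remark after Lemma \ref{GL_1_preserves_fibrants}) the composite $\bR gl_{1}\circ\bR\bbN^{\#}$ is represented at $R$ by $S H_{\cI}\mGL_{1}^{\bullet}(\bbN^{\#}R)$. Proposition \ref{F_to_L_equivofspectra_prop} gives a natural chain of weak equivalences $S H_{\cI}\mGL_{1}^{\bullet}(\bbN^{\#}R)\simeq E\,\hocolim_{\cJ}\mGL_{1}^{\bullet}(\bbN^{\#}R)$, and Proposition \ref{GL_1_orthogonal_Einfty_prop} supplies a natural isomorphism of commutative $\cI$-FCPs $\mGL_{1}^{\bullet}(\bbN^{\#}R)\cong\bbQ^{\#}(\mGL_{1}R)$. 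Thus it remains only to identify $\hocolim_{\cJ}\bbQ^{\#}(\mGL_{1}R)$ with $\mGL_{1}R$ as $\sL$-spaces.

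For that, write $Y=\mGL_{1}R$ and pick a functorial cofibrant replacement $q\colon(\bbQ^{\#}Y)^{c}\arr\bbQ^{\#}Y$ of commutative $\cI$-FCPs. Arguing exactly as in the proof of Theorem \ref{quillen_equiv_If_spaces_L-spaces}, and using Lemma \ref{xi_prop} together with Proposition \ref{general_equiv_cofibrant_FCPs_via_induction}.(iii) (as in Theorem \ref{quillen_equiv_FCP_L_spaces}), Lemma \ref{O_is_colim}, and Lemma \ref{cofibrant_colim}, we obtain a natural zig-zag of weak equivalences of $\sL$-spaces
\[
\hocolim_{\cJ}(\bbQ^{\#}Y)^{c} \xrightarrow{\ \simeq\ } \colim_{\cJ}(\bbQ^{\#}Y)^{c} \cong \bbO(\bbQ^{\#}Y)^{c} \xleftarrow{\ \xi,\ \simeq\ } \bbQ(\bbQ^{\#}Y)^{c},
\]
whose last term maps to $Y$ by the counit of $(\bbQ,\bbQ^{\#})$, a weak equivalence because this adjunction restricts to a Quillen equivalence on commutative monoids (Theorem \ref{quillen_equiv_FCP_L_spaces}) and every $\bbL$-space is fibrant. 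Since $q$ is a weak equivalence of $\cI$-FCPs it induces an equivalence on $\hocolim_{\cJ}$ (homotopy colimits over $\cJ$ and $\cI$ agree by Proposition \ref{untwisting_prop}, and the latter detects weak equivalences of $\cI$-spaces), and every map in sight is a map of $\sL$-spaces; hence $\hocolim_{\cJ}\bbQ^{\#}Y\simeq Y$ as $\sL$-spaces. Applying $E$ and concatenating the chains yields a natural weak equivalence $\bR gl_{1}(\bbN^{\#}R)\simeq E\,\mGL_{1}R = gl_{1}R = \bR gl_{1}R$, which is precisely the commutativity of the left triangle.

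For the right-hand triangle, recall that $(\bbN,\bbN^{\#})$ restricts to a Quillen equivalence between commutative orthogonal ring spectra and $E_{\infty}$ ring spectra \cite{MM}*{1.1}, so the derived unit $\eta\colon R'\arr\bR\bbN^{\#}\bL\bbN R'$ is an isomorphism in $\ho\bC\sI\sS$ for every $R'$. Combining this with the left triangle just proved gives $\bR gl_{1}(R')\cong\bR gl_{1}(\bR\bbN^{\#}\bL\bbN R')\cong\bR gl_{1}(\bL\bbN R')$, i.e.\ the right triangle commutes. The only genuinely non-formal ingredient is the comparison of delooping machines, but this has already been isolated in Propositions \ref{FL_space_equiv} and \ref{F_to_L_equivofspectra_prop}; I expect the remaining difficulty to be purely the derived-functor bookkeeping and the repeated verification that each equivalence in the chain respects the $\sL$-space (equivalently $E_{\infty}$) structure, so that it survives application of $E$.
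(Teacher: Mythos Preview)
Your proof is correct and uses the same core ingredients as the paper, but the organization is genuinely different.  The paper proves the \emph{right} triangle first, by decomposing it into the square of Proposition~\ref{GL_1_orthogonal_Einfty_prop} (comparing $\mGL_1^{\bullet}$ with $\mGL_1$ via $\bL\bbQ$) and a lower triangle asserting that $SH_{\cI} X \simeq E\,\bbQ X$ for cofibrant commutative $\cI$-FCPs $X$; the left triangle then follows formally from the Quillen equivalence, exactly as in Theorem~\ref{diagram_gl_comparison_theorem}.  You instead prove the \emph{left} triangle directly: you start from an $\sL$-space $Y=\mGL_1 R$ and show $SH_{\cI}\,\bbQ^{\#}Y \simeq E\,Y$, then deduce the right triangle from the equivalence.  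In effect the paper compares $SH_{\cI}$ with $E\circ\bL\bbQ$ on the $\cI$-FCP side, while you compare $SH_{\cI}\circ\bbQ^{\#}$ with $E$ on the $\sL$-space side; both routes pass through the same zig-zag $\hocolim_{\cJ}\simeq\colim_{\cJ}\cong\bbO\overset{\xi}{\simeq}\bbQ$ on a cofibrant commutative FCP.  The paper's decomposition is slightly more modular (it reuses the square/triangle template from Theorem~\ref{diagram_gl_comparison_theorem} and isolates the delooping comparison from the units comparison), whereas your argument is a bit more direct but requires introducing an auxiliary cofibrant replacement $(\bbQ^{\#}Y)^c$ and invoking the derived counit explicitly.

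One small correction: for the first arrow in your displayed zig-zag, $\hocolim_{\cJ}(\bbQ^{\#}Y)^c \arr \colim_{\cJ}(\bbQ^{\#}Y)^c$, you cite Lemma~\ref{cofibrant_colim}, but that lemma is for cofibrant $\cI$-\emph{spaces}, and cofibrant commutative $\cI$-FCPs are not cofibrant as $\cI$-spaces.  The correct reference is Proposition~\ref{general_equiv_cofibrant_FCPs_via_induction}.(iv), which handles exactly this case; the paper invokes that result for both $\pi$ and $\xi$.
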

\begin{proof}
We will do the diagram on the right first.  We choose to use the Segal machine in the definition of $gl_1$ for commutative orthogonal ring spectra (Definition \ref{def_of_units}).  The diagram is:
\[
\xymatrix{
\ho \bC \sI \sS \ar[d]_{\bR \mGL_1^{\bullet}} \ar[0,2]^{\bL \bbN} & & \ho \bC\sS[\bbL] \ar[d]^{\bR \mGL_1}  \\ 
\ho \bC \cI \sU \ar[dr]_{S H_{\cI}}  \ar[0, 2]^{\bL \bbQ}  & & \ho \bC \sU[\bbL] \ar[dl]^{E} \\
& \ho \sS^{\Omega} & }
\]
The square commutes by Proposition \ref{GL_1_orthogonal_Einfty_prop}.  For the triangle, let $X$ be a cofibrant commutative $\cI$-FCP.  We will prove in Proposition \ref{general_equiv_cofibrant_FCPs_via_induction} that the map of $\sL$-spaces $\xi \colon \bbQ X \arr \bbO X$ from Lemma \ref{xi_prop} and the canonical projection
\[
\pi \colon \hocolim_{\cJ} X \arr \colim_{\cJ} X
\]
are both weak homotopy equivalences.  Observe that the $\sL$-space action on $\hocolim_{\cJ} X$ specified in Proposition \ref{hocolim_of_FCP_is_Lspace_prop} is defined so that $\pi$ is a map of $\sL$-spaces.   Combined with the isomorphism of $\sL$-spaces $\colim_{\cJ} X \cong \bbO X$ from Lemma \ref{O_is_colim}, we have exhibited a natural chain of weak homotopy equivalences of $\sL$-spaces $\bbQ X \simeq \hocolim_{\cJ} X$.  Along with Proposition \ref{F_to_L_equivofspectra_prop}, this gives a chain of weak equivalences
\[
SH_{\cI} X \simeq E \hocolim_{\cJ} X \simeq E \bbQ X.
\]
Thus the triangle commutes.

The diagram on the left commutes by Proposition \ref{GL_1_orthogonal_Einfty_prop}, the Quillen equivalence $(\bbQ, \bbQ^{\#})$ and the triangle in the other diagram, exactly as in the proof of Theorem \ref{diagram_gl_comparison_theorem}.
\end{proof}

\section{Construction of the model structure on diagram spaces}\label{model_structure_section}
In this section we construct the model structures on $\bbI$-spaces and $\cI$-spaces used throughout the paper.  The analogous model structures on diagram spectra are constructed in \cite{MMSS}, which is the source for many of the arguments in this section.

\medskip

The model structure on commutative monoids in $\sD$-spaces will require an underlying positive model structure on $\sD$-spaces, just as for diagram spectra.  We will work at a level of generality that subsumes all variants by fixing the following data:
\begin{input_data}\label{model_structure_setup}  Fix a pair $(\sD, \sD_{+})$ consisting of a symmetric monoidal topological category $(\sD, \oplus, 0)$ whose unit object $0$ is an initial object (see Remark \ref{top_initial_object_remark}) and a full subcategory $\sD_{+} \subset \sD$.  The categories $\sD$ and $\sD_{+}$ must satisfy the following conditions: 
\begin{itemize}
\item[(i)]  Given a $\sD$-space $X$, there exists an associated left $\sD$-module $\sX \arr \ob \sD$ (see \S\ref{Topological Categories and the Bar Construction} for this terminology) and this association is functorial.  Furthermore, if $X \arr Y$ is a levelwise fibration of $\sD$-spaces, then $\sX \arr \sY$ is a fibration of spaces.
\item[(ii)]  The inclusion of categories $\sD_{+} \arr \sD$ induces a natural weak equivalence of homotopy colimits:
\[
\hocolim_{\sD_{+}} X \overset{\simeq}{\arr} \hocolim_{\sD} X.
\]
\item[(iii)]  Suppose that for every morphism $\phi \colon d \arr d'$ of $\sD_{+}$, the induced map $X(\phi) \colon X(d) \arr X(d')$ is a weak equivalence.  Then for every $d \in \ob \sD_{+}$, the inclusion of $X(d)$ into the homotopy colimit is a weak equivalence:
\[
X(d) \overset{\simeq}{\arr} \hocolim_{\sD} X.
\]
\end{itemize}
\end{input_data}

$\sD_{+}$ is the subcategory determining the relative level model structure that we start with.  For example, the positive level model structure corresponds to the case where $\sD_{+}$ is the full subcategory of $\sD$ without $0$.  Condition (i) is required in order for homotopy colimits over $\sD$ to be defined and is automatically satisfied if $\ob \sD$ is discrete.

We now define the  model structure on $\sD$-spaces relative to $(\sD, \sD_{+})$.  First consider the $\sD_{+}$-relative level model structure on $\sD\sU$, (also known as the relative projective model structure).  This has weak equivalences the $\sD_{+}$ level  equivalences ($X(d) \arr Y(d)$ a weak equivalence of spaces for each $d \in \ob \sD_{+}$), fibrations the level fibrations and cofibrations defined by the left lifting property (LLP) with respect to acylic fibrations.  The generating cofibrations and acyclic cofibrations are given as follows.  Let $I$ be the set of inclusions of spaces $i \colon S^n \arr D^{n + 1}$ for $n \geq -1$ (where $S^{-1} = \emptyset$), and let $J$ be the set of inclusions $i_0 \colon D^n \arr D^n \times [0,1]$ for $n \geq 0$.  The sets $I$ and $J$ are the generating cofibrations and acyclic cofibrations for the underlying model structure on $\sU$.  Given an object $d$ of $\sD$, the functor $F_d \colon \sU \arr \sD\sU$ is left adjoint to evaluation on the object $d$.  Define:
\[
F_{+}I = \{F_d i \mid d \in \ob\sD_{+}, i \in I\}, \quad \text{and} \quad F_{+}J = \{F_d j \mid d \in \ob\sD_{+}, j \in J \}.
\]
Then $F_{+}I$ and $F_{+}J$ are the generating cofibrations and acylic cofibrations for the $\sD_{+}$-relative level model structure on $\sD\sU$.

We will refer to the fibrations and weak equivalences of the $\sD_{+}$-relative level model structure as level fibrations and level equivalences (leaving reference to $\sD_{+}$ implicit).  The cofibrations of the level model structure will simply be called cofibrations as they coincide with the cofibrations in the new model structure.

A map $f \colon X \arr Y$ of $\sD$-spaces is a weak homotopy equivalence if the induced map of homotopy colimits over $\sD$ is a weak homotopy equivalence of spaces:
\[
f_* \colon \hocolim_{\sD} X \overset{\simeq}{\arr} \hocolim_{\sD} Y.
\]
A map $p \colon E \arr B$ of $\sD$-spaces is a fibration if it has the right lifting property (RLP) with respect to the acylic cofibrations.  Notice that since the homotopy colimit functor preserves tensors with spaces, pushouts and sequential colimits, the weak equivalences are well-grounded.

The main result of this section is:
\begin{theorem}\label{general_model_cat_theorem}  Suppose that $(\sD, \sD_{+})$ satisfies the hypotheses of Input Data \ref{model_structure_setup}.
Then the category of $\sD$-spaces is a compactly generated topological model category with respect to the cofibrations, fibrations and weak homotopy equivalences.  The model structure is constructed as a left Bousfield localization of the $\sD_{+}$-relative level model structure on $\sD\sU$.  
\end{theorem}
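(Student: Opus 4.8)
The plan is to obtain the model structure as a left Bousfield localization of the $\sD_{+}$-relative level model structure, following the template of \cite{MMSS} for diagram spectra, and to verify the hypotheses of the recognition theorem for compactly generated model categories \cite{parametrized}. First I would record the $\sD_{+}$-relative level model structure: it is compactly generated with generating cofibrations $F_{+}I$ and generating acyclic cofibrations $F_{+}J$, fibrations and weak equivalences being determined levelwise over $\ob\sD_{+}$, by the usual recognition theorem for relative projective structures on diagram categories, the topological subtleties arising from $\ob\sD$ being a space being absorbed by Input Data \ref{model_structure_setup}.(i) and the bar-construction machinery of \S\ref{Topological Categories and the Bar Construction}. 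Its cofibrations (the $F_{+}I$-cofibrations) will remain the cofibrations of the localized structure, and every $\sD_{+}$-level equivalence is a weak homotopy equivalence (apply $\hocolim_{\sD_{+}}$, then Input Data \ref{model_structure_setup}.(ii)), so the localization only enlarges the weak equivalences, as a left Bousfield localization should.

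Next I would construct candidate generating acyclic cofibrations. For each morphism $\phi\colon d\arr d'$ of $\sD_{+}$, precomposition gives a map of represented $\sD$-spaces $F_{d'}(\ast)\arr F_{d}(\ast)$; factoring it in the level model structure as an $F_{+}I$-cofibration $\lambda_{\phi}\colon F_{d'}(\ast)\arr M_{\phi}$ followed by a level acyclic fibration $M_{\phi}\arr F_{d}(\ast)$ (and topologizing over $\mor\sD_{+}$ as in the Appendix), set
\[
K \;=\; F_{+}J \;\cup\; \{\, \lambda_{\phi}\,\boxempty\, i \ \mid\ \phi\in\mor\sD_{+},\ i\in I \,\}.
\]
A co-Yoneda computation, $\hocolim_{\sD}F_{d}(\ast)=B(\ast,\sD,\sD(d,-))\simeq \ast$ (the category $(d\downarrow\sD)$ has an initial object), shows each $\lambda_{\phi}$ is a weak homotopy equivalence; since the weak homotopy equivalences are well-grounded, each $\lambda_{\phi}\boxempty i$, hence every $K$-cell complex, is an $F_{+}I$-cofibration and a weak homotopy equivalence. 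Conversely, a level acyclic fibration is $F_{+}I$-injective, hence injective against every cofibration, hence $K$-injective, and it is a weak homotopy equivalence by Input Data \ref{model_structure_setup}.(ii); so $F_{+}I\text{-inj}\subseteq K\text{-inj}\cap W$.

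The heart of the argument, and the step I expect to be the main obstacle, is the reverse inclusion $K\text{-inj}\cap W\subseteq F_{+}I\text{-inj}$, which rests on a characterization of $K$-injective maps. Unwinding the lifting properties exactly as in \cite{MMSS}*{\S9}, a map $p\colon E\arr B$ is $K$-injective if and only if it is a level fibration and, for every $\phi\colon d\arr d'$ in $\sD_{+}$, the naturality square from $p(d)$ to $p(d')$ is a homotopy pullback; in particular the $K$-injective objects are precisely the level-fibrant $\sD$-spaces $X$ with $X(\phi)$ a weak homotopy equivalence for all $\phi\in\mor\sD_{+}$ — the fibrant objects named in Theorem \ref{model_structure_If_spaces} and its $\bbI$-analog. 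Given such a $p$ that is moreover a weak homotopy equivalence, I would form the homotopy fibre $\sF\arr E\arr B$ of $\sD$-spaces; the homotopy-pullback squares force $\sF(\phi)$ to be an equivalence for every $\phi$ in $\sD_{+}$, so $\sF$ is "homotopy invariant". A simplicial-space argument applied to $B(\ast,\sD,-)$ — the realization of a degreewise-fibration map of simplicial spaces all of whose faces give homotopy-pullback squares is again homotopy cartesian, the same mechanism used in the fibre-sequence diagram in the proof of Proposition \ref{symmetric_space_spectra_quillen_adjunction} — identifies $\hocolim_{\sD}\sF$ with the homotopy fibre of $\hocolim_{\sD}E\arr\hocolim_{\sD}B$, which is contractible since $p\in W$. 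Input Data \ref{model_structure_setup}.(iii) then gives $\sF(d)\simeq\hocolim_{\sD}\sF\simeq\ast$ for every $d\in\ob\sD_{+}$, and a short argument on path components upgrades this to: each $p(d)$, $d\in\ob\sD_{+}$, is a weak homotopy equivalence. Together with "level fibration", $p$ is a level acyclic fibration.

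Finally I would assemble the pieces. The weak homotopy equivalences satisfy two-out-of-three and are closed under retracts, being created by the functor $\hocolim_{\sD}\colon\sD\sU\arr\sU$; the domains of $F_{+}I$, $F_{+}J$ and the $\lambda_{\phi}\boxempty i$ are compact; $K$-cell complexes are $F_{+}I$-cofibrations and weak homotopy equivalences; and $F_{+}I\text{-inj}=K\text{-inj}\cap W$ by the two displayed inclusions above. The recognition theorem for compactly generated model categories \cite{parametrized} then produces the model structure, with cofibrations the $F_{+}I$-cofibrations, fibrations the $K$-injective maps (equivalently, the level fibrations with the homotopy-pullback property) and weak equivalences the weak homotopy equivalences; since it shares its cofibrations with the level structure and has strictly more weak equivalences, it is by construction the left Bousfield localization of the $\sD_{+}$-relative level structure (the local objects being the homotopy-invariant level-fibrant $\sD$-spaces). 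The topological (SM7) axiom is inherited from the level structure together with this description of the fibrations; the monoidal pushout-product axiom is deferred to \S\ref{model_structure_monoids_section}. Everything outside the third paragraph is formal once Input Data \ref{model_structure_setup}.(i)--(iii) are in hand; the real work is pinning down the $K$-injective maps and running the quasifibration-type argument that propagates contractibility of $\hocolim_{\sD}\sF$ back down to each level.
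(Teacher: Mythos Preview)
Your proposal is correct and follows essentially the same approach as the paper: define $K$ as $F_{+}J$ together with pushout-products of mapping-cylinder factorizations of the maps $F_{d'}(\ast)\to F_{d}(\ast)$, characterize the $K$-injective maps, prove that a $K$-injective weak homotopy equivalence is a level acyclic fibration by analyzing the fiber, and then invoke the recognition theorem from \cite{parametrized}. The paper isolates your ``simplicial-space/quasifibration'' step as a separate lemma (Lemma~\ref{hocolim_is_homotopy_fiber}), proving it via Input Data~\ref{model_structure_setup}.(i) and the general result that a fibration of $\sD$-modules realizes to a quasifibration of homotopy colimits (Lemma~\ref{map_of_hocolim_is_quasifib}); it also works with the strict fiber rather than the homotopy fiber, which is harmless since $p$ is a level fibration.
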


The case $\sD_{+} = \sD$ is the most important, and we will refer to that case as the absolute  model structure when necessary for clarity.  The examples we will need are:
\begin{enumerate}
\item $\sD = \sD_{+} = \bbI$: the \emph{(absolute) model structure} on $\bbI$-spaces.
\item $\sD = \bbI$, $\sD_{+} = \bbI_{>0}$: the \emph{positive model structure} on $\bbI$-spaces.
\item $\sD = \sD_{+} = \cI^{\dagger}$: the \emph{(absolute) model structure} on $\cI^{\dagger}$-spaces.
\item $\sD = \cI^{\dagger}$, $\sD_{+} = \cI^{\dagger}_{> 0}$: the \emph{positive model structure} on $\cI^{\dagger}$-spaces.
\end{enumerate}

\begin{remark}  We define the absolute and positive  model structures on $\cI$-spaces by transferring structure across the equivalence of categories $\cI \sU \arr \cI^{\dagger} \sU$ that is induced by the equivalence $\cI^{\dagger} \arr \cI$.  It is straightforward to see that the resulting model structures on $\cI$-spaces are well-defined and compactly generated by the images of the generating sets under the prolongation $\cI^{\dagger} \sU \arr \cI \sU$.
\end{remark}

\begin{proposition}\label{verify_axioms}
Examples (1) - (4) all satisfy the hypotheses of Input Data~\ref{model_structure_setup}.
\end{proposition}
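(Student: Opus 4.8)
The plan is to verify conditions (i), (ii) and (iii) of Input Data \ref{model_structure_setup} for each of the four pairs, organizing the argument around the distinction between the \emph{full} cases $\sD_{+}=\sD$ (examples (1) and (3)) and the \emph{positive} cases $\sD_{+}=\sD_{>0}$ (examples (2) and (4)). In all four cases the monoidal unit --- the empty set $\mathbf{0}$ for $\bbI$, the zero inner-product space for $\cI^{\dagger}$ --- is an initial object, as required by the hypotheses, so the setup of Input Data \ref{model_structure_setup} applies.

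Condition (i) does not depend on $\sD_{+}$. For $\sD=\bbI$ the object space $\ob\bbI=\bbN$ is discrete, so one takes $\sX=\coprod_{\bn}X(\bn)$ with the obvious structure map and the action $\mor\bbI\times_{\ob\bbI}\sX\arr\sX$ given by $(\phi,x)\mapsto X(\phi)(x)$; a levelwise fibration $X\arr Y$ then induces $\coprod_{\bn}X(\bn)\arr\coprod_{\bn}Y(\bn)$, which is a Serre fibration because the base is discrete. For $\sD=\cI^{\dagger}$ one uses the associated left $\cI^{\dagger}$-module constructed in \S\ref{Topological Categories and the Bar Construction}: its total space is assembled from $\ob\cI^{\dagger}$, $\mor\cI^{\dagger}$ and the spaces $X(V)$ by products and pullbacks over $\ob\cI^{\dagger}$, so functoriality is automatic and a levelwise fibration induces a fibration of total spaces.

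For the full cases (1) and (3), condition (ii) is the identity map, and for condition (iii) the hypothesis is that $X(\phi)$ is a weak equivalence for \emph{every} morphism $\phi$ of $\sD=\sD_{+}$; since $\sD$ has an initial object, Lemma \ref{fibrant_hocolim_lemma} applies directly and gives that $X(d)\arr\hocolim_{\sD}X$ is a weak equivalence for every $d$. For the positive cases (2) and (4), condition (iii) follows from B\"okstedt's Lemma \ref{bokstedt_lemma} with $n=0$: the hypothesis says $X(\phi)$ is a weak equivalence, hence $\lambda$-connected for all $\lambda$, for every morphism $\phi$ of $\sD_{>0}$, so the inclusion $X(d)\arr\hocolim_{\sD}X$ is $(\lambda-1)$-connected for all $\lambda$ and all $d\in\ob\sD_{>0}$, hence a weak equivalence (for $\cI^{\dagger}$ this is B\"okstedt's lemma for $\cI$ transported across the equivalence $\cI^{\dagger}\sU\simeq\cI\sU$).

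This leaves condition (ii) for the positive cases, which I expect to be the main obstacle. The assertion must hold for \emph{all} $\sD$-spaces $X$, so the $X$-dependent Lemma \ref{general_inclusion into_hocolim_equiv} is not enough; instead I would show the inclusion $\sD_{>0}\hookrightarrow\sD$ is homotopy cofinal and invoke Lemma \ref{theorem_A_hocolim_version}. For an object $d\ne 0$ the comma category $d\downarrow\sD_{>0}$ has the initial object $\id_{d}$, hence contractible classifying space; for $d=0$, since $0$ is initial there is a unique morphism from $0$ to each object, so $0\downarrow\sD_{>0}\cong\sD_{>0}$, and $B\sD_{>0}$ is contractible because the endofunctor $S(d)=d\oplus d_{0}$ (for a fixed $d_{0}\in\ob\sD_{>0}$, e.g. $\mathbf{1}$ or $\bR$) is joined to both $\id$ and the constant functor at $d_{0}$ by natural transformations, namely the two summand inclusions, which realize to a contracting homotopy. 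Lemma \ref{theorem_A_hocolim_version} then gives the required equivalence. Thus the only nonformal ingredients are B\"okstedt's lemma for (iii) in the positive setting and this cofinality computation for (ii); everything else reduces to the bookkeeping of the appendix's topological homotopy colimits that underlies (i).
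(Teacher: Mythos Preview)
Your argument is correct, and for conditions (i), (iii), and the trivial full-case instances of (ii) it tracks the paper's proof: the paper invokes Construction~\ref{module_from_Dspace_construction} for (i), Lemma~\ref{fibrant_hocolim_lemma} for (iii) in the full cases, and Lemma~\ref{bokstedt_lemma} for (iii) in the positive cases, exactly as you do. One small point: your justification of (i) for $\cI^{\dagger}$ is a bit loose---the associated module $\sX(\cI^{\dagger})$ is a coend, not literally built from products and pullbacks. The honest reason a levelwise fibration yields a fibration of total spaces is the explicit formula $\sX(\cI^{\dagger}) = X(0)\amalg\coprod_{a,n>0}\cI_c(\bR^n,U^a)\times_{O(n)}X(\bR^n)$ computed in the appendix: a levelwise fibration then gives a map of associated bundles over each Grassmannian which is a fibration on fibers, hence a fibration.

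For condition (ii) in the positive cases you take a genuinely different route from the paper. The paper simply cites Lemma~\ref{general_inclusion into_hocolim_equiv}, which, as you correctly observe, carries the hypothesis that $X(\phi)$ be a weak equivalence for every $\phi$ in $\sD_{>0}$ and therefore does not literally establish (ii) for arbitrary $X$. Your cofinality argument via Lemma~\ref{theorem_A_hocolim_version}---checking that $(d\downarrow\sD_{>0})$ has an initial object for $d\neq 0$ and that $B\sD_{>0}$ is contractible via the shift $d\mapsto d\oplus d_0$ and the two summand-inclusion natural transformations---proves (ii) in the generality actually stated in Input Data~\ref{model_structure_setup}. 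So your treatment is more complete on this point: the paper's citation suffices wherever (ii) is actually invoked later (those uses occur only for $X$ satisfying the extra hypothesis), but your argument closes the gap between the stated condition and its verification. The cost is negligible---the contractibility of $B\sD_{>0}$ is a two-line swindle---so your route is both cleaner and more robust.
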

\begin{proof}  In all cases, condition (i) is given by Construction \ref{module_from_Dspace_construction} and Lemma \ref{map_of_hocolim_is_quasifib}.  For (1) and (3), condition (ii) is automatically satisfied and (iii) follows from Lemma \ref{fibrant_hocolim_lemma}.  For (2) and (4), condition (ii) is Lemma \ref{general_inclusion into_hocolim_equiv} and condition (iii) is Lemma \ref{bokstedt_lemma}.
\end{proof}

The rest of this section consists of the proof of Theorem \ref{general_model_cat_theorem}.  Given a map $\phi \colon c \arr d$ in $\sD_{+}$, there is an induced natural transformation $\phi^* \colon F_{d} \arr F_{c}$ of functors $\sU \arr \sD\sU$.  Let $\lambda_\phi = \phi^*(*) \colon F_{d}(*) \arr F_c(*)$ be the component of this natural transformation at $*$.  By the Yoneda lemma, $F_d(*)$ is the represented $\sD$-space $\sD[d] = \sD(d, -)$.  Factor $\lambda_\phi \colon \sD[d] \arr \sD[c]$ into a cofibration $k_\phi$ followed by a level acyclic fibration $r_\phi$ using the mapping cylinder $M \lambda_\phi$ of $\lambda_\phi$ (defined level-wise):
\[
\lambda_\phi \colon \sD[d] \overset{k_\phi}{\arr} M \lambda_\phi \overset{r_\phi}{\arr} \sD[c].
\]
Starting with $k_\phi$ and any $i \colon S^n \arr D^{n + 1}$ in the set $I$ of generating cofibrations, passage to pushouts yields the pushout product:
\[
k_\phi \boxempty i \colon (\sD[d] \times D^{n + 1}) \cup_{\sD[d] \times S^n} (M \lambda_\phi \times S^n ) \arr M \lambda_\phi \times D^{n + 1}.
\]
Let $k_\phi \boxempty I = \{ k_\phi \boxempty i \mid i \in I \}$.  Notice that $\lambda_\phi$ is a weak homotopy equivalence, since $\hocolim_{\sD} \sD[d]$ and $\hocolim_{\sD} \sD[c]$ are both contractible.  Hence each $k_\phi$ is a weak homotopy equivalence.  Define $K$ to be the union of $F_{+}J$ and the sets $k_\phi \boxempty I$ over all morphisms $\phi$ of $\sD_{+}$.  $K$ will be the set of generating acyclic cofibrations and $F_{+}I$ will be the set of generating cofibrations for the  model structure on $\sD\sU$.  

\begin{proposition}\label{characterize_rlp_K}  A map $p \colon E \arr B$ satisfies the RLP with respect to $K$ if and only if $p$ is a level fibration and the induced map $E(c) \arr E(d) \times_{B(d)} B(c)$ is a weak equivalence for all $\phi \colon c \arr d$ in $\sD_{+}$.
\end{proposition}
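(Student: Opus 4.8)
The plan is to treat the two families of maps constituting $K$ separately. First, $F_{+}J$ is by construction the set of generating acyclic cofibrations of the $\sD_{+}$-relative level model structure, so $p$ has the RLP with respect to $F_{+}J$ precisely when $p$ is a level fibration; this supplies the first clause of the proposition. It then remains to characterize the RLP with respect to the sets $k_\phi \boxempty I$ as $\phi \colon c \arr d$ ranges over the morphisms of $\sD_{+}$. For this I would invoke the standard Leibniz adjunction for the tensoring of $\sD\sU$ over $\sU$: a map $p$ of $\sD$-spaces has the RLP with respect to $k_\phi \boxempty i$ for all $i \in I$ if and only if the induced map of spaces $\sD\sU(k_\phi, p)$ has the RLP with respect to all $i \in I$, that is, is an acyclic Serre fibration (the latter because $I$ generates the cofibrations of $\sU$). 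Thus the whole statement reduces to showing that, granting that $p$ is a level fibration, $\sD\sU(k_\phi, p)$ is an acyclic Serre fibration for every $\phi$ in $\sD_{+}$ if and only if the canonical map $E(c) \arr E(d) \times_{B(d)} B(c)$ is a weak homotopy equivalence for every such $\phi$.

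The remaining step is to replace $k_\phi$ by $\lambda_\phi$. Since $k_\phi$ is a cofibration, $p$ is a level fibration, and the level model structure is topological, SM7 shows that $\sD\sU(k_\phi, p)$ is automatically a Serre fibration, so being an acyclic Serre fibration is equivalent to being a weak homotopy equivalence. Now $\lambda_\phi = r_\phi \circ k_\phi$, where $r_\phi \colon M\lambda_\phi \arr \sD[c]$ is the mapping cylinder collapse, a $\sD$-space homotopy equivalence split by the inclusion of $\sD[c]$; hence $r_\phi$ induces homotopy equivalences of mapping spaces $\sD\sU(\sD[c], E) \arr \sD\sU(M\lambda_\phi, E)$ and $\sD\sU(\sD[c], B) \arr \sD\sU(M\lambda_\phi, B)$, while leaving the coordinates indexed by $\sD[d]$ unchanged. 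Because $p(d) \colon E(d) \arr B(d)$ is a Serre fibration, the fiber products appearing in the targets of $\sD\sU(\lambda_\phi, p)$ and $\sD\sU(k_\phi, p)$ are homotopy pullbacks, so the gluing lemma gives that $\sD\sU(k_\phi, p)$ is a weak equivalence if and only if $\sD\sU(\lambda_\phi, p)$ is. Finally, the enriched Yoneda isomorphism $\sD\sU(\sD[e], X) \cong X(e)$, natural in $e$ and $X$, identifies $\sD\sU(\lambda_\phi, p)$ with the canonical map $E(c) \arr E(d) \times_{B(d)} B(c)$. Chaining these equivalences together proves the proposition.

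The step I expect to be the main obstacle is the replacement of $k_\phi$ by $\lambda_\phi$: one must check that $\sD[d]$, $\sD[c]$ and $M\lambda_\phi$ are level cofibrant and that the comparison square relating $\sD\sU(k_\phi, p)$ to $\sD\sU(\lambda_\phi, p)$ actually commutes, so that the gluing lemma for homotopy pullbacks over fibrations applies on the nose. The remaining ingredients — the Leibniz adjunction, the characterization of acyclic Serre fibrations by the RLP against $I$, and the Yoneda identification — are formal.
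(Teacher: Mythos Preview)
Your proposal is correct and follows essentially the same route as the paper's proof: reduce to $p$ being a level fibration via $F_{+}J$, use the Leibniz adjunction (cited in the paper as \cite{MMSS}*{5.16}) to replace the RLP against $k_\phi \boxempty I$ by the condition that $\sD\sU(k_\phi, p)$ is an acyclic Serre fibration, invoke SM7 for the level model structure to reduce ``acyclic Serre fibration'' to ``weak equivalence'', pass from $k_\phi$ to $\lambda_\phi$ using that $r_\phi$ is a homotopy equivalence, and finish with Yoneda. Your worry at the end about level cofibrancy of $\sD[d]$, $\sD[c]$, and $M\lambda_\phi$ is unnecessary: since $r_\phi$ is a genuine $\sD$-space homotopy equivalence (with section the cylinder inclusion), it induces homotopy equivalences on mapping spaces into any target, and the pullback comparison then follows from the fact that $E(d)\arr B(d)$ is a Serre fibration, exactly as you say.
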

\begin{proof}
Since $F_{+}J \subset K$ and level fibrations are precisely the maps satisfying the RLP with respect to $F_{+}J$, we assume that $p$ is a level fibration and then show that for each morphism $\phi$ of $\sD_{+}$, $p$ satisfies the RLP with respect to $k_\phi \boxempty I$ if and only if $E(c) \arr E(d) \times_{B(d)} B(c)$ is a weak homotopy equivalence.

By \cite{MMSS}*{5.16}, $p$ has the RLP with respect to $k_\phi \boxempty I$ if and only if the following map of morphism spaces has the RLP with respect to $I$: 
\[
\sD\sU(k_\phi^*, p_*) \colon \sD\sU(M \lambda_\phi, E) \arr \sD\sU(\sD[d], E) \times_{\sD\sU(\sD[d], B)} \sD\sU(M \lambda_\phi, B).
\]
The latter condition means that $\sD\sU(k_\phi^*, p_*)$ is an acyclic Serre fibration.  Since $k_\phi$ is a cofibration and $p$ is a level fibration, we know that $\sD\sU(k_\phi^*, p_*)$ is a Serre fibration because the level model structure is topological.  Hence $p$ satisfies the RLP with respect to $k_\phi \boxempty I$ if and only if $\sD\sU(k_\phi^*, p_*)$ is a weak equivalence of spaces.

Since the canonical fibration $r_\phi \colon M \lambda_\phi \arr \sD[c]$ is a weak equivalence, $\sD\sU(k_\phi^*, p_*)$ is a weak equivalence if and only if \[
\sD\sU(\lambda_\phi^*, p_*) \colon \sD\sU(\sD[c], E) \arr \sD\sU(\sD[d], E) \times_{\sD\sU(\sD[d], B)} \sD\sU(\sD[c], B)
\]
is a weak equivalence.  This map is isomorphic to the induced map to the pullback
\[
E(c) \arr E(d) \times_{B(d)} B(c),
\]
so the proof is complete.
\end{proof}

\begin{corollary}\label{characterize_rlp_K_fortrivialmaps}  The trivial map $F \arr \ast$ satisfies the RLP with respect to $K$ if and only if $F(\phi) \colon F(c) \arr F(d)$ is a weak equivalence for every $\phi \colon c \arr d$ in $\sD_{+}$.
\end{corollary}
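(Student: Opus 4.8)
The plan is to deduce this statement immediately from Proposition \ref{characterize_rlp_K} by specializing to the map $p\colon F\arr\ast$. First I would observe that this map is automatically a level fibration: for every object $d$ of $\sD$, the component $F(d)\arr\ast$ is a Serre fibration, since any map of spaces whose target is a point is a Serre fibration. Hence the hypothesis of Proposition \ref{characterize_rlp_K} that $p$ be a level fibration is satisfied for free.

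Next I would unwind the condition supplied by Proposition \ref{characterize_rlp_K} in this special case. For a morphism $\phi\colon c\arr d$ of $\sD_{+}$, the pullback appearing there is $F(d)\times_{\ast}\ast$, which is canonically identified with $F(d)$; and under this identification the induced map $F(c)\arr F(d)\times_{\ast}\ast$ is exactly the structure map $F(\phi)\colon F(c)\arr F(d)$ of the $\sD$-space $F$. So Proposition \ref{characterize_rlp_K} tells us that $p$ has the right lifting property with respect to $K$ precisely when $F(\phi)$ is a weak homotopy equivalence for every $\phi$ in $\sD_{+}$, which is the claim.

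I do not anticipate a genuine obstacle here; the statement is a direct corollary, and the only point requiring a moment's thought is the (routine) verification that $F\arr\ast$ is a level fibration and the (equally routine) identification of the pullback $F(d)\times_{\ast}\ast$ with $F(d)$ and of the induced map with $F(\phi)$. Both are immediate from the fact that $\ast$ is the terminal $\sD$-space, computed levelwise as the one-point space.
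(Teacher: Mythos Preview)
Your proposal is correct and is exactly the argument the paper intends: the corollary is stated without a separate proof because it is the immediate specialization of Proposition~\ref{characterize_rlp_K} to $p\colon F\arr\ast$, using that any map to the terminal object is a level fibration and that the pullback $F(d)\times_{\ast}\ast$ is $F(d)$ with induced map $F(\phi)$. One small wording quibble: being a level fibration is not a \emph{hypothesis} of Proposition~\ref{characterize_rlp_K} but one of the two conditions in the characterization; your point stands, since that condition is automatic for $F\arr\ast$.
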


We will need to understand how the fiber of a levelwise fibration $E \arr B$ relates to the fiber of $\hocolim_{\sD} E \arr \hocolim_{\sD} B$.  For the following lemma, we fix a map $b \colon * \arr B$, giving compatible baspoints in each space $B(d)$.

\begin{lemma}\label{hocolim_is_homotopy_fiber}
Suppose that $p \colon E \arr B$ is a level fibration of $\sD$-spaces with fiber $F = E_{b}$  over $b \colon * \arr B$.  Choose a point $* \in B\sD$  and also write $*$ for its image under the induced map of homotopy colimits
\[
B\sD = \hocolim_{\sD} (*) \overset{b}{\arr} \hocolim_{\sD} B.
\]
Then in the following morphism of homotopy fiber sequences
\[
\xymatrix{
F_{*}(q) \ar[r] \ar[d] & F_{*}(p) \ar[d] \\
\hocolim_{\sD} F \ar[d]_{q} \ar[r] & \hocolim_{\sD} E \ar[d]^{p} \\
B \sD \ar[r]_-{i_b} & \hocolim_{\sD} B }
\]
\begin{itemize}
\item[(i)]  the map $F_{*}(q) \arr F_{*}(p)$ of homotopy fibers is a weak equivalence, and
\item[(ii)]  $\hocolim_{\sD} F$ is weak homotopy equivalent to the homotopy fiber $F_*(p)$.
\end{itemize}
\end{lemma}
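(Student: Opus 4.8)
The plan is to realise the square formed by the two homotopy colimits together with $B\sD$ and $\hocolim_\sD B$ (rows two and three of the displayed diagram) as a genuine pullback of spaces, and then to play off the quasifibration property of its two vertical legs. First I would note that limits of $\sD$-spaces are computed levelwise, so $F = E_b$ is the levelwise fibre, $F(d) = E(d)\times_{B(d)}\ast$, and that the bar construction $B(\ast,\sD,-)$ preserves this pullback: each $B_n(\ast,\sD,-)$ is built from the levelwise data by coproducts and fibre products over $\ob\sD$, all of which commute with the pullback in question, and geometric realisation preserves pullbacks of simplicial spaces in the ambient category of compactly generated spaces. Hence
\[
\hocolim_\sD F \;\cong\; \hocolim_\sD E \times_{\hocolim_\sD B} B\sD ,
\]
so that square is a pullback. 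In particular the fibre of $q$ over the chosen point $\ast \in B\sD$ coincides, as a space, with the fibre of $p$ over $i_b(\ast)$; call this common space $Q$.

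The key input is now Lemma \ref{map_of_hocolim_is_quasifib} combined with Input Data \ref{model_structure_setup}(i): since $p\colon E\to B$ is a level fibration, the associated map of left $\sD$-modules is a fibration, so $\hocolim_\sD p$ is a quasifibration; the same applied to the (trivially levelwise-fibrant) map $F\to\ast$ shows $q$ is a quasifibration. Therefore the inclusions $Q\arr F_\ast(q)$ and $Q\arr F_\ast(p)$ of the strict fibres into the homotopy fibres are both weak homotopy equivalences. They are compatible with the map $F_\ast(q)\arr F_\ast(p)$ induced on homotopy fibres by the map of squares (a constant path is sent to a constant path), so the resulting triangle over $Q$ commutes and $F_\ast(q)\arr F_\ast(p)$ is a weak homotopy equivalence by two-out-of-three. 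This gives (i).

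For (ii) I would use that $B\sD$ is weakly contractible: since $0$ is an initial object of $\sD$, Lemma \ref{fibrant_hocolim_lemma} applied to the terminal $\sD$-space (whose structure maps are all identities) gives a weak equivalence $\ast = \ast(d)\arr \hocolim_\sD\ast = B\sD$ for any $d$. As the base of $q\colon\hocolim_\sD F\arr B\sD$ is weakly contractible, the canonical map $F_\ast(q)\arr\hocolim_\sD F$ from homotopy fibre to total space is a weak homotopy equivalence: it is a fibration whose fibre is a space of paths in $B\sD$, hence weakly contractible. Composing with the equivalence of (i) yields $\hocolim_\sD F\simeq F_\ast(q)\simeq F_\ast(p)$, which is (ii).

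The only non-formal ingredient, and hence the main obstacle, is Lemma \ref{map_of_hocolim_is_quasifib} --- that the homotopy colimit of a level fibration of $\sD$-spaces is a quasifibration. This is precisely what Input Data \ref{model_structure_setup}(i) is designed to feed into, and its proof is where the levelwise-cofibrant (proper) structure of the bar construction and a gluing theorem for quasifibrations under geometric realisation are used. Granting it, the identification of $\hocolim_\sD F$ with the pullback and the weak contractibility of $B\sD$ are routine.
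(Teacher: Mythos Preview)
Your proof is correct and follows essentially the same route as the paper: realise the square of homotopy colimits as a pullback, invoke Lemma~\ref{map_of_hocolim_is_quasifib} (via Input Data~\ref{model_structure_setup}(i)) to see that both $p$ and $q$ are quasifibrations with isomorphic strict fibres, deduce (i) by two-out-of-three, and then use contractibility of $B\sD$ for (ii). The only cosmetic difference is that the paper cites the initial object of $\sD$ directly for $B\sD\simeq *$, whereas you route this through Lemma~\ref{fibrant_hocolim_lemma}; either works.
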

\begin{proof}  

Each space $F(d)$ is the pullback of the fibration $p(d) \colon E(d) \arr B(d)$ over the point $b \colon \ast \arr B(d)$.  These pullbacks assemble into a pullback square of left $\sD$-modules:
\addtocounter{theorem}{1}
\begin{equation}\label{pullback_of_Dmodules}
\xymatrix{ \sF \ar[d]_{q} \ar[r] & \sE \ar[d]^{p} \\
\ob \sD \ar[r] & \sB }
\end{equation}
This pullback induces a pullback of topological homotopy colimits:
\[
\xymatrix{ \hocolim_{\sD} F \ar[d]_{q} \ar[r] & \hocolim_{\sD} E \ar[d]^{p} \\
B \sD  \ar[r] & \hocolim_{\sD} B }
\]
By condition (i) on $(\sD, \sD_{+})$, both of the vertical arrows in \eqref{pullback_of_Dmodules} are fibrations, so we may apply Proposition \ref{map_of_hocolim_is_quasifib}.  It follows that $q_*$ and $p_*$ are quasifibrations.  Since the square is a pullback, the fibers of $q$ and $p$ over $*$ are isomorphic.  Thus the map of homotopy fibers $F_{*}(q) \arr F_{*}(p)$ is a weak homotopy equivalence, proving (i).  To prove (ii), recall that the category $\sD$ has an initial object, and so the classifying space $B \sD $ is contractible.  Thus the inclusion $F_{*}(q) \arr \hocolim_{\sD} F$ of the homotopy fiber of $q$ is a weak equivalence.  Along with the weak equivalence of homotopy fibers, this gives a chain of weak equivalences between $\hocolim_{\sD} F$ and $F_{*}(p)$.
\end{proof}

We can now establish the crucial step in setting up the model structure:

\begin{proposition}\label{RLP_K_and_stable}  Suppose that $p \colon E \arr B$ is a weak homotopy equivalence satisfying the RLP with respect to $K$.  Then $p$ is a level acyclic fibration.
\end{proposition}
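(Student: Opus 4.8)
The plan is to analyse $p$ through its fibres, using the machinery of Lemma~\ref{hocolim_is_homotopy_fiber}. Since $F_{+}J \subseteq K$, the map $p$ has the RLP against $F_{+}J$ and is therefore a level fibration; in particular every $p(d) \colon E(d) \arr B(d)$ is a Serre fibration, and by Proposition~\ref{characterize_rlp_K} the square associated to each morphism $\phi \colon c \arr d$ of $\sD_{+}$ is homotopy cartesian. To show that $p$ is a level acyclic fibration it remains to show that $p(d)$ is a weak homotopy equivalence for every $d \in \ob \sD_{+}$; as $p(d)$ is a Serre fibration, it is enough to show that all of its fibres are weakly contractible.

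Fix a point $b \colon \ast \arr B$ of $B$ and let $F = E_{b}$ be the fibre of $p$ over $b$, formed as a pullback of $\sD$-spaces. Lifting properties are stable under base change, so $F \arr \ast$ again has the RLP against $K$; by Corollary~\ref{characterize_rlp_K_fortrivialmaps} this means precisely that $F(\phi) \colon F(c) \arr F(d)$ is a weak homotopy equivalence for every morphism $\phi$ of $\sD_{+}$. Condition~(iii) of Input Data~\ref{model_structure_setup}, applied to the $\sD$-space $F$, then shows that $F(d) \arr \hocolim_{\sD} F$ is a weak homotopy equivalence for each $d \in \ob \sD_{+}$.

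It remains to identify $\hocolim_{\sD} F$. Since $p$ is a level fibration, Lemma~\ref{hocolim_is_homotopy_fiber} exhibits $\hocolim_{\sD} F$ as weakly equivalent to the homotopy fibre of $p_{*} \colon \hocolim_{\sD} E \arr \hocolim_{\sD} B$; and $p_{*}$ is a weak homotopy equivalence by hypothesis, so this homotopy fibre is weakly contractible. Hence $F(d)$ is weakly contractible for all $d \in \ob \sD_{+}$. Letting $b$ range over the points of $B$ shows that every fibre of every $p(d)$, $d \in \ob \sD_{+}$, is weakly contractible, so $p(d)$ is a weak homotopy equivalence; thus $p$ is a level acyclic fibration.

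The delicate point is the very last reduction. Lemma~\ref{hocolim_is_homotopy_fiber} only sees fibres over a natural section $b \colon \ast \arr B$, whereas to recognise $p(d)$ as a weak equivalence one must control the fibres of the Serre fibration $p(d)$ over all of $B(d)$, including its various path components. I expect this to be the main obstacle, and I would handle it by reducing an arbitrary fibre to the case of the lemma --- for example by first base-changing $p$ along maps from representable $\sD$-spaces $\sD[d_{0}] \arr B$ and using that $\hocolim_{\sD}\sD[d_{0}]$ is contractible while, by the quasifibration property of $\hocolim$ over a level fibration (as in the proof of Lemma~\ref{hocolim_is_homotopy_fiber}), such a base change remains a $K$-injective weak homotopy equivalence. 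Everything else is formal: the argument runs on the stability of the RLP against $K$ under pullback, Corollary~\ref{characterize_rlp_K_fortrivialmaps}, condition~(iii) of Input Data~\ref{model_structure_setup}, and the identification of homotopy colimits of fibres with homotopy fibres.
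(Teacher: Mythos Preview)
Your first three paragraphs follow the paper's proof exactly: show $p$ is a level fibration via $F_{+}J \subset K$; pull back over a section $b \colon \ast \arr B$ to obtain a fibre $F$ that still has the RLP against $K$; use Corollary~\ref{characterize_rlp_K_fortrivialmaps} and condition~(iii) of Input Data~\ref{model_structure_setup} to get $F(d) \simeq \hocolim_{\sD} F$; and invoke Lemma~\ref{hocolim_is_homotopy_fiber} to identify $\hocolim_{\sD} F$ with the homotopy fibre of the weak equivalence $p_{*}$.

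The worry you raise in the final paragraph---that a natural section $b \colon \ast \arr B$ need not reach every component of $B(d)$---is a point the paper itself glosses over: it simply writes ``$\pi_{*} F(d) = 0$. This means that the maps $p(d) \colon E(d) \arr B(d)$ are weak homotopy equivalences,'' with no further comment. So your write-up is at least as complete as the paper's. If you want to close the gap, there is a cleaner route than base-changing along representables: the proof of Lemma~\ref{hocolim_is_homotopy_fiber} already shows (via Proposition~\ref{map_of_hocolim_is_quasifib} and condition~(i)) that $p_{*} \colon \hocolim_{\sD} E \arr \hocolim_{\sD} B$ is a quasifibration, and its strict fibre over any vertex $(d_{0}, x)$ is exactly $p(d_{0})^{-1}(x)$; since $p_{*}$ is a weak equivalence, every such fibre is weakly contractible, and this covers all points of every $B(d_{0})$.
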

\begin{proof}
As $p$ has the RLP with respect to $F_{+}J$, it is a level fibration.  We must show that it is a level equivalence.  Choose a map $b \colon * \arr B$ and let $F = E_b$ be the pullback of $p \colon E \arr B$ over $b$.  Lemma \ref{hocolim_is_homotopy_fiber} implies that $\hocolim_{\sD} F$ is equivalent to the homotopy fiber of a weak equivalence:
\[
\hocolim_{\sD} F \arr \hocolim_{\sD} E \overset{\simeq}{\arr} \hocolim_{\sD} B.
\]
Hence $\pi_*\hocolim_{\sD} F = 0$.  By the pullback square, $F \arr \ast$ satisfies the RLP with respect to $K$.  Corollary \ref{characterize_rlp_K_fortrivialmaps} implies that the maps $F(\phi)$ are weak homotopy equivalences for all morphisms $\phi$ of $\sD_{+}$.  By assumption (iii) on $(\sD, \sD_{+})$, for every object $d$ of $\sD_{+}$, the inclusion $F(d) \arr \hocolim_{\sD} F$ is a weak homotopy equivalence.  Thus $\pi_* F(d) = 0$.  This means that the maps $p(d) \colon E(d) \arr B(d)$ are weak homotopy equivalences for all objects $d$ of $\sD_{+}$, proving that $p$ is a level equivalence.
\end{proof}

\begin{lemma}\label{stable_acylic_cofib_kcell}  A retract of a relative $K$-cell complex is an acyclic cofibration.
\end{lemma}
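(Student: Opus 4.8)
The plan is to establish the two halves of the assertion---cofibration and weak homotopy equivalence---separately, and then invoke closure under retracts for each. The cofibration half is formal: every map in $K$ is a cofibration in the $\sD_{+}$-relative level model structure, since the maps in $F_{+}J$ are among its generating (acyclic) cofibrations, and each $k_\phi \boxempty i$ is the pushout product of the cofibration $k_\phi$ with a generating cofibration $i \in I$ of spaces, hence a cofibration because the level model structure is topological. As cofibrations are closed under coproducts, pushouts, sequential composition, and retracts, a retract of a relative $K$-cell complex is a cofibration. (By compact generation it suffices throughout to work with sequentially indexed cell complexes.) In particular every map in $K$, and every relative $K$-cell complex, is an $h$-cofibration.

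For the weak equivalence half, I would first check that every map in $K$ is a weak homotopy equivalence. The maps in $F_{+}J$ are level acyclic cofibrations, hence level equivalences, hence weak homotopy equivalences---this uses condition (ii) of Input Data \ref{model_structure_setup} together with the homotopy-invariance of $\hocolim_{\sD_{+}}$, and reflects the fact that the new model structure is a left Bousfield localization of the level one. For $k_\phi \boxempty i$: as already noted in the construction of $K$, the map $\lambda_\phi$ is a weak homotopy equivalence because both $\hocolim_{\sD} \sD[d]$ and $\hocolim_{\sD} \sD[c]$ are contractible, and therefore $k_\phi$ is a weak homotopy equivalence; then $k_\phi \boxempty i$ is a weak homotopy equivalence by property (v) of well-groundedness, applied with $f = k_\phi$ (a weak equivalence) and $i \in I$ (a relative CW complex). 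This is legitimate because the weak homotopy equivalences of $\sD$-spaces were noted to be well-grounded, since $\hocolim_{\sD}$ preserves tensors with spaces, pushouts, and sequential colimits.

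It then remains to propagate weak equivalence through the cell structure. A coproduct of maps in $K$ is an $h$-cofibration and, by well-groundedness property (i), a weak homotopy equivalence. A single stage of cell attachment is a cobase change of such a coproduct along an arbitrary map, so it is again an $h$-cofibration and, by property (ii), a weak homotopy equivalence. Finally a relative $K$-cell complex is the sequential colimit of such stages, each an $h$-cofibration and a weak equivalence, hence a weak homotopy equivalence by property (iv). Since $\hocolim_{\sD}$ is a functor and weak homotopy equivalences of spaces are closed under retracts, a retract of a weak homotopy equivalence of $\sD$-spaces is again one. Combining with the cofibration half completes the proof.

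The only point that requires any care is matching $k_\phi \boxempty i$ to the exact shape of the pushout product appearing in well-groundedness property (v)---taking $X = \sD[d]$, $Y = M\lambda_\phi$, $A = S^n$, $B = D^{n+1}$---and checking that properties (ii) and (iv) genuinely apply, which they do because the maps produced at each stage are $h$-cofibrations. Everything else is bookkeeping with the well-groundedness axioms, and this bookkeeping is precisely the mechanism by which the Bousfield localization is shown to exist.
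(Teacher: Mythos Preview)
Your proof is correct and follows essentially the same approach as the paper's, which is a very terse three-sentence argument invoking that the maps in $K$ are weak equivalences and $h$-cofibrations (so well-groundedness handles the weak-equivalence half) and cofibrations (so closure properties handle the cofibration half). You have simply unpacked each of these claims in full detail, including the verification via property~(v) that $k_\phi \boxempty i$ is a weak equivalence and the explicit walk through properties (i), (ii), and (iv) to propagate weak equivalence through the cell structure.
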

\begin{proof}
The maps in $K$ are all weak equivalences and $h$-cofibrations.  Since the weak equivalences are well grounded, this implies that every retract of a relative $K$-cell complex is a weak equivalence.  The maps in $K$ are also cofibrations, so by the closure properties of cofibrations in the level model structure, every retract of a relative $K$-cell complex is a cofibration.  
\end{proof}

The proof of Theorem \ref{general_model_cat_theorem} will be completed by invoking the following criterion for compactly generated model categories:

\begin{theorem}\label{model_cat_construction_quoted}\cite{parametrized}*{4.5.6}  Suppose that $\sC$ is a bicomplete category with a subcategory of weak equivalences satisfying the two out of three property.  Let $I$ and $J$ be compact sets of maps in $\sC$ satisfying the following two conditions:
\begin{itemize}
\item[(a)] Every relative $J$-cell complex is a weak equivalence.
\item[(b)] A map has the RLP with respect to $I$ if and only if it is a weak equivalence and has the RLP with respect to $J$.
\end{itemize}
Then $\sC$ is a compactly generated model category with generating cofibrations $I$ and generating acyclic cofibrations $J$.
\end{theorem}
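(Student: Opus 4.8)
The statement is the standard ``recognition theorem'' for cofibrantly generated model categories, in the form adapted to \emph{compact} rather than small generating sets, so the plan is to run the familiar argument with the small object argument replaced by its sequential ($\omega$-indexed) variant. First I would fix the definitions: a \emph{fibration} is a map with the RLP with respect to $J$; a \emph{cofibration} is a map with the LLP with respect to every map having the RLP with respect to $I$; and the weak equivalences are the given class. Call a map with the RLP with respect to $I$ an \emph{acyclic fibration}; by hypothesis (b) this is exactly a map that is both a fibration and a weak equivalence, so the two possible meanings of ``acyclic fibration'' agree, and moreover maps with the RLP with respect to $I$ lie among those with the RLP with respect to $J$, whence every relative $J$-cell complex is a cofibration. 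The category is bicomplete by assumption, two-out-of-three holds by assumption, and the fibrations and cofibrations are closed under retracts because they are defined by lifting properties; so what remains is to produce the two functorial factorizations, to verify the lifting axiom, and to note closure of the weak equivalences under retracts (which drops out of the factorizations and liftings, or may be regarded as part of the standing meaning of ``subcategory of weak equivalences'').

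Next I would construct the factorizations by the compact object argument. Because $I$ and $J$ are compact sets of maps, for any $f\colon X \arr Y$ the evident inductive construction --- form $X = Z_0 \arr Z_1 \arr \dotsb$ by attaching at each stage all cells from $I$ (respectively $J$) indexed by the commuting squares over $f$, and set $Z_\infty = \colim_n Z_n$ --- terminates after $\omega$ steps: any lifting problem posed by a map of $I$ or $J$ against $Z_\infty \arr Y$ factors through some finite stage $Z_n$ by compactness of the domain, and is solved in $Z_{n+1}$. Thus $f = (Z_\infty \arr Y)\circ(X \arr Z_\infty)$ with $X \arr Z_\infty$ a relative $I$-cell complex (hence a cofibration) and $Z_\infty \arr Y$ an acyclic fibration; running the same argument with $J$ gives a factorization with $X \arr Z_\infty$ a relative $J$-cell complex and $Z_\infty \arr Y$ a fibration, and here hypothesis (a) forces $X \arr Z_\infty$ to be a weak equivalence, hence an acyclic cofibration. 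This establishes both factorization axioms.

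Then I would verify the lifting axiom. One half --- cofibrations lift against acyclic fibrations --- is immediate, since by definition cofibrations are the maps with the LLP against all maps having the RLP with respect to $I$, and those are exactly the acyclic fibrations. For the other half, let $i\colon A \arr B$ be an acyclic cofibration and $p$ a fibration; factor $i = q\circ j$ with $j\colon A \arr C$ a relative $J$-cell complex and $q\colon C \arr B$ a fibration. Then $j$ is a weak equivalence (by (a)), so two-out-of-three makes $q$ a weak equivalence, i.e.\ an acyclic fibration; since $i$ is a cofibration it lifts in the square with top edge $j$, left edge $i$, right edge $q$, bottom edge $\id_B$, yielding $r\colon B \arr C$ with $ri = j$ and $qr = \id_B$, which exhibits $i$ as a retract of $j$. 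As $j$ is a relative $J$-cell complex it has the LLP against all fibrations, a class closed under retracts, so $i$ lifts against $p$. The same retract argument shows every cofibration is a retract of a relative $I$-cell complex and every acyclic cofibration is a retract of a relative $J$-cell complex, so the model structure is compactly generated by $I$ and $J$; and the observation that $f$ is a weak equivalence iff the cofibration part of its (cofibration, acyclic fibration) factorization is one feeds the standard verification that weak equivalences are closed under retracts.

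I expect the main obstacle to be the compact object argument itself: one must check that the sequential colimits $Z_\infty = \colim_n Z_n$ are formed along maps that are sufficiently well behaved --- pushouts of coproducts of maps in $I$ or $J$, which are $h$-cofibrations (indeed closed inclusions in the applications in this paper) --- so that mapping out of the compact domains of $I$ and $J$ genuinely commutes with these colimits, and hence that the $\omega$-step process really lands in the class of maps with the RLP with respect to $I$ (respectively $J$). Everything beyond that is formal: the compatibility of the two factorizations needs no argument, being exactly the content of hypothesis (b), which identifies $\mathrm{RLP}(I)$ with ``weak equivalence and $\mathrm{RLP}(J)$''.
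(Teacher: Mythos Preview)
The paper does not prove this theorem at all: it is quoted verbatim from \cite{parametrized}*{4.5.6} and invoked as a black box to finish the construction of the model structure on $\sD$-spaces. Your proposal is the standard recognition-theorem argument one would expect to find in the cited source, and it is essentially correct; the only point worth flagging is that your worry about the sequential colimits being ``sufficiently well behaved'' is already absorbed into the paper's definition of a \emph{compact} set of maps (domains of $I$ see relative $I$-cell complexes as sequential colimits), so that is a hypothesis rather than an obstacle. Since there is no proof in the paper to compare against, there is nothing further to say about approach.
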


Here a set of maps $I$ is said to be compact if for every domain object $X$ of a map in $I$ and every relative $I$-cell complex $Z_0 \arr Z$, the induced map $\colim \sC(X, Z_n) \arr \sC(X, Z)$ is an isomorphism.  

In our situation, the generating cofibrations are $F_{+}I$ and the generating acyclic cofibrations are $K$.  Using the adjunction between $F_n$ and evaluation at level $n$, compactness of $F_{+}I$ and $K$ follows from compactness of spheres and disks.  Condition (a) follows from Lemma \ref{stable_acylic_cofib_kcell}.  Proposition \ref{RLP_K_and_stable} shows that weak homotopy equivalences satisfying the RLP with respect to $K$ satisfy the RLP with respect to $F_{+}I$.  This is one direction of condition (b).  For the other, suppose that $f$ satisfies the RLP with respect to $F_{+}I$, i.e. $f$ is a level acylic fibration.  Since $f$ is a level equivalence, it is a weak homotopy equivalence, so we only need to show that $f$ satisfies the RLP with respect to $K$.  Using the level model structure, $f$ satisfies the RLP with respect to cofibrations.  It follows from Lemma \ref{stable_acylic_cofib_kcell} that $f$ satisfies the RLP with respect to relative $K$-cell complexes.  In particular, $f$ satisfies the RLP with respect to $K$.  Thus conditions (a) and (b) are satisfied, so Theorem \ref{model_cat_construction_quoted} gives the compactly generated model structure on $\sD$-spaces.  This concludes the proof of Theorem \ref{general_model_cat_theorem}.

\section{The equivalence of $\bbI$-spaces and $\cI$-spaces}

We will now prove that the prolongation and forgetful functors comprise a Quillen equivalence between $\bbI$-spaces and $\cI$-spaces.  We first record a standard consequence of B\"oksted's Lemma (\ref{bokstedt_lemma}), known as the ``telescope lemma''.

\begin{lemma}\label{telescope}  Suppose that $X$ is an $\bbI$-space such that $X(\bn)$ is $\lambda_n$-connected, where $\{\lambda_n\}$ is an unbounded nondecreasing sequence of integers.  Then the inclusion of categories $\bbJ \arr \bbI$ induces a weak homotopy equivalence of homotopy  colimits:
\[
\hocolim_{\bbJ} X \overset{\simeq}{\arr} \hocolim_{\bbI} X.
\]
\end{lemma}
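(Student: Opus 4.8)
The plan is to prove something a little stronger: under the stated hypotheses \emph{both} homotopy colimits $\hocolim_{\bbJ} X$ and $\hocolim_{\bbI} X$ are weakly contractible, so that the map between them induced by the inclusion $\bbJ \arr \bbI$ — being any map between weakly contractible spaces — is automatically a weak homotopy equivalence. Thus the argument splits into two independent connectivity estimates, one over $\bbJ$ and one over $\bbI$.

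For $\bbJ$, I would first note that $\bbJ$ is isomorphic to the poset $\bbN = \{0 < 1 < 2 < \cdots\}$, since for $m \le n$ there is exactly one morphism $\bm \arr \bn$ in $\bbJ$ (the standard inclusion) and none for $m > n$. Hence $\hocolim_{\bbJ} X$ is the mapping telescope of the sequence $X(\mathbf{0}) \arr X(\mathbf{1}) \arr X(\mathbf{2}) \arr \cdots$, and for each $i$ one has $\pi_i \hocolim_{\bbJ} X \cong \colim_n \pi_i X(\bn)$ (a compatible basepoint exists once $\lambda_n \ge 0$, which holds for $n$ large since $\{\lambda_n\}$ is unbounded). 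Given $i$, unboundedness of $\{\lambda_n\}$ furnishes an $n$ with $\lambda_n \ge i$, and monotonicity then gives $\pi_i X(\bm) = 0$ for all $m \ge n$; so the colimit vanishes and $\hocolim_{\bbJ} X$ is weakly contractible.

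For $\bbI$, I would invoke B\"okstedt's Lemma \ref{bokstedt_lemma}. The one step that needs any care — and the only real content — is converting the hypothesis on the connectivity of the \emph{spaces} $X(\bn)$ into a hypothesis on the connectivity of \emph{maps}, as B\"okstedt's Lemma requires. Fix $k \ge 0$ and choose $N$ with $\lambda_{N+1} \ge k+1$. For any morphism $\bm \arr \bn$ of $\bbI_{>N}$, both $X(\bm)$ and $X(\bn)$ are $\lambda_{N+1}$-connected (as $\lambda_m, \lambda_n \ge \lambda_{N+1}$), so the induced map $X(\bm) \arr X(\bn)$ is $\lambda_{N+1}$-connected. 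B\"okstedt's Lemma then says that for each object $d$ of $\bbI_{>N}$ the inclusion $X(d) \arr \hocolim_{\bbI} X$ is at least $(\lambda_{N+1} - 1)$-connected; combining this with $\pi_i X(d) = 0$ for $i \le \lambda_{N+1}$ forces $\pi_i \hocolim_{\bbI} X = 0$ for $i \le \lambda_{N+1} - 1$, in particular for $i \le k$. As $k$ was arbitrary, $\hocolim_{\bbI} X$ is weakly contractible, and the map $\hocolim_{\bbJ} X \arr \hocolim_{\bbI} X$ is therefore a weak homotopy equivalence. I do not anticipate any serious obstacle: beyond correctly citing B\"okstedt's Lemma, the work is just the bookkeeping with connectivity indices described above.
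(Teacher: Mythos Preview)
Your argument is correct. The paper does not give a proof of this lemma at all; it merely records it as ``a standard consequence of B\"okstedt's Lemma (\ref{bokstedt_lemma})'' and moves on. Your proof spells out exactly how that deduction goes, and the observation that under the stated hypotheses both homotopy colimits are in fact weakly contractible is correct and makes the comparison immediate. One small remark: a map between two $\lambda$-connected spaces is actually $(\lambda+1)$-connected, so you could sharpen the connectivity you feed into Lemma~\ref{bokstedt_lemma} by one, but the weaker bound you use already suffices.
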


\begin{lemma}\label{stiefel_manifold_lemma}  Let $\cI[n] = \cI(\bR^n, -)$ be the $\cI$-space represented by $\bR^n$.  Restricting $\cI[n]$ to an $\bbI$-space, the inclusion of categories $\bbJ \arr \bbI$ induces a weak homotopy equivalence $\hocolim_{\bbJ} \cI[n] \arr \hocolim_{\bbI} \cI[n]$.  Furthermore, $\hocolim_{\bbI} \cI[n]$ is contractible.
\end{lemma}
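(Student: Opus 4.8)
The plan is to identify the values of the $\bbI$-space $\cI[n]$ with Stiefel manifolds and then deduce both assertions from the telescope lemma (Lemma~\ref{telescope}) together with elementary facts about infinite Stiefel manifolds. First I would note that $\cI[n](\bm) = \cI(\bR^n, \bR^m)$ is the space of linear isometric embeddings $\bR^n \arr \bR^m$, that is, the Stiefel manifold $V_n(\bR^m)$ of orthonormal $n$-frames in $\bR^m$; this space is empty for $m < n$ and is $(m - n - 1)$-connected for $m \geq n$. Hence, setting $\lambda_m = m - n - 1$, we obtain an unbounded, nondecreasing sequence of integers such that $\cI[n](\bm)$ is $\lambda_m$-connected (the empty space being $k$-connected for every $k \leq -2$). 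Applying Lemma~\ref{telescope} to $X = \cI[n]$ gives at once the asserted weak homotopy equivalence $\hocolim_{\bbJ} \cI[n] \arr \hocolim_{\bbI} \cI[n]$.

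It then remains to see that $\hocolim_{\bbJ} \cI[n]$ is contractible. Since $\bbJ$ is isomorphic to the poset $(\bbN, \leq)$, this homotopy colimit is the mapping telescope of the sequence of closed inclusions $V_n(\bR^m) \hookrightarrow V_n(\bR^{m+1})$ induced by $\bR^m \subset \bR^{m+1}$. These are cofibrations of CW complexes, so the canonical map from the telescope to the colimit $\colim_m V_n(\bR^m) = V_n(\bR^{\infty})$ is a homotopy equivalence. The infinite Stiefel manifold $V_n(\bR^{\infty})$ is weakly contractible, since $\pi_k V_n(\bR^{\infty}) = \colim_m \pi_k V_n(\bR^m) = 0$ for every $k$ by the connectivity estimate above, and, being a CW complex, it is genuinely contractible. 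Combining with the first part, $\hocolim_{\bbI} \cI[n] \simeq \hocolim_{\bbJ} \cI[n] \simeq \ast$.

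An alternative and slightly more structural route to the contractibility uses only results already available: the functor $\bbJ \arr \cJ$ from the Introduction has the property that each comma category $(V \downarrow \bbJ)$ has an initial object --- the least canonical subspace $\bR^m \subseteq U$ containing $V$ --- hence a contractible classifying space, so Lemma~\ref{theorem_A_hocolim_version} yields $\hocolim_{\bbJ} \cI[n] \overset{\simeq}{\arr} \hocolim_{\cJ} \cI[n]$; since $\cI[n] = F_{\bR^n}(\ast)$ is a cofibrant $\cI$-space, Lemma~\ref{cofibrant_colim} identifies the latter with $\colim_{\cJ} \cI[n] = \cI_c(\bR^n, U)$, which is contractible by Lemma~\ref{mapping_space_equiv_lemma}.(i). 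The only point requiring a little care in either approach is the empty-space and connectivity bookkeeping needed to legitimately invoke Lemma~\ref{telescope}; beyond that, the argument is entirely formal.
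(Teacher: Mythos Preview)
Your proposal is correct and follows essentially the same route as the paper: identify $\cI[n](\bm)$ with the Stiefel manifold $V_n(\bR^m)$, use its $(m-n-1)$-connectivity to invoke the telescope lemma for the first claim, and then pass from $\hocolim_{\bbJ}$ to $\colim_{\bbJ} \cI[n] = \cI_c(\bR^n,\bR^\infty)$, which is contractible. Your alternative route via Lemma~\ref{theorem_A_hocolim_version} and Lemma~\ref{cofibrant_colim} is a pleasant bonus not in the paper, but the core argument is the same.
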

\begin{proof}  We need to show that the following map of homotopy colimits is a weak homotopy equivalence:
\[
\underset{k \in \bbJ}{\hocolim} \; \cI(\bR^n, \bR^k) \arr \underset{k \in \bbI}{\hocolim} \;  \cI(\bR^n, \bR^k).
\]
The space $\cI(\bR^n, \bR^k)$ is the Stiefel manifold of $n$-frames in $\bR^k$, which is $(k - n - 1)$-connected.  Thus, $\cI[n]$ satisfies the conditions for Lemma \ref{telescope}, which gives the weak homotopy equivalence of homotopy colimits.  

For the second claim, notice that the maps $\cI(\bR^n, \bR^k) \arr \cI(\bR^n, \bR^{k + 1})$ are closed inclusions of manifolds.  Therefore, there is a weak homotopy equivalence 
\[
\cI_c(\bR^n, \bR^{\infty}) = \colim_{\bbJ} \cI[n] \overset{\simeq}{\arr} \hocolim_{\bbJ} \cI[n].
\]
Since the space of isometries $\cI_c(\bR^n, \bR^{\infty})$ is contractible, the result follows.
\end{proof}

\begin{proposition}\label{eta_equiv}
Let $X$ be a cofibrant $\bbI$-space.  Both of the following maps are weak homotopy equivalences:
\begin{itemize}
\item[(i)] the map $\hocolim_{\bbJ} \bbP X \arr \hocolim_{\bbI} \bbP X$ induced by the inclusion of categories $\bbJ \arr \bbI$.
\item[(ii)]  the unit $\eta \colon X \arr \bbU \bbP X$ of the adjunction $(\bbP, \bbU)$.
\end{itemize}
\end{proposition}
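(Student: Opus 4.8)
The statement to prove is that, for a cofibrant $\bbI$-space $X$, (i) the map $\hocolim_{\bbJ} \bbP X \arr \hocolim_{\bbI} \bbP X$ is a weak homotopy equivalence, and (ii) the unit $\eta \colon X \arr \bbU \bbP X$ is a weak homotopy equivalence. The strategy is a standard cellular induction: reduce both statements to the case of a generating cofibration $F_{\bm} K$ (a free $\bbI$-space on a represented one, tensored with a CW complex $K$), verify them there by hand, and then propagate along pushouts and sequential colimits using well-groundedness of weak homotopy equivalences together with the fact that $\bbP$, $\bbU$, and the homotopy colimit functors all preserve the relevant colimits and $h$-cofibrations.

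\emph{Step 1: Reduce to cells.} Since $X$ is cofibrant, it is a retract of an $F\!I$-cell complex built from the generating cofibrations $F_{\bm} i$, $i \colon S^{n-1} \arr D^n$. Retracts are harmless, so assume $X$ is such a cell complex. The functors $\bbP$ (a left adjoint) and $\bbU$ preserve pushouts and sequential colimits, and $\hocolim_{\bbJ}$, $\hocolim_{\bbI}$ preserve tensors with spaces, pushouts, and sequential colimits. All the maps in question are maps of pushout/colimit diagrams along $h$-cofibrations, so properties (iii)--(iv) of well-groundedness for weak homotopy equivalences of spaces reduce (i) and (ii) to the case $X = F_{\bm} K$ with $K$ a CW complex, and then (since $\bbP$ commutes with tensors) to $K = *$, i.e. to $X = F_{\bm}(*) = \bbI[\bm] = \bbI(\bm, -)$.

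\emph{Step 2: The cell case.} For $X = \bbI[\bm]$, the prolongation is the represented $\cI$-space: $\bbP\,\bbI[\bm] \cong \cI[m] = \cI(\bR^m, -)$. For (i), Lemma~\ref{stiefel_manifold_lemma} says exactly that $\hocolim_{\bbJ} \cI[m] \arr \hocolim_{\bbI} \cI[m]$ is a weak homotopy equivalence (the Stiefel manifolds $\cI(\bR^m, \bR^k)$ are highly connected, so B\"okstedt's telescope lemma applies). For (ii), we must check that $\bbI[\bm] \arr \bbU\,\bbP\,\bbI[\bm] \cong \bbU\,\cI[m]$ induces a weak homotopy equivalence on homotopy colimits over $\bbI$; by Lemma~\ref{stiefel_manifold_lemma} again, $\hocolim_{\bbI} \bbU\,\cI[m] \simeq \colim_{\bbJ} \cI[m] = \cI_c(\bR^m, \bR^\infty)$ is contractible, and $\hocolim_{\bbI} \bbI[\bm]$ is contractible since $\bbI[\bm] = F_{\bm}(*)$ is a cofibrant $\bbI$-space representing $\bm$ and its homotopy colimit is the classifying space of a category with initial object — or, more concretely, one computes $\hocolim_{\bbI} \bbI(\bm, -) \simeq B(\bm \!\downarrow\! \bbI) \simeq *$. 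Both sides being contractible, the unit map is a weak homotopy equivalence between them.

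\emph{Main obstacle.} The one point requiring care is the bookkeeping in Step 1: one must arrange the induction so that each stage presents $\bbP X$, $\bbU\bbP X$, $\hocolim_{\bbJ}(-)$ and $\hocolim_{\bbI}(-)$ simultaneously as colimits of sequences of $h$-cofibrations with compatibly-mapped stages, so that well-groundedness applies uniformly; this is routine but needs the observation that $F_{\bm}(-)$, hence $\bbP$, sends the generating cofibrations to $h$-cofibrations at each level of $\cI$, and that $\bbU$ (mere restriction) obviously preserves $h$-cofibrations and the relevant colimits. The only genuinely nontrivial input is B\"okstedt's telescope lemma, already packaged as Lemma~\ref{stiefel_manifold_lemma}, so after that the proof is formal.
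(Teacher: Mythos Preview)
Your proposal is correct and follows essentially the same approach as the paper's proof: reduce by cellular induction (using that $\bbP$, $\bbU$, and the homotopy colimit functors preserve the relevant colimits and that weak equivalences are well-grounded) to the represented case $X = \bbI[\bm]$, identify $\bbP\,\bbI[\bm] \cong \cI[m]$, invoke Lemma~\ref{stiefel_manifold_lemma} for (i), and for (ii) observe that both $\hocolim_{\bbI} \bbI[\bm]$ and $\hocolim_{\bbI} \cI[m]$ are contractible. The only cosmetic difference is that the paper obtains the contractibility of $\hocolim_{\bbI} \bbI[\bm]$ from the level-wise homotopy equivalence $\epsilon \colon B(*, \bbI, \bbI) \arr *$, whereas you phrase it as $B(\bm \downarrow \bbI) \simeq *$; these are equivalent observations.
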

\begin{proof}
The  $\bbP$, $\bbU$, and the homotopy colimit functors all commute with colimits, including tensors with spaces.  Since weak equivalences are well-grounded, we may assume that $X$ is an $FI$-cell complex and induct up the cell structure.  Hence it suffices to prove ether claim for a represented $\bbI$-space $X = F^{\bbI}_n(*) = \bbI[\bn]$.  Since their right adjoints are isomorphic by inspection, there is a natural isomorphism $\bbP F^{\bbI}_n \cong F^{\cI}_{\bR^n}$.  Hence (i) follows from Lemma \ref{stiefel_manifold_lemma}.  For (ii), we need to show that $\eta \colon \bbI[\bn] \arr \cI[n]$ is a weak homotopy equivalence.  We have the canonical level equivalence $\epsilon \colon B(*, \bbI, \bbI) \arr *$, so $\hocolim_{\bbI} \bbI[\bn]$ is contractible.  By Lemma \ref{stiefel_manifold_lemma}, the target $\hocolim_{\bbI} \cI[n]$ is also contractible.  Therefore $\eta$ is a weak homotopy equivalence.
\end{proof}

\begin{theorem}\label{equivalence_of_I_and_If_spaces}
The prolongation functor $\bbP \colon \bbI\sU \arr \cI\sU$ and the forgetful functor $\bbU \colon \cI \sU \arr \bbI \sU$ induce a Quillen equivalence of the stable model structures.
\end{theorem}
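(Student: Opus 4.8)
The plan is to verify that $(\bbP, \bbU)$ is a Quillen adjunction and then to deduce the Quillen equivalence from \cite{hovey_model_categories}*{1.3.16}, in the form: it suffices that $\bbU$ reflect weak equivalences between fibrant objects and that, for every cofibrant $\bbI$-space $X$, the derived unit $X \arr \bbU R\bbP X$ be a weak equivalence, where $R$ denotes fibrant approximation in $\cI\sU$. Almost all of the homotopical work has already been done in Proposition \ref{eta_equiv} and in the untwisting results of \S\ref{section_equivalence_cI_spaces_L_spaces}. For the adjunction itself, $\bbP$ is left Kan extension along the embedding $\bbI \arr \cI$ and $\bbU$ is restriction along it, so they are adjoint; and $\bbU$ is right Quillen because a fibration of $\cI$-spaces is in particular a level fibration, every morphism of $\bbI$ maps to a morphism of $\cI$, so the matching-square condition of Theorem \ref{model_structure_If_spaces} restricts to the corresponding condition for $\bbI$-spaces, while acyclic fibrations in both categories are exactly the level acyclic fibrations, which $\bbU$ visibly preserves.

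Next I would show that $\bbU$ reflects weak equivalences between fibrant objects. If $Y$ is a fibrant $\cI$-space then all of its structure maps $Y(\phi)$ are weak equivalences, so Lemma \ref{fibrant_hocolim_lemma}, applied to $\cI^{\dagger}$ and to $\bbI$ (each of which has $0$ as an initial object), gives natural weak equivalences $Y(0) \overset{\simeq}{\arr} \hocolim_{\cI} Y$ and $Y(0) \overset{\simeq}{\arr} \hocolim_{\bbI} \bbU Y$. Consequently, for a map $f$ of fibrant $\cI$-spaces, $\bbU f$ is a weak equivalence of $\bbI$-spaces if and only if $f$ is a weak equivalence of $\cI$-spaces; in particular $\bbU$ reflects such equivalences.

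For the derived unit of a cofibrant $X$, I would factor it as $X \xrightarrow{\eta} \bbU\bbP X \arr \bbU R\bbP X$. The first map is a weak equivalence by Proposition \ref{eta_equiv}.(ii). For the second, since $R\bbP X$ is fibrant the previous paragraph gives $\hocolim_{\bbI}\bbU R\bbP X \simeq \hocolim_{\cI} R\bbP X$, so it is enough to produce a natural weak equivalence $\hocolim_{\bbI}\bbU\bbP X \simeq \hocolim_{\cI}\bbP X$; combined with the weak equivalence of $\cI$-spaces $\bbP X \arr R\bbP X$, this shows $\bbU\bbP X \arr \bbU R\bbP X$ is a weak equivalence of $\bbI$-spaces. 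The comparison $\hocolim_{\bbI}\bbU\bbP X \simeq \hocolim_{\cI}\bbP X$ is assembled from three facts: Proposition \ref{eta_equiv}.(i) gives a weak equivalence $\hocolim_{\bbJ}\bbP X \arr \hocolim_{\bbI}\bbP X$; the inclusion $\bbJ \arr \cJ$ is homotopy cofinal (every finite dimensional subspace of $U$ lies in some standard $\bR^n$, so each relevant slice category is a chain with an initial object), whence Lemma \ref{theorem_A_hocolim_version} gives $\hocolim_{\bbJ}\bbP X \arr \hocolim_{\cJ}\bbP X$; and Proposition \ref{untwisting_prop} gives $\hocolim_{\cJ}\bbP X \arr \hocolim_{\cI}\bbP X$. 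With this in place, \cite{hovey_model_categories}*{1.3.16} yields the Quillen equivalence.

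I expect the crux to be precisely this last comparison. Unlike in the proof of Theorem \ref{quillen_equiv_If_spaces_L-spaces}, where all $\bbL$-spaces are fibrant and it is enough that the unit be a weak equivalence on cofibrant objects, here $\cI$-spaces need not be fibrant, so one must directly control $\bbU$ applied to a fibrant replacement; the essential point making this possible is that the untwisting equivalence holds for the pair $(\cJ, \cI)$ while its analogue fails for $(\bbJ, \bbI)$ (see the remark after Proposition \ref{untwisting_prop}).
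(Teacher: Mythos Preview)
Your proposal is correct and follows essentially the same route as the paper: both verify the Quillen adjunction via the characterization of fibrations, invoke \cite{hovey_model_categories}*{1.3.16}, use Lemma~\ref{fibrant_hocolim_lemma} at $Y(0)$ to show $\bbU$ reflects weak equivalences between fibrant objects, and handle the derived unit by combining Proposition~\ref{eta_equiv}(i)--(ii), Lemma~\ref{theorem_A_hocolim_version} for $\bbJ \arr \cJ$, and Proposition~\ref{untwisting_prop}. The only difference is presentational: where you argue in prose that the natural zigzag $\hocolim_{\bbI}\bbU(-) \simeq \hocolim_{\cI}(-)$ for $\bbP X$ and the fibrant comparison for $R\bbP X$ ``combine'' with $r$ to force $\bbU r$ to be a weak equivalence, the paper draws a single commutative diagram with $(R\bbP X)(0)$ mapping into each of $\hocolim_{\bbJ}$, $\hocolim_{\bbI}$, $\hocolim_{\cJ}$, $\hocolim_{\cI}$ of $R\bbP X$, making the compatibility of the two zigzags explicit. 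You might add a sentence noting that the inclusion of $(R\bbP X)(0)$ factors through $\hocolim_{\bbJ} R\bbP X$, so that your two comparisons sit in one commuting diagram; otherwise the deduction of the weak equivalence of the \emph{specific} map $\bbU r$ from the abstract equivalence of its source and target is not quite justified.
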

\begin{proof}
$\bbU$ preserves fibrations by the characterization of fibrations given in Proposition \ref{characterize_rlp_K}.   Acylic fibrations and level acyclic fibrations coincide and $\bbU$ preserves level equivalences, so $\bbU$ preserves acylic fibrations.  Thus $(\bbP, \bbU)$ is a Quillen adjunction.  

By \cite{hovey_model_categories}*{1.3.16}, $(\bbP, \bbU)$ is a Quillen equivalence if $\bbU$ detects weak equivalences between fibrant objects and for cofibrant $\bbI$-spaces $X$, the composite
\addtocounter{theorem}{1}
\begin{equation}\label{composite_for_quillen_equiv}
X \overset{\eta}{\arr} \bbU \bbP X \xrightarrow{\bbU r} \bbU R \bbP X
\end{equation}
of the unit of the adjunction with the map induced by fibrant approximation $r \colon \bbP X \arr R \bbP X$ is a weak equivalence.  Suppose that $f \colon X \arr Y$ is a map of fibrant $\cI$-spaces such that $\bbU f$ is a weak equivalence.  By Lemma \ref{fibrant_hocolim_lemma}, the vertical arrows in the following commutative diagram are weak homotopy equivalences:
\[
\xymatrix{
\hocolim_{\cI} X \ar[r]^{f} & \hocolim_{\cI} Y \\
X(0) \ar[u]^{\simeq} \ar[d]_{\simeq} \ar[r] & Y(0) \ar[u]_{\simeq} \ar[d]^{\simeq} \\
\hocolim_{\bbI} \bbU X \ar[r]^{\bbU f} & \hocolim_{\bbI} \bbU Y }
\]
Hence the top map is a weak homotopy equivalence, so $\bbU$ detects weak equivalences between fibrant objects.

By Proposition \ref{eta_equiv}.(ii), $\eta$ is a weak homotopy equivalence, so we just need to show that $\bbU r$ is a weak homotopy equivalence.  Consider the following commutative diagram:
\[
\xymatrix{
\hocolim_{\bbI} \bbP X \ar[r]^{\bbU r} & \hocolim_{\bbI} R\bbP X & \\
\hocolim_{\bbJ} \bbP X \ar[u]^{i_*} \ar[d] \ar[r] & \hocolim_{\bbJ} R\bbP \ar[u] \ar[d] X & (R\bbP X)(0) \ar[l]_-{\simeq} \ar[ul]_-{\simeq} \ar[dl]^-{\simeq} \ar[ddl]^-{\simeq} \\
\hocolim_{\cJ} \bbP X \ar[r] \ar[d] & \hocolim_{\cJ} R\bbP X \ar[d] & \\
\hocolim_{\cI} \bbP X \ar[r]^-{r} & \hocolim_{\cI} R\bbP X & }
\]
The four maps from $(R\bbP X)(0)$ are weak homotopy equivalences by Lemma \ref{fibrant_hocolim_lemma}.  The fibrant approximation map r induces a weak homotopy equivalence by definition.  The three vertical maps on the left are weak homotopy equivalences by Proposition \ref{eta_equiv}.(i), Lemma \ref{theorem_A_hocolim_version}, and Lemma \ref{untwisting_prop}.  It follows that $\bbU r$ is also a weak homotopy equivalence.
\end{proof}

\section{The model structure on FCPs}\label{model_structure_monoids_section}

In this section we will construct the model structure on the category of $\sD$-FCPs, where $\sD = \bbI$ or $\cI$.  The main technical point is the following lemma.

\begin{lemma}\label{times_cofibrants_preserves_equiv}
If $X$ is a cofibrant $\sD$-space, then the functor $X \boxtimes_{\sD} (-)$ preserves weak homotopy equivalences.  
\end{lemma}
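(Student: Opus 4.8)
The plan is to reduce to the case of representable $\sD$-spaces and then use the explicit formula for $\boxtimes$ on representables, together with the fact that weak equivalences of $\sD$-spaces are well-grounded. Since $\boxtimes_{\sD}$ commutes with colimits (it is a left adjoint in each variable), tensors with spaces, pushouts and sequential colimits, and since the weak equivalences are well-grounded, a cell-induction argument lets me assume $X$ is an $F_{+}I$-cell complex. By the gluing and colimit axioms for well-grounded weak equivalences, it suffices to treat $X$ built from a single generating cell, hence to treat $X = F_{d}(K)$ for an object $d$ of $\sD$ and a CW complex $K$. Because $F_{d}(K) \cong F_{d}(*) \times K \cong \sD[d] \times K$ and $\boxtimes_{\sD}$ commutes with tensors by spaces, the problem reduces to showing that $\sD[d] \boxtimes_{\sD} (-)$ preserves weak homotopy equivalences.

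For a representable $\sD$-space, the defining adjunction gives $\sD[d] \boxtimes_{\sD} Y \cong Y(d \oplus -)$ after unwinding: indeed $\sD\sU(\sD[d] \boxtimes Y, Z) \cong (\sD \times \sD)\sU(\sD[d] \times Y, Z \circ \oplus)$, and by the Yoneda lemma in the first variable this is $\sD\sU(Y, Z(d \oplus -))$, so $\sD[d] \boxtimes_{\sD} Y \cong d_! Y$ where $d_! Y = Y(d \oplus -)$ is restriction along $d \oplus (-) \colon \sD \arr \sD$. Thus the claim becomes: if $f \colon Y \arr Y'$ is a weak homotopy equivalence of $\sD$-spaces, then so is the induced map $Y(d \oplus -) \arr Y'(d \oplus -)$. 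This is a statement about how $\hocolim_{\sD}$ interacts with precomposition by $d \oplus (-)$.

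The key step is therefore to show that $\hocolim_{\sD} Y(d \oplus -) \arr \hocolim_{\sD} Y$, induced by the natural transformation $d \oplus (-) \Rightarrow \id$ coming from the initial map $0 \arr d$ (using that $0$ is initial and $0 \oplus e \cong e$), is a weak homotopy equivalence for every $\sD$-space $Y$. For $\sD = \bbI$ this is the analog of B\"okstedt-type cofinality: the functor $\bn \mapsto d \oplus \bn$ is homotopy cofinal in $\bbI$, so Lemma \ref{theorem_A_hocolim_version} (the homotopy colimit form of Theorem A) applies once one checks that the relevant comma categories have contractible classifying spaces — equivalently $B(d \downarrow (d \oplus -))$ is contractible, which holds because $d \oplus (-)$ admits the initial map from $d$. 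The case $\sD = \cI$ is handled in the same way using $\cI^{\dagger}$, again appealing to Lemma \ref{theorem_A_hocolim_version}, with the contractibility of the relevant spaces of isometries playing the role that the combinatorics of injections play for $\bbI$.

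The main obstacle will be the last step: verifying homotopy cofinality of $d \oplus (-)$, i.e. checking the comma categories $(e \downarrow (d \oplus -))$ have contractible nerves, uniformly in $\sD = \bbI$ or $\cI^{\dagger}$, and being careful about the topology on the morphism spaces in the $\cI^{\dagger}$ case so that Lemma \ref{theorem_A_hocolim_version} genuinely applies. Everything else — the reduction to representables via well-groundedness, the Yoneda identification $\sD[d] \boxtimes_{\sD} Y \cong Y(d \oplus -)$, and the commutation of $\boxtimes_{\sD}$ with the colimits appearing in cell complexes — is routine and can be dispatched quickly by citing the axioms in Input Data \ref{model_structure_setup} and the standard coend calculus for $\boxtimes$.
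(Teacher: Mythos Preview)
Your reduction to representables via well-groundedness is correct and matches the paper. The error is in the Yoneda step: from
\[
\sD\sU(\sD[d] \boxtimes Y, Z) \cong \sD\sU(Y, Z(d \oplus -))
\]
you correctly deduce that $\sD[d] \boxtimes (-)$ is \emph{left adjoint} to restriction along $d \oplus (-)$, but then you identify it with restriction itself. The left adjoint to restriction is left Kan extension, not restriction. Concretely, for $\sD = \bbI$ and $d = \mathbf{1}$ one has $(\bbI[\mathbf{1}] \boxtimes Y)(\bn) \cong \Sigma_n \times_{\Sigma_{n-1}} Y(\mathbf{n-1})$, a wedge of $n$ copies of $Y(\mathbf{n-1})$, whereas your formula would give $Y(\mathbf{n+1})$. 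So the cofinality argument you set up is addressing the wrong functor, and the proof does not go through.

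The paper computes $(\sD[d] \boxtimes Y)(n) \cong \sD(d \oplus -, n) \otimes_{\sD} Y \cong \Aut(n) \times_{\Aut(c)} Y(c)$, where $c$ is a fixed complement with $d \oplus c \cong n$. The heart of the argument is then to show that the map
\[
\pi \colon B(\sD(d \oplus -, n), \sD, Y) \arr \sD(d \oplus -, n) \otimes_{\sD} Y
\]
is a homotopy equivalence for each $n$: this is not automatic, because the tensor product of functors is the \emph{strict} coend, not the derived one. The paper proves this by recognizing $\pi$ as $\id \times \epsilon(c)$ on a fiber bundle $\Aut(n) \times_{\Aut(c)} (-)$ over the Stiefel-type base $\Aut(n)/\Aut(c)$, using that $\epsilon \colon B(\sD, \sD, Y) \arr Y$ is a levelwise deformation retract. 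Once $\pi$ is a homotopy equivalence, the bar construction interchange and the contractibility of $B(*, \sD, \sD(d \oplus -, -))$ give a natural chain $\hocolim_{\sD}(\sD[d] \boxtimes Y) \simeq \hocolim_{\sD} Y$, which is what you need.
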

\begin{proof}

We may assume that $X$ is an $FI$ cell complex.  Since applying $(-) \boxtimes Y$ to an $h$-cofibration is again an $h$-cofibration, and weak equivalences of $\sD$-spaces are well-grounded, we may induct over the cell structure of $X$.  It now suffices to prove the result when $X = F_d(*) = \sD[d]$.

Let $Y$ be a $\sD$-space.  By a comparison of right adjoints, we have a natural isomorphism $(\sD[d] \boxtimes Y)(n) \cong \sD(d \oplus - , n) \otimes_{\sD} Y$.  Write $\Aut(c) = \Aut_{\sD}(c)$ for the group of automorphisms of an object $c$ of $\sD$, and notice that $\Aut(c)$ is a compact Lie group in both of our examples.  By the coequalizer description of the coend, there is an isomorphism
\addtocounter{theorem}{1}
\begin{equation}\label{monoid_axiom_prep_identification}
\sD(d \oplus - , n) \otimes_{\sD} Y \cong \Aut(n) \times_{\Aut(c)} Y(c),
\end{equation}
where $c$ is an object of $\sD$ with a chosen isomorphism $d \oplus c \cong n$.  The group $\Aut(c)$ acts on $\Aut(n)$ via the group homomorphism $d \oplus - \colon \Aut(c) \arr \Aut(n)$, and the isomorphism is natural in the variable $n$.  Evaluating the level-wise homotopy equivalence $\epsilon \colon B(\sD, \sD, Y) \arr Y$ at $c$ induces a map 
\[
\id \times \epsilon(c) \colon \Aut(n) \times_{\Aut(c)} B(\sD(-, c), \sD, Y) \arr \Aut(n) \times_{\Aut(c)} Y(c)
\]
of fiber bundles over $\Aut(n)/\Aut(c)$.  Since it is a homotopy equivalence on each fiber, $\id \times\epsilon(c)$ is a homotopy equivalence.  The definition of $\epsilon$ and naturality give a commutative diagram
\[
\xymatrix{
\Aut(n) \times_{\Aut(c)} B(\sD(-, c), \sD, Y) \ar[rr]^-{\id \times \epsilon(c)} \ar[d]_{\cong} & & \Aut(n) \times_{\Aut(c)} Y(c) \ar[d]^{\cong} \\
B(\sD(d \oplus -, n), \sD, Y) \ar[rr]_{\pi} & & \sD(d \oplus -, n) \otimes_{\sD} Y }
\]
where the right vertical arrow is the identification \eqref{monoid_axiom_prep_identification} and the left vertical arrow passes $\Aut(n) \times_{\Aut(c)} (-)$ through the bar construction and then uses \eqref{monoid_axiom_prep_identification} (with $Y = \sD(d', -)$) level-wise.  It follows that $\pi$ is a homotopy equivalence as well.  

We will now consider the homotopy colimit of the map $\pi$ over $n$.  The canonical interchange isomorphism and the level-wise homotopy equivalence $B(*, \sD, \sD) \simeq *$ give a homotopy equivalence:
\[
B(*, \sD, B(\sD(d \oplus -, -), \sD, Y)) \cong B(B(*, \sD, \sD(d \oplus -, -)), \sD, Y) \arr B(*, \sD, Y).
\]
All together, we have constructed a natural chain of homotopy equivalences 
\[
\hocolim_{\sD} \sD[d] \boxtimes Y \simeq \hocolim_{\sD} Y.
\]
Therefore $\sD[d] \boxtimes (-)$ preserves weak equivalences and the proof is complete.
\end{proof}

We can now deduce the monoid axiom and the pushout-product axiom.

\begin{proposition}[Monoid Axiom]
For any acyclic cofibration $i \colon A \arr X$ and any $\sD$-space $Y$, the induced map $i \boxtimes \id_Y \colon A \boxtimes Y \arr X \boxtimes Y$ is a weak homotopy equivalence and an $h$-cofibration.  Furthermore, cobase changes and sequential colimits of such maps are also weak homotopy equivalences and $h$-cofibrations. 
\end{proposition}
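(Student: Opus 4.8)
The plan is to isolate one genuinely new ingredient and then assemble the statement from it together with the well-groundedness of the weak equivalences of $\sD$-spaces.

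The new ingredient is: for any $\sD$-space $Y$, the functor $(-)\boxtimes Y$ preserves weak homotopy equivalences between \emph{cofibrant} $\sD$-spaces. I would deduce this from Lemma \ref{times_cofibrants_preserves_equiv} and the symmetry of $\boxtimes$. Choose a cofibrant approximation $q\colon \tilde Y \overset{\simeq}{\arr} Y$, say by factoring $\emptyset \arr Y$ in the $\sD_{+}$-relative level model structure. For cofibrant $A$, the map $\id_{A}\boxtimes q$ is a weak homotopy equivalence by Lemma \ref{times_cofibrants_preserves_equiv}. Given a weak homotopy equivalence $f\colon A\arr B$ between cofibrant $\sD$-spaces, the map $f\boxtimes \id_{\tilde Y}$, which is identified with $\id_{\tilde Y}\boxtimes f$ via the symmetry of $\boxtimes$, is a weak homotopy equivalence because $\tilde Y$ is cofibrant, again by Lemma \ref{times_cofibrants_preserves_equiv}. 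Three sides of the naturality square relating these four maps are then weak equivalences, so $f\boxtimes \id_{Y}$ is one too.

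Next I would reduce the monoid axiom to the generating acyclic cofibrations $K$. Since $(-)\boxtimes Y$ is cocontinuous and commutes with tensors by spaces, it sends $h$-cofibrations to $h$-cofibrations and sends a relative $K$-cell complex $A\arr Z$ to a map $A\boxtimes Y\arr Z\boxtimes Y$ built from the maps $\{\,j\boxtimes \id_{Y} : j\in K\,\}$ by coproduct, cobase change and sequential colimit. I would then check that every $j\in K$ is an acyclic cofibration with cofibrant domain and codomain: for $j=F_{d}(k)$ with $k\in J$ this is immediate, since $F_{d}$ carries cofibrant spaces to cofibrant $\sD$-spaces and $k$ to a level equivalence; for $j=k_{\phi}\boxempty i$ one uses that $k_{\phi}\colon \sD[d]\arr M\lambda_{\phi}$ is a weak homotopy equivalence between cofibrant $\sD$-spaces (both homotopy colimits being contractible) together with well-groundedness property (v) to see that $k_{\phi}\boxempty i$ is again a weak homotopy equivalence, its domain and codomain being cofibrant as pushouts of cofibrant $\sD$-spaces along cofibrations. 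By the new ingredient each $j\boxtimes \id_{Y}$ is then a weak homotopy equivalence, and it is an $h$-cofibration since $j$ is a cofibration and $(-)\boxtimes Y$ preserves $h$-cofibrations. Now well-groundedness properties (i), (ii) and (iv) (together with the closure of $h$-cofibrations under coproducts, pushouts and sequential colimits) imply that $A\boxtimes Y\arr Z\boxtimes Y$ is a weak homotopy equivalence and an $h$-cofibration; and since an arbitrary acyclic cofibration $i$ is a retract of such a relative $K$-cell complex, and weak equivalences and $h$-cofibrations are closed under retracts, $i\boxtimes \id_{Y}$ is a weak homotopy equivalence and an $h$-cofibration.

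Finally, the closure statement follows from the same tools: a cobase change of $i\boxtimes \id_{Y}$ is an $h$-cofibration, and being the cobase change of a map that is both an $h$-cofibration and a weak equivalence it is a weak homotopy equivalence by property (ii); a sequential colimit of such maps is handled the same way using property (iv) and the closure of $h$-cofibrations under sequential colimits. I expect the only real work to lie in the bookkeeping of the third paragraph — verifying carefully that each $j\in K$ is an acyclic cofibration between cofibrant $\sD$-spaces so that the new ingredient applies — rather than in any one conceptual step; the key idea is simply that the symmetry of $\boxtimes$ turns Lemma \ref{times_cofibrants_preserves_equiv} into the statement that $(-)\boxtimes Y$ preserves weak equivalences of cofibrant $\sD$-spaces.
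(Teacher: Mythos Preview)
Your proposal is correct and follows essentially the same route as the paper's proof: reduce to the generating acyclic cofibrations $K$, use a cofibrant approximation of $Y$ together with Lemma~\ref{times_cofibrants_preserves_equiv} (and the symmetry of $\boxtimes$) to see that $j\boxtimes\id_{Y}$ is a weak equivalence for each $j\in K$, and then invoke well-groundedness for the closure statement. The paper argues exactly this way, only more tersely---it does not separate out your ``new ingredient'' as a stand-alone observation, and it asserts without elaboration that the domain and codomain of each generating cell are cofibrant, which you take the trouble to verify.
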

\begin{proof}  We may assume that $i$ is a relative $K$-cell complex.  Since every cofibration is in particular an $h$-cofibration, by inducting over the cell structure of $i$, we are reduced to the case when $i$ is a generating acylic cofibration.  In particular, $i$ is an $h$-cofibration so $i \boxtimes \id_Y$ is as well.  Let $q \colon Y' \arr Y$ be a cofibrant approximation of $Y$.  Since the domain and codomain of the generating cell $i \colon A \arr X$ are cofibrant, Lemma \ref{times_cofibrants_preserves_equiv} proves that $\id_A \boxtimes q$, $\id_X \boxtimes q$, and $i \boxtimes \id_{Y'}$ are all weak equivalences.  It follows that $i \boxtimes \id_Y$ is a weak equivalence as well.  The second claim follows since weak homotopy equivalences of $\sD$-spaces are well-grounded.
\end{proof}

The pushout-product axiom follows from the monoid axiom, as in \cite{MMSS}*{12.6}.  This completes the proof that $\sD$-spaces is a monoidal model category.

\begin{proposition}[Pushout-Product Axiom]\label{pushout_product_axiom_prop}
Let $i \colon A \arr X$ and $j \colon Y \arr Z$ be cofibrations of $\sD$-spaces, and assume that $i$ is a weak homotopy equivalence.  Then the pushout-product
\[
i \boxempty j \colon (X \boxtimes Y) \cup_{A \boxtimes Y} (A \boxtimes Z) \arr X \boxtimes Z
\]
is a weak homotopy equivalence.
\end{proposition}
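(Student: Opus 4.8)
The plan is to derive the pushout-product axiom from the Monoid Axiom proved above together with the two-out-of-three property; no additional cell induction is required. First I would note that since $i \colon A \arr X$ is a cofibration and a weak homotopy equivalence, it is an acyclic cofibration in the model structure of Theorem~\ref{general_model_cat_theorem}, so the Monoid Axiom applies to $i$. Writing $P = (X \boxtimes Y) \cup_{A \boxtimes Y} (A \boxtimes Z)$ for the source of $i \boxempty j$, the $\sD$-space $P$ is by definition the pushout of $i \boxtimes \id_Y \colon A \boxtimes Y \arr X \boxtimes Y$ along $\id_A \boxtimes j \colon A \boxtimes Y \arr A \boxtimes Z$.

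Next I would examine the two legs of this pushout square. The Monoid Axiom says that $i \boxtimes \id_Y$ is a weak homotopy equivalence and an $h$-cofibration, and that cobase changes of such maps remain weak homotopy equivalences; applying this to the cobase change of $i \boxtimes \id_Y$ along $\id_A \boxtimes j$ shows that the canonical map $A \boxtimes Z \arr P$ is a weak homotopy equivalence. On the other hand, unravelling the definition of the pushout-product shows that the composite
\[
A \boxtimes Z \arr P \xrightarrow{i \boxempty j} X \boxtimes Z
\]
is precisely $i \boxtimes \id_Z$, since both $(i \boxtimes \id_Z) \circ (\id_A \boxtimes j)$ and $(\id_X \boxtimes j) \circ (i \boxtimes \id_Y)$ equal $i \boxtimes j$, so that $i \boxempty j$ restricts to $i \boxtimes \id_Z$ on the $A \boxtimes Z$ summand. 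By the Monoid Axiom again, $i \boxtimes \id_Z$ is a weak homotopy equivalence. Since both $A \boxtimes Z \arr P$ and the composite $A \boxtimes Z \arr X \boxtimes Z$ are weak homotopy equivalences, the two-out-of-three property forces $i \boxempty j \colon P \arr X \boxtimes Z$ to be a weak homotopy equivalence.

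The main point to be careful about is invoking the strengthened form of the Monoid Axiom — that cobase changes of the maps $i \boxtimes \id_Y$ stay weak homotopy equivalences — rather than only its bare statement, because $A \boxtimes Z \arr P$ is not literally of the form $(\text{acyclic cofibration}) \boxtimes \id$. Beyond this bookkeeping there is no real obstacle; in fact the argument uses only that $i$ is an acyclic cofibration, the cofibration hypothesis on $j$ being needed solely to guarantee that $i \boxempty j$ is itself a cofibration, which, combined with the weak-equivalence statement just proved, yields the full monoidal model category structure on $\sD$-spaces, as in \cite{MMSS}*{12.6}.
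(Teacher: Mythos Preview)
Your proof is correct and is essentially the same as the paper's approach: the paper simply cites \cite{MMSS}*{12.6}, and the argument there (as in the commented-out proof in this paper's source) factors $i \boxtimes \id_Z$ through $P$ via the cobase change of $i \boxtimes \id_Y$ along $\id_A \boxtimes j$, then invokes the Monoid Axiom and two-out-of-three, exactly as you do. Your closing observation that the hypothesis on $j$ is not needed for the weak-equivalence half is also accurate.
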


As in the proof of \cite{MMSS}*{12.1}, we can now deduce the following result using a version of \cite{schwede_shipley}*{4.1} for compactly generated topological model categories.

\begin{theorem}For $\sD = \bbI$ and $\cI$, the category of $\sD$-FCPs is a compactly generated topological model category with fibrations and weak equivalences created by the forgetful functor to $\sD$-spaces.  A cofibration of $\sD$-FCPs whose source is a cofibrant $\sD$-space is a cofibration of $\sD$-spaces.  Since the unit $\sD$-space $*$ is cofibrant, it follows that every cofibrant $\sD$-FCP is cofibrant as a $\sD$-space,
\end{theorem}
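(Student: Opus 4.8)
The plan is to identify $\sD$-FCPs with the monoids in the monoidal model category $(\sD\sU,\boxtimes,*)$ and to lift the model structure along the free/forgetful adjunction, using the version of \cite{schwede_shipley}*{4.1} for compactly generated topological model categories employed in \cite{MMSS}*{\S12}. The inputs are already in place: $\sD\sU$ satisfies the pushout-product axiom (Proposition~\ref{pushout_product_axiom_prop}) and the Monoid Axiom, its weak equivalences are well-grounded, and the generating sets $F_{+}I$ and $K$ from \S\ref{model_structure_section} are compact. Writing $\bT\colon\sD\sU\arr\bM\sD\sU$ for the free monoid functor, left adjoint to the forgetful functor $U$, one takes $\bT F_{+}I$ and $\bT K$ as candidate generating cofibrations and acyclic cofibrations, and declares a map of $\sD$-FCPs to be a fibration or weak equivalence exactly when $U$ carries it to one.

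First I would check compactness of $\bT F_{+}I$ and $\bT K$; this reduces to compactness of $F_{+}I$ and $K$ because $\bT$, $U$, and $\boxtimes$ commute with the filtered colimits that build relative cell complexes. Then I would verify the two hypotheses of the recognition theorem~\ref{model_cat_construction_quoted}. Condition (b) follows by adjunction from the corresponding statement for $\sD\sU$ in \S\ref{model_structure_section} together with the definition of weak equivalences of $\sD$-FCPs. Condition (a) is where the Monoid Axiom is used: a pushout in $\bM\sD\sU$ of a map $\bT(j)$ along an arbitrary map carries a natural sequential filtration whose subquotients are obtained by applying $(-)\boxtimes(-)$ and iterated pushout-products to $j$ and to $\boxtimes$-powers of $\sD$-spaces; for $j$ an acyclic cofibration, the pushout-product axiom shows the relevant iterated pushout-powers of $j$ are again acyclic cofibrations, so the Monoid Axiom and well-groundedness of the weak equivalences show each stage of the filtration, and hence its colimit, is a weak equivalence. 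This produces the compactly generated model structure, and it is topological because $\sD\sU$ is and the enrichment, tensor, and cotensor of $\bM\sD\sU$ are created by $U$.

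For the second assertion I would prove, by induction over the cell filtration, that a relative $\bT F_{+}I$-cell complex $f\colon A\arr B$ with $A$ cofibrant as a $\sD$-space has $Uf$ a cofibration of $\sD$-spaces; the inductive hypothesis also records that $UB$ remains a cofibrant $\sD$-space. The inductive step again uses the filtration of a pushout along $\bT(i)$, $i\in F_{+}I$: its subquotient maps are pushout-products $i^{\boxempty n}$ tensored with $\boxtimes$-powers of the underlying $\sD$-space of the FCP being pushed out, which remain cofibrations of $\sD$-spaces because $i^{\boxempty n}$ is a cofibration (pushout-product axiom), $\boxtimes$-powers of cofibrant $\sD$-spaces are cofibrant, and $(-)\boxtimes Z$ preserves cofibrations for $Z$ cofibrant (compare the computation in Lemma~\ref{times_cofibrants_preserves_equiv}); since cofibrations of $\sD$-spaces are closed under pushout and sequential colimit, $Uf$ is a cofibration. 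A general cofibration of $\sD$-FCPs with cofibrant source is, by the retract argument, a retract of such a relative cell complex, so $Uf$ is a retract of a cofibration of $\sD$-spaces, hence a cofibration. For the final sentence: the unit $\sD$-space $*=\sD[0]=F_0(*)$ is cofibrant in $\sD\sU$ and is the initial object of $\bM\sD\sU$, so for any cofibrant $\sD$-FCP $X$ the map $*\arr X$ is a cofibration of $\sD$-FCPs with cofibrant source; by the previous step it is a cofibration of $\sD$-spaces, and therefore $X$ is cofibrant as a $\sD$-space.

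The main obstacle is the filtration of pushouts along free monoid maps. One must describe the successive subquotients concretely enough to recognize them as pushout-products of the given map with $\boxtimes$-powers of $\sD$-spaces (so that the pushout-product and Monoid Axioms can be brought to bear) and to see that they are $h$-cofibrations on underlying $\sD$-spaces (so that the well-grounded arguments apply at each finite stage and pass to the colimit). This bookkeeping is intricate but purely formal, and the treatment in \cite{schwede_shipley} and \cite{MMSS}*{\S12} transfers verbatim to $\sD\sU$, whose only relevant features are the pushout-product axiom, the Monoid Axiom, and well-groundedness of its weak equivalences.
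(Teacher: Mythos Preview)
Your proposal is correct and takes essentially the same approach as the paper: the paper's proof is a single sentence citing \cite{schwede_shipley}*{4.1} (in its compactly generated topological incarnation) together with \cite{MMSS}*{12.1}, and your sketch is exactly the content of that reference expanded out. One small quibble: your appeal to Lemma~\ref{times_cofibrants_preserves_equiv} for the claim that $(-)\boxtimes Z$ preserves cofibrations when $Z$ is cofibrant is misplaced, since that lemma concerns preservation of weak equivalences; the cofibration preservation is instead part of the full pushout-product axiom (taking the second map to be $\emptyset\arr Z$), which the paper records when it calls the model structure monoidal.
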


The following result is an immediate consequence of the Quillen equivalence between $\bbI$-spaces and $\cI$-spaces.

\begin{theorem}\label{quillen_equiv_monoids}  The adjunction $(\bbP, \bbU)$ restricts to give a Quillen equivalence between the categories of $\bbI$-FCPs and $\cI$-FCPs.
\end{theorem}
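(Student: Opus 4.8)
The plan is to obtain this as a formal consequence of the Quillen equivalence $(\bbP,\bbU)$ between $\bbI$-spaces and $\cI$-spaces (Theorem~\ref{equivalence_of_I_and_If_spaces}), using the two key features of the model structures on FCPs established just above: the fibrations, weak equivalences and fibrant objects of $\sD$-FCPs are created by the forgetful functor to $\sD$-spaces, and cofibrant $\sD$-FCPs are cofibrant as $\sD$-spaces. First I would note that since $\bbP$ is strong symmetric monoidal and $\bbU$ is lax symmetric monoidal, the adjunction restricts to an adjunction between $\bbM\bbI\sU$ and $\bbM\cI\sU$, and that the underlying $\cI$-space of $\bbP X$ is the prolongation of the underlying $\bbI$-space of $X$. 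Because the forgetful functors commute with $\bbP$ and $\bbU$ and create fibrations and acyclic fibrations, and because $\bbU$ preserves fibrations and acyclic fibrations of diagram spaces, the functor $\bbU\colon\bbM\cI\sU\arr\bbM\bbI\sU$ preserves fibrations and acyclic fibrations; hence $(\bbP,\bbU)$ is a Quillen adjunction on FCPs.

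For the Quillen equivalence I would verify the conditions of \cite{hovey_model_categories}*{1.3.16}. That $\bbU$ reflects weak equivalences between fibrant $\cI$-FCPs follows because a fibrant $\cI$-FCP is fibrant as a $\cI$-space, weak equivalences of FCPs are precisely those of the underlying diagram spaces, and the square of homotopy colimits built from Lemma~\ref{fibrant_hocolim_lemma} in the proof of Theorem~\ref{equivalence_of_I_and_If_spaces} shows that $\bbU$ reflects weak equivalences between fibrant $\cI$-spaces. For the remaining condition, let $X$ be a cofibrant $\bbI$-FCP and let $r\colon\bbP X\arr R\bbP X$ be a fibrant approximation in $\bbM\cI\sU$; we must show that $X\xrightarrow{\eta}\bbU\bbP X\xrightarrow{\bbU r}\bbU R\bbP X$ is a weak equivalence. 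Here $X$ is cofibrant as a $\bbI$-space, so $\eta$ is a weak equivalence of $\bbI$-spaces by Proposition~\ref{eta_equiv}.(ii); and $\bbP X$ is cofibrant as a $\cI$-space, while the underlying map of $r$ is a weak equivalence of $\cI$-spaces with fibrant target. Thus $R\bbP X$ is simultaneously a fibrant replacement of $\bbP X$ in $\cI\sU$, and the diagram chase in the proof of Theorem~\ref{equivalence_of_I_and_If_spaces} applies verbatim to the cofibrant $\cI$-space $\bbP X$, showing that $\bbU r$ is a weak equivalence of $\bbI$-spaces. Composing gives the claim.

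The only point requiring genuine care — and the main potential obstacle — is this compatibility of fibrant replacement with the forgetful functors: one must know that a fibrant replacement of $\bbP X$ formed inside the category of $\cI$-FCPs remains a fibrant replacement when regarded in $\cI\sU$, so that the space-level arguments of the preceding section transfer without modification. This is exactly what the ``created by the forgetful functor'' description of the FCP model structure provides, so no new homotopy-theoretic input is needed; the rest is bookkeeping with the monoidal structure of the adjunction.
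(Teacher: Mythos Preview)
Your proposal is correct and is precisely the elaboration the paper has in mind: the paper states the theorem as ``an immediate consequence of the Quillen equivalence between $\bbI$-spaces and $\cI$-spaces,'' and your argument unpacks that claim using exactly the two inputs the paper has just established---that fibrations and weak equivalences of $\sD$-FCPs are created in $\sD$-spaces and that cofibrant $\sD$-FCPs are cofibrant as $\sD$-spaces---together with the Hovey criterion and the diagram chase from Theorem~\ref{equivalence_of_I_and_If_spaces}.
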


\section{The model structure on commutative FCPs}

We will now construct the positive model structure on commutative $\bbI$-FCPs and commutative $\cI$-FCPs, then show that they are Quillen equivalent.  The arguments are formally similar to those of \cite{MMSS}, and we will not go into full detail when unnecessary.  In order to state the main results in this section, write $F_{+} I$ and $K_{+}$ for the generating cofibrations and generating acyclic cofibrations for the positive model structure on $\sD$-spaces.  The sets $\bC F_{+} I$ and $\bC K_{+}$ result from applying the free commutative monoid functor $\bC$ to the elements of $F_{+} I$ and $K_{+}$.  In this section, we will use $\sD$ to denote either $\bbI$ or $\cI$.

\begin{theorem}\label{existence_commFCP_modelstructure_thm}
The category $\bC \sD \sU$ of commutative $\sD$-FCPs is a compactly generated topological model category with fibrations and weak equivalences created in the positive stable model structure on $\sD$-spaces.  The set of generating cofibrations if $\bC F_{+}I$ and the set of generating acyclic cofibrations is $\bC K_{+}$.
\end{theorem}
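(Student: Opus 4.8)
The plan is to obtain the model structure by transporting it along the free/forgetful adjunction $(\bC, U)$ between $\sD$-spaces and commutative $\sD$-FCPs, using the recognition criterion of Theorem~\ref{model_cat_construction_quoted} with generating cofibrations $\bC F_{+}I$ and generating acyclic cofibrations $\bC K_{+}$. Two of the three inputs to that criterion are formal. Since $\bC$ is a left adjoint it preserves filtered colimits, so compactness of $\bC F_{+}I$ and $\bC K_{+}$ follows from compactness of $F_{+}I$ and $K_{+}$. For condition (b), the adjunction identifies the maps of commutative FCPs with the RLP against $\bC F_{+}I$ (resp.\ $\bC K_{+}$) with those whose underlying map has the RLP against $F_{+}I$ (resp.\ $K_{+}$), that is, the positive level acyclic fibrations (resp.\ positive stable fibrations); since a positive stable acyclic fibration is exactly a positive level acyclic fibration, and the latter are exactly the maps that are simultaneously positive stable fibrations and positive stable equivalences, condition (b) holds and, at the same time, the fibrations and weak equivalences are forced to be those created by $U$.

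The real content is condition (a): every relative $\bC K_{+}$-cell complex is a weak homotopy equivalence. Because the weak equivalences of $\sD$-spaces are well-grounded and the maps produced below are $h$-cofibrations, I would reduce, by passing through coproducts, cobase changes and sequential colimits, to showing that in a single pushout
\[
\xymatrix{ \bC A \ar[r] \ar[d] & \bC X \ar[d] \\ R \ar[r] & R' }
\]
with $i\colon A\arr X$ in $K_{+}$ and $R$ an arbitrary commutative $\sD$-FCP, the map $R\arr R'$ is an acyclic $h$-cofibration. Here I would follow the filtration argument of \cite{MMSS}*{\S15}: present $R'$ as the colimit of a tower $R = R_0 \arr R_1 \arr \dotsb$ in which $R_{q-1}\arr R_q$ is a cobase change of
\[
\id_R\boxtimes(j_q/\Sigma_q)\colon R\boxtimes(Q^q/\Sigma_q)\arr R\boxtimes(X^{\boxtimes q}/\Sigma_q),
\]
where $j_q\colon Q^q\arr X^{\boxtimes q}$ is the $q$-fold iterated pushout-product $i^{\boxempty q}$, equipped with its canonical $\Sigma_q$-action. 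It then suffices to check that each $\id_R\boxtimes(j_q/\Sigma_q)$ is an acyclic $h$-cofibration.

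The $h$-cofibration half is routine, since $j_q$ is a cofibration and orbits and $R\boxtimes(-)$ preserve $h$-cofibrations. For the weak equivalence half, the crucial observation — and the reason the \emph{positive} model structure is used — is that the $\Sigma_q$-actions in sight are free. The source and target of $i\in K_{+}$ are cofibrant $\sD$-spaces built from cells $F_d(-)$ with $d$ a nonzero object; for such $d$ a nontrivial permutation of the $q$ summands of $d^{\oplus q}$ alters every injection (for $\sD=\bbI$) or every isometry (for $\sD=\cI$) out of $d^{\oplus q}$, so $\Sigma_q$ acts freely, levelwise, on $\sD(d^{\oplus q},-)$, hence on $X^{\boxtimes q}$ and on its subobject $Q^q$. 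Consequently the comparison maps $E\Sigma_q\times_{\Sigma_q}(-)\arr(-)/\Sigma_q$ are levelwise weak equivalences on $Q^q$ and $X^{\boxtimes q}$; combined with the Pushout-Product Axiom (Proposition~\ref{pushout_product_axiom_prop}), which makes $j_q$ an acyclic cofibration, and the interchangeability of homotopy colimits, this shows $j_q/\Sigma_q$ is a weak equivalence of $\sD$-spaces. The same freeness shows $X^{\boxtimes q}/\Sigma_q$ and $Q^q/\Sigma_q$ are \emph{flat}, i.e.\ $\boxtimes$-ing with them preserves weak equivalences: $X^{\boxtimes q}$ is cofibrant, so $X^{\boxtimes q}\boxtimes(-)$ is homotopical by Lemma~\ref{times_cofibrants_preserves_equiv}, and passing to free $\Sigma_q$-orbits only applies the homotopical functor $\hocolim_{\Sigma_q}$. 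Finally, with $j_q/\Sigma_q$ a weak equivalence between flat $\sD$-spaces, a cofibrant approximation of $R$ together with Lemma~\ref{times_cofibrants_preserves_equiv} and flatness forces $\id_R\boxtimes(j_q/\Sigma_q)$ to be a weak equivalence, exactly as in the proof of the Monoid Axiom.

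This yields condition (a), so Theorem~\ref{model_cat_construction_quoted} delivers the compactly generated model structure; the topological form of SM7 is then inherited from $\sD$-spaces as in \cite{MMSS}*{12.1}, using that $U$ creates fibrations, weak equivalences and cotensors. The step I expect to be the main obstacle is the verification that the filtration quotients $\id_R\boxtimes(j_q/\Sigma_q)$ are weak equivalences: it is here that the symmetric powers $X^{\boxtimes q}/\Sigma_q$ must be shown to interact correctly with the homotopy theory of $\sD$-spaces, and this is precisely where the positivity hypothesis — freeness of the $\Sigma_q$-actions — is indispensable, in parallel with the treatment of commutative diagram ring spectra in \cite{MMSS}.
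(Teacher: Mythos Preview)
Your outline is the paper's strategy (both follow \cite{MMSS}*{\S15}): lift the model structure along $(\bC,U)$, reduce condition~(a) via the filtration of Lemma~\ref{filtration_on_commutative_monoid_pushout} to showing each $\id_R\boxtimes(j_q/\Sigma_q)$ is an acyclic $h$-cofibration, and use positivity to control the symmetric powers. The identification of where positivity enters is also correct.

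The gap is in your flatness step. You argue that $\Sigma_q$ acts freely on $\sD(d^{\oplus q},-)$ and write ``hence on $X^{\boxtimes q}$''; but $X^{\boxtimes q}$ is a coend and freeness does not generically survive coequalizers, so this ``hence'' already needs an inductive argument over the cells of $X$. More seriously, the flatness of $X^{\boxtimes q}/\Sigma_q$ requires that strict $\Sigma_q$-orbits agree with homotopy $\Sigma_q$-orbits \emph{after} applying $-\boxtimes R$, i.e.\ that $\Sigma_q$ acts levelwise freely on $X^{\boxtimes q}\boxtimes R$ for an \emph{arbitrary} $\sD$-space $R$. Freeness on $X^{\boxtimes q}$ alone does not give this: the weak equivalence $E\Sigma_q\times_{\Sigma_q}X^{\boxtimes q}\simeq X^{\boxtimes q}/\Sigma_q$ does not make $(X^{\boxtimes q}/\Sigma_q)\boxtimes(-)$ homotopical, since flatness is not invariant under weak equivalence.

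The paper fills this gap with Lemma~\ref{prep_for_extended_power_prop}, the analog of \cite{MMSS}*{15.5}. Using the explicit coequalizer descriptions of $\boxtimes$ in Lemmas~\ref{computation_of_bbIproduct} and~\ref{computation_of_I_f_product}, one computes
\[
\bigl((F_d(\ast)^{\boxtimes n}\times K)\boxtimes R\bigr)(m)\;\cong\;\Aut(m)\times_{\Aut(m-nd)}K\times R(m-nd),
\]
on which $\Sigma_n$ acts through $\Aut(nd)\subset\Aut(m)$; since $\Aut(nd)$ and $\Aut(m-nd)$ sit in complementary blocks, this action is visibly free \emph{regardless of $R$}, and the map $E\Sigma_n\times_{\Sigma_n}(-\boxtimes R)\to(-/\Sigma_n)\boxtimes R$ is a levelwise homotopy equivalence. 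This is exactly the ``freeness after $\boxtimes R$'' you need for the basic cells; the general case then follows by induction over the cellular filtration as in Proposition~\ref{symmetric_power_equiv}. With this lemma in hand, your argument goes through as written.
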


\begin{theorem}\label{equivalence_of_commutative_I_and_If_spaces}
The prolongation functor $\bbP$ and forgetful functor $\bbU$ induce a Quillen equivalence between the categories of commutative $\bbI$-FCPs and commutative $\cI$-FCPs.
\end{theorem}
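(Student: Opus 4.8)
The plan is to follow the template already established for the non-commutative case in Theorem~\ref{quillen_equiv_monoids}, adapting it to handle the fact that cofibrant commutative FCPs are \emph{not} in general cofibrant as $\sD$-spaces. First I would check that $(\bbP, \bbU)$ is a Quillen adjunction on commutative FCPs: the forgetful functor to commutative FCPs creates fibrations and weak equivalences from the positive model structure on $\sD$-spaces (Theorem~\ref{existence_commFCP_modelstructure_thm}), and $\bbU \colon \cI\sU \arr \bbI\sU$ preserves positive level fibrations and positive level equivalences, hence preserves fibrations and acyclic fibrations of commutative FCPs. Since $\bbP$ is strong symmetric monoidal it commutes with the free commutative monoid functor $\bC$, so it is left adjoint to $\bbU$ at the level of commutative monoids.

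For the Quillen equivalence, by \cite{hovey_model_categories}*{1.3.16} it suffices to show that $\bbU$ detects weak equivalences between fibrant objects and that for a cofibrant commutative $\bbI$-FCP $X$ the composite $X \overset{\eta}{\arr} \bbU\bbP X \arr \bbU R\bbP X$ is a weak homotopy equivalence. The detection statement goes exactly as in the proof of Theorem~\ref{equivalence_of_I_and_If_spaces}: a fibrant commutative $\cI$-FCP is in particular positive fibrant, so by B\"okstedt's Lemma~\ref{bokstedt_lemma} the inclusion $X(\bR^d) \arr \hocolim_{\cI} X$ (for $d > 0$) and $\bbU X(d) \arr \hocolim_{\bbI}\bbU X$ are weak equivalences, giving a commutative square of homotopy colimits that forces $\hocolim_{\cI} X \arr \hocolim_{\cI} Y$ to be an equivalence whenever $\bbU f$ is. The remaining task is to prove that the unit $\eta \colon X \arr \bbU\bbP X$ induces a weak homotopy equivalence on homotopy colimits when $X$ is a cofibrant commutative $\bbI$-FCP; then the same diagram chase through $\hocolim_{\bbJ}, \hocolim_{\cJ}, \hocolim_{\cI}$ as in Theorem~\ref{equivalence_of_I_and_If_spaces}, together with fibrant approximation, finishes the argument.

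The main obstacle is exactly that last point: one cannot simply reduce to representable $\bbI$-spaces, because cofibrant commutative $\bbI$-FCPs are built from cells of the form $\bC F_{+} I$, not $F_{+} I$, and the free commutative monoid functor $\bC$ involves symmetric powers $(-)^{\boxtimes j}/\Sigma_j$. The plan is to use the ``induction up the cellular filtration'' machinery forward-referenced in Proposition~\ref{general_equiv_cofibrant_FCPs_via_induction}, which analyzes exactly such cell attachments: the key input is a filtration of a pushout $\bC(A) \boxtimes_{\bC A} \bC X$ along $\bC$ applied to a generating cofibration, with filtration quotients expressed via $\Sigma_j$-equivariant smash/box powers of the cofibration, as in \cite{MMSS}*{\S15} or \cite{MM}. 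Combined with Lemma~\ref{times_cofibrants_preserves_equiv} (that $\boxtimes$ with a cofibrant $\sD$-space preserves weak equivalences) and the analysis of $\xi \colon \bbQ X \arr \bbO X$ from Theorem~\ref{quillen_equiv_FCP_L_spaces}, one shows cellwise that $\eta$ is a weak homotopy equivalence on cofibrant commutative $\bbI$-FCPs. Since the detailed cellular induction is deferred to \S\ref{model_structure_monoids_section}/\S18 (specifically Proposition~\ref{general_equiv_cofibrant_FCPs_via_induction}), in this section I would cite that result and assemble the pieces, keeping the proof short.

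\begin{proof}
The forgetful functor $\bbU \colon \cI \sU \arr \bbI \sU$ preserves positive level fibrations and positive level equivalences, hence preserves the fibrations and acyclic fibrations of commutative FCPs, which are created in the positive stable model structure by Theorem~\ref{existence_commFCP_modelstructure_thm}. Since $\bbP$ is strong symmetric monoidal it commutes with the free commutative monoid functor $\bC$, and therefore remains left adjoint to $\bbU$ on commutative monoids. Thus $(\bbP, \bbU)$ is a Quillen adjunction between commutative $\bbI$-FCPs and commutative $\cI$-FCPs.

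By \cite{hovey_model_categories}*{1.3.16}, it remains to check that $\bbU$ detects weak equivalences between fibrant objects and that for a cofibrant commutative $\bbI$-FCP $X$ the composite
\[
X \overset{\eta}{\arr} \bbU \bbP X \xrightarrow{\bbU r} \bbU R \bbP X
\]
is a weak homotopy equivalence. For the first claim, let $f \colon X \arr Y$ be a map of fibrant commutative $\cI$-FCPs with $\bbU f$ a weak equivalence. Both $X$ and $\bbU X$ are positive fibrant, so by B\"okstedt's Lemma~\ref{bokstedt_lemma} the inclusions $X(\bR^d) \arr \hocolim_{\cI} X$ and $\bbU X(d) \arr \hocolim_{\bbI} \bbU X$ are weak equivalences for $d > 0$; the resulting commutative diagram, exactly as in the proof of Theorem~\ref{equivalence_of_I_and_If_spaces}, shows $\hocolim_{\cI} f$ is a weak equivalence.

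For the second claim, the only point requiring the commutative setting is that $\eta \colon X \arr \bbU \bbP X$ is a weak homotopy equivalence for cofibrant commutative $\bbI$-FCPs $X$; cofibrant commutative FCPs need not be cofibrant as $\sD$-spaces, so one cannot reduce to representables as in Proposition~\ref{eta_equiv}. Instead, one inducts up the cellular filtration of $X$ as a $\bC F_{+} I$-cell complex, using the analysis of symmetric powers of cofibrations carried out in Proposition~\ref{general_equiv_cofibrant_FCPs_via_induction}, together with Lemma~\ref{times_cofibrants_preserves_equiv}; this shows that $\hocolim_{\bbI} \eta$ is a weak homotopy equivalence. Granting this, the diagram chase of Theorem~\ref{equivalence_of_I_and_If_spaces} through $\hocolim_{\bbJ}, \hocolim_{\cJ}$ and $\hocolim_{\cI}$ (using Lemma~\ref{telescope}, Lemma~\ref{theorem_A_hocolim_version}, and Proposition~\ref{untwisting_prop}), together with fibrant approximation, shows $\bbU r$ is a weak homotopy equivalence and hence that $(\bbP, \bbU)$ is a Quillen equivalence.
\end{proof}
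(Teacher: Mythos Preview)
Your proposal is correct and follows essentially the same approach as the paper: reduce to the template of Theorem~\ref{equivalence_of_I_and_If_spaces}, replace the use of $X(0)$ by $X(1)$ via B\"okstedt's Lemma~\ref{bokstedt_lemma} for positive fibrant objects, and invoke Proposition~\ref{general_equiv_cofibrant_FCPs_via_induction} for the unit $\eta$ on cofibrant commutative $\bbI$-FCPs. One small correction: in the final diagram chase you cite Lemma~\ref{telescope} for the step $\hocolim_{\bbJ}\bbP X \to \hocolim_{\bbI}\bbP X$, but the telescope lemma requires connectivity hypotheses that a general cofibrant commutative $\bbI$-FCP need not satisfy; the correct reference is Proposition~\ref{general_equiv_cofibrant_FCPs_via_induction}(i), which establishes exactly this equivalence by the cellular induction you already cite for~$\eta$.
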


We will employ an alternative description of the product $\boxtimes$ in the categories of $\bbI$-spaces and $\cI$-spaces.  Recall the diagram category $\Sigma$ with objects $\bn$ and morphisms the permutations.  Consider the category $\Sigma \sU$ of unbased $\Sigma$-spaces.  Using the cartesian monoidal structure of $\sU$, we have the symmetric monoidal product $\boxtimes_{\Sigma}$ on $\Sigma \sU$ defined by left Kan extension of the external cartesian product along $\oplus \colon \Sigma \times \Sigma \arr \Sigma$.  Let $\ast$ be the commutative monoid in $\Sigma \sU$ defined by $\ast(\mathbf{n}) = \ast$ for all $\bn$.  The product of $\ast$-modules $X \boxtimes_{*} Y$ is defined as the coequalizer of $\Sigma$-spaces:
\[
\xymatrix{
X \boxtimes_{\Sigma} \ast \boxtimes_{\Sigma} Y \ar@<-.5ex>[r] \ar@<.5ex>[r] & X \boxtimes_{\Sigma} Y \ar[r]& X \boxtimes_{*} Y.
}
\]
To avoid confusion, we will temporarily write the internal product of $\bbI$-spaces as $\boxtimes_{\bbI}$.  

\begin{proposition}  The category $\bbI \sU$ of unbased $\bbI$-spaces is isomorphic to the category of $\ast$-modules in $\Sigma \sU$.  Furthermore, this isomorphism is symmetric monoidal: for $\bbI$-spaces $X$ and $Y$, their product $X \boxtimes_{\bbI} Y$ as $\bbI$-spaces is naturally isomorphic to their product $X \boxtimes_{*} Y$ as $\ast$-modules.
\end{proposition}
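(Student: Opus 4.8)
The plan is to realize $\bbI\sU$ as the category of algebras for the monad $(-)\boxtimes_\Sigma\ast$ on $\Sigma\sU$, and then to match the monoidal structures. Let $\iota\colon\Sigma\arr\bbI$ be the inclusion; it is the identity on objects and strict symmetric monoidal (both categories use concatenation and the block‑transposition symmetry). Restriction along $\iota$ is the underlying‑$\Sigma$-space functor $U\colon\bbI\sU\arr\Sigma\sU$, with left adjoint $P=\Lan_{\iota}$. Because $\iota$ is the identity on objects, $U$ is strictly monadic: the comparison functor $\bbI\sU\arr\text{Alg}(\mathbb{T})$ is an isomorphism of categories, where $\mathbb{T}=UP$.

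First I would compute $\mathbb{T}$. Since $\iota$ is the identity on objects, $(\mathbb{T}Y)(\bn)=(PY)(\bn)=\int^{\bp\in\Sigma}\bbI(\bp,\bn)\times Y(\bp)$, whereas $(Y\boxtimes_\Sigma\ast)(\bn)=\int^{\bp,\bq\in\Sigma}\Sigma(\bp\oplus\bq,\bn)\times Y(\bp)$. Evaluating the coend in the variable $\bq$ gives a natural isomorphism $\int^{\bq}\Sigma(\bp\oplus\bq,\bn)\cong\bbI(\bp,\bn)$ — under it the coset space $\Sigma_n/\Sigma_{n-p}$ is the set of injections $\bp\arr\bn$ — so $Y\boxtimes_\Sigma\ast\cong\mathbb{T}Y$ naturally in $Y$. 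It remains to see that this isomorphism is one of monads: that the unit $Y\arr Y\boxtimes_\Sigma\ast$ induced by $\Sigma[\mathbf{0}]\arr\ast$ corresponds to the unit of the adjunction, and the multiplication induced by $\ast\boxtimes_\Sigma\ast\arr\ast$ corresponds to $\mathbb{T}\mathbb{T}\arr\mathbb{T}$; this is a diagram chase that I would carry out by evaluating both sides on represented $\Sigma$-spaces. Since modules over a commutative monoid $A$ are precisely algebras for $(-)\boxtimes A$, this yields the claimed isomorphism of categories $\bbI\sU\cong\ast\text{-Mod}(\Sigma\sU)$.

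Next I would promote this to a symmetric monoidal isomorphism. The functor $P=\Lan_{\iota}$ is symmetric strong monoidal because $\iota$ is strict symmetric monoidal, and under the identification above it is exactly the free $\ast$-module functor $Y\mapsto Y\boxtimes_\Sigma\ast$, which is symmetric strong monoidal for $\boxtimes_{\ast}$. In particular the units match: the unit $\bbI[\mathbf{0}]=\bbI(\mathbf{0},-)$ of $\boxtimes_\bbI$, which is the terminal $\bbI$-space since $\mathbf{0}$ is initial in $\bbI$, equals $P(\Sigma[\mathbf{0}])$, and this corresponds to the $\ast$-module $\ast=\Sigma[\mathbf{0}]\boxtimes_\Sigma\ast$, the unit of $\boxtimes_{\ast}$. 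For general objects, every $\bbI$-space $X$ is the reflexive coequalizer of $PUPUX\rightrightarrows PUX$, and correspondingly every $\ast$-module is a reflexive coequalizer of free $\ast$-modules; since $\boxtimes_\bbI$ and $\boxtimes_{\ast}$ are cocontinuous, hence preserve reflexive coequalizers, in each variable, and agree (compatibly) on free objects by the strong monoidality of $P$, the usual two-variable coequalizer argument shows $X\boxtimes_\bbI Y\cong X\boxtimes_{\ast}Y$ naturally and compatibly with the associativity, unit, and symmetry constraints.

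I expect the technical heart to be the bookkeeping in the last two steps: verifying that the coend isomorphism $Y\boxtimes_\Sigma\ast\cong UPY$ respects the monad structures, and then running the two-variable reflexive-coequalizer comparison of $\boxtimes_\bbI$ with $\boxtimes_{\ast}$ — each uses only standard facts about Day convolution, monadicity, and strong monoidal Kan extensions, but involves several commuting diagrams. A more pedestrian alternative, which I would use if it turns out shorter, avoids monadicity altogether: unwinding the coproduct formula $(Y\boxtimes_\Sigma\ast)(\bn)\cong\coprod_{p\le n}\bbI(\bp,\bn)\times_{\Sigma_p}Y(\bp)$ shows directly that a $\ast$-module structure on a $\Sigma$-space $Y$ is the same data and axioms as natural maps $j_{*}\colon Y(\bp)\arr Y(\bn)$ for each injection $j\colon\bp\arr\bn$, functorial in $j$ and extending the $\Sigma$-action — i.e. an extension of $Y$ to a functor $\bbI\arr\sU$ — and the same coend manipulation identifies $\boxtimes_\bbI$ with $\boxtimes_{\ast}$.
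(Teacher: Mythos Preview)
Your proposal is correct, but it takes a more abstract route than the paper.  The paper's argument is essentially what you sketch at the end as the ``pedestrian alternative,'' carried out even more directly via the external characterization of Day convolution: a $\ast$-module structure on a $\Sigma$-space $X$ unwinds to natural maps $\ast \times X(\bm) \arr X(\bn)$, which are precisely the maps $X(\iota)$ for the canonical inclusions $\iota \colon \bm \arr \bn$; combined with the factorization of every morphism in $\bbI$ as a canonical inclusion followed by a permutation, this gives the extension to an $\bbI$-space.  For the monoidal comparison the paper reduces to representables by co-Yoneda (both $\boxtimes_{\bbI}$ and $\boxtimes_\ast$ are cocontinuous in each variable and every $\bbI$-space is a colimit of $\bbI[\bm]$'s), then checks $\bbI[\bm]\boxtimes_\ast\bbI[\bn]\cong\bbI[\bm\oplus\bn]$ by identifying $\bbI[\bm]$ with the free $\ast$-module $\ast\boxtimes_\Sigma\Sigma[\bm]$.

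Your monadic approach trades these concrete identifications for general machinery: strict monadicity of restriction along an identity-on-objects functor, the coend identification of $UP$ with $(-)\boxtimes_\Sigma\ast$, and a two-variable reflexive-coequalizer argument for the monoidal structure.  This is valid and more conceptual, and would generalize to other identity-on-objects symmetric monoidal inclusions, but it is longer and the bookkeeping you flag (matching monad structures, tracking coherence) is exactly what the paper's hands-on computation sidesteps.  Since the paper immediately uses this proposition only to extract the explicit coequalizer formula for $\boxtimes_{\bbI}$ in the next lemma, its concrete approach is the better fit here.
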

\begin{proof}
A $\ast$-module $X$ consists of an underlying functor $\Sigma \arr \sU$ along with associative and unital natural transformations
\[
\ast \times X(\bm) \arr X(\bn).
\]
These give the maps $X(\iota) \colon X(\bm) \arr X(\bn)$ that define $X$ on the canonical inclusions $\iota \colon \bm \arr \bn$ of $\bbI$.  Since every morphism in $\bbI$ can be factored as a canonical inclusion followed by a permutation, this gives the extension of $X$ to an $\bbI$-space.  Conversely, for an $\bbI$-space $X$ the maps $X(\iota) \colon X(\bm) \arr X(\bn)$ define a $\ast$-module structure on the underlying $\Sigma$-space.  This correspondence of structures is functorial.

It remains to give a natural isomorphism $X \boxtimes_{\bbI} Y \cong X \boxtimes_{*} Y$ for $\bbI$-spaces $X$ and~$Y$.  Both sides of these bifunctors  have right adjoints defined by internal function objects and thus preserve colimits.  On the other hand, every $\bbI$-space is a colimit of represented $\bbI$-spaces $\bbI[\bm] = \bbI(\bm, - )$.  Consequently, it suffices to prove the result for represented $\bbI$-spaces:
\[
\bbI[\bm] \boxtimes_{\ast} \bbI[\bn] \cong \bbI[\bm] \boxtimes_{\bbI} \bbI[\bn].
\]
A long series of adjunctions shows that $\bbI[\bm] \boxtimes_{\bbI} \bbI[\bn] \cong \bbI[\bm \oplus \bn]$, and the analogous result is true for represented $\Sigma$-spaces $\Sigma[\bm] = \Sigma(\bm, -)$ (this is essentially \cite{MMSS}*{Lemma 1.8}).  As a $\ast$-module, $\bbI[\bm]$ is the free $\ast$-module $\ast \boxtimes_{\Sigma} \Sigma[\bm]$.  The desired isomorphism follows:
\begin{align*}
\bbI[\bm] \boxtimes_{\ast} \bbI[\bn] &\cong (\ast \boxtimes_{\Sigma} \Sigma[\bm] ) \boxtimes_{\ast} (\ast \boxtimes_{\Sigma} \Sigma[\bn] ) \\
&\cong \ast \boxtimes_{\Sigma} \Sigma[\bm \oplus \bn] \\
&\cong \bbI[\bm \oplus \bn] \\
&\cong \bbI[\bm] \boxtimes_{\bbI} \bbI[\bn].
\end{align*}

\end{proof}

The point of considering $\bbI$-spaces as $\ast$-modules is that it makes the computation of the internal cartesian product of $\bbI$-spaces much easier.  The product of $\Sigma$-spaces is given by the formula:
\[
(X \boxtimes_{\Sigma} Y)(\bm) = \coprod_{a + b = m} \Sigma_{m} \times_{\Sigma_a \times \Sigma_b} X(\ba) \times Y(\bb).
\]
We use the coequalizer definition of $\boxtimes_{*}$ to deduce the following:
\begin{lemma}\label{computation_of_bbIproduct}  Let $X$ and $Y$ be $\bbI$-spaces.  Then $(X \boxtimes_{\bbI} Y)(\bm)$ is the coequalizer of the following diagram:
\[
\xymatrix{
\underset{a + b = m}{\displaystyle\coprod} \Sigma_m \times_{\Sigma_a \times \Sigma_b \times \Sigma_c} X(\ba) \times Y(\bc)
 \ar@<.5ex>[r] \ar@<1.5ex>[r] & 
\underset{a + b = m}{\displaystyle\coprod} \Sigma_{m} \times_{\Sigma_a \times \Sigma_b} X(\ba) \times Y(\bb).
}
\]
\end{lemma}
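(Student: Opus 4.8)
The plan is to reduce the computation to the explicit formula for $\boxtimes_{\Sigma}$ by working through the identification of $\bbI$-spaces with $\ast$-modules in $\Sigma\sU$. By the preceding proposition, $X \boxtimes_{\bbI} Y \cong X \boxtimes_{\ast} Y$, so it is enough to compute $(X \boxtimes_{\ast} Y)(\bm)$. By definition $X \boxtimes_{\ast} Y$ is the coequalizer in $\Sigma\sU$ of the two maps $X \boxtimes_{\Sigma} \ast \boxtimes_{\Sigma} Y \rightrightarrows X \boxtimes_{\Sigma} Y$ induced by the two $\ast$-module action maps $X \boxtimes_{\Sigma} \ast \arr X$ and $\ast \boxtimes_{\Sigma} Y \arr Y$. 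Colimits of $\Sigma$-spaces — and hence coequalizers — are formed objectwise in $\sU$, so $(X \boxtimes_{\ast} Y)(\bm)$ is the coequalizer of $(X \boxtimes_{\Sigma} \ast \boxtimes_{\Sigma} Y)(\bm) \rightrightarrows (X \boxtimes_{\Sigma} Y)(\bm)$.

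First I would evaluate the two objects using the displayed formula for $\boxtimes_{\Sigma}$. The target is $(X \boxtimes_{\Sigma} Y)(\bm) = \coprod_{a+b=m} \Sigma_m \times_{\Sigma_a \times \Sigma_b} X(\ba) \times Y(\bb)$ on the nose. Iterating the same formula, and using that $\ast(\bb) = \ast$ contributes nothing to the space but retains the subgroup $\Sigma_b$ in the isotropy, gives a natural isomorphism
\[
(X \boxtimes_{\Sigma} \ast \boxtimes_{\Sigma} Y)(\bm) \cong \coprod_{a+b+c=m} \Sigma_m \times_{\Sigma_a \times \Sigma_b \times \Sigma_c} X(\ba) \times Y(\bc),
\]
which is the source object of the coequalizer in the statement.

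Next I would identify the two parallel arrows under these isomorphisms. Unwinding the definition of the $\ast$-module action maps — recall from the proof of the preceding proposition that the action of $\ast$ on $X$ (resp.\ on $Y$) encodes the values of $X$ (resp.\ $Y$) on the canonical inclusions of $\bbI$ — the first map sends the summand indexed by $(a,b,c)$ to the summand of $(X \boxtimes_{\Sigma} Y)(\bm)$ indexed by $(a+b,\,c)$ via the structure map $X(\ba) \arr X(\mathbf{a+b})$ on the inclusion $\ba \hookrightarrow \mathbf{a+b}$, equivariantly for the evident inclusion of stabilizers into $\Sigma_m$; the second map sends it to the summand indexed by $(a,\,b+c)$ via the structure map $Y(\bc) \arr Y(\mathbf{b+c})$ on $\bc \hookrightarrow \mathbf{b+c}$. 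Once these are checked to agree with the maps coming from the coequalizer presentation of $\boxtimes_{\ast}$, the lemma follows by uniqueness of coequalizers.

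The only real work is bookkeeping: matching the $\Sigma_m$-actions across the reindexings $(a+b)+c=m$, $a+(b+c)=m$ and $a+b+c=m$, and verifying that the $\ast$-module structure maps on $X$ and $Y$ are precisely the structure maps of $X$ and $Y$ viewed as $\bbI$-spaces on the inclusion morphisms. Both are immediate from the construction in the preceding proposition, so I expect no genuine obstacle — this is a formal consequence of the $\ast$-module description together with the formula for $\boxtimes_{\Sigma}$.
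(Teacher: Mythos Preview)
Your proposal is correct and follows exactly the approach the paper takes: the paper simply says ``We use the coequalizer definition of $\boxtimes_{*}$ to deduce the following'' before stating the lemma, and then spells out the two parallel maps in precisely the way you describe (via $X(\id_{\ba} \oplus \iota)$ and $Y(\iota \oplus \id_{\bc})$ together with the inclusions of the Young subgroups). Your write-up is in fact more detailed than the paper's, which treats the computation as an immediate unwinding of the formula for $\boxtimes_{\Sigma}$ and the coequalizer defining $\boxtimes_{*}$.
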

Here the top map is defined using the inclusion $\Sigma_a \times \Sigma_b \arr \Sigma_{a + b}$ and the map 
\[
X(\id_\ba \oplus \iota) \colon X(\ba) = X(\ba \oplus \mathbf{0}) \arr X(\ba \oplus \bb),
\]
while the bottom map is defined using $\Sigma_b \times \Sigma_c \arr \Sigma_{b + c}$ and 
\[
Y(\iota \oplus \id_{\bc}) \colon Y(\bc) = Y(\mathbf{0} \oplus \bc) \arr Y(\bb \oplus \bc).
\]

\medskip

We have the corresponding results for $\cI$-spaces, proved in the same way.  Let $\cO$ be the category of finite dimensional inner product spaces and linear isometric isomorphisms.  

\begin{proposition}  The category $\cI \sU$ of $\cI$-spaces is isomorphic to the category of $\ast$-modules in $\cO \sU$.  Furthermore, this isomorphism is monoidal: for $\cI$-spaces $X$ and $Y$, their product $X \boxtimes_{\cI} Y$ as $\cI$-spaces is naturally isomorphic to their product $X \boxtimes_{*} Y$ as $\ast$-modules.
\end{proposition}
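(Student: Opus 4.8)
The plan is to carry out the verbatim analogue of the proof just given for $\bbI$-spaces, under the translation that replaces $\Sigma$ by $\cO$, the symmetric groups $\Sigma_{n}$ by the orthogonal groups $O(V)$, the standard finite sets $\bn$ by the standard inner product spaces $\bR^{n}$, and injections by linear isometries. Two things must be established: the isomorphism between $\cI \sU$ and the category of $\ast$-modules in $\cO \sU$, and the identification $X \boxtimes_{\cI} Y \cong X \boxtimes_{*} Y$.

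For the isomorphism of categories I would first observe that, although $\boxtimes_{\cO}$ is not the cartesian product, the terminal $\cO$-space $\ast$ (with $\ast(V) = \ast$ for every $V$) is terminal in $\cO \sU$ and hence carries a unique structure of commutative monoid under $\boxtimes_{\cO}$, with unit the canonical map $\cO[0] \arr \ast$ out of the monoidal unit $\cO[0] = \cO(0, -)$. Computing the Day convolution through the skeleton $\{\bR^{n}\}$ of $\cO$ gives the formula
\[
(X \boxtimes_{\cO} Y)(\bR^n) \cong \coprod_{a+b = n} O(n) \times_{O(a) \times O(b)} X(\bR^a) \times Y(\bR^b),
\]
so that an $\ast$-module structure on a $\cO$-space $X$ amounts to a coherent family of maps $X(\bR^a) \arr X(\bR^{a+b})$, one for each decomposition $n = a+b$, each equivariant for $O(a) \times O(b)$ (with $O(b)$ acting trivially on the source and through $O(b) \subset O(n)$ on the target). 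These are precisely the values that a $\cI$-space takes on the standard inclusions $\bR^{a} \arr \bR^{a+b}$. Since every linear isometry $\phi \colon V \arr W$ factors as the isomorphism $V \xrightarrow{\cong} \phi(V)$ followed by the inclusion of $\phi(V)$ as an orthogonal summand of $W$, and the standard inclusions together with the morphisms of $\cO$ generate $\cI$, such an $\ast$-module extends uniquely to a continuous $\cI$-space, and conversely a $\cI$-space restricts to such data; checking that this correspondence is functorial and a bijection on morphisms gives the isomorphism of categories.

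For the monoidal statement I would argue exactly as in the $\bbI$-space case. Both $\boxtimes_{\cI}$ and $\boxtimes_{*}$ are left adjoints in each variable (with right adjoints the internal function-object constructions), hence preserve colimits in each variable, and every $\cI$-space is a colimit of representable $\cI$-spaces $\cI[V] = \cI(V, -)$; so it suffices to produce a natural isomorphism $\cI[V] \boxtimes_{\cI} \cI[W] \cong \cI[V] \boxtimes_{*} \cI[W]$. A series of adjunctions gives $\cI[V] \boxtimes_{\cI} \cI[W] \cong \cI[V \oplus W]$ and, for representable $\cO$-spaces, $\cO[V] \boxtimes_{\cO} \cO[W] \cong \cO[V \oplus W]$ (the analogue of \cite{MMSS}*{Lemma 1.8}). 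Under the isomorphism of the preceding paragraph the representable $\cI$-space $\cI[V]$ is the free $\ast$-module $\ast \boxtimes_{\cO} \cO[V]$ — indeed $(\ast \boxtimes_{\cO} \cO[V])(\bR^n) \cong O(n) \times_{O(\dim V) \times O(n - \dim V)} \cO(V, \bR^{\dim V}) \cong \cI(V, \bR^n)$ — and, since the free-module functor $\ast \boxtimes_{\cO} (-)$ is strong symmetric monoidal,
\begin{align*}
\cI[V] \boxtimes_{*} \cI[W]
&\cong (\ast \boxtimes_{\cO} \cO[V]) \boxtimes_{*} (\ast \boxtimes_{\cO} \cO[W]) \\
&\cong \ast \boxtimes_{\cO} (\cO[V] \boxtimes_{\cO} \cO[W]) \\
&\cong \ast \boxtimes_{\cO} \cO[V \oplus W] \\
&\cong \cI[V \oplus W] \\
&\cong \cI[V] \boxtimes_{\cI} \cI[W].
\end{align*}
Extending along colimits in each variable then gives the natural isomorphism for arbitrary $\cI$-spaces (and, just as for $\bbI$-spaces, an explicit coequalizer description of $X \boxtimes_{\cI} Y$).

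I expect the only real work to be in the first part: verifying that the $\ast$-module structure maps are exactly the coherent extension data along the standard inclusions, and that continuity is preserved passing in both directions between $\cI$-spaces and $\ast$-modules — essentially the same bookkeeping with the topologies on $\mor \cI$ and $\mor \cO$ that appears in the $\bbI$-space case. Once that is settled, the monoidal statement is purely formal.
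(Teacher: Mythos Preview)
Your proposal is correct and is exactly the approach the paper takes: the paper does not give a separate proof for the $\cI$-case but simply states that it is ``proved in the same way'' as the $\bbI$-space proposition, and your write-up is a careful spelling-out of that analogue (including the identification $\cI[V] \cong \ast \boxtimes_{\cO} \cO[V]$ and the chain of isomorphisms on representables). The only addition over the $\bbI$-case is the continuity check you flag at the end, which is indeed routine.
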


The full subcategory of $\cO$ consisting of the inner product spaces $\bbR^n$ is a skeletal subcategory, so the analog of the coequalizer in Lemma \ref{computation_of_bbIproduct} may be computed in the following way.  Make the abbreviations $O(n) = O(\bbR^n)$ and $X(n) = X(\bbR^n)$ for an $\cI$-space $X$.

\begin{lemma}\label{computation_of_I_f_product}  Let $X$ and $Y$ be $\cI$-spaces.  Then $(X \boxtimes_{\cI} Y)(m)$ is naturally isomorphic to the coequalizer of the following diagram:
\[
\xymatrix{
\underset{a + b + c = m}{\displaystyle\coprod} O(m) \times_{O(a) \times O(b) \times O(c)} X(a) \times Y(c)
 \ar@<.5ex>[d] \ar@<1.5ex>[d] \\
\underset{a + b = m}{\displaystyle\coprod} O(m) \times_{O(a) \times O(a)} X(a) \times Y(b).
}
\]
\end{lemma}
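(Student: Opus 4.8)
The plan is to mirror the proof of Lemma~\ref{computation_of_bbIproduct}, replacing $\Sigma$ by $\cO$ and $\Sigma_n$ by $O(n)$ throughout. By the preceding Proposition, under the isomorphism between $\cI$-spaces and $*$-modules in $\cO\sU$ the internal product $X \boxtimes_{\cI} Y$ is identified with the product $X \boxtimes_{*} Y$ of $*$-modules, and $X \boxtimes_{*} Y$ is by definition the coequalizer of
\[
X \boxtimes_{\cO} * \boxtimes_{\cO} Y \rightrightarrows X \boxtimes_{\cO} Y ,
\]
the two maps being induced by the right action of $*$ on $X$ and the left action of $*$ on $Y$. So it suffices to evaluate the two $\cO$-space products at $\bbR^m$ and to read off these two structure maps.

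Next I would record the pointwise formula for $\boxtimes_{\cO}$, exactly parallel to $(A \boxtimes_{\Sigma} B)(\bm) \cong \coprod_{a+b=m} \Sigma_m \times_{\Sigma_a \times \Sigma_b} A(\ba) \times B(\bb)$ used in the $\bbI$ case. Since $\boxtimes_{\cO}$ is the left Kan extension of the external product along $\oplus \colon \cO \times \cO \to \cO$, its value at $\bbR^m$ is the coend $\int^{(V,W)} \cO(V \oplus W, \bbR^m) \times A(V) \times B(W)$; because $\cO$ is a groupoid with skeleton $\{\bbR^n\}$, this coend is a colimit over that groupoid and collapses to
\[
(A \boxtimes_{\cO} B)(m) \;\cong\; \coprod_{a+b=m} O(m) \times_{O(a)\times O(b)} A(a) \times B(b),
\]
using that $\cO(\bbR^a \oplus \bbR^b, \bbR^m)$ is a free $(O(a)\times O(b))$-space isomorphic to $O(m)$ when $a+b=m$ and is empty otherwise. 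Applying this twice and using that $*(\bbR^b)$ is a single point gives
\[
(X \boxtimes_{\cO} * \boxtimes_{\cO} Y)(m) \;\cong\; \coprod_{a+b+c=m} O(m) \times_{O(a)\times O(b)\times O(c)} X(a) \times Y(c) .
\]

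It remains to identify the two maps of the coequalizer. The map induced by the right $*$-action $X \boxtimes_{\cO} * \to X$ sends, on the summand indexed by $(a,b,c)$, the middle block of coordinates into $X$: it is built from the group homomorphism $O(a)\times O(b) \to O(a+b)$ together with the map $X(a) \to X(a+b)$ induced by the isometry $\bbR^a \hookrightarrow \bbR^{a+b}$ onto the first $a$ coordinates, which is precisely the $*$-module structure map of $X$ (equivalently, $X$ applied to that inclusion of $\cI$). The map induced by the left $*$-action $* \boxtimes_{\cO} Y \to Y$ is built analogously from $O(b)\times O(c) \to O(b+c)$ and the inclusion-induced map $Y(c) \to Y(b+c)$. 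Substituting these descriptions into the coequalizer above produces exactly the stated diagram. The only step requiring real care — and the main obstacle — is the bookkeeping of the $O(n)$-conjugation actions and of the fibered-product subgroups: one must check that passing to the skeleton and iterating $\boxtimes_{\cO}$ yields precisely the subgroups $O(a)\times O(b)\times O(c) \subset O(m)$ displayed, and that the two $*$-actions unwind to the two inclusion-induced maps with the index $b$ absorbed into $X$ in one case and into $Y$ in the other. Everything else is formal and runs verbatim as in the $\bbI$ case.
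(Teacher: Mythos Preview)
Your proposal is correct and follows exactly the approach the paper intends: the paper gives no separate proof for this lemma, stating only that the $\cI$-space results are ``proved in the same way'' as the $\bbI$-space case after passing to the skeletal subcategory $\{\bbR^n\}$ of $\cO$. Your argument is precisely the expected unwinding of Lemma~\ref{computation_of_bbIproduct} with $\Sigma_n$ replaced by $O(n)$, using the identification of $\boxtimes_{\cI}$ with $\boxtimes_*$ from the preceding Proposition and the groupoid coend formula for $\boxtimes_{\cO}$.
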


The following lemma is an analog of \cite{MMSS}*{15.5}

\begin{lemma}\label{prep_for_extended_power_prop}   Let $\sD = \bbI$ or $\cI$.  Let $n \geq 1$, and suppose that $K$ is a $\Sigma_n$-equivariant CW complex.  Let $d \neq 0$ be an object of $\sD$ and let $X$ be a $\sD$-space.  Then the quotient map
\[
E\Sigma_n \times_{\Sigma_n} (F_{d}(\ast)^{\boxtimes n} \times K)  \boxtimes X \arr (F_{d} (\ast)^{\boxtimes n} \times K) / \Sigma_n  \boxtimes X
\]
is a level-wise homotopy equivalence.  
\end{lemma}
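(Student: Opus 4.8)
The plan is to reduce to a levelwise statement and then recognize the map as the comparison between a homotopy orbit space and a strict orbit space for a \emph{free} $\Sigma_n$-action; the hypothesis $d \neq 0$ is precisely what forces freeness.

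First, since $(-)\boxtimes X$ is a left adjoint and both $E\Sigma_n \times_{\Sigma_n}(-)$ and $(-)/\Sigma_n$ are colimit constructions, the displayed map is obtained by applying, objectwise in $\sD$, the canonical projection $E\Sigma_n \times_{\Sigma_n} Y_c \arr Y_c/\Sigma_n$, where $Y_c = \bigl( (F_d(\ast)^{\boxtimes n} \times K)\boxtimes X \bigr)(c)$; in particular the two possible parsings of the source agree. Using the standard computation $\sD[c_1]\boxtimes\sD[c_2]\cong\sD[c_1\oplus c_2]$ (as in \cite{MMSS}*{Lemma 1.8} and its analog for $\cI$) together with $F_d(\ast)=\sD[d]$, identify $F_d(\ast)^{\boxtimes n}$ with the represented $\sD$-space $\sD[d^{\oplus n}]$, the group $\Sigma_n$ permuting the $n$ summands of $d^{\oplus n}$. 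The constant space $K$ pulls out of the coend computing $\boxtimes$, so, using the identification $(\sD[D]\boxtimes X)(c)\cong \sD(D\oplus -,c)\otimes_{\sD}X$ from the proof of Lemma \ref{times_cofibrants_preserves_equiv} (with $D = d^{\oplus n}$), one gets $Y_c \cong K \times \bigl( \sD(d^{\oplus n}\oplus -,c)\otimes_{\sD}X \bigr)$ with the diagonal $\Sigma_n$-action, the second factor carrying the action induced by precomposition with $\sigma \oplus \id_{c_2}$ on $d^{\oplus n}\oplus c_2$.

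Second, and this is the heart of the argument, I would show that $\Sigma_n$ acts freely on $Y_c$. Define a $\Sigma_n$-equivariant restriction map $Y_c \arr \sD(d^{\oplus n},c)$ by projecting off the $K$-factor and then sending the class of a pair $(\psi\colon d^{\oplus n}\oplus c_2 \arr c,\ x\in X(c_2))$ to $\psi\circ\iota_{d^{\oplus n}}$. This is independent of $x$ and well defined on the coend because $(\id_{d^{\oplus n}}\oplus f)\circ\iota_{d^{\oplus n}}=\iota_{d^{\oplus n}}$ for every $f\colon c_2\arr c_2'$, and it is $\Sigma_n$-equivariant for the block-permutation action of $\Sigma_n\subseteq\Aut_{\sD}(d^{\oplus n})$ by precomposition. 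Now every morphism of $\bbI$ (an injection of finite sets) and of $\cI$ (a linear isometry) is a monomorphism, and $d\neq 0$; hence if $\psi\circ\sigma_{d^{\oplus n}}=\psi$ for the block permutation $\sigma_{d^{\oplus n}}$ associated to $\sigma\in\Sigma_n$, then $\sigma_{d^{\oplus n}}=\id$ and therefore $\sigma=\id$. Thus $\Sigma_n$ acts freely on $\sD(d^{\oplus n},c)$, and since the product of any $\Sigma_n$-space with a free one is free, $\Sigma_n$ acts freely on $Y_c$.

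Finally, for a free action of the finite group $\Sigma_n$ on a compactly generated space, $Y_c\arr Y_c/\Sigma_n$ is a principal $\Sigma_n$-bundle, so the projection $E\Sigma_n\times_{\Sigma_n}Y_c\arr Y_c/\Sigma_n$ is a fibre bundle with contractible fibre $E\Sigma_n$ and hence a homotopy equivalence (the space-level analog of \cite{MMSS}*{15.5}; the $\Sigma_n$-CW hypothesis on $K$ keeps everything inside a convenient category of spaces). Carrying this out at every object $c$ shows the map of $\sD$-spaces is a levelwise homotopy equivalence. The main obstacle is making the freeness argument work for an \emph{arbitrary} $\sD$-space $X$: a priori the $\Sigma_n$-action on the coend $\sD(d^{\oplus n}\oplus -,c)\otimes_{\sD}X$ could acquire fixed points not present on the representables, and the restriction map is exactly the device that rules this out, by mapping the coend equivariantly onto the free $\Sigma_n$-space $\sD(d^{\oplus n},c)$.
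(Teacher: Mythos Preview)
Your argument is correct and takes a genuinely different route from the paper. The paper computes explicitly: using the coequalizer description of $\boxtimes$ from Lemma~\ref{computation_of_I_f_product}, it identifies the level-$m$ space with $O(m)\times_{O(m-nd)}K\times X(m-nd)$, so that the $\Sigma_n$-action lives entirely on the compact Lie group $O(m)$ via block permutation matrices; since $E\Sigma_n\times O(m)\arr O(m)$ is then a $(\Sigma_n\times O(m-nd))$-equivariant homotopy equivalence between free $\Sigma_n$-CW complexes, the conclusion follows after applying $(-)\times_{\Sigma_n\times O(m-nd)}K\times X(m-nd)$. Your approach bypasses the explicit formula and isolates the mechanism: the equivariant restriction $Y_c\arr\sD(d^{\oplus n},c)$ makes transparent that freeness is precisely the conjunction of ``morphisms in $\sD$ are monic'' with $d\neq 0$, and the argument runs uniformly for $\bbI$ and $\cI$. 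The only point worth tightening is the last step: ``free action of a finite group on a compactly generated space gives a principal bundle'' is not automatic for an arbitrary $\sD$-space $X$, but your restriction map already closes this, since $\sD(d^{\oplus n},c)$ is a free $\Sigma_n$-CW complex and composing with a classifying map to $E\Sigma_n$ supplies an equivariant section of $E\Sigma_n\times Y_c\arr Y_c$, from which an equivariant homotopy inverse is built in the usual way.
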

\begin{proof} 
We will give the proof for $\cI$-spaces; the argument for $\bbI$-spaces is similar.  For ease of notation, we assume without loss of generality that the object $d$ is of the form $\bbR^d$.  We will continue to write $F_d$ for the left adjoint of evaluation at $\bbR^d$.  There is an isomorphism of $\cI$-spaces $F_d(\ast)^{\boxtimes n} \cong F_{nd} (\ast)$ \cite{MMSS}*{Lemma 1.8}.  Hence $(F_d(\ast)^{\boxtimes n} \times K)(a) \cong \cI( \bR^{nd}, \bR^a) \times K = O(a) \times_{O(a - nd)} K$.  By the description of $\boxtimes$ in Lemma \ref{computation_of_I_f_product}, $(F_d(\ast)^{\boxtimes n} \times K \boxtimes X )(m)$ is the coequalizer of:
\[
\xymatrix{
\underset{a + b + c = m}{\displaystyle\coprod} O(m) \times_{O(a - nd) \times O(b) \times O(c)} K \times X(c)
 \ar@<.5ex>[d] \ar@<1.5ex>[d] \\
\underset{a + b = m}{\displaystyle\coprod} O(m) \times_{O(a - nd) \times O(b)} K \times X(b).
}
\]
In the coequalizer, all summands are identified with the $(a, b) = (nd, m - nd)$ summand, which is left unchanged, so we have:
\[
((F_d(\ast)^{\boxtimes n} \times K) \boxtimes X )(m) \cong O(m) \times_{O(m - nd)} K \times X(m - nd).
\]
The group $\Sigma_n$ acts on $K$ and acts on $O(nd)$ by permuting the summands in $(\bbR^d)^n$, and thus acts on $O(m)$ via the inclusion $O(nd) \arr O(m)$.  Passing to orbits, we have:
\[
((F_d(\ast)^{\boxtimes n} \times K) / \Sigma_n  \boxtimes X)(m) \cong O(m) \times_{\Sigma_n \times O(m - nd)} K \times X(m - nd),
\]
and similarly:
\[
(E\Sigma_n \times_{\Sigma_n} (F_d(\ast)^{\boxtimes n} \times K)  \boxtimes X)(m) \cong (E\Sigma_n \times O(m)) \times_{\Sigma_n \times O(m - nd)} K \times X(m - nd ).
\]
The quotient map $E \Sigma_n \times O(m) \arr O(m)$ is a $(\Sigma_n \times O(m - nd))$-equivariant homotopy equivalence.  This proves the lemma.
\end{proof}

In order to make inductive arguments over cell attachments, we will use a certain filtration on the pushout $\bC B \cup_{\bC A} X$ of a commutative FCP along a free map of commutative FCPs $\bC f \colon \bC A \arr  \bC B$.  We first describe a filtration on the $n$-fold $\boxtimes$-product of a pushout of $\sD$-spaces.  This material is described in more generality in \cite{ss}*{\S A.6, A.15} and is also related to the filtration in \cite{EM}*{\S 12}.

Given two maps $f \colon A \rightarrow B$ and $g \colon A' \arr B'$, the pushout product $f \boxempty g$ is the induced map
\[
f \boxempty g \colon B \boxtimes A' \cup_{A \boxtimes A'} A \boxtimes B' \arr B \boxtimes B'.
\]
We write $f^{\boxempty n} \colon Q^{n}f \arr B^{\boxtimes n}$ for the $n$-fold iterated pushout product of $f$.  Now let $X \overset{g}{\longleftarrow} A \overset{f}{\arr} B$ be a diagram of $\sD$-spaces and write $P(g, f)$ for its pushout.  
\begin{lemma}\label{filtration_on_product_of_pushouts}
There is a sequence of $\sD$-spaces $P_{i}^{n}(g, f)$ and maps
\[
X^{\boxtimes n} = P_{0}^{n}(g, f) \arr P_{1}^{n}(g, f) \arr \dotsm \arr P_{n}^{n}(g, f) \cong P(g, f)^{\boxtimes n}
\]
whose composite is the canonical map $X^{\boxtimes n} \arr P(g, f)^{\boxtimes n}$.  The spaces $P_{i}^{n}(g, f)$ can be inductively described by $\Sigma_n$-equivariant pushout squares of the form
\[
\xymatrix{
\Sigma_{n} \times_{\Sigma_{n - i} \times \Sigma_{i}} X^{\boxtimes (n - i)} \boxtimes Q^{i} f \ar[rr]^{\id \boxtimes f^{\boxempty i}} \ar[d] & & \Sigma_{n} \times_{\Sigma_{n - i} \times \Sigma_{i}} X^{\boxtimes (n - i)} \boxtimes B^{\boxtimes i} \ar[d] \\
P_{i - 1}^{n}(g, f) \ar[rr] & & P_{i}^{n}(g, f)}
\]
Furthermore, if $f$ is a generating positive cofibration in $F_{+}I$, then the maps $\id \boxtimes f^{\boxempty i}$ and $P_{i - 1}^{n}(g, f) \arr P_{i}^{n}(g, f)$ are $h$-cofibrations of $\sD$-spaces.
\end{lemma}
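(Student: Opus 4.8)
The plan is to realize $\{P_{i}^{n}(g,f)\}$ as the standard skeletal (``cube'') filtration of a $\boxtimes$-power of a pushout, following \cite{ss}*{\S A.6, A.15} (compare \cite{EM}*{\S 12}); only the verification that the relevant functors behave well on $\sD$-spaces is particular to our setting. Write $P = P(g,f)$, the colimit of the punctured square $X \overset{g}{\longleftarrow} A \overset{f}{\arr} B$. First I would dispose of $n = 1$, where $Q^{1}f = A$, $f^{\boxempty 1} = f$, $P_{0}^{1}(g,f) = X$, and the square asserted in the statement is precisely the defining pushout $A \to B$, $A \to X$, $X \to P$. For general $n$ I would present $P^{\boxtimes n}$ as the colimit over the $n$-fold product of the punctured square and filter it by the number of tensor coordinates that have been pushed off $X$: set $P_{i}^{n}(g,f)$ to be the image in $P^{\boxtimes n}$ of the terms in which at most $i$ of the $n$ factors lie ``in the $B$-direction''. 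Passing from $P_{i-1}^{n}$ to $P_{i}^{n}$ then adjoins, over each of the $\binom{n}{i}$ size-$i$ subsets of the factors, the sub-diagram which is an $i$-fold product of $A \to B$ in those coordinates tensored with $X^{\boxtimes(n-i)}$ in the remaining ones; by definition its \emph{latching object} is the iterated pushout product $Q^{i}f$ and the attaching map is $\id_{X^{\boxtimes(n-i)}} \boxtimes f^{\boxempty i}$.

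The key step is to assemble these $\binom{n}{i}$ squares into the single $\Sigma_{n}$-equivariant square displayed in the statement. For this I would use that the iterated pushout product $Q^{i}f \to B^{\boxtimes i}$ is canonically $\Sigma_{i}$-equivariant and that $\Sigma_{n}$ acts by permuting the $n$ factors, the stabilizer of a size-$i$ subset being $\Sigma_{n-i} \times \Sigma_{i}$ with $\Sigma_{n-i}$ acting on $X^{\boxtimes(n-i)}$ and $\Sigma_{i}$ on $Q^{i}f$ and $B^{\boxtimes i}$; this produces exactly the induced form $\Sigma_{n} \times_{\Sigma_{n-i} \times \Sigma_{i}}(-)$. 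One then checks that $P_{n}^{n}(g,f) \cong P(g,f)^{\boxtimes n}$ and that the composite $X^{\boxtimes n} = P_{0}^{n} \to \dots \to P_{n}^{n}$ is the canonical map. This last verification, that the inductively built squares genuinely exhaust the box power with the stated subquotients, is the step I expect to be the main obstacle; it is however purely combinatorial and independent of the ambient monoidal category, so it can be imported from \cite{ss} essentially verbatim.

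For the final assertion, suppose $f$ is a generating positive cofibration in $F_{+}I$. Then $f^{\boxempty i}$ is again of the form $F_{c}(j')$ for some object $c \neq 0$ and some cofibration $j'$ of spaces (an iterated pushout product of sphere inclusions, hence a relative CW complex up to homeomorphism), so $f^{\boxempty i} \colon Q^{i}f \to B^{\boxtimes i}$ is a cofibration of $\sD$-spaces and in particular an $h$-cofibration. Applying $X^{\boxtimes(n-i)} \boxtimes (-)$ preserves $h$-cofibrations, since $(-) \boxtimes Y$ carries $h$-cofibrations to $h$-cofibrations (as noted in the proof of the monoid axiom), and $\Sigma_{n} \times_{\Sigma_{n-i} \times \Sigma_{i}}(-)$ preserves them as well because $h$-cofibrations are closed under coproducts. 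Finally $P_{i-1}^{n}(g,f) \to P_{i}^{n}(g,f)$ is a cobase change of an $h$-cofibration and is therefore an $h$-cofibration, which completes the sketch.
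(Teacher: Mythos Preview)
Your proposal is correct and follows essentially the same approach as the paper. The paper's own proof is even more terse: it simply cites \cite{ss}*{A.8} for the construction of the filtration and defers the $h$-cofibration claim to the later Lemma~\ref{boxempty_is_hcofibration}, which shows that $f^{\boxempty i}$ is a $\Sigma_i$-equivariant $h$-cofibration when $f$ is a coproduct of generating positive cofibrations. Your direct argument that $f^{\boxempty i} \cong F_{c}(j')$ for a single generating cofibration $f$ is a valid shortcut in this special case and is implicit in the paper's formula \eqref{pushout_product_map_description}; the remaining closure properties of $h$-cofibrations you invoke ($(-)\boxtimes Y$, coproducts, cobase change) are exactly those the paper relies on as well.
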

\noindent See \cite{ss}*{A.8} for a construction of the filtration.  The claim about $h$-cofibrations follows from Lemma \ref{boxempty_is_hcofibration} below.

Let $f \colon A \arr B$ be a map of $\sD$-spaces, and let $X$ and $Y$ be commutative $\sD$-FCPs.  Consider the following pushout diagram of commutative $\sD$-FCPs
\[
\xymatrix{
\bC A \ar[d]_{\bC f} \ar[r] & X \ar[d]^{\overline{f}} \\
\bC B \ar[r] & Y}
\]
in which the map $\bC A \arr X$ is induced by a map of $\sD$-spaces $g \colon A \arr X$.  

\begin{lemma}\label{filtration_on_commutative_monoid_pushout}
There is a sequence of $\sD$-spaces
\[
X = P_{0} Y \arr P_{1} Y \arr \dotsm \arr P_{n} Y \arr \dotsm
\]
whose transfinite composition is the canonical map $\overline{f} \colon X \arr Y$.  The $\sD$-spaces $P_{n} Y$ can be inductively described by pushout squares of the form
\[
\xymatrix{
X \boxtimes Q^{n}f / \Sigma_{n} \ar[rr]^-{\id \boxtimes f^{\boxempty n} / \Sigma_n} \ar[d] & & X \boxtimes B^{\boxtimes n}/ \Sigma_{n} \ar[d] \\
P_{n - 1} Y \ar[rr] & & P_{n} Y 
}
\]
where $f^{\boxempty n} \colon Q^n f \arr B^{\boxtimes n}$ is the $n$-fold iterated pushout-product map.  Furthermore, if $f$ is a coproduct of generating positive cofibrations in $F_{+} I$, then the maps $\id \boxtimes f^{\boxempty}/\Sigma_n$ and $P_{n - 1}Y \arr P_{n}Y$ are $h$-cofibrations of $\sD$-spaces.
\end{lemma}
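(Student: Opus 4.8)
The plan is to identify the pushout $Y$ as an iterated relative $\boxtimes$-product and then filter it by ``word length in $B$''. Since the forgetful functor $\bC\sD\sU \arr \sD\sU$ is right adjoint to the free commutative monoid functor $\bC$, the pushout $Y = X \cup_{\bC A} \bC B$ in commutative $\sD$-FCPs is the coproduct of $X$ and $\bC B$ in commutative $\bC A$-algebras, hence the relative product $Y \cong X \boxtimes_{\bC A} \bC B$. Concretely, $Y$ is the coequalizer in $\sD\sU$ of the reflexive pair
\[
\bC A \boxtimes X \boxtimes \bC B \rightrightarrows X \boxtimes \bC B,
\]
where one arrow uses $g \colon A \arr X$ and the multiplication of $X$, the other uses $f \colon A \arr B$ and the multiplication of $\bC B$; reflexivity (insert the unit $\ast \arr \bC A$) ensures that this underlying coequalizer computes the pushout of commutative monoids. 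Writing $\bC B = \coprod_{k \geq 0} B^{\boxtimes k}/\Sigma_k$, note that in the two coequalizer arrows an element of $A$ is either absorbed into $X$ (via $g$ and $\mu_X$), decreasing the number of tensor factors coming from $B$, or turned into an element of $B$ (via $f$), leaving it unchanged. Hence the canonical maps $X \boxtimes B^{\boxtimes n}/\Sigma_n \arr Y$ exhibit $Y$ as the colimit of the increasing sequence of images $F_n Y$ of $\coprod_{k \leq n} X \boxtimes B^{\boxtimes k}/\Sigma_k$.

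Next I would take $P_n Y := F_n Y$ and check the inductive pushout square, with $P_0 Y = X$. The map $X \boxtimes Q^n f/\Sigma_n \arr P_{n-1} Y$ is assembled from the canonical map $Q^n f \arr B^{\boxtimes n}$, the description of $Q^n f$ as the colimit of the punctured $n$-cube on $f$, and the maps $g$, $\mu_X$: on a face of $Q^n f$ having at least one coordinate that factors through $f$, move that coordinate into $X$, landing in $F_{n-1} Y$ by induction; the cube compatibilities and commutativity of $\mu_X$ make this well defined and $\Sigma_n$-equivariant, so (using that $\boxtimes$ preserves colimits, whence $(X \boxtimes Q^n f)/\Sigma_n \cong X \boxtimes (Q^n f/\Sigma_n)$) it descends to orbits. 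The remaining point is the identification $F_n Y \cong F_{n-1}Y \cup_{X \boxtimes Q^n f/\Sigma_n} X \boxtimes B^{\boxtimes n}/\Sigma_n$, i.e.\ that the part of $Y$ of word length exactly $n$ is precisely $X \boxtimes B^{\boxtimes n}/\Sigma_n$ glued along the locus where some coordinate comes from $A$; this is proved exactly as the analogous statement for $\sD$-spaces in \cite{ss}*{A.8, A.15} (see also \cite{EM}*{\S12}), applying the coordinatewise filtration of Lemma~\ref{filtration_on_product_of_pushouts}. Passing to colimits over $n$ then gives the sequence $X = P_0 Y \arr P_1 Y \arr \cdots$ with transfinite composite $\overline{f}\colon X \arr Y$. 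I expect this combinatorial bookkeeping of ``word length in $B$ modulo $A$'' to be the only real obstacle, which is why I would cite \cite{ss} for it rather than redo it by hand.

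Finally, for the $h$-cofibration claim, suppose $f = \coprod_\alpha F_{d_\alpha} i_\alpha$ with each $i_\alpha \in I$ and $d_\alpha$ an object of $\sD_{+}$. Each generating cofibration is an $h$-cofibration and $h$-cofibrations are closed under coproducts, so $f$ is an $h$-cofibration; moreover, using $F_{d}(K) \cong F_{d}(\ast) \x K$ and $F_{d}(\ast)^{\boxtimes n} \cong F_{nd}(\ast)$, the pair $(B^{\boxtimes n}, Q^n f)$ is a $\Sigma_n$-relative CW complex, so by Lemma~\ref{boxempty_is_hcofibration} the iterated pushout product $f^{\boxempty n}\colon Q^n f \arr B^{\boxtimes n}$ is a $\Sigma_n$-equivariant $h$-cofibration whose orbit map $f^{\boxempty n}/\Sigma_n\colon Q^n f/\Sigma_n \arr B^{\boxtimes n}/\Sigma_n$ is a relative CW complex. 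Since tensoring with a fixed $\sD$-space preserves $h$-cofibrations (it commutes with the cylinder $I \x (-)$ and with pushouts), $\id_X \boxtimes f^{\boxempty n}/\Sigma_n$ is an $h$-cofibration, and $P_{n-1} Y \arr P_n Y$, being a cobase change of it, is an $h$-cofibration as well.
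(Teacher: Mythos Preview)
Your proposal is correct and, like the paper, ultimately defers the combinatorial core to \cite{ss}; the difference lies in which coequalizer presentation of $Y$ you filter.  You use the relative product description $Y \cong X \boxtimes_{\bC A} \bC B$, realized as the reflexive coequalizer of $X \boxtimes \bC A \boxtimes \bC B \rightrightarrows X \boxtimes \bC B$, and filter by the $B$-degree in $\bC B = \coprod_k B^{\boxtimes k}/\Sigma_k$.  The paper instead uses the monadic presentation $\bC(\bC X \cup_A B) \rightrightarrows \bC(X \cup_A B) \arr Y$ (the standard description of a pushout of algebras over a monad), applies the filtration $P_i^n$ of Lemma~\ref{filtration_on_product_of_pushouts} to each $(X \cup_A B)^{\boxtimes n}$ and $(\bC X \cup_A B)^{\boxtimes n}$, and takes coequalizers of the resulting filtered diagrams; the coequalizer $\bC\bC X \rightrightarrows \bC X \arr X$ then simplifies the inductive pushout square to the one in the statement.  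Your route is more direct and closer in spirit to the ``word length'' arguments in \cite{EM}, while the paper's route makes transparent exactly how the filtration on $Y$ descends from the filtration already established in Lemma~\ref{filtration_on_product_of_pushouts}.  Both arrive at the same filtration and both invoke Lemma~\ref{boxempty_is_hcofibration} for the $h$-cofibration claim; one small point is that you should be explicit that passing to $\Sigma_n$-orbits of the $\Sigma_n$-equivariant $h$-cofibration $f^{\boxempty n}$ yields an $h$-cofibration (this follows from the equivariant NDR-pair structure, not merely from $f^{\boxempty n}$ being a relative CW pair).
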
 
\begin{proof}  For a proof in greater generality, see \cite{ss}*{A.16}.  Let $X \cup_{A} B$ and $\bC X \cup_{A} B$ be the pushouts of the diagrams
\[
X \overset{g}{\longleftarrow} A \overset{f}{\longrightarrow} B \qquad \text{and} \qquad \bC X \overset{\eta}{\longleftarrow} X \overset{f}{\longleftarrow} A \overset{g}{\arr} B
\]
in the category of $\sD$-spaces.  The FCP $Y$ is canonically isomorphic to the coequalizer of $\sD$-spaces
\[
\xymatrix{
\bC(\bC X \cup_{A} B)  \ar@<-.5ex>[r]_{\alpha_X} \ar@<.5ex>[r]^{\mu} & \bC(X \cup_{A} B) \ar[r] & Y,
}
\]
where $\mu$ is the composite
\[
\bC( \bC X \cup_{A} B) \arr \bC( \bC X \cup_{\bC A} \bC B) \arr \bC \bC(X \cup_{A} B) \overset{\mu}{\arr} \bC(X \cup_{A} B)
\]
and $\alpha_X$ is induced by the commutative FCP structure map $\alpha_X \colon \bC X \arr X$.  The filtration of the $n$-fold $\boxtimes$ products of the pushouts $X \cup_{A} B$ and $\bC X \cup_{A} B$ from Lemma \ref{filtration_on_product_of_pushouts} yields a filtration $P_i Y$ of $Y$ given by the coequalizer diagrams
\[
\xymatrix{
\underset{n \geq 0}{\displaystyle\coprod} P_{i}^{n}(\eta \circ g, f)/\Sigma_n \ar@<.5ex>[r]_{\alpha_X} \ar@<1.5ex>[r]^{\mu} & \underset{n \geq 0}{\displaystyle\coprod} P_{i}^{n} (g, f)/\Sigma_n \ar@<1ex>[r] & P_i Y.
}
\]
The pushout squares that inductively describe $P_i^n(\eta \circ g, f)$ and $P_i^n(g, f)$ give a pushout square of coequalizer diagrams that inductively describe the filtration $P_i Y$.  Using the coequalizer $\bC \bC X \rightrightarrows \bC X \arr X$, we see that the induced pushout diagram of coequalizers is of the form in the statement of the lemma.  The claim about $h$-cofibrations follows from Lemma \ref{boxempty_is_hcofibration} below.
\end{proof}

We will apply the filtrations $\{P_i^{n}(f, g)\}$ and $\{P_nY\}$ in situations where $f$ is a coproduct of maps in $F_{+}I$:
\[
f = \coprod_{\alpha} F_{d_{\alpha}} i_{\alpha} \colon \coprod_{\alpha} F_{d_{\alpha}} S^{q_{\alpha}} \arr \coprod_{\alpha} F_{d_{\alpha}} D^{q_{\alpha} + 1}.
\]
The iterated pushout product $f^{\boxempty n} \colon Q^n f \arr B^{\boxtimes n}$ then takes the form
\addtocounter{theorem}{1}
\begin{equation}\label{pushout_product_map_description}
f^{\boxempty n} = \coprod_{(\alpha_1, \dotsc, \alpha_n)} F_{d_{\alpha_1}\oplus \dotsm \oplus d_{\alpha_n}} (i_{\alpha_1} \boxempty \dotsm \boxempty i_{\alpha_n}) = \coprod_{\_{\alpha}} F_{d_{\_{\alpha}}} (i_{\_{\alpha}})
\end{equation}
where
\[
i_{\alpha_1} \boxempty \dotsm \boxempty i_{\alpha_n} \colon S^1 \sma S^{q_{\alpha_1}} \sma \dotsm \sma S^{q_{\alpha_n}} \arr D^{q_{\alpha_1} + 1} \times \dotsm \times D^{q_{\alpha_{n}} + 1}
\]
is the iterated pushout product of the maps $i_{\alpha} \colon S^{q_{\alpha}} \arr D^{q_{\alpha} + 1}$ in the category of spaces.  The coproduct runs over sequences $\_{\alpha} = (\alpha_1, \dotsc, \alpha_n)$ and the symmetric group $\Sigma_n$ acts on $f^{\boxempty n}$ by permuting the sequences.  

\begin{lemma}\label{boxempty_is_hcofibration}  If $f$ is a coproduct of generating positive cofibrations in $F_{+} I$, then the iterated pushout product $f^{\boxempty n}$ is a $\Sigma_n$-equivariant $h$-cofibration of $\sD$-spaces.
\end{lemma}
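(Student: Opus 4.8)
The plan is to reduce the assertion, by means of the explicit formula \eqref{pushout_product_map_description}, to a classical equivariant statement about NDR pairs of spaces. Recall first what is being claimed: an $h$-cofibration $j \colon A \arr X$ of $\sD$-spaces equipped with a compatible action of a group $G$ is a \emph{$G$-equivariant $h$-cofibration} if it has the homotopy extension property in the category of $G$-objects in $\sD\sU$, equivalently if the retraction $I \times X \arr M_j$ of the mapping cylinder that witnesses the $h$-cofibration may be chosen $G$-equivariantly. Two closure properties will do the bookkeeping. First, a coproduct of $G$-equivariant $h$-cofibrations, with $G$ permuting the summands among themselves (as $\Sigma_n$ does on \eqref{pushout_product_map_description}), is again a $G$-equivariant $h$-cofibration. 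Second, any cocontinuous functor that preserves tensors with spaces preserves $h$-cofibrations, and does so equivariantly when it is compatible with the relevant group actions; this applies both to the left adjoint $F_{d} = \sD(d, -) \times (-) \colon \sU \arr \sD\sU$ (with a group acting on $d$, hence on $\sD(d,-)$ by precomposition) and to the induction functor $\Sigma_n \times_{H} (-)$ from $H$-objects to $\Sigma_n$-objects for a subgroup $H \leq \Sigma_n$, since $\Sigma_n \times_{H} Y$ commutes with $I \times (-)$.

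First I would rewrite \eqref{pushout_product_map_description} by grouping the index sequences $\_{\alpha} = (\alpha_1, \dotsc, \alpha_n)$ into $\Sigma_n$-orbits. Choosing one representative $\_{\alpha}$ per orbit, with stabilizer $\Sigma_{\_{\alpha}} \leq \Sigma_n$ — a product of symmetric groups, one for each block of repeated indices in $\_{\alpha}$ — one obtains a $\Sigma_n$-equivariant identification
\[
f^{\boxempty n} \;\cong\; \coprod_{[\_{\alpha}]} \; \Sigma_n \times_{\Sigma_{\_{\alpha}}} F_{d_{\_{\alpha}}}\bigl(i_{\_{\alpha}}\bigr),
\]
where $\Sigma_{\_{\alpha}}$ acts on $d_{\_{\alpha}} = d_{\alpha_1} \oplus \dotsm \oplus d_{\alpha_n}$ by permuting equal summands (using the symmetric monoidal structure of $\sD$) and on the space-level iterated pushout product $i_{\_{\alpha}} = i_{\alpha_1} \boxempty \dotsm \boxempty i_{\alpha_n}$ by permuting the corresponding factors. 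By the two closure properties above, it suffices to prove that each $F_{d_{\_{\alpha}}}(i_{\_{\alpha}})$ is a $\Sigma_{\_{\alpha}}$-equivariant $h$-cofibration of $\sD$-spaces; and since $F_{d_{\_{\alpha}}}$ is cocontinuous, preserves tensors with spaces, and is compatible with the $\Sigma_{\_{\alpha}}$-actions on source and target, this in turn follows once we know that $i_{\_{\alpha}}$ is a $\Sigma_{\_{\alpha}}$-equivariant $h$-cofibration of spaces.

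This last point is the technical heart of the argument. Each generating map $i_{\alpha} \colon S^{q_\alpha} \arr D^{q_\alpha + 1}$ exhibits $(D^{q_\alpha + 1}, S^{q_\alpha})$ as an NDR pair via its standard representation. The iterated pushout product of NDR pairs is again an NDR pair — the inclusion of the generalized fat wedge into the product — and the classical construction of the representing function and deformation for a product pair (take the pointwise minimum of the given representing functions, together with the deformation that at a point runs the deformation of whichever coordinate is closest to its subspace, suitably reparametrized) is symmetric under permutation of the factors. Hence, when the factors within a block are literally the same pair $(D^{q+1}, S^{q})$, the resulting representation is invariant under that block's symmetric group, so $i_{\_{\alpha}}$ is a $\Sigma_{\_{\alpha}}$-equivariant $h$-cofibration of spaces, as required. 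Assembling the three steps yields that $f^{\boxempty n}$ is a $\Sigma_n$-equivariant $h$-cofibration of $\sD$-spaces. The main obstacle is precisely the equivariance of the product-NDR construction: one must check that the explicit formulas for the representing function and the connecting homotopy are genuinely symmetric under interchanging identical coordinates; everything else is formal manipulation of left adjoints and the closure properties of $h$-cofibrations.
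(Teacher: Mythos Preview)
Your argument is correct and follows essentially the same route as the paper: decompose \eqref{pushout_product_map_description} into $\Sigma_n$-orbits indexed by sequences with stabilizer a Young subgroup $\Sigma_{n_1} \times \dotsm \times \Sigma_{n_j}$, reduce via induction and the left adjoint $F_d$ to the space-level claim that $i_{\_\alpha}$ is a $\Sigma_{\_\alpha}$-equivariant $h$-cofibration, and invoke the symmetry of the product NDR construction. The paper is terser on precisely the point you flag as the technical heart --- it simply asserts that ``we may choose homotopy extensions along $i_{\alpha}^{\boxempty k}$ to be $\Sigma_k$-equivariant'' --- so your explicit NDR justification is a welcome elaboration rather than a departure.
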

\begin{proof} (compare \cite{ss}*{7.1})
The orbit of the summand of $f^{\boxempty n}$ indexed on the sequence
\[
\_{\alpha} = (\underbrace{\alpha_1, \dotsc, \alpha_1}_{n_1}, \dotsc, \underbrace{\alpha_j, \dotsc, \alpha_j}_{n_j})
\]
takes the form:
\[
\Sigma_n \times_{\Sigma_{n_1} \times \dotsm \times \Sigma_{n_j}} F_{d_{\_{\alpha}}} (i_{\_{\alpha}}) = \Sigma_n \times_{\Sigma_{n_1} \times \dotsm \times \Sigma_{n_j}} F_{d_{\alpha_1}^{\oplus n_1} \oplus \dotsm \oplus d_{\alpha_j}^{\oplus n_j}} (i_{\alpha_1}^{\boxempty n_1} \boxempty \dotsm \boxempty i_{\alpha_j}^{\boxempty n_j}).
\]
The iterated pushout products of the map $i_{\alpha}$ are $h$-cofibrations of topological spaces, and the functor $F_d$ preserves $h$-cofibrations since it is a left adjoint.  Furthermore, we may choose homotopy extensions along $i_{\alpha}^{\boxempty k}$ to be $\Sigma_k$-equivariant, i.e. the map $i_{\alpha}^{\boxempty k}$ is a $\Sigma_k$-equivariant $h$-cofibration.  It follows that the map $F_{d_{\_{\alpha}}} (i_{\_{\alpha}})$ is a $(\Sigma_{n_1} \times \dotsm \times \Sigma_{n_j})$-equivariant $h$-cofibration, and after passage to orbits we see that $f^{\boxempty n}$ is a $\Sigma_n$-equivariant $h$-cofibration.  
\end{proof}

\begin{proposition}\label{symmetric_power_equiv}  Let $X$ be a positive cofibrant $\sD$-space and let $n \geq 1$.  Then $E \Sigma_{n} \times_{\Sigma_{n}}  X$ is also positive cofibrant and the quotient map
\[
q_n \colon E \Sigma_n \times_{\Sigma_n} X^{\boxtimes n} \arr X^{\boxtimes n} / \Sigma_n
\]
is a weak homotopy equivalence.
\end{proposition}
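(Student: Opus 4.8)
The plan is to carry out the $\sD$-space analogue of the argument in \cite{MMSS}*{\S15}, using the box product $\boxtimes$ in place of the smash product of diagram spectra and the filtrations of Lemmas~\ref{filtration_on_product_of_pushouts} and \ref{filtration_on_commutative_monoid_pushout} in place of the ones used there. Both $E\Sigma_{n}\times_{\Sigma_{n}}(-)^{\boxtimes n}$ and $(-)^{\boxtimes n}/\Sigma_{n}$ commute with tensors by spaces, pushouts and sequential colimits and carry $h$-cofibrations of $\sD$-spaces to $h$-cofibrations, so since the weak homotopy equivalences of $\sD$-spaces are well-grounded it suffices to treat an $F_{+}I$-cell complex $X$ and to run a simultaneous induction on $n$ and on the cellular filtration of $X$, the base case $X=\emptyset$ being trivial. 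I would carry along the auxiliary statement that $E\Sigma_{n}\times_{\Sigma_{n}}X^{\boxtimes n}$ and $X^{\boxtimes n}/\Sigma_{n}$ are positive cofibrant; this is needed in the induction and also yields the cofibrancy assertion of the proposition. For the inductive step, write $X_{k}$ as a pushout of $X_{k-1}$ along a coproduct $f\colon A\arr B$ of generating positive cofibrations $F_{d}i$ with $d\neq 0$, so $B=F_{d}D^{q}$ for a disk $D^{q}$. Lemma~\ref{filtration_on_product_of_pushouts} furnishes a $\Sigma_{n}$-equivariant filtration $X_{k-1}^{\boxtimes n}=P^{n}_{0}\arr\dotsm\arr P^{n}_{n}\cong X_{k}^{\boxtimes n}$ whose $i$-th stage is a $\Sigma_{n}$-equivariant pushout along an $h$-cofibration with source $\Sigma_{n}\times_{\Sigma_{n-i}\times\Sigma_{i}}X_{k-1}^{\boxtimes(n-i)}\boxtimes Q^{i}f$ and target $\Sigma_{n}\times_{\Sigma_{n-i}\times\Sigma_{i}}X_{k-1}^{\boxtimes(n-i)}\boxtimes B^{\boxtimes i}$; applying the two functors gives two filtrations compatible with $q_{n}$, so by well-groundedness it is enough to check that $q_{n}$ is a weak equivalence on these sources and targets (the pushout corners then follow by gluing, condition (iii) of the well-grounded axioms) and to read off cofibrancy from the same pushout squares.

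On a piece $\Sigma_{n}\times_{H}W$ with $H=\Sigma_{n-i}\times\Sigma_{i}$ and $W=X_{k-1}^{\boxtimes(n-i)}\boxtimes B^{\boxtimes i}$ one has $E\Sigma_{n}\times_{\Sigma_{n}}(\Sigma_{n}\times_{H}W)\cong EH\times_{H}W$ and $(\Sigma_{n}\times_{H}W)/\Sigma_{n}\cong W/H$; since $H$ acts through the two box factors of $W$ separately and $\boxtimes$ preserves colimits and tensors by spaces in each variable, the comparison map becomes
\[
\bigl(E\Sigma_{n-i}\times_{\Sigma_{n-i}}X_{k-1}^{\boxtimes(n-i)}\bigr)\boxtimes\bigl(E\Sigma_{i}\times_{\Sigma_{i}}B^{\boxtimes i}\bigr)\arr\bigl(X_{k-1}^{\boxtimes(n-i)}/\Sigma_{n-i}\bigr)\boxtimes\bigl(B^{\boxtimes i}/\Sigma_{i}\bigr).
\]
The case $i=0$ is the inductive hypothesis on $k$. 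The case $i=n$ reads $E\Sigma_{n}\times_{\Sigma_{n}}B^{\boxtimes n}\arr B^{\boxtimes n}/\Sigma_{n}$, a level equivalence by Lemma~\ref{prep_for_extended_power_prop} applied to $B^{\boxtimes n}\cong F_{d}(\ast)^{\boxtimes n}\times(D^{q})^{\times n}$ with $(D^{q})^{\times n}$ a $\Sigma_{n}$-CW complex and the ``$X$'' of that lemma the unit $\sD$-space. For $0<i<n$ I would factor the displayed map through $\bigl(X_{k-1}^{\boxtimes(n-i)}/\Sigma_{n-i}\bigr)\boxtimes\bigl(E\Sigma_{i}\times_{\Sigma_{i}}B^{\boxtimes i}\bigr)$: the left factor $E\Sigma_{n-i}\times_{\Sigma_{n-i}}X_{k-1}^{\boxtimes(n-i)}\arr X_{k-1}^{\boxtimes(n-i)}/\Sigma_{n-i}$ is a weak equivalence by the inductive hypothesis on $n$ (legitimate since $n-i<n$ and $X_{k-1}$ is positive cofibrant), and the right factor $E\Sigma_{i}\times_{\Sigma_{i}}B^{\boxtimes i}\arr B^{\boxtimes i}/\Sigma_{i}$ is a level equivalence by Lemma~\ref{prep_for_extended_power_prop} as before; since the inductive hypothesis on $n$ also makes $X_{k-1}^{\boxtimes(n-i)}/\Sigma_{n-i}$, $E\Sigma_{i}\times_{\Sigma_{i}}B^{\boxtimes i}$ and $B^{\boxtimes i}/\Sigma_{i}$ positive cofibrant, Lemma~\ref{times_cofibrants_preserves_equiv} shows that boxing with any of these preserves weak equivalences, so both maps in the factorization, hence the whole comparison, are weak equivalences. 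The argument with $Q^{i}f$ in place of $B^{\boxtimes i}$ is identical, since $Q^{i}f$ is assembled from box powers of $A$ and $B$ by the same filtration.

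The step I expect to be the main obstacle is exactly this last one: the factorizations only preserve weak equivalences because the extended powers being boxed in are inductively known to be cofibrant, and this is where the hypothesis $d\neq 0$ — equivalently, that $X$ is \emph{positive} cofibrant — is indispensable, since for $d=0$ the map $E\Sigma_{i}\times_{\Sigma_{i}}B^{\boxtimes i}\arr B^{\boxtimes i}/\Sigma_{i}$ fails to be a weak equivalence and the scheme breaks down.
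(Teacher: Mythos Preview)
Your outline tracks the paper's proof closely: the same cellular induction, the same use of the filtration from Lemma~\ref{filtration_on_product_of_pushouts}, and the same reduction to the pieces $\Sigma_{n}\times_{\Sigma_{n-i}\times\Sigma_{i}}X_{k-1}^{\boxtimes(n-i)}\boxtimes B^{\boxtimes i}$ (and the $Q^{i}f$ variant). The factorization through $(X_{k-1}^{\boxtimes(n-i)}/\Sigma_{n-i})\boxtimes(E\Sigma_{i}\times_{\Sigma_{i}}B^{\boxtimes i})$ is also exactly what the paper does.

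There is one point where you diverge and it creates a real burden. For the second map in the factorization,
\[
(X_{k-1}^{\boxtimes(n-i)}/\Sigma_{n-i})\boxtimes(E\Sigma_{i}\times_{\Sigma_{i}}B^{\boxtimes i})\arr (X_{k-1}^{\boxtimes(n-i)}/\Sigma_{n-i})\boxtimes(B^{\boxtimes i}/\Sigma_{i}),
\]
you first establish $E\Sigma_{i}\times_{\Sigma_{i}}B^{\boxtimes i}\arr B^{\boxtimes i}/\Sigma_{i}$ as a level equivalence and then invoke Lemma~\ref{times_cofibrants_preserves_equiv}, which forces you to know that $X_{k-1}^{\boxtimes(n-i)}/\Sigma_{n-i}$ is positive cofibrant. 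That is the auxiliary claim you say you will ``carry along,'' but it is not proved anywhere in the paper, it is not part of the statement of the proposition, and in the positive \emph{projective} model structure it is not obvious (and likely fails in general: already $F_{d}(\ast)^{\boxtimes i}/\Sigma_{i}$ need not be a retract of an $F_{+}I$-cell complex). The paper sidesteps this entirely: Lemma~\ref{prep_for_extended_power_prop} is stated for an \emph{arbitrary} $\sD$-space in the $\boxtimes$ slot and yields a \emph{level-wise} homotopy equivalence, so one simply applies it with the ``$X$'' of that lemma equal to $X_{k-1}^{\boxtimes(n-i)}/\Sigma_{n-i}$ and no cofibrancy hypothesis is needed. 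If you reroute that one step you can drop the auxiliary cofibrancy claim for $X^{\boxtimes n}/\Sigma_{n}$ altogether; the cofibrancy of $E\Sigma_{i}\times_{\Sigma_{i}}Q^{i}f$ (which you do need for the \emph{first} map in the factorization, via Lemma~\ref{times_cofibrants_preserves_equiv}) follows from the first paragraph of the proof and the positive cofibrancy of $Q^{i}f$.
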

\begin{proof}  The space $E \Sigma_n$ is a $\Sigma_n$-equivariant CW complex constructed with free $\Sigma_n$-cells.  The induced filtration on the inclusion $X \arr E \Sigma_n \times_{\Sigma_n} X$ is by positive cofibrations.

For the second claim, we may assume that $X$ is an $F_{+}I$-cell complex and induct up the cellular filtration.  Suppose that for all $n \geq 0$ the natural transformation $q_n \colon E\Sigma_n \times_{\Sigma_n} (-) \arr (-)/\Sigma_n$ is a weak homotopy equivalence on $X^{\boxtimes n}$ and let $Y$ be the  pushout of the diagram $X \overset{g}{\longleftarrow} A \overset{f}{\arr} B$ where $f = \amalg_{\alpha} f_{\alpha}$ is a coproduct of maps in $F_{+}I$.  Apply the functor $(-)^{\boxtimes n}$ to the diagram, then consider the filtration $P_{i}^{n}(f, g)$ on $Y^{\boxtimes n}$ from Lemma \ref{filtration_on_product_of_pushouts}.  By analysis of the pushout diagram describing $P_{i}^{n}(f, g)$, it suffices to prove that $q_n$ is a weak homotopy equivalence on 
\[
\Sigma_n \times_{\Sigma_{n - i} \times \Sigma_i} X^{\boxtimes(n - i)} \boxtimes Q^{i} f \qquad \text{and} \qquad \Sigma_n \times_{\Sigma_{n - i} \times \Sigma_i} X^{\boxtimes (n - i)} \boxtimes B^{\boxtimes i}.
\]
Using the description of $Q^{i}f$ in \eqref{pushout_product_map_description}, we see that $Q^i f$ is positive cofibrant.  Hence $E\Sigma_i \times_{\Sigma_i} Q^{i} f$ is also positive cofibrant.  There is a $(\Sigma_{n - i} \times \Sigma_i)$-equivariant homotopy equivalence $E \Sigma_n \simeq E \Sigma_{n - i} \times E \Sigma_i$, so by Lemma \ref{times_cofibrants_preserves_equiv}, $q_{n - i}$ induces a weak homotopy equivalence 
\[
E \Sigma_n \times_{\Sigma_{n - i} \times \Sigma_i} X^{\boxtimes(n - i)} \boxtimes Q^{i} f \arr  X^{\boxtimes (n - i)} / \Sigma_{n - i} \boxtimes (E\Sigma_i \times_{\Sigma_i} Q^{i}f).
\]
By analyzing the $\Sigma_i$ orbits of $Q^{i} f$ as in the proof of Lemma \ref{boxempty_is_hcofibration} and making iterated use of Lemma \ref{prep_for_extended_power_prop}, we see that $q_{i}$ induces a level-wise homotopy equivalence on $X^{\boxtimes (n - i)} / \Sigma_{n - i} \boxtimes Q^{i} f$.  It follows that $q_n$ induces a weak homotopy equivalence on $\Sigma_n \times_{\Sigma_{n - i} \times \Sigma_i} X^{\boxtimes(n - i)} \boxtimes Q^{i} f$.  A similar argument shows that $q_n$ induces a weak homotopy equivalence on $\Sigma_n \times_{\Sigma_{n - i} \times \Sigma_i} X^{\boxtimes (n - i)} \boxtimes B^{\boxtimes i}$ as well.
\end{proof}

It is an immediate consequence of the proposition that the functor $\bC$ preserves weak homotopy equivalences between positive cofibrant $\sD$-spaces.  In particular, every map in $\bC K_{+}$ is a weak homotopy equivalence.  It is straightforward to prove that the functor $\bC$ preserves $h$-cofibrations of $\sD$-spaces, as in \cite{EKMM}*{XII.2.3}.  We can now use the same proof as in \cite{MMSS}*{15.9, 15.11} to prove the next lemma, which says that $\bC F_{+} I$ and $\bC K_{+}$ both satisfy the cofibration hypothesis \cite{MMSS}*{5.3}.

\begin{lemma}\label{cofibration_hypothesis_commFCPs}  Let $L$ denote either $\bC F_{+} I$ or $\bC K_{+}$.  
\begin{itemize}
\item[(i)] If $i \colon A \arr B$ is a coproduct of maps in $L$, then in any pushout diagram of commutative FCPs
\[
\xymatrix{ A \ar[d]_{i} \ar[r] & X \ar[d]^{j} \\
B \ar[r] & Y }
\]
the cobase change $j$ is an $h$-cofibration of $\sD$-spaces.
\item[(ii)] The colimit of a sequence of maps of commutative FCPs that are $h$-cofibrations in $\sD \sU$ is their colimit as a sequence of maps in $\sD \sU$.
\end{itemize}
\end{lemma}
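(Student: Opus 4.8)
The plan is to mirror the proof of \cite{MMSS}*{15.9, 15.11}, leaning on two facts already available: the filtration of a pushout of commutative FCPs along a free map supplied by Lemma \ref{filtration_on_commutative_monoid_pushout}, and the fact, noted just above, that $\bC$ preserves $h$-cofibrations of $\sD$-spaces.

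For part (i), I would first reduce to the case $i = \bC f$. Since the coproduct in the category of commutative FCPs is given by $\boxtimes$, a coproduct of maps $\bC f_{\alpha}$ is naturally isomorphic to $\bC\bigl(\coprod_{\alpha} f_{\alpha}\bigr)$; hence a coproduct of maps in $L$ has the form $\bC f$ with $f = \coprod_{\alpha} f_{\alpha}$ a coproduct of generating positive cofibrations (when $L = \bC F_{+}I$) or of generating acyclic cofibrations (when $L = \bC K_{+}$). The map $\bC A \arr X$ is then induced by a map of $\sD$-spaces, so Lemma \ref{filtration_on_commutative_monoid_pushout} exhibits the cobase change $j \colon X \arr Y$ as the transfinite composite of a sequence $X = P_{0} Y \arr P_{1} Y \arr \dotsm$ in which each map $P_{n-1}Y \arr P_{n}Y$ is an $h$-cofibration of $\sD$-spaces; since $h$-cofibrations are closed under sequential colimits, $j$ is an $h$-cofibration of $\sD$-spaces.

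The step I expect to be the main obstacle is the case $L = \bC K_{+}$: Lemmas \ref{filtration_on_product_of_pushouts} and \ref{boxempty_is_hcofibration} state the relevant $h$-cofibration conclusions only when $f$ is a coproduct of maps in $F_{+}I$, whereas here $f$ is a coproduct of elements of $K_{+}$. I would handle this by revisiting the orbit analysis in the proof of Lemma \ref{boxempty_is_hcofibration}: every element of $K_{+}$ is, at the level of $\sD$-spaces, of the form $F_{d}(\ell)$ with $d \neq 0$ and $\ell$ a retract of a relative CW inclusion of spaces (for $F_{+}J$), or a pushout-product $k_{\phi} \boxempty i$ with $k_{\phi}$ a cofibration and $i \in I$; in each case the iterated pushout products can be taken $\Sigma_{k}$-equivariantly and $F_{d}$ preserves $h$-cofibrations, so the passage to $\Sigma_{n}$-orbits still yields $h$-cofibrations exactly as in Lemma \ref{boxempty_is_hcofibration}. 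With this in hand the filtration argument of Lemma \ref{filtration_on_commutative_monoid_pushout} goes through verbatim for $\bC K_{+}$, and part (i) follows.

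For part (ii), I would argue that the forgetful functor from commutative FCPs to $\sD$-spaces creates colimits of sequences. The functor $\bC$ commutes with sequential colimits, since each $\boxtimes$-power and symmetric power $(-)^{\boxtimes n}/\Sigma_{n}$ does --- using that the diagonal $\bbN \arr \bbN^{n}$ is cofinal and that colimits commute with colimits. Given a sequence $R_{0} \arr R_{1} \arr \dotsm$ of commutative FCPs whose maps are $h$-cofibrations of $\sD$-spaces, with underlying colimit $R = \colim_{n} R_{n}$ formed in $\sD\sU$, the algebra structure maps assemble into $\bC R \cong \colim_{n} \bC R_{n} \arr \colim_{n} R_{n} = R$, making $R$ the colimit in commutative FCPs; the $h$-cofibration hypothesis is what guarantees this colimit is formed along well-grounded maps, matching the form of the cofibration hypothesis \cite{MMSS}*{5.3}.
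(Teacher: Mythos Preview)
Your approach is correct and matches the paper's, which simply cites \cite{MMSS}*{15.9, 15.11} together with the observation that $\bC$ preserves $h$-cofibrations; your explicit filtration argument via Lemma \ref{filtration_on_commutative_monoid_pushout} is precisely what underlies that citation. Your identification of the $K_{+}$ case as needing a mild extension of Lemma \ref{boxempty_is_hcofibration} is apt and your sketch for handling it is sound.
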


Following the proof of \cite{MMSS}*{15.4}, we see that every relative $\bC K_{+}$ cell complex is a weak homotopy equivalence.  Combined with the cofibration hypothesis for $\bC F_{+}I$ and $\bC K_{+}$, this verifies the hypotheses for the model structure lifting result \cite{MMSS}*{5.13}.  This completes the construction of the model structure on commutative FCPs and the proof of Theorem \ref{existence_commFCP_modelstructure_thm}.

We now turn to proving that the adjunction $(\bbP, \bbU)$ between commutative $\bbI$-FCPs and commutative $\cI$-FCPs is a Quillen equivalence.  We will use an inductive argument that is general enough to be useful in a few different circumstances.

\begin{proposition}\label{general_equiv_cofibrant_FCPs_via_induction}  
For $X$ a cofibrant commutative $\bbI$-FCP, the following maps are weak homotopy equivalences:
\begin{itemize}
\item[(i)] the map $\hocolim_{\bbJ} \bbP X \arr \hocolim_{\bbI} \bbP X$ induced by the inclusion of categories $\bbJ \arr \bbI$.
\item[(ii)]  the unit $\eta \colon X \arr \bbU \bbP X$ of the adjunction $(\bbP, \bbU)$.
\end{itemize}
For $X$ a cofibrant commutative $\cI$-FCP, the following maps are weak homotopy equivalences:
\begin{itemize}
\item[(iii)] the map $\xi \colon \bbQ X \arr \bbO X$ induced by a choice of one-dimensional subspace of the universe $U$ (see \S \ref{section_equivalence_cI_spaces_L_spaces}, in partiular Lemma \ref{xi_prop}),
\item[(iv)] the canonical projection $\pi \colon \hocolim_{\cJ} X \arr \colim_{\cJ} X$ from the homotopy colimit to the colimit.
\end{itemize}
\end{proposition}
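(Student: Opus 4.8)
The four assertions are exactly the commutative‑FCP analogues of facts already established for diagram spaces: (i) and (ii) match Proposition~\ref{eta_equiv}, (iii) matches Lemma~\ref{xi_prop}, and (iv) matches Lemma~\ref{cofibrant_colim} (via the isomorphism $\bbO X\cong\colim_{\cJ}X$ of Lemma~\ref{O_is_colim}). The plan is to bootstrap from those results by a transfinite induction up the cellular filtration of a cofibrant commutative FCP. Since every cofibrant commutative $\sD$‑FCP is a retract of a $\bC F_{+}I$‑cell complex, and since all of the functors in sight — the prolongation $\bbP$, the restriction $\bbU$, the homotopy colimits over $\bbJ$, $\bbI$ and $\cJ$, the colimit over $\cJ$, and $\bbQ$ and $\bbO$ — commute with tensors with spaces, pushouts and sequential colimits (hence preserve $h$‑cofibrations), while the weak homotopy equivalences of $\sD$‑spaces are well‑grounded, it suffices to verify each of the four maps on the building blocks occurring in the filtration. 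We also use that $\bbP$ and $\bbQ$ are strong symmetric monoidal, so they carry $\bC$ to $\bC$ and commute with the formation of symmetric powers, and that the colimit functors commute with $\Sigma_n$‑orbits and with $E\Sigma_n\times_{\Sigma_n}(-)$.

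For the inductive step, assume the four assertions for a cofibrant commutative FCP $X$, and let $Y$ be the pushout of $\bC A\xrightarrow{\bC f}\bC B$ along a map $\bC A\to X$, with $f$ a coproduct of generating positive cofibrations. By Lemma~\ref{filtration_on_commutative_monoid_pushout} the underlying $\sD$‑space of $Y$ is the sequential colimit of a filtration $X=P_0Y\to P_1Y\to\cdots$, where $P_nY$ is built from $P_{n-1}Y$ by a pushout along the $h$‑cofibration $X\boxtimes Q^nf/\Sigma_n\to X\boxtimes B^{\boxtimes n}/\Sigma_n$. Each of the four maps is natural, hence induces a map of such filtrations, and well‑groundedness reduces the problem to showing that each is a weak homotopy equivalence on $X\boxtimes Q^nf/\Sigma_n$ and on $X\boxtimes B^{\boxtimes n}/\Sigma_n$. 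Using the description \eqref{pushout_product_map_description} of $Q^nf$ and $B^{\boxtimes n}$ as coproducts of free $\Sigma_n$‑cells of the form $F_d(\ast)^{\boxtimes n}\times K$, this comes down to the terms $X\boxtimes\bigl((F_d(\ast)^{\boxtimes n}\times K)/\Sigma_n\bigr)$. Now Lemma~\ref{prep_for_extended_power_prop} identifies such a term, up to a natural levelwise homotopy equivalence, with $X\boxtimes\bigl(E\Sigma_n\times_{\Sigma_n}(F_d(\ast)^{\boxtimes n}\times K)\bigr)$, whose second factor is positive cofibrant, and Proposition~\ref{symmetric_power_equiv} supplies the comparison $E\Sigma_n\times_{\Sigma_n}W^{\boxtimes n}\to W^{\boxtimes n}/\Sigma_n$ for positive cofibrant $W$. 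Feeding this through the strong monoidal functors $\bbP$ and $\bbQ$, and through $\hocolim$ and $\colim$ using their commutation with orbits, applying Lemma~\ref{times_cofibrants_preserves_equiv} to the cofibrant factors, and invoking the inductive hypothesis on $X$, one reduces each of (i)--(iv) on these building blocks to the already‑known statement for a cofibrant $\sD$‑space: Proposition~\ref{eta_equiv} for (i) and (ii), Lemma~\ref{xi_prop} for (iii), and Lemma~\ref{cofibrant_colim} together with the contractibility of $B\cJ$ for (iv). The colimit stage of the transfinite induction is then immediate from well‑groundedness.

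The main obstacle — and the reason one cannot simply imitate the proof of Proposition~\ref{eta_equiv} — is that a cofibrant commutative FCP need not be cofibrant as a $\sD$‑space, so the building blocks $X\boxtimes B^{\boxtimes n}/\Sigma_n$ are not cofibrant and Lemma~\ref{times_cofibrants_preserves_equiv} does not apply directly to the factor $X$. The way around this is to handle the symmetric powers through the homotopy‑orbit comparisons of Proposition~\ref{symmetric_power_equiv} and Lemma~\ref{prep_for_extended_power_prop}, and — because the factor $X$ in $X\boxtimes(-)$ is itself only cofibrant as a commutative FCP — to interleave (i)--(iv) with the auxiliary assertion that $X\boxtimes(-)$ preserves weak homotopy equivalences whenever $X$ is a cofibrant commutative FCP, proving all of these in a single transfinite induction (the auxiliary assertion is itself verified on the same building blocks, with $K=\ast$, by Proposition~\ref{symmetric_power_equiv}, Lemma~\ref{prep_for_extended_power_prop} and Lemma~\ref{times_cofibrants_preserves_equiv}). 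Beyond this, the work is bookkeeping: tracking the $\Sigma_n$‑equivariance in Lemma~\ref{prep_for_extended_power_prop} and checking that the natural transformations $\eta$, $\xi$ and the projection $\pi$ are compatible with the filtration of Lemma~\ref{filtration_on_commutative_monoid_pushout}.
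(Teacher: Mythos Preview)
Your proposal shares the paper's overall scaffold --- use the filtration of Lemma~\ref{filtration_on_commutative_monoid_pushout} and compare strict orbits to homotopy orbits via Proposition~\ref{symmetric_power_equiv} and Lemma~\ref{prep_for_extended_power_prop} --- but the inductive scheme you set up does not close, and the paper organizes the induction quite differently to avoid exactly the obstacle you identify.

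In your inductive step you attach cells to an arbitrary cofibrant commutative FCP $X$, and the filtration hands you terms of the form $X\boxtimes Q^{n}f/\Sigma_{n}$. After using Lemma~\ref{prep_for_extended_power_prop} you are left needing $\psi$ to be a weak equivalence on $X\boxtimes W$ with $W$ positive cofibrant. Your plan is to ``feed through strong monoidal functors'' and use the inductive hypothesis on $X$ together with the auxiliary assertion that $X\boxtimes(-)$ preserves weak equivalences. But this does not give what you need: the functors $\hocolim_{\bbJ}$, $\hocolim_{\bbI}$, $\hocolim_{\cJ}$, $\colim_{\cJ}$, $\bbU$, and $\bbO$ (which is only \emph{lax} monoidal) do not commute with $X\boxtimes(-)$, so knowing that $\psi_{X}$ and $\psi_{W}$ are equivalences, or that $X\boxtimes(-)$ preserves equivalences, does not produce an equivalence $F(X\boxtimes W)\to G(X\boxtimes W)$. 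The auxiliary assertion is a statement about the \emph{source} category, whereas $\psi$ lives after applying $F$ and $G$.

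The paper sidesteps this entirely by a two-tier induction on the \emph{number of stages} of cell attachment rather than on the cells themselves. The base case is a single stage starting from the unit FCP $\ast$, so the filtration terms are just $Q^{n}f/\Sigma_{n}$ with no $X\boxtimes(-)$ factor, and the comparison with $E\Sigma_{n}\times_{\Sigma_{n}}Q^{n}f$ reduces directly to the diagram-space results. For the passage from $n$ stages to $n+1$, the paper writes $X_{n+1}\cong B(X_{n},\bC A,\bC T)$ as a proper simplicial object whose $q$-simplices $X_{n}\boxtimes(\bC A)^{\boxtimes q}\boxtimes\bC T\cong X_{n}\boxtimes\bC(A^{\amalg q}\amalg T)$ are again cofibrant commutative FCPs constructible in $n$ stages (because $\boxtimes$ is the coproduct in commutative FCPs). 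The inductive hypothesis then applies to each simplicial level \emph{as a whole}, and one never has to analyze $\psi$ on a product $X\boxtimes W$ with $W$ a mere $\sD$-space. This bar-construction reduction is the missing idea in your outline.
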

\begin{proof}  Write $\psi \colon F \arr G$ for any of the four maps.  We will repeatedly use the fact that all of the functors in (i) -- (iv) preserve colimits and tensors with spaces.  First consider the effect of $\psi$ on a $\bC F_{+} I$-cell complex constructed in a single stage of cell attachment.  Consider the pushout diagram of commutative FCPs
\[
\xymatrix{
\bC A \ar[d]_{\bC f} \ar[r] & \ast \ar[d]^{\overline{f}} \\
\bC B \ar[r] & X}
\]
where $f = \coprod f_{\alpha}$ is a coproduct of generating cofibrations $f_\alpha \colon F_{\bm_\alpha} S^{q_\alpha} \arr F_{\bm_\alpha} D^{q_\alpha + 1}$ in $F_{+}I$.  Consider the filtration $\{P_n X\}$ of $\overline{f}$ given by Lemma \ref{filtration_on_commutative_monoid_pushout}.  The diagram space $P_n X$ is the pushout of the diagram
\[
P_{n - 1} X \longleftarrow Q^{n}f / \Sigma_n  \xrightarrow{f^{\boxempty n}/\Sigma_{n}}  B^{\boxtimes n} / \Sigma_n 
\]
Assume inductively that $\psi$ is a weak equivalence on $P_{n - 1} X$.  Since $f^{\boxempty n}/\Sigma_n$ is an $h$-cofibration, it suffices to prove that $\psi$ is a weak homotopy equivalence on the middle and right entries.

Consider the following commutative diagram
\[
\xymatrix{
F(E \Sigma_n \times_{\Sigma_n} Q^n f) \ar[r]^{\psi} \ar[d]_{F q_n } & G(E \Sigma_n \times_{\Sigma_n} Q^n f) \ar[d]^{G q_n} \\
F(Q^n f / \Sigma_n) \ar[r]_{\psi} & G(Q^n f / \Sigma_n) }
\]
By the description of $Q^n f$ in \eqref{pushout_product_map_description}, we see that $Q^n f$ is a positive cofibrant diagram space.  Hence $E \Sigma_n \times_{\Sigma_n} Q^n f$ is also positive cofibrant by Proposition \ref{symmetric_power_equiv}.  Each of the four natural transformations under consideration is a weak homotopy equivalence on positive cofibrant diagram spaces (Proposition \ref{eta_equiv}, Lemma \ref{xi_prop}, and Lemma \ref{cofibrant_colim}).  Thus the top map in the diagram is a weak homotopy equivalence.  We will now prove that the vertical maps in the diagram are weak homotopy equivalences in each of the three cases.

(i)  The map $q_n$ is a weak homotopy equivalence by Proposition \ref{symmetric_power_equiv}.  Since $\bbP$ commutes with colimits and is strong symmetric monoidal, there is a natural isomorphism between $\bbU \bbP q_n$ and the map
\[
\bbU q_n \colon \bbU( E \Sigma_n \times_{\Sigma_n} Q^n (\bbP f)) \arr \bbU Q^n (\bbP f) / \Sigma_n.
\]
As observed in the proof of Proposition \ref{symmetric_power_equiv}, $q_n$ is in fact a level-wise homotopy equivalence on $Q^n \bbP f$.  Since $\bbU$ preserves level-wise homotopy equivalences, the claim follows.

(ii)  Since $q_n$ is a level-wise homotopy equivalence, both $\hocolim_{\bbJ} q_n$ and $\hocolim_{\bbI} q_n$ are weak homotopy equivalences.

(iii) Recall the description of $Q^n f$ in \eqref{pushout_product_map_description} and set $K = S^1 \sma S^{q_{\alpha_1}} \sma \dotsm \sma S^{q_{\alpha_n}}$.  From the definition of $\bbQ$ we see that:
\begin{align*}
\bbQ ( Q^{n} f / \Sigma_n ) \cong \ast \times_{\Sigma_n} \coprod_{(\alpha_1, \dotsc, \alpha_n)} \cI_c((V_{\alpha_1} \oplus \dotsm \oplus V_{\alpha_{n}}) \otimes U, U) \times K \\ \intertext{and} 
\bbQ ( E \Sigma_{n} \times_{\Sigma_{n}} Q^{n} f  ) \cong E \Sigma_{n} \times_{\Sigma_n} \coprod_{(\alpha_1, \dotsc, \alpha_n)} \cI_c((V_{\alpha_1} \oplus \dotsm \oplus V_{\alpha_{n}}) \otimes U, U) \times K.
\end{align*}
The map $\bbQ q_n$ induced by projecting $E \Sigma_{n}$ to a point is a homotopy equivalence because the symmetric group acts freely on the coproduct of spaces of isometries.  A similar argument using the definition of $\bbO$ shows that the analogous map $\bbO q_n$ is also a homotopy equivalence.  

(iv) There is a natural isomorphism $\colim_{\cJ} X \cong \bbO X$ (Lemma \ref{O_is_colim}), so we have already proved the claim for the colimit functor.  The case of the homotopy colimit functor is proved in the same way.

Returning to the general case, it now follows that the map $\psi$ is a weak homotopy equivalence on $Q^n f / \Sigma_n$.  A similar argument proves that $\psi$ is a weak homotopy equivalence on $B^{\boxtimes n} / \Sigma_n$.  Thus $\psi$ is a weak homotopy equivalence on $P_n Y$.  Passing to colimits, we have proved that $\psi$ is a weak homotopy equivalence on $\bC F_{+} I$-cell complexes constructed in a single stage of cell attachment.

Now we inductively assume that $\psi$ is a weak homotopy equivalence on $\bC F_{+} I$-cell complexes that can be constructed in $n$ stages, and consider the case of an $\bC F_{+} I$-cell complex $X$ that is constructed in $n + 1$ stages.  Write $X = X_{n} \boxtimes_{\bC A} \bC B$ where $X_n$ is a $\bC F_{+} I$-cell complex constructed in $n$ stages and $\bC f \colon \bC A \arr \bC B$ is induced by a coproduct $f = \coprod f_{\alpha}$ of generating cofibrations in $F_{+} I$.  Following the proof of \cite{MMSS}*{15.9}, we write $X$ as a two sided bar construction $X \cong B(X_{n}, \bC A, \bC T)$ where $T = \coprod_{\alpha} F_{d_{\alpha}} (\ast)$ is a coproduct of free diagram spaces on a point.  This bar construction is proper and all of the functors occuring in (i) -- (iii) preserve geometric realization of simplicial diagram spaces and $h$-cofibrations, so it suffices to prove that $\psi$ is a weak homotopy equivalence on the diagram space of $q$-simplices:
\[
X_{n} \boxtimes (\bC A)^{\boxtimes q} \boxtimes \bC T \cong X_{n} \boxtimes \bC (A \amalg \dotsm \amalg A \amalg T).
\]
This $\bC F_{+}I$-cell complex can be constructed in $n$ stages, so the result follows by the induction hypothesis.
\end{proof}

As a consequence of the weak equivalences (i) and (ii), the proof of Theorem \ref{equivalence_of_I_and_If_spaces} can be extended to prove Theorem \ref{equivalence_of_commutative_I_and_If_spaces}.  Notice that the comparison between $X(0)$ and $\hocolim X$ in the cited proof can be replaced by $X(1)$ and $\hocolim X$ in the case of positive fibrant $X$ by use of Lemma \ref{bokstedt_lemma}.

\appendix
\section{Topological categories, the bar construction and $\sL$-space structures}\label{Topological Categories and the Bar Construction}

We gather here the basic theory of bar constructions and homotopy colimits defined over topological categories.  Much of this material has appeared elsewhere (e.g. \cite{vogt} and references therein), but it will be useful to lay out exactly what we need.  In this paper a topological category does \emph{not} mean a category enriched in topological spaces, but rather a category internal to topological spaces.  Thus a topological category $\sD$ consists of a space of objects $\ob \sD$, a space of morphisms $\mor \sD$, and structure maps
\begin{align*}
s, t &\colon \mor \sD \arr \ob \sD, \\
i &\colon \ob\sD \arr \mor \sD, \\
\circ &\colon \mor \sD \times_{\ob\sD} \mor \sD \arr \mor \sD
\end{align*}
that are appropriately associative and unital.  Notice that $\mor \sD$ is a space over $(\ob \sD)^2$ via $s$ and $t$ and that $\ob \sD$ is a space over $(\ob \sD)^2$ via the diagonal map.  We will further require that $i$ is an $h$-cofibration of spaces over $(\ob \sD)^2$, as holds in all of the examples that we use.  We write $A \times_{\sD} B$ for the pullback $A \times_{\ob \sD} B$ of spaces over $\ob \sD$.  A left $\sD$-module $\sX$ consists of a space $\sX$ along with a map $t \colon \sX \arr \ob \sD$ and an action map
\[
\lambda \colon \mor \sD \times_{\sD} \sX \arr \sX
\]
that is associative and unital.  A right $\sD$-module $\sY$ is the same structure except that we label the structure map by $s \colon \sY \arr \ob \sD$ and $\sD$ acts on the right: 
\[
\rho \colon \sY \times_{\sD} \mor \sD \arr \sY.  
\]
Forgetting the topology on $\ob \sD$, a left $\sD$-module $\sX$ determines a continuous functor $X \colon \sD \arr \sU$ of categories enriched in spaces.  A right $\sD$-module $\sY$ determines a continuous functor $Y \colon \sD^{\op} \arr \sU$.  

\begin{definition}  Let $\sD$ be a topological category, $\sX$ a left $\sD$-module and $\sY$ a right $\sD$-module.  The bar construction $B(\sY, \sD, \sX)$ is the geometric realization of the simplicial space with $q$-simplices defined  by
\[
B_q(\sY, \sD, \sX) = \sY \times_{\sD} \mor \sD \times_{\sD} \dotsm \times_{\sD} \mor \sD \times_{\sD} \sX,
\]
where $\mor \sD$ appears $q$ times.  Insertion of identity arrows via $i \colon \ob \sD \arr \mor \sD$ provides the degeneracy maps and the composition in $\sD$ along with $\lambda$ and $\rho$ provide the face maps.  Our assumption that $i$ is an $h$-cofibration insures that $B_*(\sY, \sD, \sX)$ is proper.

We write $\ast$ for the $\sD$-module given by the identity map $\ob \sD \arr \ob \sD$.  Its underlying functor is constant at the one-point space $\ast$.  When $\sY = \ast$,  the bar construction defines the (topological) homotopy colimit of $\sX$ over $\sD$:
\[
\hocolim_{\sD} \sX = B(*, \sD, \sX).
\]
\end{definition}

First let us record a basic commutation relation.  The bivariance of $\sD(-, -)$ makes $\mor \sD$ the total space of a $\sD$-bimodule that we denote by $\sD$. Let $\sX$ be a left $\sD$-module.  Considering $\sX$ as a constant simplicial space, we have a simplicial map $\epsilon_* \colon B_*(\sD, \sD, \sX) \arr \sX$ defined on $q$-simplices as the $(q + 1)$-fold iteration $\lambda^{q + 1}$ of the left-module structure map.  Its geometric realization $\epsilon$ is a map of left $\sD$-modules.  Given a right $\sD$-module $\sY$, we define a map of right $\sD$-modules $\epsilon \colon  B(\sY, \sD, \sD) \arr \sY$ in a similar way.    The bimodule structure of $\sD$ allows iterated bar constructions $B(\sY, \sD, B(\sD, \sD, \sX))$ and $B(B(\sY, \sD, \sD), \sD, \sX)$ that are canonically isomorphic.

\begin{lemma}\label{appendix_homotopy_diagram}
The following diagram of spaces commutes up to homotopy:
\[
\xymatrix{
B(\sY, \sD, B(\sD, \sD, \sX)) \ar[rr]^{\cong} \ar[dr]_{B(\id, \id, \epsilon)} & &  \ar[dl]^{B(\epsilon, \id, \id)} B(B(\sY, \sD, \sD), \sD, \sX) \\
& B(\sY, \sD, \sX) & }
\]
\end{lemma}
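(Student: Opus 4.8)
The plan is to exhibit an explicit simplicial homotopy between the two composites, both of which are maps out of the common iterated bar construction $B(\sY, \sD, B(\sD, \sD, \sX))$ (identified with $B(B(\sY, \sD, \sD), \sD, \sX)$ via the canonical interchange isomorphism). Unwinding definitions, a $q$-simplex of the iterated bar construction is a string of the form
\[
y \times f_1 \times \dotsm \times f_q \times f_{q+1} \times \dotsm \times f_{q + q'} \times x,
\]
but more usefully we should view the iterated realization through the standard trick: $B(\sD, \sD, \sX) \simeq \sX$ via $\epsilon$, which is a \emph{deformation retraction} of left $\sD$-modules (the homotopy inverse being the unit inclusion $\sX = B_0(\sD, \sD, \sX) \arr B(\sD, \sD, \sX)$, and the retracting homotopy built from the ``extra degeneracy'' $s_{-1}$ coming from inserting identity morphisms). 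Symmetrically, $\epsilon \colon B(\sY, \sD, \sD) \arr \sY$ is a deformation retraction of right $\sD$-modules. First I would record these two extra-degeneracy contractions precisely as simplicial homotopies.

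Then the argument runs as follows. Consider the bisimplicial space $B_p(\sY, \sD, \sD)_? \times_{\sD} B_q(\sD, \sD, \sX)$ — more precisely the trisimplicial object $B_p(\sY,\sD,\sD) \times_{\sD} \underbrace{\mor\sD \times_\sD \dotsm}_{r} \times_{\sD} B_q(\sD,\sD,\sX)$ whose realization in the outer $r$-direction first, or the realization collapsing $p$ and $q$, recovers both sides. Applying $B(\id,\id,\epsilon)$ realizes the $q$-direction using the augmentation $\epsilon \colon B(\sD,\sD,\sX) \arr \sX$; applying $B(\epsilon,\id,\id)$ realizes the $p$-direction using $\epsilon \colon B(\sY,\sD,\sD)\arr\sY$. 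Since $B_\bullet(\sD,\sD,\sX)$ and $B_\bullet(\sY,\sD,\sD)$ each carry extra degeneracies making their augmentations to $\sX$, resp. $\sY$, into simplicial homotopy equivalences, both $B(\id,\id,\epsilon)$ and $B(\epsilon,\id,\id)$ are homotopy equivalences, and moreover they fit into a commuting-up-to-homotopy triangle because on the full trisimplicial object both ways of collapsing down to $B(\sY,\sD,\sX)$ are related by the standard shuffle/Eilenberg--Zilber homotopy. Concretely I would produce an explicit simplicial homotopy $H \colon \Delta^1 \times B_\bullet(\sY,\sD,B(\sD,\sD,\sX)) \arr B_\bullet(\sY,\sD,\sX)$ by interpolating between the two augmentations: at one end collapse the inner $B(\sD,\sD,\sX)$ via $\epsilon$ and then the outer, at the other end do it in the reverse order, using the extra degeneracy $s_{-1}$ on the relevant face to slide identity morphisms across. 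Properness (guaranteed by the assumption that $i \colon \ob\sD \arr \mor\sD$ is an $h$-cofibration of spaces over $(\ob\sD)^2$) ensures all of this passes to geometric realization without cofibrancy obstructions.

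I expect the main obstacle to be purely bookkeeping: writing down the homotopy $H$ in a way that is manifestly compatible with the $\times_{\sD}$ pullback structure at each simplicial level, since the face and degeneracy maps of $B_\bullet(\sY,\sD,B(\sD,\sD,\sX))$ involve the composition maps $\circ$ and the module actions $\lambda, \rho$ in a somewhat intricate way, and one must check that the extra degeneracy on the inner bar complex really does descend through the pullbacks over $\ob\sD$. An alternative — and perhaps cleaner — route that I would consider is to avoid explicit simplicial homotopies entirely: observe that $\epsilon \colon B(\sD,\sD,\sX) \arr \sX$ admits a section $\iota$ which is a map of left $\sD$-modules up to homotopy (in fact the inclusion of $0$-simplices), so that $B(\id,\id,\epsilon)$ has homotopy inverse $B(\id,\id,\iota)$; then the two triangle legs, post-composed with this inverse, are both homotopic to the canonical map $B(B(\sY,\sD,\sD),\sD,\sX) \arr B(\sY,\sD,\sX)$ induced by $\epsilon \colon B(\sY,\sD,\sD)\arr\sY$, using naturality of the interchange isomorphism in bar constructions. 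Either way the content is soft, and the cited properness hypothesis is what makes the homotopies realize.
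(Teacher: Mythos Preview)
Your strategy is sound in spirit---the lemma is soft and either of your routes (an explicit simplicial homotopy built from extra degeneracies, or the section-plus-naturality argument) can be pushed through---but you are making it harder than necessary, and the paper's argument is shorter and more direct.

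The paper does not introduce a trisimplicial object, nor does it invoke extra degeneracies or Eilenberg--Zilber. Instead it identifies both iterated bar constructions with the realization of the diagonal of the \emph{bisimplicial} space $B_{*,*}(\sY,\sD,\sD,\sD,\sX)$ whose $(p,q)$-simplices are
\[
\sY \times_{\sD} (\mor\sD)^p \times_{\sD} \mor\sD \times_{\sD} (\mor\sD)^q \times_{\sD} \sX.
\]
Under this identification the two legs of the triangle become simplicial maps $f,g\colon d_q B(\sY,\sD,\sD,\sD,\sX)\to B_q(\sY,\sD,\sX)$ given simply by $f=\lambda_{\sX}^{q+1}$ and $g=\rho_{\sY}^{q+1}$: collapse all the $\mor\sD$ factors onto $\sX$ via the left action, respectively onto $\sY$ via the right action. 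The paper then writes down the simplicial homotopy in one line:
\[
h_i = (d'_0)^i\, d''_i \dotsm d''_q, \qquad 0\le i\le q,
\]
where $d'$ and $d''$ are the face maps in the two bisimplicial directions. In words, $h_i$ first collapses the last $q-i+1$ inner factors onto $\sX$, then the first $i$ outer factors onto $\sY$; varying $i$ interpolates between ``collapse everything right'' and ``collapse everything left''. Since simplicial homotopies realize to topological homotopies automatically, no appeal to properness is needed for this step. Your extra $r$-direction and the Eilenberg--Zilber machinery are packaging that obscures this elementary interpolation; your instinct that the obstacle is pure bookkeeping is correct, but the paper's bookkeeping fits in a single displayed formula.
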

\begin{proof}
The iterated bar constructions are the geometric realizations of the bisimplicial space $B_{*, *}(\sY, \sD, \sD, \sD, \sX)$ with $(p,q)$-simplices:
\[
\sY \times_{\sD} (\mor \sD)^{p} \times_{\sD} \mor \sD \times_{\sD} (\mor \sD)^{q} \times_{\sD} \sX.
\]
Of course the products $(\mor \sD)^p$ are really pullbacks over $\ob \sD$ so that the morphisms are composable.  The order in which the simplicial directions are realized determines the order of iteration of the bar construction.  Both $B(\sY, \sD, B(\sD, \sD, \sX))$ and $B(B(\sY, \sD, \sD), \sD, \sX)$ are canonically isomorphic to the geometric realization of the diagonal simplicial space $d_* B(\sY, \sD, \sD, \sD, \sX)$ with $q$-simplices:
\[
d_qB(\sY, \sD, \sD, \sD, \sX) = B_{q, q}(\sY, \sD, \sD, \sD, \sX).
\]
Under these identifications, the two routes in the diagram are the geometric realizations of the maps of simplicial spaces $f, g \colon d_* B(\sY, \sD, \sD, \sD, \sX) \arr B_*(\sY, \sD, \sX)$ given by $f = \lambda_{\sX}^{q + 1}$ and $g = \rho_{\sY}^{q + 1}$.  We will define a simplicial homotopy from $f$ to~$g$.  

Let $d'_i$, respectively $d^{''}_i$, denote the $i$-th face map of the bisimplicial space in the first ($p$), respectively second ($q$), direction.  We also write $d'_i$ and $d''_i$ for the effect of these maps on the diagonal simplicial space.  Define $h_i \colon d_qB(\sY, \sD, \sD, \sD, \sX) \arr B_{q + 1}(\sY, \sD, \sX)$ by:
\[
h_i = (d'_{0})^{i} d''_{i} \dotsm d''_{q}, \qquad \qquad  \text{$0 \leq i \leq q$.}
\]
Notice that $h_i$ applies the last face map in the second direction $q - i$ times, then applies the first face map in the first direction $i$ times.  In symbols (omitting the objects from the notation), 
\begin{align*}
h_i ( y; \phi'_q, \dotsc, \phi'_1; \phi; &\phi''_q, \dotsc, \phi''_1; x) \\
 &= (x; \phi'_q, \dotsc, \phi'_{i + 1}, (\phi'_{i} \dotsm \phi'_{1} \phi \phi''_{q} \dotsm \phi''_{i + 1}), \phi''_{i}, \dotsc, \phi''_{1}; y).
\end{align*}
It is straightforward to check that $h_i$ defines a simplicial homotopy from $f$ to $g$.
\end{proof}

We can think of the topological bar construction $B(\sY, \sD, \sX)$ as a derived or homotopy coherent version of the tensor product of functors $\sY \otimes_{\sD} \sX$.  The latter is a version of an enriched coend that takes the topology on $\ob \sD$ into account.  We define $\sY \otimes_{\sD} \sX$ as the coequalizer of the last two face maps in the simplicial space giving rise to the bar construction:
\[
\xymatrix{
\sY \times_{\sD} \mor \sD \times_{\sD} \sX \ar@<.5ex>[r]^-{\lambda} \ar@<-.5ex>[r]_-{\rho} & \sY \times_{\sD} \sX \ar[r] & \sY \otimes_{\sD} \sX
}
\]
Notice that there is a canonical quotient map $\pi \colon B(\sY, \sD, \sX) \arr \sY \otimes_{\sD} \sX$.  From this perspective, it is not clear that $\sY \otimes_{\sD} \sX$ agrees with the enriched coend $\int^{d \in \sD} Y(d) \times X(d)$ of the bifunctor $Y \times X$.  The latter is calculated as the coequalizer of the diagram
\[
\xymatrix{
\underset{c, d \in \ob \mathrm{sk} \sD}{\displaystyle\coprod} Y(d) \times \sD(c, d) \times X(c) \ar@<.5ex>[r] \ar@<-.5ex>[r] & \underset{d \in \ob \mathrm{sk} \sD}{\displaystyle\coprod} Y(d) \times X(d) 
}
\]
and does not depend on the topology of $\ob \sD$.  We will now give a general procedure for constructing $\sD$-modules for which the tensor product $ - \otimes_{\sD} -$ agrees with the coend.

\begin{construction}\label{module_from_Dspace_construction}
Let $X$ be a $\sD$-space and let $(\mor \sD)_{d}$ denote the space of morphisms with source $d$, i.e. the pullback of $s \colon \mor \sD \arr \ob \sD$ along the inclusion $d \colon * \arr \ob \sD$.  Define $\sX$ to be the enriched coend $\int^{d \in \sD} (\mor \sD)_{d} \times X(d)$.  
The target map $t \colon (\mor \sD)_d \arr \ob \sD$ induces a map $t \colon \sX \arr \ob \sD$ and the left action of $\mor \sD$ on $(\mor \sD)_d$ defined by the composition map $\circ$ in $\sD$ defines a left $\sD$-module structure on $\sX$.  Similarly, given a functor $Y \colon \sD^{\op} \arr \sU$, we have the right $\sD$-module $\sY = \int^{d \in \sD} Y(d) \times {}_{d}(\mor \sD)$, where ${}_{d}(\mor \sD)$ is the space of morphisms with target $d$.
\end{construction}

\begin{proposition}\label{tensor_product_ok_prop}  Suppose that $X$ is a $\sD$-space and $Y$ is a $\sD^{\op}$-space, and let $\sX$ and $\sY$ be the left and right $\sD$-modules defined as above.  Then $\sY \otimes_{\sD} \sX$ is canonically isomorphic to the enriched coend $\int^{d \in \sD} Y(d) \times X(d)$.  
\end{proposition}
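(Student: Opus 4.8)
The plan is to identify $\sY\otimes_\sD\sX$ with the enriched coend by exploiting that, by Construction~\ref{module_from_Dspace_construction}, the left module $\sX$ is a coend of \emph{representable} left $\sD$-modules, combined with a topological co-Yoneda lemma for $-\otimes_\sD-$ and the fact that $-\otimes_\sD-$ preserves colimits in each variable.

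First I would record the co-Yoneda lemma. For an object $c$ of $\sD$, write $\sD[c]$ for the left $\sD$-module $(\mor\sD)_c = s^{-1}(c)$, with $\sD$ acting by postcomposition and structure map $t\colon(\mor\sD)_c\to\ob\sD$; its underlying $\sD$-space is the represented one $\sD(c,-)$, and by Construction~\ref{module_from_Dspace_construction} it is the module associated to $\sD(c,-)$. For \emph{any} right $\sD$-module $\sW$ there is a natural isomorphism $\sW\otimes_\sD\sD[c]\cong\sW_c$, where $\sW_c = s^{-1}(c)\subseteq\sW$ is the fiber of the structure map over $c$: the module action $(w,\psi)\mapsto w\psi$ gives a continuous map $\sW\times_\sD\sD[c]\to\sW_c$ that coequalizes the two face maps, $w\mapsto(w,\id_c)$ gives a section, and a one-line check on the coequalizer presenting $\otimes_\sD$ shows these are mutually inverse. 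Because $c$ is held fixed, this is insensitive to the topology on $\ob\sD$. Applying it with $\sW=\sY$ and using that $\sY$ is the module associated to the $\sD^{\op}$-space $Y$, we get $\sY_c\cong Y(c)$; this is the (discrete-object) co-Yoneda identity $\int^d Y(d)\times\sD(c,d)\cong Y(c)$ applied to the fiber of $\sY=\int^d Y(d)\times{}_d(\mor\sD)$ over $c$.

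Next, by Construction~\ref{module_from_Dspace_construction}, $\sX = \int^{c}(\mor\sD)_c\times X(c) = \int^{c}\sD[c]\times X(c)$, which is exhibited as a left $\sD$-module by the coequalizer of the evident diagram $\coprod_{c,c'}\sD[c']\times\sD(c,c')\times X(c)\rightrightarrows\coprod_{c}\sD[c]\times X(c)$, with the spaces $X(c)$ treated as constant. Working in compactly generated spaces, so that $\sU/\ob\sD$ is cartesian closed, the functor $\sY\otimes_\sD(-)$, regarded as a functor of the left $\sD$-module variable, preserves all colimits: it is built from the coequalizer of two parallel natural maps between functors of the form $(\text{fixed space over }\ob\sD)\times_{\ob\sD}(-)$, and both pullback along a fixed map and coequalizers commute with colimits; likewise, tensoring with a constant space commutes with these pullbacks and coequalizers. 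Hence
\[
\sY\otimes_\sD\sX \;\cong\; \sY\otimes_\sD\Bigl(\int^{c}\sD[c]\times X(c)\Bigr) \;\cong\; \int^{c}\bigl(\sY\otimes_\sD\sD[c]\bigr)\times X(c) \;\cong\; \int^{c}\sY_c\times X(c) \;\cong\; \int^{c}Y(c)\times X(c),
\]
where the third isomorphism is the co-Yoneda lemma and the fourth is $\sY_c\cong Y(c)$; the coend on the right is exactly the discrete-object enriched coend $\int^{d\in\sD}Y(d)\times X(d)$. Tracking the maps shows the composite is the comparison induced by the fiber inclusions $Y(c)\times X(c)\to\sY\times_\sD\sX\to\sY\otimes_\sD\sX$, so it is natural in both $X$ and $Y$.

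The step I expect to be the main obstacle is making the co-Yoneda lemma and the colimit-preservation of $-\otimes_\sD-$ precise in the topological setting: in the discrete-object case both are immediate from manipulating coproducts indexed over objects, but here one must phrase everything in terms of fiber products over the space $\ob\sD$, invoke the cartesian closedness of compactly generated spaces over $\ob\sD$ for the colimit-preservation used to pull $\sY\otimes_\sD(-)$ through the coequalizer defining $\sX$, and verify that $\mor\sD$, $\sX$, $\sY$, and the various pullbacks all remain within the convenient category (the standing hypothesis that $i\colon\ob\sD\to\mor\sD$ is an $h$-cofibration, together with the usual point-set conventions, takes care of what is needed). Granting that, the remaining verifications are routine diagram chases through the coequalizer defining $\otimes_\sD$.
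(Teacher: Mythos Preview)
Your proof is correct and takes essentially the same approach as the paper: both arguments reduce $\sY\otimes_\sD\sX$ via a co-Yoneda identity for representable modules and the fact that $-\otimes_\sD-$ commutes with the coends defining $\sX$ and $\sY$. The paper is slightly more symmetric---it first records ${}_d(\mor\sD)\otimes_\sD(\mor\sD)_c\cong\sD(c,d)$ and then expands both $\sY$ and $\sX$ as coends of representables to obtain the double coend $\int^d\int^c Y(d)\times\sD(c,d)\times X(c)$, which collapses by the ordinary enriched co-Yoneda---whereas you expand only $\sX$ and apply co-Yoneda directly against the full module $\sY$; but this is a cosmetic difference in bookkeeping, not a different idea.
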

\begin{proof}
Notice that ${}_{d}(\mor \sD) \otimes_{\sD} (\mor \sD)_c \cong \sD(c, d)$.  By taking the product with $Y(d)$ on the left and $X(c)$ on the right, then passing to coends over $c$ and $d$, we see that the coequalizer defining $\sY \otimes_{\sD} \sX$ is isomorphic to:
\[
\int^{d \in \sD} \int^{c \in \sD} Y(d) \times \sD(c, d) \times X(c) \cong \int^{d \in \sD} Y(d) \times X(d)
\]
\end{proof}
\noindent We write $Y \otimes_{\sD} X$ for the tensor product of functors $\sY \otimes_{\sD} \sX$, where $\sY$ and $\sX$ are defined as in Construction \ref{module_from_Dspace_construction}.

Here is a version of Bousfield-Kan's cofinality criterion for topological homotopy colimits:
\begin{lemma}\label{theorem_A_hocolim_version}
Let $F \colon \sC \arr \sD$ be a functor of topological categories and let $\sX$ be a left $\sD$-module.  Suppose that for every object $d \in \ob \sD$, the classifying space of the comma category $(d \downarrow F)$ (considered with the topology inherited from $\sC$ and $\sD$) is contractible.  Then the map of topological homotopy colimits
\[
\hocolim_{\sC} X \circ F \arr \hocolim_{\sD} X
\]
is a homotopy equivalence.
\end{lemma}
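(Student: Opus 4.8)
The plan is to identify both homotopy colimits with bar constructions and reduce the statement to a homotopy-equivalence that can be checked simplicial degree by simplicial degree via the comma-category hypothesis. First I would rewrite $\hocolim_{\sC} X\circ F = B(\ast,\sC,F^{*}\sX)$ and $\hocolim_{\sD}X = B(\ast,\sD,\sX)$, where $F^{*}\sX$ denotes the left $\sC$-module obtained by pulling $\sX$ back along $F$ (concretely, $F^{*}\sX = \int^{c\in\sC}(\mor\sC)_{c}\times X(F(c))$, using Construction \ref{module_from_Dspace_construction} applied to $X\circ F$, together with the observation that $(\mor\sC)_{c}\times_{\sC}\mor\sD$ makes sense via $F$). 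The map in question is then $B(\ast,F,\id)\colon B(\ast,\sC,F^{*}\sX)\arr B(\ast,\sD,\sX)$ induced by $F$ on the morphism coordinates and by the $\sD$-action on the last coordinate.

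The key step is to produce an explicit homotopy inverse using the comma categories. For each object $d$ of $\sD$, the space $B(d\downarrow F)$ is contractible by hypothesis; the classifying space $B(d\downarrow F)$ is precisely $B(\ast,\sC, \sD(d,F-))$, the homotopy colimit over $\sC$ of the represented-type $\sC$-module $c\mapsto \sD(d,F(c))$. I would assemble these, as $d$ varies, into a $\sD$-bimodule-like gadget $\sK$ with $\sK = \int^{c\in\sC} {}_{(-)}(\mor\sD)_{F(c)}\times(\mor\sC)_{c}$ — i.e.\ a $(\sD,\sC)$-bimodule whose bar construction $B(\sK,\sC,-)$ computes the ``pull-push'' along $F$. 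The contractibility hypothesis says exactly that the natural map of $\sD$-modules $\epsilon\colon B(\sK,\sC,\ast)\arr \ast$ (over $\ob\sD$) is a level-wise homotopy equivalence: evaluated at $d$ it is the map $B(d\downarrow F)\arr \ast$. Then I would run the standard two-sided bar construction manipulation: there are canonical isomorphisms and deformation retractions
\[
B(\ast,\sD,\sX)\;\simeq\;B(B(\sK,\sC,\ast),\sD,\sX)\;\cong\;B(\ast,\sC,B(\sK,\sD,\sX))\;\simeq\;B(\ast,\sC,F^{*}\sX),
\]
where the first equivalence uses $\epsilon$ together with properness (so that $B(-,\sD,\sX)$ preserves the level-wise homotopy equivalence $\epsilon$), the middle isomorphism is the interchange of iterated bar constructions (Lemma \ref{appendix_homotopy_diagram} and the discussion preceding it), and the last equivalence uses that $B(\sK,\sD,\sX)\arr F^{*}\sX$ is a level-wise homotopy equivalence of $\sC$-modules (again by $\epsilon$ applied with the $\sD$-action on $\sX$, since ${}_{c}(\mor\sD)$-type modules have $B({}_{c}(\mor\sD)_{F(-)}, \sD, \sX)\simeq \sX(F(c))$ via the extra-degeneracy contraction). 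Finally I would check that the resulting composite equivalence is homotopic to $B(\ast,F,\id)$, by tracing through the simplicial maps exactly as in Lemma \ref{appendix_homotopy_diagram}; the simplicial homotopy there is precisely the tool needed to compare the two routes.

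The main obstacle is bookkeeping rather than conceptual: one must be careful that all the bar constructions in sight are \emph{proper} (so that geometric realization preserves the level-wise homotopy equivalences produced by $\epsilon$), which follows from our standing assumption that $i\colon\ob\sD\arr\mor\sD$ and $i\colon\ob\sC\arr\mor\sC$ are $h$-cofibrations over the squares of object spaces, together with the fact that $h$-cofibrations are preserved by the relevant pullbacks and products; and one must verify that the pulled-back module $F^{*}\sX$ really is the left $\sC$-module associated (via Construction \ref{module_from_Dspace_construction}) to the $\sC$-space $X\circ F$, so that $B(\ast,\sC,F^{*}\sX)$ genuinely is $\hocolim_{\sC}X\circ F$. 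Both of these are routine given the setup in Appendix \S\ref{Topological Categories and the Bar Construction}, so the proof is essentially the classical Bousfield--Kan cofinality argument carried out internally to topological categories, with Lemma \ref{appendix_homotopy_diagram} supplying the one non-formal homotopy.
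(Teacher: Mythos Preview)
Your approach is a direct two-sided-bar proof of cofinality, and it is essentially correct. The paper takes a different route: it does not prove the lemma itself but cites Meyer's general framework \cite{meyerII}*{\S4.3}, checking only that with the admissible pair $(\sH,\sM)=(\text{homotopy equivalences},\text{all squares})$ the relevant hypothesis holds, namely that $\hocolim_{(d\downarrow F)}\overline{X}=B(d\downarrow F)\times X(d)\arr X(d)$ lies in $\sH$, which is immediate from the contractibility assumption. Your argument is what actually underlies Meyer's theorem in this case; it is more self-contained at the cost of the bookkeeping you flag.

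One variance correction: in the paper's convention for $B(\sY,\sD,\sX)$ the right module sits on the left and the left module on the right. Since $\sK(d,c)=\sD(d,Fc)$ is contravariant in $d$ (right $\sD$-module) and covariant in $c$ (left $\sC$-module), the evaluation at $d$ giving $B(d\downarrow F)$ is $B(\ast,\sC,\sK_d)$, not $B(\sK_d,\sC,\ast)$. Thus your chain should read
\[
B(\ast,\sD,\sX)\;\simeq\;B(B(\ast,\sC,\sK),\sD,\sX)\;\cong\;B(\ast,\sC,B(\sK,\sD,\sX))\;\simeq\;B(\ast,\sC,F^{\ast}\sX),
\]
with $\sK$ on the right of the inner $\sC$-bar. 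With that fix the interchange and extra-degeneracy steps go through exactly as you describe, and the identification of the composite with the canonical map $B(\ast,F,\id)$ is indeed the content of (a mild variant of) the simplicial homotopy in Lemma~\ref{appendix_homotopy_diagram}.
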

This is proved by Meyer's thoroughly general approach to bar constructions \cite{meyerII}.  In particular, it is a special case of \cite{meyerII}*{\S 4.3} with his $X$ our $X \circ F$ and his $X'$ our $X$ and the particular choice of admissible pair given by $\sH$ the homotopy equivalences and $\sM$ all commutative squares.  To verify the condition on $\sH$ in the source, note that $\hocolim_{(d \downarrow F)} \overline{X} = B(d \downarrow F) \times X(d)$ since in this case the functor $\overline{X}$ is the constant functor at $X(d)$.  Thus the map required to be in $\sH$ is the projection $B(d \downarrow F) \times X(d) \arr X(d)$, which is a homotopy equivalence by the assumption on $(d \downarrow F)$.

In the applications of Lemma \ref{theorem_A_hocolim_version}, we will check that $B(d \downarrow F)$ is contractible by showing that $(d \downarrow F)$ has an initial object. 

\begin{remark}\label{top_initial_object_remark} An object $0 \in \ob \sD$ of a topological category is initial if and only if it is initial in the underlying category internal to sets and the map $! \colon \ob \sD  \arr \mor \sD$ sending $d$ to the unique morphism $0 \arr d$ is continuous.  The usual proof that a category with initial object has a contractible classifying space then goes through.  For example, in the case of the continuous functor $F \colon \sD^{\delta} \arr \sD$ from $\sD$ with the discrete topology to $\sD$ with its given topology, the comma category $(d \downarrow F)$ has the initial object $(\id \colon d \arr F(d))$ as an ordinary category.  However, in general the map $!$ will not be continuous, so this is not an initial object of $(d \downarrow F)$ considered as a topological category.  
\end{remark}

The following lemma is \cite{meyerII}*{4.4.1}.  Note that the squares labelled (v) in the cited source are pullbacks and so belong to the class $\sM$. 

\begin{lemma}\label{map_of_hocolim_is_quasifib}  Suppose that $\sE \arr \sB$ is a morphism of left $\sD$-modules such that the map of underlying spaces is a fibration.  Then the induced map of topological homotopy colimits $\hocolim_{\sD} \sE \arr \hocolim_{\sD} \sB$ is a quasifibration.
\end{lemma}

We now record the definition of the topology on functor categories.

\begin{definition}\label{topological_functor_category}  Let $\sP$ and $\sD$ be topological categories.  Define a topology on the category $\Fun(\sP, \sD)$ of functors and natural transformations as follows.  The space of objects is given the subspace topology induced by the inclusion
\begin{align*}
\ob \Fun(\sP, \sD) \subset \sU(\mor \sP, \mor \sD) 
\end{align*}
that send a functor $\theta$ to its effect on morphisms: 
\[
(\phi \colon A \rightarrow B) \mapsto (\theta(\phi) \colon \theta(A) \rightarrow \theta(B)).  
\]
The space of morphisms is given the subspace topology induced by the inclusion
\[
\mor \Fun(\sP, \sD) \subset \sU(\ob \sP, \mor \sD) \times \sU(\mor \sP, \mor \sD )^2.
\]
that sends a natural transformation $\alpha \colon \theta \rightarrow \theta'$ to the triple $((\alpha_{A})_{A}, \theta, \theta')$ consisting of its components $\bigl( \alpha_{A} \colon \theta(A) \rightarrow \theta'(A) \bigr)_{A}$ and its source and target $(\theta(\phi), \theta'(\phi))_{\phi}$.  
\end{definition}

We will now describe the topologies on the categories of isometries used in this paper.  The space $\cI(V, W)$ of linear isometries from $V$ to $W$ is topologized as a subspace of $\sU(V, W)$.  If $V \subset W$, there is a canonical identification $\cI(V, W) \cong O(W)/O(W - V)$.  Infinite dimensional inner product spaces are topologized as the colimit of their finite dimensional subspaces and spaces of isometries $\cI_c(U', U)$ of possibly infinite dimensional inner product spaces are topologized using the compact open topology.  This means that
\[
\cI_c(U', U) = \underset{V' \subset U'}{\lim} \, \underset{V \subset U}{\colim} \, \cI(V', V)
\]
where $V' $ and $V$ run through finite dimensional subspaces.  From this definition it is straightforward to verify that composition in $\cI_c$ is continuous.

Throughout, we use a fixed universe $U$: this is an inner product space isomorphic to $\bR^{\infty}$.  Let $W$ be a real inner product space, finite-dimensional or infinite-dimensional, and consider the category $\cJ(W)$ of finite dimensional sub-inner product spaces $V \subset W$ with morphisms the inclusions $V \subset V'$.  The space of objects is the following disjoint union of Grassmanians:
\begin{align*}
\ob \cJ(W) &= \coprod_{n \geq 0} \cI_c(\bR^n, W) / O(n) 
\end{align*}
The space of morphisms is the space of flags of subspaces of $W$ of length two:
\begin{align*}
\mor \cJ(W) &= \coprod_{0 \leq m \leq n} \cI_c(\bR^{n}, W) / O(m) \times O(n - m).
\end{align*}
More generally, the space $N_q \cJ(W)$ of $q$-simplices in the nerve of $\cJ(W)$ is the space of flags of subspaces of $V$ of length $q + 1$:
\addtocounter{theorem}{1}
\begin{equation}\label{nerve_of_cJ}
\coprod_{0 \leq n_0 \leq \dotsm \leq n_q} \cI_c(\bR^{n_q}, W) / O(n_0) \times O(n_1 - n_0) \times \dotsm \times O(n_q - n_{q - 1}).
\end{equation}
In the case of $W = U$, we have described the topology on $\cJ = \cJ(U)$.  Now consider the category $\cI^{\dagger}$ of finite dimensional inner product spaces $V \subset U^a$ for $a \geq 0$, and linear isometries (not necessarily respecting the inclusion into $U^a$).  The space of objects of $\cI^{\dagger}$ is defined by:
\[
\ob \cI^{\dagger} = \{0\} \amalg \coprod_{a > 0} \coprod_{n > 0} \cI_c(\bR^n, U^a)/O(n).
\]
Notice that the only zero dimensional object is $0 \subset U^0$.  The inner product spaces $0 \subset U^a$ for $a > 0$ do not appear in this category.  The space of morphisms is:
\[
\mor \cI^{\dagger} = \ob \cI^{\dagger} \amalg \coprod_{a, b > 0} \coprod_{0 < m \leq n} \cI_c(\bR^n, U^b) \times_{O(n)} \cI(\bR^m, \bR^n) \times_{O(m)} \cI_c(\bR^m, U^a).
\]
Here the copy of $\ob \cI^{\dagger}$ represents the space of maps $0 \arr V$, and a point $[f, \phi, g]$ of the other summand corresponds to the morphism $f \circ \phi \circ g^{-1} \colon \mathrm{Im}(g) \arr \mathrm{Im}(f)$.  The source and target maps are defined by projecting to the third and first factors, respectively.  Composition is defined using the composition in the middle factor.  The direct sum taking $V \subset U^a$ and $W \subset U^b$ to $V \oplus W \subset U^{a + b}$ is well-defined and continuous on both $\ob \cI^{\dagger}$ and $\mor \cI^{\dagger}$.  It follows that $\cI^{\dagger}$ is a permutative topological category under direct sum, i.e. a symmetric monoidal topological category whose unit and associativity isomorphisms are identity maps.  There is a canonical inclusion of topological categories $\cJ \arr \cI^{\dagger}$ which is the identity on objects.  On morphisms, it sends an inclusion $V \subset W$ in $U$ to the point $[f_W, \iota, f_V]$ of the $a = b = 1$ summand, where $\iota$ is the canonical inclusion $\bR^m \subset \bR^n$ and $f_W \colon \bR^n \arr U$ and $f_V \colon \bR^m \arr U$ are representatives for $W$ and $V$ chosen such that $f_W \circ \iota = f_V$.  

Notice that the functor $\cI^{\dagger} \arr \cI$ sending $V \subset U^a$ to $V$ is an equivalence of categories, even though it is not injective on objects.  Throughout the paper, whenever we form a bar construction involving $\cI$, we implicitly use the category $\cI^{\dagger}$ in place of $\cI$ by precomposing functors with domain $\cI$ along the equivalence $\cI^{\dagger} \arr \cI$.

We have used one more category of isometries.  Let $\cI(W)$ be the full subcategory of $\cI^{\dagger}$ with objects the finite dimensional sub-inner product spaces $V \subset W$.  Thus $\ob \cI(W) = \ob \cJ(W)$ and $\mor \cI(W)$ is topologized as a subspace of $\mor \cI^{\dagger}$.  Notice that $\cI(U)$ is \emph{not} the same category as $\cI$, and that the inclusion $\cJ \arr \cI^{\dagger}$ factors as $\cJ \arr \cI(U) \arr \cI^{\dagger}$.

In order to take topological homotopy colimits of an $\cI$-space $X$ over each of the categories $\cJ(W), \cI(W),$ and $\cI^{\dagger}$, we use Construction \ref{module_from_Dspace_construction} to define three left modules associated to $X$, one over each of these categories.  We write $\sX(\sD)$ for the left $\sD$-module associated to $X$, where $\sD = \cJ(W), \cI(W),$ or $\cI^{\dagger}$.  By writing the coend that defines $\sX$ as a coequalizer, we find that:
\begin{align*}
\sX(\cI(W)) &= \sX(\cJ(W)) = \coprod_{n \geq 0} \cI_c(\bR^n, W) \times_{O(n)} X(\bR^n) \\
\sX(\cI^{\dagger}) &= X(0) \amalg \coprod_{a > 0} \coprod_{n > 0} \cI_c(\bR^n, U^a) \times_{O(n)} X(\bR^n).
\end{align*}
In each of the three cases the structure map $\sX(\sD) \arr \ob \sD$ collapses $X(\bR^n)$ to a point.  These are all $O(n)$-bundles and in each case we identify the fiber over an $n$-plane $V$ with the space $X(V)$.

We record here the following generalization of \eqref{nerve_of_cJ} that describes the $q$-simplices of the homotopy colimit of $\sX(\cJ(W))$:
\addtocounter{theorem}{1}
\begin{equation}\label{simplices_of_cJ_X}
B_q(*, \cJ(W), \sX) = \coprod_{0 \leq n_0 \leq \dotsm \leq  n_q} \cI_c(\bR^{n_q}, W) \times_{O(n_0) \times O(n_1 - n_0) \times \dotsm \times O(n_q - n_{q - 1})} X(\bR^{n_0}).
\end{equation}

We will now construct actions by the linear isometries operad $\sL$ \cite{E_infty_rings}.  The $j$-th space of $\sL$ is $\sL(j) = \cI_c(U^j, U)$, and an $\sL$-space is an algebra over $\sL$ in spaces.  An $\sL$-category is a topological category $\sD$ such that $\ob \sD$ and $\mor \sD$ are $\sL$-spaces and such that the category structure maps are maps of $\sL$-spaces.  A left $\sL \sD$-algebra $\sX$ is a left $\sD$-module $\sX$ over an $\sL$-category such that $\sX$ is an $\sL$-space and the structure maps $t \colon \sX \arr \ob \sD$ and $\lambda \colon \mor \sD \times_{\ob \sD} \sX \arr \sX$ are maps of $\sL$-spaces.  Right $\sL \sD$-algebras are defined similarly.  We also have variants of all these notions for the operad $\bbL$ with $\bbL(0) = *$, $\bbL(1) = \sL(1)$ and $\bbL(j) = \emptyset$ for $j > 1$.  An algebra over $\bbL$ in spaces is the same thing as an $\bbL$-space as defined in \S\ref{Infinite loop space theory of $S$-modules_section}.  We are interested in these structures because the bar construction of $\sL \sD$-algebras is an $\sL$-space.

\begin{lemma}\label{barconstruction_Lspace_lemma}
If $\sD$ is an $\sL$-category and $\sX$ and $\sY$ are left and right $\sL \sD$-algebras, then the bar construction $B(\sY, \sD, \sX)$ is an $\sL$-space.  The analogous statement holds with $\sL$ replaced by $\bbL$.
\end{lemma}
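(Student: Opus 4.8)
The plan is to recognize $B(\sY,\sD,\sX)$ as the geometric realization of a simplicial space built entirely out of $\sL$-spaces and maps of $\sL$-spaces, and then to transport the $\sL$-action across geometric realization using the fact that $|-|$ commutes with finite products in compactly generated spaces.

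First I would record the relevant closure property: for any operad $\sP$ acting on spaces, the category of $\sP$-algebras has all small limits, created by the forgetful functor to spaces; in particular a pullback of $\sP$-algebras along $\sP$-maps is again a $\sP$-algebra, and its projections are $\sP$-maps. Applying this with $\sP = \sL$: since $\sD$ is an $\sL$-category the structure maps $s, t \colon \mor\sD \arr \ob\sD$ are maps of $\sL$-spaces, so each iterated fiber product of copies of $\mor\sD$ over $\ob\sD$ is an $\sL$-space; since $\sX$ and $\sY$ are left and right $\sL\sD$-algebras, $t \colon \sX \arr \ob\sD$ and $s \colon \sY \arr \ob\sD$ are $\sL$-maps as well, so each
\[
B_q(\sY, \sD, \sX) = \sY \times_{\sD} \mor\sD \times_{\sD} \dotsm \times_{\sD} \mor\sD \times_{\sD} \sX
\]
($\mor\sD$ appearing $q$ times) is an $\sL$-space with the factor projections being $\sL$-maps. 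The face maps of $B_\bullet(\sY,\sD,\sX)$ are assembled from the composition map $\circ$ of $\sD$ and the action maps $\lambda$, $\rho$, and the degeneracies from the unit map $i \colon \ob\sD \arr \mor\sD$; all of these are maps of $\sL$-spaces by the definitions of $\sL$-category and $\sL\sD$-algebra. Since a map into a pullback is an $\sL$-map as soon as its composites with the projections are, every face and degeneracy map of $B_\bullet(\sY,\sD,\sX)$ is a map of $\sL$-spaces, so $B_\bullet(\sY, \sD, \sX)$ is a simplicial $\sL$-space.

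Next I would realize the operad action. For each $j$, the levelwise actions $\sL(j) \times B_q(\sY,\sD,\sX)^{\times j} \arr B_q(\sY,\sD,\sX)$ commute with the face and degeneracy maps (these being $\sL$-maps), hence define a simplicial map $\underline{\sL(j)} \times B_\bullet(\sY,\sD,\sX)^{\times j} \arr B_\bullet(\sY,\sD,\sX)$, where $\underline{\sL(j)}$ is the constant simplicial space at $\sL(j)$. Working in compactly generated spaces, geometric realization commutes with finite products, so there are natural homeomorphisms $\sL(j) \times B(\sY,\sD,\sX)^{\times j} \cong |\underline{\sL(j)} \times B_\bullet(\sY,\sD,\sX)^{\times j}|$; composing with the realization of the simplicial action map gives maps $\sL(j) \times B(\sY,\sD,\sX)^{\times j} \arr B(\sY,\sD,\sX)$. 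The unit, associativity, and equivariance axioms for these maps follow by applying the product-preserving functor $|-|$ to the corresponding levelwise identities. Hence $B(\sY,\sD,\sX)$ is an $\sL$-space. The argument applies verbatim with $\sL$ replaced throughout by the operad $\bbL$ ($\bbL(0)=*$, $\bbL(1)=\sL(1)$, $\bbL(j)=\emptyset$ for $j>1$), giving the $\bbL$-space statement.

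The only place the topological hypotheses really enter is the commutation of geometric realization with finite products, and the properness/$h$-cofibrancy needed for realization to be homotopically well behaved; both are standard in the category of compactly generated spaces and are already used tacitly elsewhere in the paper (e.g. in forming products and iterated bar constructions), so I do not anticipate a genuine obstacle — the content of the lemma is really just the bookkeeping that all the structure maps in sight are $\sL$-equivariant.
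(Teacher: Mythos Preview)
Your proof is correct and follows essentially the same approach as the paper: show that $B_\bullet(\sY,\sD,\sX)$ is a simplicial $\sL$-space and then pass the action through geometric realization. The paper's proof is the two-line version of yours; you have simply spelled out the closure of $\sL$-spaces under pullbacks and the commutation of $|-|$ with finite products that the paper leaves implicit.
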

\begin{proof}
Since all spaces and maps involved are $\sL$-spaces and maps of $\sL$-spaces, the simplicial bar construction $B_*(\sY, \sD, \sX)$ is a simplicial $\sL$-space.  Thus its geometric realization is an $\sL$-space as well.
\end{proof}

\begin{proposition}\label{hocolim_of_FCP_is_Lspace_prop}  Let $X$ be an $\cI$-space.  Then:
\begin{itemize}
\item[(i)]  $\cJ$ and $\cI(U)$ are $\sL$-categories.
\item[(ii)]  $\sX(\cJ)$ and $\sX(\cI(U))$ are left $\bbL \sD$-algebras.
\item[(iii)]  $\hocolim_{\cJ} X$ and $\hocolim_{\cI(U)} X$ are $\bbL$-spaces and the map from the former to the latter induced by $\cJ \arr \cI(U)$ is a map of $\bbL$-spaces.
\end{itemize}
Suppose further that $X$ is a commutative $\cI$-FCP.  Then:
\begin{itemize}
\item[(iv)]  $\sX(\cJ)$ and $\sX(\cI(U))$ are left $\sL \sD$-algebras.
\item[(v)]  $\hocolim_{\cJ} X$ and $\hocolim_{\cI(U)} X$ are $\sL$-spaces and the map from the former to the latter induced by $\cJ \arr \cI(U)$ is a map of $\sL$-spaces.
\end{itemize}
\end{proposition}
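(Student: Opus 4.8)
The plan is to equip the topological categories $\cJ$ and $\cI(U)$ with explicit actions of the linear isometries operad $\sL$, to check that these actions are compatible with the category structure maps and with the left modules $\sX(\cJ)$, $\sX(\cI(U))$ furnished by Construction \ref{module_from_Dspace_construction}, and then to read off (iii) and (v) from Lemma \ref{barconstruction_Lspace_lemma}. Throughout I would work with the explicit descriptions of $\ob\cJ = \ob\cI(U)$, $\mor\cJ$, $\mor\cI(U)\subset\mor\cI^{\dagger}$, and $\sX(\cJ)=\sX(\cI(U))=\coprod_{n\geq 0}\cI_c(\bR^n,U)\times_{O(n)}X(\bR^n)$ recorded in the appendix; recall also that an $\sL$-category (resp.\ left $\sL\sD$-algebra) restricts to an $\bbL$-category (resp.\ left $\bbL\sD$-algebra) along the evident operad map $\bbL\to\sL$.

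For (i), given $\gamma\in\sL(j)=\cI_c(U^j,U)$ I would let $\gamma$ act on objects by $(V_1,\dots,V_j)\longmapsto\gamma(V_1\oplus\dots\oplus V_j)$, where $V_1\oplus\dots\oplus V_j$ is viewed as a subspace of $U^j$; on the morphisms of $\cJ$ by $\gamma(V_1\oplus\dots\oplus V_j)\subset\gamma(V_1'\oplus\dots\oplus V_j')$; and on the morphisms of $\cI(U)$ by the conjugate $\gamma\circ(\phi_1\oplus\dots\oplus\phi_j)\circ\gamma^{-1}$ of a tuple of isometries $\phi_i\colon V_i\to W_i$. One then checks continuity in the relevant topologies (using continuity of composition and direct sum in $\cI_c$, and the subspace topology on $\mor\cI(U)$), the unit and $\Sigma_j$-equivariance axioms (the latter is immediate, since permuting inputs corresponds to precomposing $\gamma$ with the block permutation of $U^j$), the operad composition axiom (which reduces to associativity of $\oplus$ and the definition of operadic composition in $\sL$), and compatibility of $s$, $t$, $i$, $\circ$ with these actions. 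This establishes that $\cJ$ and $\cI(U)$ are $\sL$-categories.

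For (iv) I would let $\gamma\in\sL(j)$ act on $\sX=\sX(\cJ)=\sX(\cI(U))$ by
\[
\gamma\cdot([f_1,x_1],\dots,[f_j,x_j]) = \bigl[\,\gamma\circ(f_1\oplus\dots\oplus f_j),\ \mu(x_1,\dots,x_j)\,\bigr],
\]
with $f_i\colon\bR^{n_i}\hookrightarrow U$, $x_i\in X(\bR^{n_i})$, and $\mu$ the iterated FCP multiplication $X(\bR^{n_1})\times\dots\times X(\bR^{n_j})\to X(\bR^{n_1+\dots+n_j})$. The point is that this is well defined on $O(n_1)\times\dots\times O(n_j)$-orbits and that its $\Sigma_j$-equivariance holds precisely because $X$ is \emph{commutative}: a permutation of the inputs permutes the tensor factors in the target of $\mu$, and commutativity of $\mu$ absorbs that permutation. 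The remaining operad axioms follow from associativity and unitality of $\mu$ and $\oplus$, and $t$ and $\lambda$ are visibly maps of $\sL$-spaces, so $\sX(\cJ)$ and $\sX(\cI(U))$ are left $\sL\sD$-algebras. For (ii) I would restrict to $j=1$: an $\bbL$-algebra amounts to a space with a unital associative action of the monoid $\sL(1)=\cI_c(U,U)$, and the action $\gamma\cdot[f,x]=[\gamma\circ f,x]$, together with the $j=1$ cases of the checks above (which need no multiplication on $X$), makes $\sX(\cJ)$ and $\sX(\cI(U))$ left $\bbL\sD$-algebras for any $\cI$-space $X$. It is also worth recording that, under Lemma \ref{O_is_colim}, the $\sL$-structure on $\sX$ induces exactly the classical $\sL$-space structure of \cite{E_infty_rings}*{I.1.6} on $\colim_{\cJ}X$, so that $\pi\colon\hocolim_\cJ X\to\colim_\cJ X$ is $\sL$-equivariant — the form used in Proposition \ref{F_to_L_equivofspectra_prop}.

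Finally, for (iii) and (v): the right $\sD$-module $\ast$ has underlying space $\ob\sD$ with structure maps the identity and the source map $s$, both of which are maps of $\sL$-spaces by (i), so $\ast$ is a right $\sL\sD$-algebra, hence also a right $\bbL\sD$-algebra. Lemma \ref{barconstruction_Lspace_lemma} then shows that $\hocolim_\sD X=B(\ast,\sD,\sX(\sD))$ is an $\bbL$-space for $\sD=\cJ$ and $\sD=\cI(U)$, and an $\sL$-space when $X$ is a commutative $\cI$-FCP. The inclusion $\cJ\to\cI(U)$ is the identity on objects and commutes with the operad actions on morphisms, and it induces the identity map of left modules $\sX(\cJ)=\sX(\cI(U))$; since the two-sided bar construction is functorial and preserves these structures, the induced map $\hocolim_\cJ X\to\hocolim_{\cI(U)}X$ is a map of $\bbL$-spaces, and of $\sL$-spaces in the commutative case. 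I expect the main obstacle to be the purely bookkeeping part of (i) and (iv): verifying that the proposed actions are continuous for the subspace topology on $\mor\cI(U)\subset\mor\cI^{\dagger}$ and descend correctly across the orthogonal-group quotients, and, in (iv), pinning down that commutativity of the FCP is exactly what delivers $\Sigma_j$-equivariance. Everything conceptual is already packaged in Lemma \ref{barconstruction_Lspace_lemma}.
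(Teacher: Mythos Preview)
Your proposal is correct and follows essentially the same approach as the paper: define the $\sL$-action on objects, morphisms, and $\sX$ by the explicit conjugation and direct-sum formulas you wrote down, observe that $\Sigma_j$-equivariance on $\sX$ is exactly commutativity of the FCP, specialize to $j=1$ for the $\bbL$ statements, and then invoke Lemma \ref{barconstruction_Lspace_lemma} for (iii) and (v). Your treatment is slightly more explicit than the paper's in noting that $\ast$ is a right $\sL\sD$-algebra and in recording the compatibility with $\colim_{\cJ}X$ via Lemma \ref{O_is_colim}, but the argument is the same.
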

\begin{proof}
(iii) and (v) follow from (ii) and (iv), respectively, using the preceding lemma.  We will start with (iv) and prove (i) along the way.

We define the action of $\sL$ on the space $\sX(\cJ) = \sX(\cI(U))$ as the composite
\addtocounter{theorem}{1}
\begin{gather}\label{main_Laction_diagram}
\xymatrix{
\cI_c(U^j, U) \times \displaystyle\prod_{i = 1}^{j} \cI_c(\bR^{n_i}, U) \times_{O(n_i)} X_{n_i}  \ar[d] \\
  \cI_c(U^j, U) \times \cI_c(\bR^n, U^j) \times_{O(n)} X_{n} \ar[d]^{\circ} \\
  \cI_c(\bR^n, U) \times_{O(n)} X_{n}
}
\end{gather}
of the map
\[
(\gamma ; [f_1, x_1], \dotsc, [f_j, x_j]) \longmapsto [\gamma, (f_1 \oplus \dotsm \oplus f_j), \mu(x_1, \dotsc, x_j)],
\]
followed by composition in $\cI_c$.  Here $n = n_1 + \dotsm + n_j$ and we omit from the notation the canonical isomorphism $\bR^{n_1} \oplus \dotsm \oplus \bR^{n_j} \cong \bR^{n}$.  The map $\mu \colon X_{n_1} \times \dotsm \times X_{n_j} \arr X_{n}$ is the FCP multiplication of $X$.    Notice that the $\Sigma_j$-equivariance follows from the commutativity of the FCP $X$.  The $\sL$-space structure on $\ob \cJ = \ob \cI(U)$ is the case of $X(V) = *$, and so the projection 
\[
t \colon \sX(\cJ) = \sX(\cI(U)) \arr \ob \cJ = \ob \cI(U)
\]
is a map of $\sL$-spaces.  

We now define the action of $\sL$ on
\[
\mor \cI(U) = \coprod_{0 \leq m \leq n} \cI_c(\bR^n, U) \times_{O(n)} \cI(\bR^m, \bR^n) \times_{O(m)} \cI_c(\bR^m, U).
\]
On each summand of the coproduct this is given by:
\begin{gather*}
\xymatrix{ \cI_c(U^j, U) \times
\displaystyle\prod_{i = 1}^{j} \cI_c(\bR^{n_i}, U) \times_{O(n_i)} \cI(\bR^{m_i}, \bR^{n_i}) \times_{O(m_i)} \cI_c(\bR^{m_i}, U)  \ar[d] \\
 \cI_c(\bR^{n}, U) \times_{O(n)} \cI(\bR^{m}, \bR^{n}) \times_{O(m)} \cI_c(\bR^{m}, U) }
 \\
(\gamma ; [f_1, \phi_1, g_1], \dotsc, [f_j, \phi_j, g_j]) \longmapsto [\gamma \circ \bigoplus_i f_i, \bigoplus_i \phi_i, \gamma \circ \bigoplus_i g_i ].
\end{gather*}
If $f_i$ and $g_i$ represent subspaces $V_i \subset U$ and $W_i \subset U$, then the action of $\gamma \in \sL(j)$ takes the $j$-tuple of isometries $(\phi_1 \colon V_1 \arr W_1, \dotsc, \phi_j \colon V_j \arr W_j)$ to the isometry:
\[
\gamma \circ \bigl( \bigoplus_i \phi_i \bigr) \circ \gamma^{-1} \colon \gamma \bigl( \bigoplus_i V_i \bigr) \arr \gamma \bigl( \bigoplus_i W_i \bigr).
\]
It is clear that this action stabilizes the subspace $\mor \cJ \subset \mor \cI(U)$ of inclusions, so $\mor \cJ$ is an $\sL$-space as well.  For both $\cJ$ and $\cI(U)$, it is immediate that the source, target and composition maps preserve the $\sL$-action.  This finishes the proof of (i).  It is also straightforward to check that the left module structure map $\lambda$ for $\sX(\cJ)$ and $\sX(\cI(U))$ is a map of $\sL$-spaces, so we have proved (iv) as well.   

To get the action of $\bbL$, specialize all of the $\sL$-actions to the case $j = 1$.  This does not depend on the FCP structure of $X$, so (ii) follows.
\end{proof}

\begin{bibdiv}
\begin{biblist}


\bib{ABGHR}{article}{
title={Units of ring spectra and Thom spectra}
author={M. Ando}
author={A.J. Blumberg}
author={D. Gepner}
author={M.J. Hopkins}
author={C. Rezk}
journal={arXiv:math.AT/0810.4535}
}

\bib{tmf_orientation}{article}{
title={Multiplicative Orientations of $KO$-theory and of the spectrum of topological modular forms}
author={M. Ando}
author={M.J. Hopkins}
author={C. Rezk}
journal={preprint}
}

\bib{blumberg_thesis}{book}{
title={Progress towards the calculation of the $K$-theory of Thom spectra}
author={A.J. Blumberg}
date={2005}
series={University of Chicago Thesis}
}

\bib{BCS}{article}{
title={Topological Hochschild homology of Thom spectra and the free loop space}
author={A.J. Blumberg}
author={R.L. Cohen}
author={C. Schlichtkrull}
journal={Geom. Topol.}
volume={14}
number={2}
year={2010}
pages={1165-1242}
}

\bib{brun}{article}{
title={Topological Hochschild homology of $\bZ/p^n$}
author={M. Brun}
journal={J. Pure Appl. Algebra}
volume={148}
number={1}
date={2000}
pages={29-76}
}

\bib{EKMM}{book}{
title={Rings, modules, and algebras in stable homotopy theory}
author={A.D. Elmendorf}
author={I. Kriz}
author={M.A. Mandell}
author={J.P. May} 
date={1997}
series={Mathematical Surveys and Monographs}
volume={47}
publisher={American Mathematical Society}
}

\bib{EM}{article}{
title={Rings, modules, and algebras in infinite loop space theory}
author={A.D. Elmendorf}
author={M.A. Mandell}
journal={Adv. Math.}
volume={205}
number={1}
date={2006}
pages={163-228}
}

\bib{vogt}{article}{
title={Modules of topological spaces, applications to homotopy limits and $E_{\infty}$ structures}
author={J. Hollender}
author={R.M. Vogt}
journal={Arch. Math. (Basel)}
volume={59}
number={2}
date={1992}
pages={115-129}
}

\bib{hovey_model_categories}{book}{
title={Model categories}
author={M. Hovey}
date={1999}
series={Mathematical Surveys and Monographs}
volume={63}
publisher={American Mathematical Society}
}

\bib{HSS}{article}{
title={Symmetric Spectra}
author={M. Hovey}
author={B. Shipley}
author={J. Smith}
journal={J. Amer. Math. Soc.}
volume={13}
number={1}
date={2000}
pages={149--208}
}

\bib{LMS}{book}{
title={Equivariant stable homotopy theory}
author={L.G. Lewis Jr.}
author={J.P. May}
author={M. Steinberger (with contributions by J.E. McClure)}
date={1986}
series={Springer Lecture Notes in Mathematics}
volume={1213}
}

\bib{MM}{article}{
title={Equivariant orthogonal spectra and $S$-modules}
author={M.A. Mandell}
author={J.P. May} 
date={2002}
journal={Memoirs Amer. Math. Soc.}
number={755}
publisher={American Mathematical Society}
}

\bib{MMSS}{article}{
title={Model categories of diagram spectra}
author={M.A. Mandell}
author={J.P. May} 
author={S. Schwede}
author={B. Shipley}
journal={Proc. London Math. Soc. (3)}
volume={82}
date={2001}
pages={441--512}
}

\bib{telescope_lemma}{article}{
title={A telescope comparison lemma for THH}
author={M.A. Mandell}
author={B. Shipley}
journal={Topology Appl.}
volume={117}
date={2002}
number={2}
pages={161--174}
}

\bib{E_infty_rings}{book}{
title={$E_{\infty}$ ring spaces and $E_{\infty}$ ring spectra (with contributions by F. Quinn, N. Ray, and J. Tornehave)}
author={J.P. May}
date={1977}
series={Springer Lecture Notes in Mathematics}
volume={577}
}

\bib{E_infty_permutative_cats}{book}{
title={$E_{\infty}$ spaces, group completions, and permutative categories}
author={J.P. May}
date={1974}
series={London Mathematical Society Lecture Notes}
volume={11}
pages={61--93}
}

\bib{geom_infinite_loops}{book}{
title={The geometry of iterated loop spaces}
author={J.P. May}
date={1972}
series={Springer Lecture Notes in Mathematics}
volume={271}
}

\bib{May_spectra_of_Imonoids}{article}{
title={The spectra associated to $\cI$-monoids}
author={J.P. May} 
journal={Math. Proc. Camb. Phil. Soc.}
volume={84}
date={1978}
pages={313--322}
}

\bib{May_unique_from_perm}{article}{
title={The spectra associated to permutative categories}
author={J.P. May} 
journal={Topology}
volume={17}
date={1978}
pages={225--228}
}

\bib{RANT1}{book}{
title={What precisely are $E_{\infty}$ ring spaces and $E_{\infty}$ ring spectra?}
author={J.P. May} 
series={Geom. Topol. Monogr.}
volume={16}
date={2009}
publisher={Geom. Topol. Publ., Coventry}
}

\bib{RANT3}{book}{
title={What are $E_{\infty}$ ring spaces good for?}
author={J.P. May} 
series={Geom. Topol. Monogr.}
volume={16}
date={2009}
publisher={Geom. Topol. Publ., Coventry}
}

\bib{parametrized}{book}{
title={Parametrized Homotopy Theory}
author={J.P. May}
author={J. Sigurdsson}
date={2006}
series={Mathematical Surveys and Monographs}
volume={132}
publisher={American Mathematical Society}
}

\bib{May_Thomason}{article}{
title={The uniqueness of infinite loop space machines}
author={J.P. May} 
author={R. Thomason}
journal={Topology}
volume={17}
date={1978}
pages={205--224}
}

\bib{meyerII}{article}{
title={Bar and cobar constructions II}
author={J.-P. Meyer}
journal={J. Pure Appl. Alg.}
volume={43}
date={1986}
pages={179--210}
}

\bib{rezk_log}{article}{
title={The units of a ring spectrum and a logarithmic cohomology operation}
author={C. Rezk}
journal={J. Amer. Math. Soc.}
volume={19}
number={4}
date={2006}
pages={969--1014}
}

\bib{rognes_log}{book}{
title={Topological logarithmic structures}
author={J. Rognes}
series={Geom. Topol. Monogr.}
volume={16}
date={2009}
publisher={Geom. Topol. Publ., Coventry}
}

\bib{schlichtkrull_units}{article}{
title={Units of ring spectra and their traces in algebraic K-theory}
author={C. Schlichtkrull}
journal={Geom. Topol.}
volume={8}
date={2004}
pages={645--673}
}

\bib{Sch_thomspectra}{article}{
title={Thom spectra that are symmetric spectra}
author={C. Schlichtkrull}
journal={Doc. Math.}
volume={14}
date={2009}
pages={699-748}
}

\bib{Sch_higherTHH}{article}{
title={Higher topological Hochschild homology of Thom spectra}
author={C. Schlichtkrull}
journal={arXiv:0811.0597}
volume={}
date={}
pages={}
}

\bib{ss}{article}{
title={Diagram spaces and symmetric spectra}
author={C. Schlichtkrull}
author={S. Sagave}
journal={arXiv:1103.2764}
volume={}
date={}
pages={}
}

\bib{schwede_homotopy_groups}{article}{
title={On the homotopy groups of symmetric spectra}
author={S. Schwede}
journal={Geom. Topol.}
volume={12}
date={2008}
pages={1313--1344}
}

\bib{schwede_smodules}{article}{
title={$S$-modules and symmetric spectra}
author={S. Schwede}
journal={Math. Ann.}
volume={319}
date={2001}
pages={517--532}
}

\bib{schwede_shipley}{article}{
title={Algebras and modules in monoidal model categories}
author={S. Schwede}
author={B. Shipley}
journal={Proc. London Math. Soc. (3)}
volume={80}
date={2000}
pages={491--511}
}

\bib{segal_machine}{article}{
title={Categories and cohomology theories}
author={G. Segal}
journal={Topology}
volume={13}
date={1974}
pages={293--312}
}

\bib{shipley_detection_functor}{article}{
title={Symmetric spectra and topological hochschild homology}
author={B. Shipley}
journal={$K$-theory}
volume={19}
date={2000}
pages={155--183}
}

\bib{shulman_hocolim}{article}{
title={Homotopy limits and colimits and enriched homotopy theory}
author={M. Shulman}
journal={arXiv:0610194v2 [math.AT]}
}

\bib{shulman_doubles}{article}{
title={Comparing composites of left and right derived functors}
author={M. Shulman}
journal={New York J. Math}
volume={17}
year={2011}
pages={75-125}
}

\end{biblist}
\end{bibdiv}

\end{document}